\numberwithin{equation}{section}
\newtheorem{Proposition}[equation]{Proposition}
\newtheorem{Lemma}[equation]{Lemma}
\newtheorem{Theorem}[equation]{Theorem}
\newtheorem{Corollary}[equation]{Corollary}
\newtheorem{MainTheorem}{Theorem}
\newtheorem{MainCorollary}[MainTheorem]{Corollary}
\theoremstyle{definition}  
\newtheorem{Remark}[equation]{Remark}
\newtheorem{Example}[equation]{Example}
\newcommand\Comment[2][\relax]{\space\par\medskip\noindent%
   \fbox{\begin{minipage}{\textwidth}\textbf{Comment\ifx\relax#1\else---#1\fi}\newline%
        #2\end{minipage}}\medskip
}
\def\bi{\text{\boldmath$i$}}
\def\bj{\text{\boldmath$j$}}
\def\bk{\text{\boldmath$k$}}
\def\bl{\text{\boldmath$l$}}
\def\bc{\text{\boldmath$c$}}
\def\b1{\text{\boldmath$1$}}
\def\bd{\text{\boldmath$d$}}
\def\bb{\text{\boldmath$b$}}
\def\bla{\text{\boldmath$\lambda$}}
\def\bmu{\text{\boldmath$\mu$}}
\def\bnu{\text{\boldmath$\nu$}}
\def\ula{\text{\boldmath$\lambda$}}
\def\umu{\text{\boldmath$\mu$}}
\def\unu{\text{\boldmath$\nu$}}
\newcommand{\sm}{\setminus}
\def\pmod#1{\text{ }(\text{\rm mod } #1)\,}
\def\Y#1{\llbracket #1 \rrbracket}
\newcommand{\Hom}{\operatorname{Hom}}
\newcommand{\End}{\operatorname{End}}
\newcommand{\ind}{\operatorname{ind}}
\newcommand{\im}{\operatorname{im}}
\def\sgn{\mathtt{sgn}}
\def\reg{\mathtt{reg}}
\newcommand{\res}{\operatorname{res}}
\newcommand{\cha}{\operatorname{char}}
\newcommand{\noncusp}{\mathrm{nsc}}
\newcommand{\scusp}{\mathrm{sc}}
\newcommand{\sep}{\mathrm{sep}}
\newcommand{\nsep}{\mathrm{nsep}}
\newcommand{\rot}{\mathtt{rot}}
\newcommand{\Z}{\mathbb{Z}}
\newcommand{\N}{\mathbb{Z}_{\geq 0}}
\newcommand{\0}{{\bar 0}}
\newcommand{\1}{{\bar 1}}
\def\eps{{\varepsilon}}
\def\phi{{\varphi}}
\newcommand{\zc}{{\textsf{c}}}
\newcommand{\zz}{{\textsf{z}}}
\newcommand{\ze}{{\textsf{e}}}
\newcommand{\za}{{\textsf{a}}}
\newcommand{\funF}{{\mathcal F}}
\newcommand{\F}{{\mathbb F}}
\newcommand{\Cent}{{\mathcal Z}}
\newcommand{\ga}{\gamma}
\newcommand{\Ga}{\Gamma}
\newcommand{\la}{\lambda}
\newcommand{\La}{\Lambda}
\newcommand{\al}{\alpha}
\newcommand{\be}{\beta}
\def\Si{\mathfrak{S}}
\newcommand{\si}{\sigma}
\newcommand{\om}{\omega}
\newcommand{\Om}{\Omega}
\newcommand{\de}{\delta}
\newcommand{\De}{\Delta}
\newcommand{\ka}{\kappa}
\renewcommand{\th}{\theta}
\newcommand{\Irr}{{\mathrm {Irr}}}
\newcommand{\Ind}{{\mathrm {Ind}}}
\newcommand{\Mat}{{\mathcal {M}}}
\newcommand{\tr}{{\mathrm {tr}}}
\newcommand{\Inv}{\operatorname{{\mathtt{Inv}}}}
\newcommand{\Res}{{\mathrm {Res}}}
\newcommand{\Ann}{{\mathrm {Ann}}}
\newcommand{\C}{{\mathbb C}}
\newcommand{\Q}{{\mathbb Q}}
\newcommand{\D}{{\mathscr D}}
\renewcommand{\mod}{\bmod \,}
\newcommand{\di}{\mathrm{div}}
\newcommand{\Zt}{\tilde Z}
\newcommand{\Zig}{{\sf Z}}
\def\g{{\mathfrak g}}
\def\Par{{\mathscr P}}
\newcommand{\core}{\operatorname{core}}
\newcommand{\Nodes}{\mathsf N}
\newcommand{\Char}{\mathsf{Char}}
\newcommand{\Inc}{\mathsf{Inc}}
\newcommand{\charact}{\operatorname{char}}
\newcommand{\Ru}{\mathsf R}
\newcommand{\Hook}{\mathsf H}
\newcommand{\Gook}{\mathsf G}
\newcommand{\Add}{\mathsf{Add}}
\newcommand{\Rem}{\mathsf{Rem}}
\def\b{\mathfrak{b}}
\def\k{\Bbbk}
\def\T{{\mathtt T}}
\def\height{{\operatorname{ht}}}
\def\wt{{\operatorname{wt}}}
\def\op{{\mathrm{op}}}
\def\re{{\mathrm{re}}}
\def\im{{\mathrm{im}\,}}
\def\onto{{\twoheadrightarrow}}
\def\into{{\hookrightarrow}}
\def\mod#1{#1\!\operatorname{-mod}}
\renewcommand\O{\mathcal O}
\def\iso{\stackrel{\sim}{\longrightarrow}}
\def\alt{{\tt alt}}
\def\col{{\rm col}}
\def\row{{\tt row}}
\def\CH{{\operatorname{ch}_q\,}}
\def\DIM{{\operatorname{dim}_q\,}}
\newcommand{\qdim}{\operatorname{dim}_q}
\def\Car{{\tt C}}
\newcommand{\cont}{\operatorname{cont}}
\newcommand{\quot}{\operatorname{quot}}
\newcommand{\Std}{\mathsf{Std}}
\newcommand{\vn}{\varnothing}
\newcommand{\CT}{\mathsf{CT}}
\newcommand{\Ab}{\operatorname{\mathsf{A}}}
\renewcommand{\t}{\mathtt t}
\newcommand{\s}{\mathtt s}
\newcommand{\itt}{\mathtt{i}}
  \gdef\set#1{\mathinner{\lbrace\,{\mathcode`\|"8000%
  \let|\midvert #1}\,\rbrace}}
\def\midvert{\egroup\mid\bgroup}
\colorlet{darkgreen}{green!50!black}
\tikzset{dots/.style={very thick,loosely dotted},
         greendot/.style={fill,circle,color=darkgreen,inner sep=1.5pt,outer sep=0},
         blackdot/.style={fill,circle,color=black,inner sep=1.5pt,outer sep=0},
         graydot/.style={fill,circle,color=gray,inner sep=1.1pt,outer sep=0}
}
\def\greendot(#1,#2){\node[greendot] at(#1,#2){}}
\def\blackdot(#1,#2){\node[blackdot] at(#1,#2){}}
\def\graydot(#1,#2){\node[graydot] at(#1,#2){}}
\newenvironment{braid}{
  \begin{tikzpicture}[baseline=6mm,black,line width=1pt, scale=0.32,
                      draw/.append style={rounded corners},
                      every node/.append style={font=\fontsize{5}{5}\selectfont}]%
  }{\end{tikzpicture}
}
\def\Grid(#1,#2){
  \draw[very thin,gray,step=2mm] (0,0)grid(#1,#2);
  \draw[very thin,darkgreen,step=10mm] (0,0)grid(#1,#2);
}
\newcommand\Tableau[2][\relax]{
  \begin{tikzpicture}[scale=0.5,draw/.append style={thick,black}]
    \ifx\relax#1\relax%
    \else 
      \foreach\box in {#1} { \filldraw[blue!30]\box+(-.5,-.5)rectangle++(.5,.5); }
    \fi
    \newcount\row\newcount\col
    \row=0
    \foreach \Row in {#2} {
       \col=1
       \foreach\k in \Row {
          \draw(\the\col,\the\row)+(-.5,-.5)rectangle++(.5,.5);
          \draw(\the\col,\the\row)node{\k};
          \global\advance\col by 1
       }
       \global\advance\row by -1
    }
  \end{tikzpicture}
}
\newcommand\YoungDiagram[2][\relax]{
  \begin{tikzpicture}[scale=0.5,draw/.append style={thick,black}]
    \ifx\relax#1\relax%
    \else 
    \foreach\box in {#1} {
      \filldraw[blue!30]\box rectangle ++(1,1);
    }
    \fi
    \newcount\row
    \row=0
    \foreach \col in {#2} {
       \draw(1,\the\row)grid ++(\col,1);
       \global\advance\row by -1
    }
  \end{tikzpicture}
}
\begin{document}

\title[Blocks of symmetric groups and semicuspidal KLR algebras]{Blocks of symmetric groups, semicuspidal KLR algebras and zigzag  Schur-Weyl duality}

\author{\sc Anton Evseev}
\address{School of Mathematics, University of Birmingham, Edgbaston, Birmingham B15 2TT, UK}
\email{a.evseev@bham.ac.uk}

\author{\sc Alexander Kleshchev}
\address{Department of Mathematics\\ University of Oregon\\Eugene\\ OR 97403, USA}\email{klesh@uoregon.edu}

\subjclass[2010]{20C08, 20C30, 20G43}

\thanks{The first author is supported by the EPSRC grant EP/L027283/1 and thanks the Max-Planck-Institut for hospitality. The second author is supported by the NSF grant DMS-1161094, the Max-Planck-Institut and the Fulbright Foundation.}

\begin{abstract}
We prove Turner's conjecture, which describes the blocks of the Hecke algebras of the symmetric groups up to derived equivalence as certain explicit Turner double algebras. Turner doubles are Schur-algebra-like `local'  objects, which replace wreath products of Brauer tree algebras 
 in the context of the Brou{\'e} abelian defect group conjecture for blocks of symmetric groups with non-abelian defect groups.
The main tools used in the proof are generalized Schur algebras corresponding to wreath products of zigzag algebras and  imaginary semicuspidal quotients of affine KLR algebras. 
\end{abstract}

\maketitle

\section{Introduction}\label{SIntro}
Let $H_n(q)$ be the Hecke algebra of the symmetric groups $\Si_n$ over a field $\F$ with parameter $q\in \F^\times$. An important special case is $q=1$, when $H_n(q)=\F\Si_n$. Let $e$ be the quantum characteristic of $q$. In this paper we assume that $e>0$, i.e.~there exists $k\in\Z_{> 0}$ such that $1+q+\dots+q^{k-1}=0$, and $e$ is the smallest such $k$. 

Representations of $H_n(q)$ for all $n\geq 0$ categorify the basic module $V(\La_0)$ with highest weight $\La_0$ of the affine Kac-Moody Lie  algebra $\g=\widehat{\mathfrak{sl}}_e(\C)$, see for example \cite{Ariki,Abook,Gr,Kbook}. In particular, each weight space $V(\La_0)_{\La_0-\al}$ for $\al$ in the positive part of the root lattice is identified with the complexified Grothendieck group of the corresponding block $H_\al(q)$ of some $H_n(q)$. 

The Weyl group $W$ of $\g$ acts on the weights of $V(\La_0)$, and the orbits are precisely $\O_d:=W(\La_0-d\de)=W\La_0-d\de$ for all $d\in\Z_{\geq 0}$, where $\de$ is the null-root. Chuang and Rouquier \cite{CR} lift this action of $W$ on the weights to derived equivalences between the corresponding blocks. Therefore, all blocks $H_\al(q)$ with $\La_0-\al\in \O_d$ for a fixed $d$ are derived equivalent.

Moreover, for every $d\in\Z_{\geq 0}$, Rouquier \cite{RoTh} and Chuang and Kessar \cite{CK} identify special representatives $\La_0-\al\in\O_d$ for which the corresponding blocks $H_\al(q)$ have a particularly nice structure. These blocks are known as {\em RoCK blocks}. Thus for any $n$, every block of $H_n(q)$ is derived equivalent to a RoCK block.

Let $H_\al(q)$ be a RoCK block. Turner \cite{Turner, TurnerT, TurnerCat} developed a theory of double algebras and conjectured that $H_\al(q)$ is Morita equivalent to an appropriate double \cite[Conjecture 165]{Turner}. The aim of this paper is to prove Turner's Conjecture. In fact, we prove a slightly more general result stated in terms of cyclotomic KLR algebras over $\Z$. 

To state the result precisely, we now recap Turner's theory as developed in \cite{EK}. Let $Q$ be a type $A_{e-1}$ quiver, and let $P_Q$ be the quotient of the path algebra $\Z Q$ by all paths of length $2$. 
As a $\Z$-module, $P_Q$ has an obvious  basis whose elements are identified with the vertices and the edges of $Q$. We view $P_Q$ as a $\Z$-superalgebra with $P_{Q,\0}$ being the span of the vertices and $P_{Q,\1}$ being the span of the edges. We denote by $\bar x\in \{\0,\1\}$ the degree of a homogeneous element $x$ of any superalgebra.
 Let $n\in \Z_{>0}$, 
 and consider the matrix superalgebra $X:=M_n(P_Q)$. 

For every $d\in\Z_{\geq 0}$ we have a superalgebra structure on $X^{\otimes d}$ induced by that on $X$. So 
$\bigoplus_{d\geq 0} X^{\otimes d}$ is a
superalgebra, 
with the product on each summand $X^{\otimes d}$ being as above, and $xy=0$ for $x\in X^{\otimes d}$ and $y\in X^{\otimes f}$ with $d\neq f$. In  fact, $\bigoplus_{d\geq 0} X^{\otimes d}$ is even a {\em superbialgebra}\, with the coproduct 
\begin{equation*}
\begin{split}
\De \colon 
X^{\otimes d}&\to 
\bigoplus_{0\leq f\leq d}
X^{\otimes f}\otimes X^{\otimes (d-f)},
\\ 
\xi_1\otimes\dots\otimes \xi_d &\mapsto \sum_{0\leq f\leq d} (\xi_1\otimes\dots\otimes \xi_f)\otimes(\xi_{f+1}\otimes\dots\otimes \xi_d).
\end{split}
\end{equation*}
The symmetric group $\Si_d$ acts on $X^{\otimes d}$ by signed place permutations with superalgebra automorphisms, so the set of fixed points 
$
\Inv^dX:=\big(X^{\otimes d}\big)^{\Si_d}
$
 is a subsuperalgebra of $X^{\otimes d}$, and 
$
\Inv X:=\bigoplus_{d\geq 0} \Inv^dX
$
is a subsuperbialgebra of $\bigoplus_{d\geq 0} X^{\otimes d}$.  

There is a superbialgebra structure on $(\Inv X)^*:=\bigoplus_{d\geq 0} (\Inv^dX)^*$ which is dual to that on $\Inv X$. 
We also have an $\Inv X$-bimodule structure on $(\Inv X)^*$ 
defined by
$$
(x\cdot \xi)(\eta)=x(\xi \eta),\ (\xi\cdot x)(\eta)=x( \eta\xi) \qquad(\xi,\eta\in \Inv X,\ x\in (\Inv X)^*).
$$
The {\em Turner double} is the superalgebra 
$DX:=\Inv X\otimes (\Inv X)^*$, with the product defined, using Sweedler's notation for the coproduct $\Delta$, by 
\[
 (\xi\otimes x)(\eta\otimes y)=\sum (-1)^{\bar\xi_{(1)}(\bar\xi_{(2)}+\bar\eta+\bar x)+\bar \eta_{(1)}\bar x} 
 \xi_{(2)}\eta_{(1)}\otimes (x\cdot\eta_{(2)})(\xi_{(1)}\cdot y)
\]
for all homogeneous $\xi,\eta \in \Inv X$ and $x,y\in (\Inv X)^*$. 
We have a decomposition $DX = \bigoplus_{d\ge 0} D^d X$ as a direct sum of superalgebras, where 
\[
 D^d X:= \bigoplus_{0\le f\le d} \Inv^f X \otimes (\Inv^{d-f} X)^*
\]
is a direct sum of $\Z$-supermodules. Each superalgebra $D^d X$ is symmetric. 

The superalgebra $P_Q$ is $\Z$-graded with all vertices of $Q$ being in degree $0$ and all edges in degree $1$. This induces gradings on $M_n (P_Q)$ and $\Inv X=\Inv M_n(P_Q)$ in the natural way. 
We view each $(\Inv^d X)^*$ as a graded $\Z$-module, with the grading induced from $\Inv^d X$ shifted by $2d$, i.e.~for $x\in (\Inv^d X)^*$ we have $\deg x = m$ if and only if $x$ is zero on all graded components of $\Inv^d X$ other than the $(2d-m)$th component. Then 
$D^d X$ is a $\Z$-graded superalgebra concentrated in degrees $0,1,\dots, 2d$. In fact, this grading is a refinement of the superstructure on $D^d X$, in the sense that $(D^d X)_\0$ is the sum of even graded components of $D^d X$ and $(D^d X)_\1$ is the sum of odd graded components. From now on, we forget the superstructure on $D^d X$ and view 
\[
 D_Q (n,d):= D^d X
\]
simply as a graded $\Z$-algebra. 

As before, let $H_{\al} (q)$ be a RoCK block, with 
$\La_0-\al\in \O_d$. 
The precise conditions that $\al$ must satisfy in order for this to be the case are stated in~\S\ref{SSRoCK}.
Let $R^{\La_0}_{\al}$ be the corresponding
cyclotomic KLR algebra, which has a natural grading, see~\S\ref{SSKLR}. 

\begin{MainTheorem}\label{TA}
If $n\ge d$, then
the $\Z$-algebras $R^{\La_0}_{\al}$ and 
$D_Q (n,d)$ are graded Morita equivalent. 
\end{MainTheorem}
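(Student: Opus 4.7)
The plan is to prove Theorem A by constructing a chain of graded Morita equivalences bridging the two sides through an intermediate ``local'' object, namely a wreath product $W_d := \mathsf{Z}^{\otimes d} \rtimes \Sigma_d$, where $\mathsf{Z}$ is the zigzag algebra attached to the cyclic quiver $Q$ (closely related to $P_Q$, and in fact Koszul-dual to the trivial extension of $P_Q$). On the KLR side, one aims to show that the RoCK block $R^{\Lambda_0}_\alpha$ is graded Morita equivalent to $W_d$ via the imaginary semicuspidal quotient of the affine KLR algebra at $d\delta$; on the Turner side, one shows that $D_Q(n,d)$ is graded Morita equivalent to $W_d$ via a zigzag Schur--Weyl duality whenever $n \geq d$. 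Composing these two equivalences gives the theorem.

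First I would handle the ``Schur side''. The superalgebra $X = M_n(P_Q)$ is designed so that $X^{\otimes d}$ carries a commuting action of $\Sigma_d$ by signed place permutations, and $\Inv^d X = (X^{\otimes d})^{\Sigma_d}$ is a Schur-like invariant subalgebra. A natural $(\Inv^d X, W_d')$-bimodule structure, where $W_d'$ is the wreath product $P_Q^{\otimes d} \rtimes \Sigma_d$, gives a Schur--Weyl functor in the spirit of classical Schur--Weyl duality; the assumption $n \geq d$ ensures that the ``natural module'' has enough copies to guarantee the double centralizer property and hence Morita equivalence between $\Inv^d X$ and $W_d'$. The full Turner double $D_Q(n,d) = \bigoplus_f \Inv^f X \otimes (\Inv^{d-f} X)^*$ incorporates in addition the coalgebra structure of $\Inv X$ and its dual. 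I expect this extra structure to match, under Koszul duality between $P_Q$ and $\mathsf{Z}$, with the ``extended'' wreath product structure on $W_d$; in effect, the presence of the dual tensorand $(\Inv^{d-f} X)^*$ implements the Koszul duality between the $P_Q$-wreath product and the $\mathsf{Z}$-wreath product, producing a graded Morita equivalence $D_Q(n,d) \sim_{\mathrm{Mor}} W_d$ in the stable range $n \geq d$. The grading shift by $2d$ in the definition of $(\Inv^d X)^*$ is precisely what is needed to make this Koszul match compatible with the natural grading on $W_d$.

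Next I would handle the ``KLR side''. For a RoCK $\alpha$ with $\Lambda_0 - \alpha \in \O_d$, I would exhibit an idempotent $e \in R^{\Lambda_0}_\alpha$ whose truncation $eR^{\Lambda_0}_\alpha e$ realises the imaginary semicuspidal quotient at $d\delta$, and argue that $R^{\Lambda_0}_\alpha e$ is a projective generator (this is essentially the RoCK property: the cyclotomic quotient collapses to its imaginary part). The imaginary semicuspidal quotient at $d\delta$ has an explicit description in terms of KLR combinatorics supported on the $\delta$-ray, and a careful analysis of these words, together with the action of $\Sigma_d$ by permuting the $d$ copies of $\delta$, identifies this quotient with the wreath product $W_d$ as a graded algebra. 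Putting this together with the Schur side yields the required graded Morita equivalence $R^{\Lambda_0}_\alpha \sim_{\mathrm{Mor}} D_Q(n,d)$.

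The main obstacle will be the KLR-side identification: extracting $W_d$ from a semicuspidal quotient of the affine KLR algebra and then controlling the effect of passing to the cyclotomic quotient at $\Lambda_0$ requires a precise understanding of how standard modules for $\delta$ fit into the RoCK block, and a construction of enough idempotents in $R^{\Lambda_0}_\alpha$ of the correct residue content. Equally delicate is keeping track of the $\mathbb{Z}$-grading throughout, since the final statement is a \emph{graded} Morita equivalence; in particular, one must verify that the Koszul shift by $2d$ appearing in the Turner double matches the length gradings coming from KLR generators on the semicuspidal side, and that signs arising from the super structure of $X$ are correctly absorbed into the KLR signs.
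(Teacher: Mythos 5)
Your proposed chain of equivalences $R^{\La_0}_{\al}\sim W_d\sim D_Q(n,d)$ cannot work over $\Z$, and this is not a technical hiccup but the central difficulty the paper is built to overcome. As stated explicitly in the introduction, when $d\ge\charact\F>0$ the algebra $D_Q(n,d)_\F$ has \emph{strictly more} isomorphism classes of simple modules than the idempotent truncation $\xi_\om D_Q(n,d)_\F\xi_\om\cong W_{d,\F}$, so $D_Q(n,d)_\F$ and $W_{d,\F}$ are not Morita equivalent in small characteristic; since a graded Morita equivalence over $\Z$ would descend to one over every $\F_p$, the claimed equivalence $D_Q(n,d)\sim_{\mathrm{Mor}}W_d$ over $\Z$ is false. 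Exactly the same obstruction holds on the KLR side: the Chuang--Kessar Morita equivalence between the RoCK block and the wreath product only holds when $d<\charact\F$, and what is true in general is only that a certain idempotent truncation of the RoCK block is isomorphic to $W_d$ (this is essentially the main result of \cite{Evseev}, reproved and generalized here as Theorem~\ref{TXiIso}). What your proposal does correctly prove is Proposition~\ref{PFourAlgebras}, which is the large-characteristic statement; it does not prove Theorem~\ref{TA}.

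The paper's actual route circumvents $W_d$ as a middle term. It defines $C_{\rho,d}$ as an idempotent truncation $\mathbf e R^{\La_0}_\al\mathbf e$ (Remark~\ref{RCrazy}), identifies $\ga^\om C_{\rho,d}\ga^\om\cong W_d$ (Theorem~\ref{TXiIso}) and the bimodules $\ga^{\la,\bc}C_{\rho,d}\ga^\om\cong M_{\la,\bc}$ (Theorem~\ref{TModIso}), and packages the $\ga^{\la,\bc}$-truncations of $C_{\rho,d}$ into the endomorphism algebra $E(n,d)$. The zigzag Schur--Weyl duality then gives an injective map $\Phi\colon E(n,d)\to S^\Zig(n,d)$ whose image is pinned down to be exactly $T^\Zig(n,d)\cong D_Q(n,d)$ using symmetricity of $E(n,d)_{\F_p}$ (Lemma~\ref{LESym}, Theorems~\ref{TSymmetricity},~\ref{TMainIso}). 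At this point one only has a graded Morita equivalence between $D_Q(n,d)$ and an idempotent truncation $\eps R^{\La_0}_\al\eps$; the final step upgrades it to a Morita equivalence with the \emph{whole} block by verifying, over each $\bar\F_p$, that both sides have the same number $|\Par^J(d)|$ of simple modules (Theorem~\ref{TNSimples} for the block, the Schur-algebra decomposition of $D_Q(n,d)^0$ for the double), and then applying Lemma~\ref{LMoritaField}. That counting argument is the genuine content that is missing from your proposal: it is what lets you pass from ``truncations match'' to ``full algebras are Morita equivalent'' without ever having $W_d$ Morita equivalent to either side.

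Two smaller points. First, $Q$ is a finite type $A_{e-1}$ quiver (vertex set $J=\{1,\dots,e-1\}$), not a cyclic quiver; the cyclic quiver only appears at the level of the affine KLR algebra. Second, the paper never invokes Koszul duality between $P_Q$ and $\Zig$; the dual tensorand $(\Inv^{d-f}X)^*$ in $D_Q(n,d)$ is controlled instead by the coalgebra structure on $\Inv X$ together with Theorems~\ref{TGeneration} and~\ref{TSymmetricity} from \cite{EK}, and the degree shift by $2d$ matches the degree $-2d$ symmetrizing form on $C_{\rho,d}$ coming from Corollary~\ref{CCSymmetric}, not a Koszul grading.
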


For any graded $\Z$-algebra $A$, define $A_{\F}:=A\otimes_{\Z} \F$.
The $\F$-algebra 
$R^{\La_0}_{\al,\F}$
is isomorphic to the RoCK block $H_{\al} (q)$ of a Hecke algebra,~see \cite{BK,R}. Applying this result and the aforementioned theorem of Chuang-Rouquier, one deduces the following from Theorem~\ref{TA}: 

\begin{MainCorollary}\label{CB}
If $n\ge d$, then:
\begin{enumerate}
\item[{\rm (i)}] 
The RoCK block $H_{\al}(q)$ is Morita equivalent to $D_Q(n,d)_{\F}$.
\item[{\rm (ii)}]
For every $\be$ with $\La_0-\be\in \O_d$, the algebra $H_{\be} (q)$ is derived equivalent to $D_Q(n,d)_\F$. 
\end{enumerate}
\end{MainCorollary}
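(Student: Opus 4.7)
The plan is to extract both parts of the corollary directly from Theorem~\ref{TA}, together with two external inputs: (a) the Brundan--Kleshchev--Rouquier theorem \cite{BK,R} identifying $R^{\La_0}_{\al,\F}$ with the RoCK block $H_{\al}(q)$; and (b) the Chuang--Rouquier derived equivalences \cite{CR} relating blocks whose weights lie in a common $W$-orbit.

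For part~(i), Theorem~\ref{TA} furnishes a graded Morita equivalence between $R^{\La_0}_\al$ and $D_Q(n,d)$ as graded $\Z$-algebras, implemented by a graded bimodule progenerator $M$. The key step is to observe that, for this equivalence to descend to $\F$, one needs $M$ to be $\Z$-flat. This flatness should come from the explicit integral construction of $M$ used in the proof of Theorem~\ref{TA}: typically $M$ will be realised as $eA$ for an idempotent $e$ in a $\Z$-free $\Z$-algebra $A$, forcing $M$ to be $\Z$-free as a direct summand of a free module. Granted this, $M\otimes_{\Z}\F$ gives a graded Morita equivalence between $R^{\La_0}_{\al,\F}$ and $D_Q(n,d)_\F$, and combining with the identification $R^{\La_0}_{\al,\F}\cong H_\al(q)$ of \cite{BK,R} yields~(i).

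For part~(ii), the hypothesis forces both $\La_0-\al$ and $\La_0-\be$ to lie in the orbit $\O_d$. The Chuang--Rouquier categorification \cite{CR} lifts the $W$-action on the weights of $V(\La_0)$ to derived equivalences between the corresponding blocks of Hecke algebras, so $H_\be(q)$ is derived equivalent to $H_\al(q)$. Composing with the Morita (hence derived) equivalence from~(i) produces the required derived equivalence between $H_\be(q)$ and $D_Q(n,d)_\F$.

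The main obstacle is therefore not in the deduction itself but in ensuring that the integral Morita equivalence of Theorem~\ref{TA} is realised by a $\Z$-flat progenerator, so that base change to $\F$ is safe. Once this is verified, both~(i) and~(ii) are formal consequences, with all nontrivial mathematical content concentrated in Theorem~\ref{TA}.
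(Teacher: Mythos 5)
Your proposal is correct and follows essentially the same route as the paper: the paper itself derives Corollary~B directly from Theorem~\ref{TA}, the identification $R^{\La_0}_{\al,\F}\cong H_\al(q)$ of \cite{BK,R}, and the Chuang--Rouquier derived equivalences, without a separate written-out proof. Your one extra observation---that the integral progenerator is $\Z$-free because it arises as an idempotent truncation of the $\Z$-free algebra $R^{\La_0}_\al$ (Theorem~\ref{TBasisCyc}), so base change to $\F$ preserves the Morita equivalence---is indeed what the paper uses implicitly (cf.~Remark~\ref{RCrazy} and the proof of Proposition~\ref{PFourAlgebras}), although strictly speaking base change of a progenerator bimodule already works because such a bimodule is finitely generated projective on each side, so the explicit $\Z$-freeness is a sufficient but not logically necessary check.
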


Alperin's Weight Conjecture~\cite{Alperin} predicts an equality between the number of simple modules of an arbitrary block of a finite group and the number of `weights' defined in terms of normalisers of local $p$-subgroups. In the case of blocks with abelian defect group, the 
conjecture of Brou{\'e}~\cite{Broue} lifts Alperin's Weight Conjecture to the categorical level, but no such categorical conjecture is known for  blocks of arbitrary finite groups with non-abelian defect groups.

An important step in the proof of Brou{\'e}'s conjecture for symmetric groups is the theorem~\cite{CK} asserting that, if $q=1$ and $d<\charact \F$, then there is a Morita equivalence between 
the RoCK block $H_{\al} (1)$ and the wreath product algebra 
$H_{\de} (1)^{\otimes d} \rtimes \F\Si_d$. Corollary~\ref{CB} shows that, for an arbitrary block  of a symmetric group, the corresponding double $D_Q(n,d)_{\F}$ is a `local' object that can replace $H_{\de} (1)^{\otimes d} \rtimes \F\Si_d$ in the context of Brou{\'e}'s conjecture. 

In fact, the wreath product $H_{\de} (q)^{\otimes d} \rtimes \F\Si_d$ has a $\Z$-form $(R^{\La_0}_{\delta})^{\otimes d}\rtimes \Z\Si_d$ that is closely related to $D_Q (n,d)$. 
Denote by $\Zig$ the zigzag algebra of type $A_{e-1}$  over $\Z$,
and consider the wreath product 
$W_d:= \Zig^{\otimes d}\rtimes \Z\Si_d$, see~\S\ref{SSQ}. 
Then $\Zig$ is Morita equivalent to $R^{\La_0}_{\delta}$, and more generally $W_d$ is (graded) Morita equivalent to 
$(R^{\La_0}_{\delta})^{\otimes d} \rtimes \F\Si_d$, see the proof of Proposition~\ref{PFourAlgebras}.
On the other hand, the double $D_Q(n,d)$ can be identified with a subalgebra of a
{\em generalized Schur algebra} $S^{\Zig} (n,d)$, and there is a Schur-Weyl duality between $S^{\Zig} (n,d)$ and $W_d$, see~\S\ref{SSTDSA}. In particular, for a certain explicit idempotent $\xi_\om \in D_Q (n,d)$, we have 
\[
\xi_\om D_Q (n,d) \xi_\om = \xi_\om S^{\Zig} (n,d) \xi_\om \cong W_d.
\]
Thus, the idempotent truncation $\xi_{\om} D_Q (n,d) \xi_\om$ is Morita equivalent to 
$R^{\La_0}_{\delta}\rtimes \F\Si_d$.

If $d<\charact \F$ or $\charact \F=0$, the double $D_Q(n,d)_{\F}$ is Morita equivalent to $\xi_{\om} D_Q (n,d)_{\F} \xi_\om\cong  
(R^{\La_0}_{\delta,\F})^{\otimes d}\rtimes \F\Si_d\cong 
H_{\de} (q)^{\otimes d} \rtimes \F \Si_d$, see Proposition~\ref{PFourAlgebras}.
When $d\ge \charact \F>0$, the algebra $D_Q(n,d)_{\F}$ has more isomorphism classes of simple modules than $\xi_{\om} D_Q (n,d)_{\F}\xi_{\om}$. 
As was predicted in~\cite[Conjecture 82]{Turner} and proved in~\cite{Evseev}, a certain explicit idempotent truncation of the RoCK block $H_{\al} (q)$ is Morita equivalent to 
$H_{\de} (q)^{\otimes d} \rtimes \F\Si_d$ in all cases.
Corollary~\ref{CB}(i) strengthens this result, replacing the idempotent truncation by the {\em whole} RoCK block 
$H_\al(q)$.

We now outline the proof of Theorem~\ref{TA} and the contents of the paper. 
Section~\ref{SPrelim} contains some general definitions and notation.
In Section~\ref{SZZ}, we review necessary definitions and results from~\cite{EK}. In particular, we introduce the zigzag algebra $\Zig$ and the wreath product $W_d$. An important role  
is played by the (right) {\em colored permutation 
$W_d$-modules} $M_{\la,\bc}$, which are parametrized by {\em colored compositions} $(\la,\bc)$. Here, $\la$ is a composition of $d$ and $\bc$ is a tuple consisting of elements 
of $\{1,\dots,e-1\}$ that assigns 
a `color' to each part of $\la$. 
In fact, the proof of Theorem~\ref{TA} uses only colored compositions with $n(e-1)$ parts of the form $(\la,\bc^0)$, where $\bc^0$ is given by~\eqref{EC0}, but it is more natural to work with more general colored compositions. 
We define the generalized Schur algebra $S^{\Zig} (n,d)$ as the endomorphism algebra of the direct sum of the appropriate $W_d$-modules $M_{\la,\bc^0}$ and review results identifying the Turner double $D^{\Zig} (n,d)$ as an explicit $\Z$-subalgebra of $S^{\Zig} (n,d)$. 

Section~\ref{SKLR} begins with the definition and standard properties of the KLR algebras $R_{\theta}$ and their cyclotomic quotients $R^{\La_0}_{\theta}$.
In~\S\ref{SSSC} we define the {\em semicuspidal algebra} 
$\hat C_{d\de}$ as an explicit quotient of $R_{d\delta}$.
In~\ref{SSParabolicSC}, we
 consider parabolic subalgebras of $\hat C_{d\delta}$.

In~\S\ref{SSRoCK}, we recall the definition of a RoCK block $R_{\al}^{\La_0}$ and construct a natural homomorphism $\Om\colon \hat C_{d\de} \to R_{\al}^{\La_0}$. 
The quotient $C_{\rho,d}:=\hat C_{d\de}/\ker \Omega$ is isomorphic to an idempotent truncation of $R_{\al}^{\La_0}$, which is later  shown to be Morita equivalent to 
$R_{\al}^{\La_0}$. We note that $C_{\rho,d}$ is finitely generated as a $\Z$-module, but $\hat C_{d\de}$ is not. 
The arguments of~\S\ref{SSRoCK} rely on 
results connecting cyclotomic KLR algebras with the 
combinatorics of standard tableaux and abaci, which are reviewed and developed in~\S\S\ref{SSAb}--\ref{SSDimAndCore}.

In~\S\ref{SSGG} we define the {\em Gelfand-Graev idempotent} 
$\ga^{\la,\bc}\in R_{d\de}$ for every colored composition $(\la,\bc)$ of $d$ and an `uncolored' idempotent $\ga^\om \in R_{d\de}$. The latter may be viewed as a KLR counterpart of 
$\xi_{\om} \in S^{\Zig} (n,d)$. The following two results are key to the proof of Theorem~\ref{TA}:
\begin{enumerate}
\item[{\rm (i)}] There is an explicit algebra isomorphism $W_d \iso \ga^\om C_{\rho,d} \ga^{\om}$ (see Theorem~\ref{TXiIso}).
\item[{\rm (ii)}] If $\ga^\om C_{\rho,d} \ga^{\om}$ is identified with $W_d$  via the isomorphism in (i), then there is an explicit isomorphism 
$M_{\la,\bc} \iso \ga^{\la,\bc} C_{\rho,d} \ga^\om  $ of right 
$W_d$-modules (see Theorem~\ref{TModIso}). 
\end{enumerate}

The isomorphism (i) is a slight generalization of the main result of~\cite{Evseev} and is constructed in \S\S\ref{SSTrunc},\ref{SSTruncIso} using a homomorphism~\cite{KM2} from $W_d$ to 
$\ga^\om \hat C_{d\de} \ga^\om$.
In order to prove (ii), we first show 
that $\ga^{\la,\bc} C_{\rho,d} \ga^\om$ and $M_{\la,\bc}$ have the same rank as free $\Z$-modules, see Corollary~\ref{CDimEqual}. 
This relies on combinatorial results about RoCK blocks proved in~\S\S\ref{SSSST}--\ref{SSCount}. Secondly, in~\S\S\ref{SSIm},\ref{SSLaOm},
we prove several results on the structure of 
$\ga^{\la,\bc} \hat C_{d\de} \ga^\om$. In particular, we find an explicit element that generates
$\ga^{\la,\bc} \hat C_{d\de} \ga^\om$ as a right $\ga^\om \hat C_{d\de} \ga^\om$-module, see Corollary~\ref{CCyc}. We use this element to construct a homomorphism from $M_{\la,\bc}$ to $\ga^{\la,\bc} \hat C_{d\de} \ga^\om$ and ultimately to prove (ii). 

In~\S\ref{SSEnd}, we define the algebra $E(n,d)$ as the endomorphism algebra of the direct sum of (graded shifts of) certain projective left $C_{\rho,d}$-modules $C_{\rho,d}\ga^{\la,\bc}$. 
Using the right $W_d$-modules $\ga^{\la,\bc} C_{\rho,d} \ga^\om$ and the isomorphism (ii), we construct a natural homomorphism 
$\Phi\colon E(n,d) \to S^{\Zig} (n,d)$. Finally, using 
the identification of the Turner double $D_Q (n,d)$ as a subalgebra of 
$S^{\Zig} (n,d)$ stated in Section~\ref{SZZ} as well as  
results about the semicuspidal algebra proved 
in Section~\ref{SSemiCusp}, we show that $\Phi$ is injective with image exactly $D_Q (n,d)$, so that $E(n,d) \cong D_Q (n,d)$ (see Theorem~\ref{TMainIso}). 

A priori, it follows from our constructions that $E(n,d)$ is Morita equivalent to an idempotent truncation of the RoCK block 
$R_{\al}^{\La_0}$. 
In~\S\ref{SSMorita}, we prove that $E(n,d)\cong D_Q(n,d)$ is (graded) Morita equivalent to $R_{\al}^{\La_0}$ by showing that the scalar extensions of $D_Q(n,d)$ and $R_{\al}^{\La_0}$ to any algebraically closed field have the same number of simple modules.

\section{Preliminaries}\label{SPrelim}

For any $m,n\in\Z$, we define the (possibly empty) {\em segment} 
\[
[m,n]:=\{l\in\Z\mid m\leq l\leq n\}.
\]

Let $l,m,n\in \Z_{\ge 0}$ and $I$ be a set. 
For any $i\in I$ and tuples $\bi=(i_1,\dots, i_l)\in I^l$,  
$\bj=(j_1,\dots, j_m)\in I^m$, we set
\begin{equation*}
i^n:= \underbrace{(i,\dots, i)}_n\in I^{n}, \quad
\bi \bj:=(i_1, \dots, i_l, j_1, \dots, j_m) \in I^{l+m}, \quad
\bi^n:=\underbrace{\bi \dots \bi}_n\in I^{ln}.
\end{equation*}
We write $i_1\dots i_l$ instead of $(i_1,\dots,i_l)$ when there is no possibility of confusion. 

\subsection{Partitions and compositions}\label{SSPar}
Fix $n\in\Z_{>0}$ and $d\in \Z_{\ge 0}$. We denote by $\La(n)$ the set of compositions $\la=(\la_1,\dots,\la_n)$ with $\la_1,\dots,\la_n\in\Z_{\geq 0}$. For $\la\in\La(n)$ we write $|\la|:=\la_1+\dots+\la_n$, and set $\La(n,d):=\{\la\in\La(n)\mid |\la|=d\}$. 
If $m\in \Z_{\ge 0}$, we define $m\la := (m\la_1,\dots,m\la_n)\in \La(n)$.

Let $S$ be an arbitrary finite set. 
We define $\La^S(n,d)$ 
 to be the set of tuples $\ula=(\la^{(i)})_{i\in S}$ of compositions in $\La(n)$ such that $\sum_{i\in S}|\la^{(i)}|=d$. 


We denote by $\Par$ the set of all partitions. For $\la=(\la_1,\dots,\la_m)\in\Par$ we write $\ell(\la):=\max\{k\mid \la_k>0\}$ and
$|\la|:=\la_1+\dots+\la_m$. We set $\Par(d):=\{\la\in\Par\mid |\la|=d\}$. We do not assume that the parts $\la_k$ of the partition $\la$ are positive, and we identify a partition $(\la_1,\dots,\la_m)$ with any partition $(\la_1,\dots,\la_m,0,\dots,0)$. 

We define the set of {\em $S$-multipartitions} 
$\Par^S$ as the set of tuples $\ula=(\la^{(i)})_{i\in S}$ of partitions. For $\ula\in \Par^S$, we set $|\ula|:=\sum_{i\in S}|\la^{(i)}|$ and $\Par^S(d):=\{\ula\in\Par^S \mid |\ula|=d\}$. The only multipartition in $\Par^S(0)$ is denoted by $\varnothing$.

We set
$\Nodes^S:=\Z_{>0}\times \Z_{>0}\times S$ 
and refer to the elements of $\Nodes^S$
as {\em nodes}. 
When $S$ has one element, we identify $\Nodes^S$ with ${\mathsf N}:=\Z_{>0}\times \Z_{>0}$. 
If $\ula=(\la^{(i)})_{i\in S}\in\Par^S$ is an $S$-multipartition, its 
{\em Young diagram}, which we often identify with $\ula$, is  
\[
\Y\ula:=\{ (r,s,i)\in\Nodes^S \mid  s\le \la^{(i)}_r  \}. 
\]

If $(r,s,i)\in \Nodes^S$, we say that $(r,s+1,i)$ is {\em the right neighbor} of $(r,s,i)$ and $(r+1,s,i)$ is the {\em bottom neighbor} of $(r,s,i)$.
Define a partial order $<$ on $\Nodes^S$ as follows: $(r,s,i)\leq(r',s',i')$ if and only if 
$i=i'$, $r\le r'$ and $s\le s'$.
Given a multipartition $\bla\in \Par^S$, a function $\T\colon \Y\bla\to \Z_{>0}$ is said to be {\em weakly increasing} if whenever $u\le v$ are in $\Y\ula$ we have $\T(u)\le \T(v)$.  
If $u,v\in \Nodes^S$ and neither $u\le v$ nor $v\le u$, then we say that $u$ and $v$ are {\em independent}. 
Two subsets $U,V \subseteq \Nodes^S$ are said to be {\em independent} if every element of $U$ is independent from every element of $V$. 
We say that a subset $U\subseteq \Nodes^S$ is {\em convex} if whenever $u\le v\le w$ are in $\Nodes^S$ and $u,w\in U$, we have $v\in U$.

A \emph{skew partition} is a pair $(\la, \mu)$ of  partitions such that $\Y\mu\subseteq \Y\la$. We denote it by $\la \sm \mu$   and set $|\la \sm \mu|:= |\la|-|\mu|$. We identify $\la\sm \varnothing$ with $\la$. 

\subsection{Symmetric groups and parabolic subgroups}
Let $d\in \Z_{\ge 0}$. 
We denote by $\Si_d$  the  symmetric group on $\{1,\dots,d\}$ 
and set
$s_r:=(r,r+1) \in \Si_d$ for $r=1,\dots,d-1$ to be the elementary transpositions.
For every $n\in \Z_{>0}$ and 
$\la=(\la_1,\dots,\la_n)\in\La(n,d)$, we have the standard parabolic subgroup
\[
\Si_\la\cong \Si_{\la_1}\times\dots\times \Si_{\la_n}\leq \Si_d.
\]
Moreover, for an ordered set $S=\{1,\dots,l\}$ and
 $\ula=(\la^{(1)},\dots,\la^{(l)})\in\La^S(n,d)$, 
 we define the parabolic subgroup 
$$
\Si_\ula\cong \Si_{\la^{(1)}}\times\dots\times \Si_{\la^{(l)}}\leq \Si_d.
$$

If $g\in \Si_d$ and $g=s_{r_1} \dots s_{r_l}$ is a {\em reduced decomposition} of $g$, i.e.~a decomposition as a product of elementary transpositions with $l$ smallest possible, then we define $\ell(g):=l$ and refer to $l$ as the {\em length} of $g$.
For any $\la,\mu\in \La(n,d)$, 
we denote by $\D^\la$ the set of the minimal length coset representatives for $\Si_d/\Si_\la$,
by ${}^\mu \D$ the set of the minimal length coset representatives for 
$\Si_\mu \backslash \Si_d$ and by ${}^\mu \D^{\la}$ the set
of the minimal length coset representatives for 
$\Si_\mu \backslash \Si_d/\Si_\la$.

\subsection{Algebras and modules}\label{SSAlg}
In this paper we mostly work over the ground ring $\Z$. Occasionally, we use the prime fields $\F_p$ and their algebraic closures $\bar\F_p$.

All gradings in this paper are $\Z$-gradings. 
Let $q$ be an indeterminate. 
Given a graded free $\Z$-module $V\cong \bigoplus_{n=1}^{k}\Z  v_k$ with homogeneous generators $v_k$, we write $\DIM V$ for the graded rank of $V$, i.e.\ $\DIM V:=\sum_{n=1}^k q^{\deg(v_n)}\in\Z[q,q^{-1}]$ and $\dim V:=k$. 
Throughout, $V^n$ denotes the $n$th graded component of $V$ for any $n\in \Z$. 
Given $m\in \Z$, let $q^m V$ denote the module obtained by 
shifting the grading on $V$ up by $m$, i.e. 
$
(q^m V)^n:=V^{n-m}.$ 
We use the notation $V^{>m}:=\bigoplus_{n> m} V^n$.
For any $m\in\Z$, we set $[m]:=(q^{m}-q^{-m})/(q-q^{-1})\in\Z[q,q^{-1}]$. If $m\in\Z_{\geq 0}$, we define $[m]^!:=\prod_{k=1}^m[k]$. 

Let $A$ be a ($\Z$-)graded algebra. 
All $A$-modules are assumed to be graded.
Let
$\mod{A}$ denote the category of all
{\em finitely generated}\, (graded) $A$-modules, with morphisms being {\em degree-preserving} module homomorphisms. 
Given $A$-modules $V$ and $W$, we denote by $\hom_A(V,W)$ the space of morphisms in $\mod{A}$. 
For any $m \in \Z$,
define
$
\Hom_A(V, W)^m := \hom_H(q^m V , W).
$
This is the space of homomorphisms
that are homogeneous of degree $m$.
Set
$$
\Hom_A(V,W) := \bigoplus_{m \in \Z} \Hom_A(V,W)^m. 
$$
In particular, $\End_A(V):=\Hom_A(V,V)$ is a graded algebra. 
All homomorphisms between graded algebras are assumed to be degree-preserving. We have the grading shift functor $q \colon \mod{A} \to \mod{A}$, $V\mapsto q V$. 

Given an $A$-module $V$ and a commutative ring $\k$, we denote by $A_\k:=A\otimes_\Z\k$ the (graded) 
algebra obtained by scalar extension, and by $V_\k:=V\otimes_\Z\k$ the corresponding $A_\k$-module.  
If $B=A/K$ is the quotient of $A$ by an ideal $B$ and $x\in A$, we denote an element $x+ K$ of $B$ simply by $x$ when there is no possibility of confusion.

If $\k$ is a field and $A$ is a finite-dimensional 
graded $\k$-algebra, we denote by $\ell(A)$ the number of irreducible graded $A$-modules up to isomorphism and degree shift.

\section{Zigzag algebras, wreath products and Turner doubles}\label{SZZ}

Throughout the paper, we fix $e\in\Z_{\geq 2}$. 

\subsection{Zigzag algebras and wreath products}\label{SSQ}

Let $Q$ be a type $A_{e-1}$ quiver with vertex set 
\begin{equation}\label{EJ}
J:=\{1,\dots,e-1\}.
\end{equation}
We will use the {\em  zigzag algebra $\Zig$ of type $A_{e-1}$},  defined in \cite{HK} as follows. 
First assume that $e>2$. Let $\hat Q$ be the 
quiver with vertex set $J$ and an arrow
 $\za^{k,j}$ from $j$ to $k$ for all ordered pairs $(k,j)\in J^2$ such that $|k-j|=1$:
\begin{align*}
\begin{braid}\tikzset{baseline=3mm}
\coordinate (1) at (0,0);
\coordinate (2) at (4,0);
\coordinate (3) at (8,0);
\coordinate (4) at (12,0);
\coordinate (6) at (16,0);
\coordinate (L1) at (20,0);
\coordinate (L) at (24,0);
\draw [thin, black,->,shorten <= 0.1cm, shorten >= 0.1cm]   (1) to[distance=1.5cm,out=100, in=100] (2);
\draw [thin,black,->,shorten <= 0.25cm, shorten >= 0.1cm]   (2) to[distance=1.5cm,out=-100, in=-80] (1);
\draw [thin,black,->,shorten <= 0.25cm, shorten >= 0.1cm]   (2) to[distance=1.5cm,out=80, in=100] (3);
\draw [thin,black,->,shorten <= 0.25cm, shorten >= 0.1cm]   (3) to[distance=1.5cm,out=-100, in=-80] (2);
\draw [thin,black,->,shorten <= 0.25cm, shorten >= 0.1cm]   (3) to[distance=1.5cm,out=80, in=100] (4);
\draw [thin,black,->,shorten <= 0.25cm, shorten >= 0.1cm]   (4) to[distance=1.5cm,out=-100, in=-80] (3);
\draw [thin,black,->,shorten <= 0.25cm, shorten >= 0.1cm]   (6) to[distance=1.5cm,out=80, in=100] (L1);
\draw [thin,black,->,shorten <= 0.25cm, shorten >= 0.1cm]   (L1) to[distance=1.5cm,out=-100, in=-80] (6);
\draw [thin,black,->,shorten <= 0.25cm, shorten >= 0.1cm]   (L1) to[distance=1.5cm,out=80, in=100] (L);
\draw [thin,black,->,shorten <= 0.1cm, shorten >= 0.1cm]   (L) to[distance=1.5cm,out=-100, in=-100] (L1);
\blackdot(0,0);
\blackdot(4,0);
\blackdot(8,0);
\blackdot(20,0);
\blackdot(24,0);
\draw(0,0) node[left]{$1$};
\draw(4,0) node[left]{$2$};
\draw(8,0) node[left]{$3$};
\draw(14,0) node {$\cdots$};
\draw(20,0) node[right]{$e-2$};
\draw(24,0) node[right]{$e-1$};
\draw(2,1.2) node[above]{$\za^{2,1}$};
\draw(6,1.2) node[above]{$\za^{3,2}$};
\draw(10,1.2) node[above]{$\za^{4,3}$};
\draw(18,1.2) node[above]{$\za^{e-3,e-2}$};
\draw(22,1.2) node[above]{$\za^{e-1,e-2}$};
\draw(2,-1.2) node[below]{$\za^{1,2}$};
\draw(6,-1.2) node[below]{$\za^{2,3}$};
\draw(10,-1.2) node[below]{$\za^{3,4}$};
\draw(18,-1.2) node[below]{$\za^{e-3,e-2}$};
\draw(22,-1.2) node[below]{$\za^{e-2,e-1}$};
\end{braid}
\end{align*}
Then $\Zig$ is the path algebra $\Z \hat Q$, generated by length $0$ paths $\ze_j$ for $j\in J$ and length $1$ paths $\za^{k,j}$, subject to the following relations:
\begin{enumerate}
\item All paths of length three or greater are zero.
\item All paths of length two that are not cycles are zero.
\item All cycles of length $2$ based at the same vertex are equal.
\end{enumerate}
The algebra $\Zig$ inherits the path length grading from $\Z \hat Q$. 
If $e =2$, we define $\Zig:= \Z[\zc]/(\zc^2)$, where $\zc$ is an indeterminate in degree 2. 
 
If $k,j\in J$, we say that $k$ and $j$ are {\em neighbors} if $|k-j|=1$. 
If $e>2$, for every vertex $j\in J$ pick its neighbor $k$ and denote $\zc^{(j)}:=\za^{j,k}\za^{k,j}$.  The relations in $\Zig$ imply that $\zc^{(j)}$ is independent of choice of $k$. Define $\zc:= \sum_{j \in J} \zc^{(j)}$. 
 Then in all cases $\Zig$ has a basis
\begin{equation}\label{EBZig}
B_{\Zig}:=
\{\za^{k,j} \mid k \in J,\ j \textup{ is a neighbor of }k\} \cup \{\zc^m\ze_j \mid j \in J,\ m \in \{0,1\}\}, 
\end{equation}
and 
\begin{equation}\label{EDimZZ}
\DIM \Zig=(e-1)(1+q^2)+2 (e-2)q.
\end{equation}
Moreover, using~\eqref{EBZig}, we see that for any $j\in J$
\begin{equation}\label{EkB}
\dim \ze_j \Zig = 
\begin{cases}
4 & \text{if } 1<j<e-1, \\
3& \text{if } j\in\{ 1,e-1\} \text{ and } e>2, \\
2 & \text{if } j=1 \text{ and } e=2.
\end{cases}
\end{equation}

We will also consider the graded {\em wreath products} 
\begin{equation}\label{EWreath}
W_d:= \Zig^{\otimes d}\rtimes \Z\Si_d,
\end{equation}
with $\Z\Si_d$ concentrated in degree $0$. (Note that, unlike \cite{EK}, we do not consider any superstructures here.)
As usual, we identify $\Zig^{\otimes d}$ and $\Z\Si_d$ with the subalgebras $\Zig^{\otimes d}\otimes 1_{\Si_d}$ and $1_{\Zig}^{\otimes d}\otimes \Z\Si_d$ of $W_d$, respectively. The multiplication in $W_d$ is then uniquely determined by the additional requirement that 
\begin{equation}\label{EWreathProductDetermined}
g^{-1}(x_1\otimes\dots\otimes x_d)g=x_{g1}\otimes\dots\otimes x_{gd}
\end{equation}
for $g\in\Si_d$ and $x_1,\dots,x_d\in \Zig$. 
Given $x\in \Zig$ and $1\leq a\leq d$, we denote 
$$
x[a]:=1\otimes \dots\otimes1\otimes x\otimes 1\otimes\dots\otimes  1\in \Zig^{\otimes d},
$$
with $x$ in the $a$th position. We have the idempotents  
$$
\ze_\bj:=\ze_{j_1}\otimes\dots\otimes \ze_{j_d}\in \Zig^{\otimes d} \subseteq W_d\qquad(\bj\in J^d).
$$

Fix $n\in\Z_{>0}$. We define the set of {\em colored compositions}
\begin{equation}\label{ELaCol}
\La^\col(n,d):=\La(n,d)\times J^n.
\end{equation}
Let $(\la,\bc)\in \La^\col(n,d)$ with $\la=(\la_1,\dots,\la_n)$ and $\bc=(c_1,\dots,c_n)$. We define the idempotent
\begin{equation}\label{EFancyE}
\ze_{\la,\bc}:=\ze_{c_1}^{\otimes \la_1}\otimes\dots\otimes \ze_{c_n}^{\otimes \la_n} \in \Zig^{\otimes d}
\end{equation}
and the 
{\em parabolic subalgebra} 
$$
W_{\la,\bc}=\ze_{\la,\bc} \otimes \Z\Si_{\la}\subseteq W_d.
$$
Note that $\ze_{\la,\bc}$ is the identity element of $W_{\la,\bc}$, so $W_{\la,\bc}$ is a (usually {\em non-unital}) subalgebra of $W_d$, isomorphic to the group algebra $\Z \Si_\la$. 

We assign signs $\zeta_j$ to the elements $j\in J$ according to the following rule: 
\begin{equation}\label{EZeta}
\zeta_j=
\left\{
\begin{array}{ll}
+1 &\hbox{if $j$ is odd,}\\
-1 &\hbox{if $j$ is even.}
\end{array}
\right.
\end{equation}

Consider the function 
$\eps_{\la,\bc} \colon \Si_{\la} \to \{ \pm 1\} \subseteq \Z$ defined by 
\begin{equation}\label{Eeps}
\eps_{\la,\bc} (g_1,\dots,g_{n}):=\zeta_{c_1}^{\ell(g_1)} \cdots \zeta_{c_n}^{\ell(g_n)}
\end{equation}
for all $(g_1,\dots,g_n)\in \Si_{\la_1}\times\dots\times \Si_{\la_n}=\Si_\la$.
We define the {\em $\bc$-alternating}\, right module 
$\alt_{\la,\bc}=\Z  \cdot 1_{\la,\bc}$ over $W_{\la,\bc}$ with the action on the basis element $1_{\la,\bc}$ given by 
$$1_{\la,\bc}\cdot(\ze_{\la,\bc}\otimes g)=
\eps_{\la,\bc} (g)
1_{\la,\bc}  \qquad (g\in \Si_{\la}).$$
 We have identified $\Zig^{\otimes d}$ and $\Z\Si_d$ as subalgebras of $W_d$, so we can also view  $\ze_{\la,\bc}$ as an element of $W_d$. Then $W_{\la,\bc}=\ze_{\la,\bc}(\Z\Si_\la)\ze_{\la,\bc}$ and  $\ze_{\la,\bc}W_d$ is naturally a left $W_{\la,\bc}$-module.  We now define the {\em colored permutation module}
\begin{equation}\label{EMLaC}
M_{\la,\bc}:=\alt_{\la,\bc}\otimes_{W_{\la,\bc}}\ze_{\la,\bc}W_d.
\end{equation}
This is a right $W_d$-module 
with generator $m_{\la,\bc}:=1_{\la,\bc}\otimes \ze_{\la,\bc}$.

\begin{Lemma}\label{LBasisMLaC}
For each $j\in J$, set $d_j:=\sum_{1\le r\le n, \, c_r=j} \la_r$. Then
the module $M_{\la,\bc}$ is $\Z$-free, with 
\[
\dim M_{\la,\bc} = 
\begin{cases}
|\Si_d : \Si_\la| 3^{d_1+d_{e-1}} \, 4^{\sum_{j=2}^{e-2} d_j} & \text{if } e>2, \\
|\Si_d: \Si_\la| 2^{d_1} & \text{if } e=2.
\end{cases}
\]
\end{Lemma}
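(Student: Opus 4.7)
The plan is to compute $\dim M_{\la,\bc}$ by exhibiting $\ze_{\la,\bc} W_d$ as a free left $W_{\la,\bc}$-module and then using that $\alt_{\la,\bc}$ has $\Z$-rank one.

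First, I would fix a $\Z$-basis $B$ of $\ze_{\la,\bc}\Zig^{\otimes d}$ consisting of pure tensors. Set $B_j := B_{\Zig}\cap\ze_j\Zig$, so by~\eqref{EkB} we have $|B_j|=3$ for $j\in\{1,e-1\}$ when $e>2$, $|B_j|=4$ for $1<j<e-1$, and $|B_1|=2$ when $e=2$. For each position $r\in\{1,\dots,d\}$, let $c_r[\la]$ denote the color assigned to $r$ by $(\la,\bc)$, namely $c_r[\la]=c_s$ where $s$ is the unique index with $\la_1+\dots+\la_{s-1}<r\le\la_1+\dots+\la_s$, and take $B$ to be the set of tensors $b_1\otimes\cdots\otimes b_d$ with $b_r\in B_{c_r[\la]}$. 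Grouping positions by color yields $|B|=\prod_{j\in J}|B_j|^{d_j}$, which evaluates to $3^{d_1+d_{e-1}}4^{\sum_{j=2}^{e-2}d_j}$ if $e>2$ and to $2^{d_1}$ if $e=2$.

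Then I would use the $\Z$-module identification $\ze_{\la,\bc}W_d\cong\ze_{\la,\bc}\Zig^{\otimes d}\otimes_\Z\Z\Si_d$ to conclude that $\{b\cdot w : b\in B,\ w\in\Si_d\}$ is a $\Z$-basis of $\ze_{\la,\bc}W_d$. The key point is that the left action of $W_{\la,\bc}\cong\Z\Si_\la$ permutes this basis freely: using $u(\ze_{\la,\bc})=\ze_{\la,\bc}$ (because each block of $\la$ carries a constant color) together with~\eqref{EWreathProductDetermined}, one computes $(\ze_{\la,\bc}u)\cdot(bw)=u(b)\cdot uw$, and $u(b)\in B$ since $u$ only permutes factors within blocks of common color. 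Freeness follows because left translation by $u$ on $\Si_d$ is fixed-point-free unless $u=1$. Choosing $\{(b,g):b\in B,\ g\in{}^\la\D\}$ as a system of orbit representatives then exhibits $\ze_{\la,\bc}W_d$ as a free left $W_{\la,\bc}$-module of rank $|B|\cdot|{}^\la\D|=|B|\cdot|\Si_d:\Si_\la|$.

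Finally, since $\alt_{\la,\bc}$ is free of $\Z$-rank one as a right $W_{\la,\bc}$-module, the induced module $M_{\la,\bc}=\alt_{\la,\bc}\otimes_{W_{\la,\bc}}\ze_{\la,\bc}W_d$ will be $\Z$-free of rank $|B|\cdot|\Si_d:\Si_\la|$, which is exactly the claimed expression. No step is expected to be a serious obstacle; the only point needing some care is the verification that each $u\in\Si_\la$ permutes $B$ and commutes with $\ze_{\la,\bc}$, but both of these follow at once from the fact that each block of $\la$ carries a constant color.
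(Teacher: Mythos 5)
Your proof is correct. The paper's own proof of this lemma is a one-line appeal to \eqref{EkB} together with \cite[Lemma 5.21]{EK}, so it uses the same basic facts but outsources the bookkeeping to an external result; your argument unpacks what that citation hides. The route you take — showing that $W_{\la,\bc}$ acts on the natural basis $\{b\otimes w\}$ of $\ze_{\la,\bc}W_d$ by the permutation $(b,w)\mapsto({}^u b, uw)$, which is free because left translation on $\Si_d$ is free, and then tensoring the rank-one twist $\alt_{\la,\bc}$ over $W_{\la,\bc}\cong\Z\Si_\la$ — is a clean, self-contained elaboration that replaces the black-box reference. All the small checks are in order: $u(\ze_{\la,\bc})=\ze_{\la,\bc}$ and ${}^u b\in B$ for $u\in\Si_\la$ follow because the blocks of $\la$ carry constant colors, and the set $\{(b,g):b\in B,\ g\in{}^\la\D\}$ is indeed a transversal since the $\Si_\la$-orbits are separated already by the second coordinate modulo $\Si_\la$. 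The only cosmetic point is that you should be explicit that ${}^u b = b_{u^{-1}(1)}\otimes\cdots\otimes b_{u^{-1}(d)}$ (this is the conjugation formula obtained from \eqref{EWreathProductDetermined}), but the conclusion and the rank count are exactly right.
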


\begin{proof}
This follows from~\eqref{EkB} and \cite[Lemma 5.21]{EK}.
\end{proof}

\subsection{Turner doubles and generalized Schur algebras}\label{SSTDSA}
Let $n\in\Z_{>0}$ and $d\in \Z_{\geq 0}$. Set
\begin{equation}\label{EC0}
\bc^0:=(1,\dots, e-1)^n=(1,\dots, e-1,1,\dots, e-1,\dots, 1,\dots, e-1)\in J^{n(e-1)}.
\end{equation}
We have a bijection 
\begin{align*}
\La^J (n,d) &\iso \La(n(e-1),d), \\
\ula=(\la^{(1)},\dots,\la^{(e-1)}) &\mapsto (\la^{(1)}_1, \dots, \la^{(e-1)}_1, \dots, \la^{(1)}_n, \dots, \la^{(e-1)}_n). 
\end{align*}
In this subsection, we use this bijection to translate 
the results of~\cite[\S 7.2]{EK} into the present notation. 

For any $\la\in \La(n(e-1),d)$, we define 
$$
M^\la:=M_{\la,\bc^0}.
$$
Let 
\begin{equation}\label{EMnd}
M(n,d):=\bigoplus_{\la\in\La(n(e-1),d)}M^\la.
\end{equation}
Following \cite{EK}, we consider the {\em generalized Schur algebra}
$$
S^\Zig(n,d):=\End_{W_d}\left(M(n,d)\right).
$$
Since the algebra $W_d$ is non-negatively graded, so are the modules $M^\la$. Since $M^\la$ has the degree zero generator 
$$m^\la:=m_{\la,\bc^0}$$ 
as a $W_d$-module, it follows that the algebra $S^\Zig(n,d)$ is non-negatively graded. 

For $\la \in\La(n(e-1),d)$, let $\xi_\la\in S^\Zig(n,d)$ be the projection onto the direct summand $M^\la$ of $M(n,d)$ along the decomposition ~\eqref{EMnd}. We always identify $\xi_\mu S^\Zig(n,d) \xi_\la$ with $\Hom_{W_d}(M^\la,M^\mu)$ in the obvious way. 

Let $\la\in \La((n-1)(e-1),d-1)$. For $j\in J$, we define 
$$
\hat\la^j:=(\underbrace{0,\dots,0,1,0,\dots,0}_{e-1 \text{ entries}},\la_1,\dots,\la_{(n-1)(e-1)})\in\La(n(e-1),d),
$$
where $1$ is in the $j$th position. 
Let 
 $z\in \ze_j \Zig \ze_k$ for some $j,k\in J$. By~\cite[Lemma 7.5]{EK}, there exists a unique endomorphism $\itt^\la(z)\in S^{\Zig}(n,d)$ with 
 \[
 \itt^\la (z)\colon
 m^\mu \mapsto 
 \begin{cases}
  m^{\hat\la^j} z[1] & \text{if } \mu=\hat\la^k, \\
 0 & \text{otherwise.}
 \end{cases}
 \]
Moreover, by~\cite[Lemma 7.6]{EK}, we have a (non-unital) injective algebra homomorphism 
\begin{equation}\label{EILa}
\itt^\la\colon \Zig \to S^{\Zig} (n,d), z \mapsto \sum_{j,k\in J} \itt^\la (\ze_j z \ze_k).
\end{equation}

Define $T^\Zig(n,d)$ to be the subalgebra of $S^{\Zig} (n,d)$ generated by the set
\[
S^\Zig (n,d)^0 \cup \bigcup_{\la \in \La((n-1)(e-1),d-1)} \itt^\la (\Zig). 
\]

\begin{Theorem}\label{TGeneration} {\rm \cite[Theorem 7.7]{EK}}
Suppose that $n\ge d$. 
There is a graded algebra isomorphism 
$
D_Q(n,d)\iso T^\Zig(n,d).
$
\end{Theorem}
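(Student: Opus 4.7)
The plan is to construct a degree-preserving algebra homomorphism $\Psi\colon D_Q(n,d) \to T^{\Zig}(n,d)$ and prove it is an isomorphism. In degree zero, $D_Q(n,d)$ reduces to a classical Schur-algebra-type object built from the vertex idempotents of $P_Q$, while $S^{\Zig}(n,d)^0$ decomposes analogously along the underlying uncolored composition of $d$. The hypothesis $n \geq d$ is the standard stable-range condition that allows the canonical identification of these degree-zero parts, which gives $\Psi$ in degree zero. I would then extend $\Psi$ to higher degrees by choosing natural generators of $D_Q(n,d)$: for each edge $\za^{k,j}$ and loop element $\zc^{(j)}$ of $P_Q$, paired with a composition $\la \in \La((n-1)(e-1),d-1)$, one obtains an element of $D_Q(n,d)$ which should be sent to $\itt^\la(\za^{k,j})$ or $\itt^\la(\zc\, \ze_j)$ respectively. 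Surjectivity of $\Psi$ is then immediate from the very definition of $T^{\Zig}(n,d)$ as the subalgebra generated by $S^{\Zig}(n,d)^0$ together with the images of all $\itt^\la$.

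The core of the argument is to verify that $\Psi$ respects the Turner double product
\[
(\xi \otimes x)(\eta \otimes y)=\sum (-1)^{\bar\xi_{(1)}(\bar\xi_{(2)}+\bar\eta+\bar x)+\bar\eta_{(1)}\bar x}\, \xi_{(2)}\eta_{(1)} \otimes (x\cdot \eta_{(2)})(\xi_{(1)}\cdot y),
\]
which couples the super-bialgebra structure on $\Inv X$ with Koszul-type signs. The plan is to reduce this to a finite list of generator-by-generator identities, checking that the signs $\zeta_j$ governing the wreath-product action through~\eqref{Eeps} match the Sweedler signs above. This should succeed because $\zeta_j$ records the parity of the edges of $Q$ incident to the vertex $j$, which is precisely the superdegree of the corresponding element of $P_Q$, so the coincidence of signs is forced by the very definition of the superstructure on $X = M_n(P_Q)$ and of the wreath product $W_d$.

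The main obstacle I anticipate is injectivity, which I would establish by a graded rank comparison on each component. On the generalized Schur algebra side, the ranks of the spaces $\xi_\mu S^{\Zig}(n,d)\xi_\la$ can be extracted from Lemma~\ref{LBasisMLaC} together with the description of $M^\la$ as an induced module, giving formulas in terms of double-coset combinatorics in $\Si_d$ weighted by local zigzag data. On the Turner double side, a basis of $D_Q(n,d)$ arises by pairing an explicit basis of $\Inv X$ with its dual via the decomposition $\bigoplus_{0\le f \le d} \Inv^f X \otimes (\Inv^{d-f}X)^*$. Both indexing sets turn out to be governed by pairs of colored compositions together with local data attached to each part, and once this matching is made precise, a direct graded rank equality would follow. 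The real difficulty lies in the combinatorial alignment of the two bases; an induction on the grading, using the known degree-zero Schur algebra isomorphism as a base and propagating upward via the multiplication by the $\itt^\la(\Zig)$-generators, seems the most efficient route and would confine the hardest bookkeeping to a bounded set of generator products.
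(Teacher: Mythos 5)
You should note at the outset that the paper does not actually prove this theorem: it is cited wholesale as \cite[Theorem 7.7]{EK}, and no proof appears in the present document. Any comparison with ``the paper's own proof'' is therefore empty here --- the statement is an imported black box, used in \S\ref{SSEnd} via Theorem~\ref{TSymmetricity} and Corollary~\ref{CKey}.

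With that said, your sketch has a concrete flaw in the sign analysis. You claim that the signs $\zeta_j$ of~\eqref{EZeta} ``record the parity of the edges of $Q$ incident to the vertex $j$, which is precisely the superdegree of the corresponding element of $P_Q$.'' This is not what $\zeta_j$ is. By the definition in~\eqref{EZeta}, $\zeta_j = (-1)^{j+1}$ is the parity of the vertex \emph{label} $j \in J$, alternating as $j$ ranges over $1,\dots,e-1$. The superdegree on $P_Q$ is path length mod $2$: vertices have superdegree $\bar 0$, edges have superdegree $\bar 1$, independently of which vertex they touch. Nor is $\zeta_j$ the parity of the valence of $j$ in $Q$ (which is $1$ for $j\in\{1,e-1\}$ and $2$ otherwise). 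These are three distinct parities, and the claim that the Koszul signs in the Turner double product are ``forced by the very definition'' to agree with the wreath-product signs in $\eps_{\la,\bc}$ from~\eqref{Eeps} does not follow; reconciling these two sign conventions is precisely the delicate bookkeeping that any actual proof of this theorem would have to supply, and your proposal asserts the coincidence rather than establishing it.

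Beyond that, your outline is a plausible shape for such a proof (match degree zero, pick out the $\itt^\la$-images of zigzag generators, check relations, compare graded ranks) and surjectivity is indeed immediate from the generating-set definition of $T^{\Zig}(n,d)$. But the rank-matching step for injectivity is deferred to an unspecified ``combinatorial alignment of two bases'', and the relation check --- the genuinely hard part, involving the Sweedler-notation Koszul signs against the $\bc$-alternating module and~\eqref{EWreathProductDetermined} --- is described but not carried out. As written, the proposal is a programme rather than a proof.
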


\begin{Theorem}\label{TSymmetricity}
\cite[Theorem 6.6]{EK}, 
Suppose that $n\ge d$. If $A$ is a subalgebra of $S^\Zig (n,d)$ such that
$T^\Zig(n,d) \subseteq A \subseteq S^\Zig (n,d)$ and 
$A_{\F_p}$ is a symmetric $\F_p$-algebra for every prime $p$, then $A=T^\Zig(n,d)$. 
\end{Theorem}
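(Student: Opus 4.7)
The strategy is to exploit that $T^{\Zig}(n,d) \cong D_Q(n,d)$ is already a graded symmetric $\Z$-algebra concentrated in degrees $[0,2d]$ (as recalled in Section~\ref{SIntro}) and to argue that no proper intermediate extension inside $S^{\Zig}(n,d)$ can be symmetric over every $\F_p$.

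Fix a prime $p$ and suppose $A_{\F_p}$ is symmetric with symmetrizing trace $\tau$. A first observation is that any intermediate $A$ shares its degree-$0$ component with $T^{\Zig}(n,d)$, namely $A^0 = T^{\Zig}(n,d)^0 = S^{\Zig}(n,d)^0$, since $S^{\Zig}(n,d)^0 \subseteq T^{\Zig}(n,d)$ by construction. I would first show that $\tau$ can be chosen homogeneous of some degree $-D$ and that in fact $D=2d$; the key inputs are the palindromic identity $\dim_{\F_p} A^i = \dim_{\F_p} A^{D-i}$ valid in any graded symmetric algebra, rigidity of $A^0$, and explicit knowledge of the high-degree components of $S^{\Zig}(n,d)$. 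A consequence would be $A^{2d} = T^{\Zig}(n,d)^{2d}$: the dimensions of $A^{2d}$ and $T^{\Zig}(n,d)^{2d}$ are both forced to equal $\dim T^{\Zig}(n,d)^0$, making the inclusion $T^{\Zig}(n,d)^{2d} \subseteq A^{2d}$ an equality.

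With top and bottom components agreeing, I would argue that the symmetrizing form on $A_{\F_p}$ restricts nondegenerately to $T^{\Zig}(n,d)_{\F_p}$, yielding a bimodule decomposition $A_{\F_p} = T^{\Zig}(n,d)_{\F_p} \oplus T^{\Zig}(n,d)_{\F_p}^{\perp}$ as $T^{\Zig}(n,d)_{\F_p}$-bimodules. The main obstacle is to eliminate the orthogonal complement. I would approach this using the generating set for $T^{\Zig}(n,d)$ from Theorem~\ref{TGeneration}: any purported element of $T^{\Zig}(n,d)_{\F_p}^{\perp}$ must pair trivially with every product of elements of $S^{\Zig}(n,d)^0$ and of $\bigcup_\la \itt^\la(\Zig)$, and an explicit computation inside $\Hom_{W_d}(M^\la,M^\mu) = \xi_\mu S^{\Zig}(n,d) \xi_\la$ should force such an element to vanish. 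Carrying this out uniformly in $p$, together with establishing nondegeneracy of the restriction in the first place, is where the bulk of the technical work lies; it likely relies on an explicit understanding of the integral $T^{\Zig}(n,d)$-bimodule structure of $S^{\Zig}(n,d)/T^{\Zig}(n,d)$, and is where the hypothesis $n\ge d$ enters essentially via Theorem~\ref{TGeneration}.
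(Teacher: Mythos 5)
The paper does not actually prove this statement: it records it as a citation to \cite[Theorem 6.6]{EK} and invokes it as a black box (in the proof of Theorem~\ref{TMainIso}), so there is no in-paper argument to compare against. Evaluating your outline on its own terms, it rests on two pivotal assertions that are neither established nor true for general graded algebras, so the plan as written would not go through. The first is that the symmetrizing form $\tau$ on $A_{\F_p}$ may be taken homogeneous of degree $-2d$. This is not automatic: for example $\k[x]/(x^2)\times\k[y]/(y^3)$ with $x,y$ in degree $1$ is symmetric but admits no homogeneous symmetrizing form, since a homogeneous form of degree $-D$ would force $D=1$ on the first block and $D=2$ on the second. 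One would have to use the specific containment $T^\Zig(n,d)_{\F_p}\subseteq A_{\F_p}$ and its block structure to rule this out, and until that is done the palindromic duality $\dim A^i=\dim A^{2d-i}$ — and hence your conclusion $A^{2d}=T^\Zig(n,d)^{2d}$ — is not available.

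The second and more serious gap is the claim that $\tau$ restricts non-degenerately to $T^\Zig(n,d)_{\F_p}$. Restriction of a non-degenerate associative symmetric form to a unital subalgebra is typically degenerate even when the subalgebra is itself symmetric; e.g.\ $\k\cdot 1\oplus\k E_{12}\subseteq M_2(\k)$ with the trace form gives a rank-one restriction on a two-dimensional symmetric subalgebra. Knowing $A^0=T^\Zig(n,d)^0$ and $A^{2d}=T^\Zig(n,d)^{2d}$ only gives you a perfect pairing between the extreme degrees; it says nothing about the pairing $T^i\times T^{2d-i}$ for $0<i<2d$, where $T^\Zig(n,d)^i\subsetneq A^i$ may hold, and that is exactly where the radical of $\tau|_{T^\Zig(n,d)}$ could sit. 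Without non-degeneracy you do not get the bimodule decomposition $A_{\F_p}=T^\Zig(n,d)_{\F_p}\oplus T^\Zig(n,d)_{\F_p}^{\perp}$, and the subsequent elimination of the complement is in any case only described as an unspecified "explicit computation." The high-level idea of squeezing the graded dimensions of $A$ between the known symmetric structures of $T^\Zig(n,d)$ and $S^\Zig(n,d)$ is a reasonable direction, but both keystone steps require genuine proofs before this is more than a sketch.
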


\section{KLR algebras}\label{SKLR}
\subsection{Lie-theoretic notation}\label{SSLie}

Let 
\[
I:=\Z/e\Z=\{0,\dots,e-1\}.
\]
 We consider the quiver of type $A_{e-1}^{(1)}$ with vertex set  $I$ 
and a directed edge $i \rightarrow j$ whenever $j  = i+1$. 
The corresponding {\em Cartan matrix} 
$({\mathtt c}_{ij})_{i, j \in I}$ is defined by 
\begin{equation*}\label{ECM}
{\mathtt c}_{ij} := \left\{
\begin{array}{rl}
2&\text{if $i=j$},\\
0&\text{if $j \neq i, i \pm 1$},\\
-1&\text{if $i \rightarrow j$ or $i \leftarrow j$},\\
-2&\text{if $i \rightleftarrows j$}.
\end{array}\right.
\end{equation*}

Following \cite{Kac}, we fix a realization of the Cartan matrix $({\mathtt c}_{ ij})_{i,j\in I}$ with the {\em simple roots} 
$\{\al_i\mid i\in I\},$ 
the {\em fundamental dominant weights}  
$\{\La_i\mid i\in I\},$  the {\em normalized invariant form} $(\cdot,\cdot)$ such that
$$
(\al_i,\al_j)={\mathtt c}_{ij}, \quad (\La_i,\al_j)=\de_{ij}\qquad(i,j\in I),
$$
the {\em root system} $\Phi$, the set of {\em positive roots} $\Phi_+$,  
 and the 
{\em null-root} 
\begin{equation}\label{EDelta}
\de:=\al_0+\al_1+\dots+\al_{e-1}\in\Phi_+.
\end{equation}

Let 
$
Q_+ := \bigoplus_{i \in I} \Z_{\geq 0} \alpha_i.
$ 
For $\th \in Q_+$ let $\height(\th)$ be 
the {\em height of~$\th$}, i.e.~$\height(\th)$ is the sum of the 
coefficients when $\th$ is expanded in terms of the simple roots $\alpha_i$. 
For any $m\in\Z_{\ge 0}$, 
the symmetric group $\Si_m$ 
acts from the left on the set $ I^m$ by place permutations. If $\bi=i_1\dots i_m \in I^m$
then its {\em weight} is $|\bi|:=\alpha_{i_1}+\cdots+\alpha_{i_m}\in Q_+$.  Then the
$\Si_m$-orbits on $I^m$ are the sets 
$
I^\th := \{\bi \in I^m\mid |\bi| = \th \} 
$
parametrized by all $\th \in Q_+$ of height $m$. 

We always identify $J=\{1,\dots,e-1\}$ with the subset $I\sm \{0\}$ of 
$I$, cf.~\eqref{EJ}.
Let $\Car'$ be the type $A_{e-1}$ Cartan matrix corresponding to $J$, and let $\Phi'_+\subset \Phi_+$ be the corresponding positive part of the finite root system. 
We define
\begin{align*}
 \Phi_+^{\prec \de}:= \{-\be+n\de\mid \be\in  \Phi'_+,\ n\in\Z_{> 0}\}
\ \text{and}\
\Phi_+^{\succ \de}:=\{\be+n\de\mid \be\in  \Phi'_+,\ n\in\Z_{\geq 0}\}.
\end{align*}
Set $\Phi_+^{\preceq \de}:=\Phi_+^{\prec \de}\sqcup\{\de\}$
and
$
\Phi_+^{\succeq \de}:=\Phi_+^{\succ \de}\sqcup\{\de\}.
$
Note that
$\Phi_+=\Phi_+^\im\sqcup \Phi_+^\re
$, where
$\Phi_+^\im=\{n\de\mid n\in\Z_{>0}\}$
and $\Phi_+^\re= \Phi_+^{\prec \de} \sqcup \Phi_+^{\succ \de}$.

\subsection{Basics on KLR algebras}\label{SSKLR}
Let $\th\in Q_+$ be of height $m$. 
Following~\cite{KL1,R}, the {\em KLR algebra} (of type $A_{e-1}^{(1)}$) is the unital $\Z$-algebra $R_\th$ generated by the elements 
$
\{1_\bi\:|\: \bi\in I^\th\}\cup\{y_1,\dots,y_{m}\}\cup\{\psi_1, \dots,\psi_{m-1}\}
$, 
subject only to the following relations:
\begin{align}
1_\bi 1_\bj &= \de_{\bi,\bj} 1_\bi;
\hspace{11.3mm}{\textstyle\sum_{\bi \in I^\th}} 1_\bi = 1;\label{R1}
\\
y_r 1_\bi &= 1_\bi y_r;
\hspace{20mm}\psi_r 1_\bi = 1_{s_r{ }\bi} \psi_r;\label{R2PsiE}\\
\label{R3Y}
y_r y_s &= y_s y_r;\\
\label{R3YPsi}
\psi_r y_s  &= y_s \psi_r\hspace{42.4mm}\text{if $s \neq r,r+1$};\\
\psi_r \psi_s &= \psi_s \psi_r\hspace{41.8mm}\text{if $|r-s|>1$};\label{R3Psi}\\
\psi_r y_{r+1} 1_\bi &=  (y_r\psi_r+\delta_{i_r,i_{r+1}})1_\bi;
\label{ypsi}\\
y_{r+1} \psi_r1_\bi &= (\psi_ry_r+\delta_{i_r,i_{r+1}})1_\bi;
\label{psiy}\\
\psi_r^21_\bi &= 
\left\{
\begin{array}{ll}
0&\text{if $i_r = i_{r+1}$},\\
1_\bi&\text{if $i_{r+1} \neq i_r, i_r \pm 1$},
\\
(y_{r+1}-y_r)1_\bi&\text{if $i_r \rightarrow i_{r+1}$},\\
(y_r - y_{r+1})1_\bi&\text{if $i_r \leftarrow i_{r+1}$},\\
(y_{r+1} - y_{r})(y_{r}-y_{r+1}) 1_\bi \!\!\!&\text{if $i_r \rightleftarrows i_{r+1}$};
\end{array}
\right.
 \label{quad}
 \end{align}
\begin{align}
\psi_{r}\psi_{r+1} \psi_{r} 1_\bi
&=
\left\{\begin{array}{ll}
(\psi_{r+1} \psi_{r} \psi_{r+1} +1)1_\bi&\text{if $i_{r+2}=i_r \rightarrow i_{r+1}$},\\
(\psi_{r+1} \psi_{r} \psi_{r+1} -1)1_\bi &\text{if $i_{r+2}=i_r \leftarrow i_{r+1}$},\\
\big(\psi_{r+1} \psi_{r} \psi_{r+1} -2y_{r+1}
\\\qquad\:\quad +y_r+y_{r+2}\big)1_\bi
\hspace{2.4mm}&\text{if $i_{r+2}=i_r \rightleftarrows i_{r+1}$},\\
\psi_{r+1} \psi_{r} \psi_{r+1} 1_\bi&\text{otherwise}.
\end{array}\right.
\label{braid}
\end{align}
The {\em cyclotomic KLR algebra} $R_\th^{\La_0}$ is the quotient of  $R_\th$ by the two-sided ideal $I_\th^{\La_0}$ generated by the elements 
$y_1^{\de_{i_1,0}}1_\bi$ 
for all $\bi=(i_1,\dots,i_d)\in I^\th$. We have the natural projection map
\begin{equation}\label{EPi}
\pi_\th \colon R_\th\onto R_\th^{\La_0}=R_\th/I_\th^{\La_0}.
\end{equation} 
The algebras $R_\th$ and $R_\th^{\La_0}$ have $\Z$-gradings determined by setting 
$1_\bi$ to be of degree 0,
$y_r$ of degree $2$, and
$\psi_r 1_\bi$ of degree $-{\mathtt c}_{ i_r,i_{r+1}}$
for all admissible $r$ and $\bi$.

For $\kappa\in I=\Z/e\Z$ and $\bi=(i_1,\dots,i_n)\in I^n$, we set $\bi^{+\kappa}:=(i_1+\kappa,\dots,i_n+\kappa)\in I^n$. 
Then for any $d\in\Z_{>0}$, there is an automorphism
\begin{equation}\label{ERot}
\rot_\kappa\colon  R_{d\de}\to R_{d\de}, 1_\bi\mapsto 1_{\bi^{+\kappa}},\ y_r\mapsto y_r,\ \psi_s\mapsto\psi_s
\end{equation}
for all admissible $\bi,r,s$.

Fixing a {\em preferred reduced decomposition} $w=s_{r_1}\dots s_{r_l}$ for each element $w\in \Si_m$, we define the elements 
$
\psi_w:=\psi_{r_1}\dots\psi_{r_l}\in R_\th.
$
In general, $\psi_w$ depends on the choice of a preferred reduced decomposition of~$w$.

\begin{Theorem}\label{TBasis}{\cite[Theorem 2.5]{KL1}}, \cite[Theorem 3.7]{R} 
Let $\th\in Q_+$ and $m=\height(\th)$. Then  
$$ \{\psi_w y_1^{k_1}\dots y_m^{k_m}1_\bi\mid w\in \Si_m,\ k_1,\dots,k_m\in\Z_{\geq 0}, \ \bi\in I^\th\}
$$ 
is a $\Z$-basis of  $R_\th$. 
\end{Theorem}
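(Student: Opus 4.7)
The plan is to prove the two halves of the basis statement separately: spanning and $\Z$-linear independence.

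For spanning, I would argue that every monomial in the generators $1_\bi$, $y_r$, $\psi_s$ can be rewritten as a $\Z$-linear combination of the claimed standard monomials $\psi_w y_1^{k_1}\cdots y_m^{k_m}1_\bi$. First, using \eqref{R1}--\eqref{R2PsiE}, every monomial is a sum of monomials of the form $z \cdot 1_\bi$ with a fixed idempotent on the right. Next, using the commutation relations \eqref{R3Y}, \eqref{R3YPsi} and the ``bubble'' relations \eqref{ypsi}, \eqref{psiy}, one can push all $y$'s past all $\psi$'s to the right, at the cost of terms of the same shape with lower $\psi$-content (an induction on the number of $\psi$-letters handles the error terms). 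Finally, given a product $\psi_{r_1}\cdots\psi_{r_\ell}1_\bi$ whose underlying word in $\Si_m$ is not reduced, the quadratic relation \eqref{quad} either kills the monomial or rewrites it using fewer $\psi$'s plus extra $y$'s. Similarly, when the word is reduced but not in the chosen preferred form for $w\in\Si_m$, the braid relations \eqref{R3Psi}, \eqref{braid} allow passage between reduced decompositions up to lower-order error. Induction on a suitable ordering (e.g.\ lexicographic on $(\ell,\deg)$) shows everything reduces to the claimed monomials.

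For linear independence, I would construct a faithful action of $R_\theta$ on the ``polynomial representation''
\[
\Pol_\theta := \bigoplus_{\bi\in I^\theta} \Z[y_1,\dots,y_m]\cdot v_\bi,
\]
with the defining generators acting by: $1_\bj v_\bi = \delta_{\bi,\bj} v_\bi$, $y_r$ acting by multiplication by $y_r$ on the $\bi$-summand, and $\psi_r$ acting by a divided-difference-like operator
\[
\psi_r (f\cdot v_\bi) = \frac{{}^{s_r}f - f}{y_{r+1}-y_r}\cdot v_{s_r\bi} \quad \text{if } i_r=i_{r+1},
\]
and by an appropriate polynomial multiple of $(s_r f)\cdot v_{s_r\bi}$ (with factor depending on $(\alpha_{i_r},\alpha_{i_{r+1}})$) in the other cases, so that the squared relation \eqref{quad} is enforced. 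Checking \eqref{R1}--\eqref{R3Psi}, \eqref{ypsi}--\eqref{quad} is a routine though lengthy case analysis in types $A_1^{(1)}$ and $A_{e-1}^{(1)}$ with $e\geq 3$. Once the action is verified, it follows that the standard monomials act on $v_\bi$ by pairwise $\Z$-linearly independent vectors of $\Pol_\theta$: indeed, $\psi_w y_1^{k_1}\cdots y_m^{k_m}1_\bi$ sends $v_\bi$ into $w\cdot\bi$-summand, and using a filtration argument together with the leading ``transposition'' term in the $\psi$-action, one identifies the leading symbol of $\psi_w$ (for a reduced decomposition of $w$) with the corresponding product of divided differences, which are known to be linearly independent over $\Z[y_1,\dots,y_m]$.

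Combining the spanning and independence statements gives the claim. The main obstacle is the verification of the braid relation \eqref{braid} for the polynomial representation: among the cases $i_{r+2}=i_r$ with various arrow configurations there are subtle sign and ``bubble'' terms that must match the inhomogeneous right-hand side, and in the doubly-laced case $i_r\rightleftarrows i_{r+1}$ (which does not arise for $e\geq 3$ but does for $e=2$) one must set up the divided-difference operator carefully so that both \eqref{quad} and \eqref{braid} hold simultaneously. Once that verification is done, independence of the standard monomials follows by extracting leading terms on $\Pol_\theta$, and the theorem is established.
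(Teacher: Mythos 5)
The paper does not prove this theorem itself; it is cited directly from Khovanov--Lauda \cite[Theorem 2.5]{KL1} and Rouquier \cite[Theorem 3.7]{R}. Your two-step strategy — a straightening argument for spanning, followed by a faithful polynomial representation to show linear independence of the standard monomials — is exactly the route taken in \cite{KL1}, so you have correctly reconstructed the proof of the cited result rather than found an alternative. A couple of small points worth tightening if you were to write this out in full: (1) when $i_r = i_{r+1}$ you have $s_r\bi = \bi$, so the target summand in your divided-difference formula is again the $\bi$-summand (consistent, but worth saying explicitly, and you should fix a sign convention once and for all since it must match \eqref{ypsi}--\eqref{quad}); (2) in the independence step you should be explicit that $\psi_w$ a priori depends on the chosen reduced word, but any two choices differ by terms involving strictly fewer $\psi$'s, so the leading-term filtration argument is insensitive to the choice — this is what makes the phrase ``leading symbol of $\psi_w$'' legitimate; (3) the double edge $i_r \rightleftarrows i_{r+1}$ occurs precisely when $e=2$ (where $0\to 1$ and $1\to 0$), so your caveat is placed correctly. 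None of these affect the soundness of the argument.
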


As a special case of \cite[Remark 4.20]{KK}, we have 

\begin{Theorem}\label{TBasisCyc} 
Let $\th\in Q_+$. Then  the $\Z$-module 
$R_{\th}^{\La_0}$ is free of finite rank. 
\end{Theorem}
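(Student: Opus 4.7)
The plan is to handle finite generation first and then prove freeness (equivalently, absence of $\mathbb{Z}$-torsion), following the strategy Kang–Kashiwara use in the cited reference. For finite generation I would start from the $R_\theta$-basis given in Theorem \ref{TBasis} and show that, modulo the cyclotomic ideal $I_\theta^{\Lambda_0}$, one can bound the exponents $k_1,\ldots,k_m$ in the monomials $\psi_w y_1^{k_1}\cdots y_m^{k_m} 1_{\bi}$ by a constant $N=N(\theta)$. The key step is an induction on the position $r$: the relation $y_1^{\delta_{i_1,0}} 1_{\bi}=0$ annihilates the first strand if $i_1=0$, and by commuting a high power of $y_r$ past $\psi_{r-1}$ using \eqref{ypsi}--\eqref{quad} one can rewrite $y_r^{k}1_{\bi}$ modulo $I_\theta^{\Lambda_0}$ in terms of expressions in which the $y$-weight has been shifted to earlier positions, plus lower-order terms. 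Pushing everything to position $1$ and applying the cyclotomic relation then bounds the total $y$-degree. This yields an explicit finite spanning set for $R_\theta^{\Lambda_0}$.

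For freeness and the computation of the rank, I would invoke categorification. Define induction and restriction functors between $\gmod{R_\theta^{\Lambda_0}}$ and $\gmod{R_{\theta+\al_i}^{\Lambda_0}}$ by
\[
 \funF_i(M) := R_{\theta+\al_i}^{\La_0} 1_{(\cdot,i)}\otimes_{R_\theta^{\La_0}} M, \qquad \funE_i(N) := 1_{(\cdot,i)} N,
\]
with appropriate grading shifts. The central technical step, and the hardest part of the argument, is to establish that $\funE_i$ and $\funF_i$ are biadjoint up to an explicit shift depending on $(\Lambda_0-\theta,\al_i)$. This biadjointness is proved by exhibiting an exact sequence relating the $R_{\theta+\al_i}^{\La_0}$-bimodule $R_{\theta+\al_i}^{\La_0}\otimes_{R_\theta^{\La_0}} R_{\theta+\al_i}^{\La_0}$ with a summand of $R_{\theta+\al_i}^{\La_0}$ itself, and requires a delicate calculation with intertwiners that produces the ``missing'' term controlling the cyclotomic truncation.

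Granted the biadjointness, one deduces commutation relations between $[\funE_i]$ and $[\funF_j]$ on the graded Grothendieck group $K:=\bigoplus_\theta [\mathbf{proj}\,R_\theta^{\La_0}]$ matching the defining relations of $U_q(\widehat{\mathfrak{sl}}_e)$; together with the obvious highest-weight vector $[R_0^{\La_0}]$, this identifies $K$ with the basic integrable module $V(\La_0)$. Comparing the $\theta$-weight-space dimension of $V(\Lambda_0)$ with the size of the spanning set from the first step, I would conclude that the spanning set is in fact a $\mathbb{Z}$-basis: any $\Z$-linear relation would reduce, after base change to $\QQ$, to a linear relation in $V(\La_0)_{\La_0-\theta}$, and an analogous argument over $\F_p$ (using that the same categorification works integrally) rules out $p$-torsion for every prime~$p$. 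The principal obstacle is, as indicated, the biadjointness of $\funE_i$ and $\funF_i$, which is exactly the content of \cite[Remark~4.20]{KK} that the authors cite.
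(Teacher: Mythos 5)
Note first that the paper itself offers no proof of Theorem~\ref{TBasisCyc}: it is quoted as a special case of \cite[Remark 4.20]{KK}, so what you have written is a reconstruction of the argument behind that citation. Your sketch correctly identifies the central technical input — the biadjointness of $\funE_i$ and $\funF_i$, proved via an exact sequence comparing $\funE_i\funF_j$ and $\funF_j\funE_i$ and the control it gives over the cyclotomic truncation — and that this is indeed the hard part.

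The passage from biadjointness to freeness over $\Z$, however, has a real gap. You propose to compare the size of the spanning set for $R_\theta^{\La_0}$ with the dimension of the weight space $V(\La_0)_{\La_0-\theta}$ and to claim that a $\Z$-linear relation in the algebra ``reduces to a linear relation in $V(\La_0)_{\La_0-\theta}$.'' This conflates two quantities of very different size: $\dim V(\La_0)_{\La_0-\theta}$ equals the number of irreducible $R^{\La_0}_{\theta,\k}$-modules (this is Theorem~\ref{TNSimples}), whereas the rank of $R^{\La_0}_\theta$ as a $\Z$-module is $\sum_L \dim L\cdot\dim P_L$, typically much larger, and a relation among basis elements of the algebra is not a relation in a weight space of $V(\La_0)$. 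The argument in \cite{KK} does not go through a Grothendieck-group dimension count at all; rather, the biadjointness statement itself shows that $e(\theta-\al_i,i)R^{\La_0}_\theta$ is a \emph{projective} $R^{\La_0}_{\theta-\al_i}$-module, and decomposing $R^{\La_0}_\theta = \bigoplus_i e(\theta-\al_i,i)R^{\La_0}_\theta$ by the last letter and inducting on $\height(\theta)$ gives directly that $R^{\La_0}_\theta$ is finitely generated projective over $\Z$, hence free. The identification of the Grothendieck group with $V(\La_0)$ is a corollary of this structure, not the means of establishing it. Your preliminary step is also optimistic: pushing $y$-exponents ``to position~$1$'' using \eqref{ypsi}--\eqref{quad} produces correction terms that are not easily controlled, and in \cite{KK} the nilpotency of $y_r$ for $r>1$ is obtained from the same projectivity statement rather than as an elementary precursor to it.
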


By \cite[Proposition 3.10]{SVV} (see also \cite[Remark 3.19]{Webster}), we have

\begin{Theorem} \label{TSVV} 
Let $\th\in Q_+$. Then for any field\, $\k$, the algebra  
$R_{\th,\k}^{\La_0}$ is symmetric. More precisely, $R_{\th,\k}^{\La_0}$ admits a symmetrizing form of degree $(\La_0-\th,\La_0-\th)$. 
\end{Theorem}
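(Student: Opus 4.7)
The plan is to produce a non-degenerate homogeneous trace $\tau\colon R^{\La_0}_{\th,\k}\to \k$ of degree $-(\La_0-\th,\La_0-\th)$; the associated bilinear form $(a,b)\mapsto \tau(ab)$ will then be the desired symmetrizing form. Two complementary approaches present themselves, and I would try both in parallel and reconcile them.

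The first (extrinsic) approach is to use the Brundan--Kleshchev--Rouquier isomorphism identifying $R^{\La_0}_{\th,\k}$ with a block of a cyclotomic Hecke algebra $H_m(q)$ of type $A$ with one cyclotomic parameter, where $m = \height(\th)$. Cyclotomic Hecke algebras of type $A$ are classically symmetric by Malle--Mathas, so one transports the Malle--Mathas form through the isomorphism. The work is then purely combinatorial: identify the grading on $H_m(q)$ coming from the KLR presentation and verify that the transported form is homogeneous of degree $-(\La_0-\th,\La_0-\th)$. This can be done by evaluating $\tau$ on a single highest-degree basis element (say $\psi_w y_1^{k_1}\dotsm y_m^{k_m}1_\bi$ from Theorem \ref{TBasis}, projected modulo the cyclotomic ideal) that survives to a non-zero class of the required degree.

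The second (intrinsic) approach, which is what SVV and Webster essentially use, is to exploit the biadjunction between the $i$-induction and $i$-restriction functors $F_i$, $E_i$ on $\bigoplus_\th R^{\La_0}_\th\text{-mod}$. Following Kashiwara/Webster, one shows that $E_i$ is left adjoint to $F_i$, and that there is an additional biadjunction $F_i\dashv q^{c_i(\th)}E_i$ with an explicit grading shift $c_i(\th)$ coming from the $i$-residues of the weight $\La_0-\th$. Assembling these biadjunctions along any path from $0$ to $\th$ in the root lattice produces a Frobenius (in fact symmetric, because the relevant shift is self-conjugate) structure on $R^{\La_0}_{\th,\k}$. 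The degree of the resulting form is tracked by adding up the successive shifts, and one computes that the total shift telescopes to $(\La_0-\th,\La_0-\th)$ using the standard identity $\sum_{i} (\La_0-\th',\al_i)\cdot(\text{adjunction shift at step }i)$.

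The main obstacle, in either approach, is proving non-degeneracy in arbitrary characteristic. For the Hecke-algebra route one must know the Malle--Mathas form is non-degenerate over $\k$, which it is because $H_m(q)$ is free of finite rank over $\Z[q,q^{-1}]$ with the form computed on an explicit basis. For the biadjunction route, non-degeneracy reduces to the existence of the counit of the $(F_i,E_i)$-adjunction as a genuine natural transformation of functors between categories of finitely generated projectives, which is the content of the fact that $R^{\La_0}_\th$ is finitely generated as a $\Z$-module (Theorem \ref{TBasisCyc}) together with the explicit construction of the unit via "bubble" polynomials. Descending the degree computation to an arbitrary field $\k$ is then formal, since both sides of the pairing are already defined over $\Z$.
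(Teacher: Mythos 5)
The paper does not prove this statement; it simply cites \cite[Proposition~3.10]{SVV} and \cite[Remark~3.19]{Webster}. Your second, intrinsic approach via the biadjunction of $i$-induction and $i$-restriction and the attendant Frobenius structure is exactly the content of those references, and you correctly flag it as such, so on that front your plan lines up with what the paper relies on.

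Your first (extrinsic) approach, however, has a genuine gap that cannot be patched: the Brundan--Kleshchev--Rouquier isomorphism between $R^{\La_0}_{\th,\k}$ and a block of a cyclotomic Hecke algebra $H_m(q)$ requires $\k$ to contain an element $q$ of quantum characteristic $e$. Such a $q$ does not always exist. Concretely, if $\charact\k=p>0$ and $e=mp$ with $m>1$, then no $q\in\bar\k^\times$ has quantum characteristic $e$ (the only candidate of $p$-power order is $q=1$, which has quantum characteristic $p\ne e$, and all other roots of unity in $\bar\F_p$ have order coprime to $p$), so $R^{\La_0}_{\th,\k}$ is not a block of any Hecke algebra. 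This is precisely the point of the remark in \S\ref{SSRoCK}: ``if $\k$ is a field such that $e=m\,\charact\k$ for some $m\in\Z_{>1}$, the algebra $R^{\La_0}_{\theta,\k}$ is not in general isomorphic to a block of a Hecke algebra.'' Thus transporting the Malle--Mathas form cannot, on its own, establish the theorem over arbitrary $\k$; the biadjunction argument is not an optional ``reconciliation'' but the only route that covers all fields. Separately, note a sign discrepancy: the theorem asserts the form has degree $(\La_0-\th,\La_0-\th)$, which is a nonpositive integer (it equals $-2d$ when $\th=\cont(\rho)+d\de$, cf.\ Corollary~\ref{CCSymmetric}), whereas you look for a trace of degree $-(\La_0-\th,\La_0-\th)$; with the paper's convention that a trace of degree $D$ vanishes off the $(-D)$-component, your sign is the wrong one.
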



\subsection{Parabolic subalgebras}

For $\theta_1,\dots,\theta_r\in Q_+$ and 
$\theta=\theta_1+\dots+\theta_r$, we have a non-unital embedding 
\begin{equation}\label{EIota}
\iota_{\theta_1,\dots,\theta_r}\colon R_{\theta_1}\otimes\dots\otimes R_{\theta_r}\to R_{\theta}
\end{equation}
whose image is the {\em parabolic subalgebra}  $R_{\theta_1,\dots,\theta_r}\subseteq R_\theta$. Denoting by $1_\theta$ the identity element in $R_\theta$, we set
\begin{equation}\label{E1Par}
1_{\theta_1,\dots,\theta_r}:=\iota_{\theta_1,\dots,\theta_r}(1_{\theta_1}\otimes\dots\otimes 1_{\theta_r}).
\end{equation}
We have
$$
1_{\theta_1,\dots,\theta_r}=\sum_{\bi^{(1)}\in I^{\theta_1},\dots, \bi^{(r)}\in I^{\theta_r}}1_{\bi^{(1)}\dots\bi^{(r)}}.
$$
Note that we always identify $R_{\theta_1}\otimes\dots\otimes R_{\theta_r}$ with 
$
R_{\theta_1,\dots,\theta_r}
$
via $\iota_{\th_1,\dots,\th_r}$. 

We have the corresponding 
induction and restriction functors 
\begin{align*}\Ind_{\theta_1,\dots,\theta_r}&\colon
\mod{R_{\theta_1,\dots,\theta_r}}\to\mod{R_{\theta}},\ W\mapsto  R_\theta 1_{\theta_1,\dots,\theta_r}\otimes_{R_{\theta_1,\dots,\theta_r}} W,
\\
\Res_{\theta_1,\dots,\theta_r}&\colon
\mod{R_{\theta}}\to \mod{R_{\theta_1,\dots,\theta_r}},\ U\mapsto   1_{\theta_1,\dots,\theta_r}U.
\end{align*}
Let $W_1\in\mod{R_{\theta_1}}, \dots, W_r\in\mod{R_{\theta_r}}$. We define
$$W_1\circ\dots\circ W_r:=\Ind_{\theta_1,\dots,\theta_r} W_1\boxtimes \dots\boxtimes W_r.$$ 

We refer to the elements of $I^{\th}$ as {\em words}.
Given $W\in\mod{R_\th}$ and $\bi\in I^\th$, we say that 
$\bi$ is a {\em word of $W$} if $1_\bi W\neq 0$. 
If every $W_\bi$ is free of finite rank as a $\Z$-module, we define 
the {\em formal character}  of $W$  as 
$ \CH W=
\sum_{\bi\in I^\th}(\DIM W_\bi)\bi\in \Z[q,q^{-1}]\cdot I^\th.$ 

Given a composition $\la\in \La(r,m)$ and words $\bi^{(1)}\in I^{\la_1},\dots, \bi^{(r)}\in I^{\la_r}$, a word 
$\bi\in I^m$ is called a {\em shuffle} of $\bi^{(1)},\dots,\bi^{(r)}$ if 
$\bi=g\cdot(\bi^{(1)}\dots \bi^{(r)})$ for some $g\in\D^\la$. By \cite[Lemma 2.20]{KL1}, an element $\bi\in I^m$ is a word of 
$W_1\circ\dots\circ W_r$ if and only if $\bi$ is a shuffle of words $\bi^{(1)},\dots,\bi^{(r)}$ where $\bi^{(s)}$ is a word of $W_s$ for $s=1,\dots,r$. 

We will need the following weak version of the Mackey Theorem for KLR algebras, see \cite[Proposition 3.7]{Evseev} or the proof of \cite[Proposition 2.18]{KL1}:

\begin{Lemma} \label{LMackey} 
Let $\theta_1,\dots,\theta_r,\theta_1',\dots,\theta_t'\in Q_+$ satisfy $\theta_1+\dots+\theta_r=\theta_1'+\dots+\theta_t'=:\theta$. Define $m:=\height(\theta)$, 
$\la:=(\height(\theta_1),\dots,\height(\theta_r))\in\La(r,m)$, and  $\la':=(\height(\theta_1'),\dots,\height(\theta_t'))\in\La(t,m)$. Then
$$1_{\theta_1,\dots,\theta_r}R_\theta1_{\theta_1',\dots,\theta_t'}=\sum_{w\in{}^\la\D^{\la'}}R_{\theta_1,\dots,\theta_r}\psi_w R_{\theta_1',\dots,\theta_t'}.
$$ 
\end{Lemma}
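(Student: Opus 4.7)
My plan is to reduce the claim, via Theorem~\ref{TBasis}, to an inclusion on each basis element, and then to proceed by induction on the length $\ell(g)$ of the Weyl-group element $g\in\Si_m$ appearing in the basis element. By Theorem~\ref{TBasis}, every element of $1_{\theta_1,\dots,\theta_r}R_\theta 1_{\theta_1',\dots,\theta_t'}$ is a $\Z$-linear combination of elements
\[
\psi_g\, y_1^{k_1}\cdots y_m^{k_m}\,1_\bi
\]
satisfying $1_\bi = 1_{\theta_1',\dots,\theta_t'}1_\bi$ (equivalently, $\bi$ decomposes as a concatenation $\bi^{(1)}\cdots\bi^{(t)}$ with $\bi^{(s)}\in I^{\theta_s'}$) and $1_{g\bi}=1_{\theta_1,\dots,\theta_r}1_{g\bi}$. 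The reverse inclusion $\supseteq$ is automatic since $1_{\theta_1,\dots,\theta_r}$ and $1_{\theta_1',\dots,\theta_t'}$ are the identity elements of the respective parabolic subalgebras, so it suffices to show that each such basis element lies in $\sum_{w\in{}^\la\D^{\la'}} R_{\theta_1,\dots,\theta_r}\,\psi_w\, R_{\theta_1',\dots,\theta_t'}$.

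For the induction, I would use the double coset decomposition $\Si_m=\bigsqcup_{w\in{}^\la\D^{\la'}}\Si_\la\,w\,\Si_{\la'}$ to factor $g=uwz$ with $w\in{}^\la\D^{\la'}$, $u\in\Si_\la$, $z\in\Si_{\la'}$ and $\ell(g)=\ell(u)+\ell(w)+\ell(z)$; this length-additive decomposition is a standard property of parabolic subgroups of Coxeter groups. Concatenating reduced decompositions of $u$, $w$ and $z$ yields a reduced decomposition of $g$ and the associated $\psi$-product $\tilde\psi_u\tilde\psi_w\tilde\psi_z$. Because $u\in\Si_\la$, every simple reflection in its reduced decomposition acts within a single block of $\la$, so $\tilde\psi_u\in R_{\theta_1,\dots,\theta_r}$; analogously $\tilde\psi_z\in R_{\theta_1',\dots,\theta_t'}$, and each $y_s$ lies in the parabolic subalgebra corresponding to the block containing $s$, hence $\tilde\psi_z\, y_1^{k_1}\cdots y_m^{k_m}1_\bi\in R_{\theta_1',\dots,\theta_t'}$. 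Thus $\tilde\psi_u\tilde\psi_w\tilde\psi_z\, y_1^{k_1}\cdots y_m^{k_m}1_\bi$ is already of the required form, modulo the discrepancy between $\tilde\psi_w$ and $\psi_w$.

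It remains to compare $\psi_g$ with $\tilde\psi_u\tilde\psi_w\tilde\psi_z$ (and $\tilde\psi_w$ with $\psi_w$), and this is where the main bookkeeping lies. Repeated use of the braid relation~\eqref{braid} together with the quadratic and commutation relations~\eqref{R3Psi}, \eqref{R3YPsi}, \eqref{ypsi}, \eqref{psiy}, \eqref{quad} shows that switching between two reduced decompositions of a fixed Weyl-group element modifies the associated $\psi$-product only by a $\Z[y_1,\dots,y_m]$-linear combination of elements $\psi_{g'}1_{\bi'}$ with $\ell(g')<\ell(g)$, in which the $\bi'$ remain compatible with $(\theta_1',\dots,\theta_t')$ and the $g'\bi'$ with $(\theta_1,\dots,\theta_r)$. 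The induction hypothesis then absorbs all such lower-length terms into the right-hand side, completing the proof. The main obstacle I anticipate is precisely this braid-relation bookkeeping: checking that after each rewriting step the correction terms retain the parabolic compatibility on both sides. This is a routine length-induction argument following the template of the Mackey filtration proof of \cite[Proposition~2.18]{KL1}.
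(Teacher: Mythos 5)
The paper does not prove this lemma; it cites \cite[Proposition 3.7]{Evseev} and the proof of \cite[Proposition 2.18]{KL1}, and your argument follows the same template as the latter: basis theorem, length-additive double-coset factorization $g=uwz$, and induction on $\ell(g)$ to absorb braid-relation corrections. The outline is sound. The one point you flag as the anticipated obstacle resolves slightly differently than you describe: it is not the case that the lower-length correction terms $\psi_{g'}p(y)1_\bi$ produced by re-braiding automatically have $g'\bi$ compatible with $(\theta_1,\dots,\theta_r)$. Instead, one uses that the original basis element is invariant under left multiplication by $1_{\theta_1,\dots,\theta_r}$ (since $1_{g\bi}1_{\theta_1,\dots,\theta_r}=1_{g\bi}$); multiplying the entire rewritten expression by this idempotent annihilates precisely the incompatible correction terms, because $1_{\theta_1,\dots,\theta_r}\psi_{g'}1_\bi=1_{\theta_1,\dots,\theta_r}1_{g'\bi}\psi_{g'}1_\bi=0$ whenever $g'\bi$ is incompatible. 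The surviving correction terms then satisfy both compatibilities and are handled by the inductive hypothesis; the right-hand compatibility is automatic since $1_\bi$ persists on the right throughout the rewriting. Once this is noted, your bookkeeping closes up.
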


With the notation as in the beginning of the subsection, we have the {\em parabolic subalgebra}
$$
R^{\La_0}_{\theta_1,\dots,\theta_r}:=\pi_\theta(R_{\theta_1,\dots,\theta_r})\subseteq R^{\La_0}_\theta.
$$
Let $\th,\eta\in Q_+$. We have a natural embedding 
$\zeta_{\th,\eta}\colon R_\th\to R_{\th,\eta}, \ x\mapsto 
\iota_{\th,\eta}(x\otimes 1_\eta)$. The map $\pi_{\th+\eta}\circ \zeta_{\th,\eta}$ factors through the quotient $R_\th^{\La_0}$ to give the natural {\em unital} algebra homomorphism   
\begin{equation}\label{EZetaHom}
\zeta_{\th,\eta}\colon R_\th^{\La_0}\to R_{\th,\eta}^{\La_0}.
\end{equation}

\subsection{Divided power idempotents}\label{SSDiv}
Fix $i\in I$. Let $m\in \Z_{\ge 0}$ and denote by $w_0$ the longest element of $\Si_m$. 
The algebra $R_{m\al_i}$ is known to be the nil-Hecke algebra 
and has an idempotent 
$1_{i^{(m)}}:=\psi_{w_0}\prod_{s=1}^m y_s^{s-1}$, cf. \cite{KL1}. The fact that  $1_{i^{(m)}}$ is an idempotent follows immediately from the equality 
\begin{equation}\label{EKLId}
1_{i^{(m)}}\psi_{w_0}=\psi_{w_0}
\end{equation}
noted in \cite[\S2.2]{KL1}.

\begin{Lemma} \label{LNHecke} 
For any $x\in R_{m\al_i}$ there exists $y\in \Z[y_1,\dots,y_m]$ such that $1_{i^{(m)}}x=\psi_{w_0}y$. 
\end{Lemma}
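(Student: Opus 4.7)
The plan is to invoke Theorem~\ref{TBasis}, which in the single-color case gives the nil-Hecke basis of $R_{m\al_i}$, to write $x = \sum_{w\in \Si_m}\psi_w f_w$ with $f_w \in \Z[y_1,\ldots,y_m]$. Since $1_{i^{(m)}} = \psi_{w_0}\prod_{s=1}^m y_s^{s-1}$ by definition, it suffices to show that $\psi_{w_0}g\psi_w \in \psi_{w_0}\Z[y_1,\ldots,y_m]$ for an arbitrary polynomial $g$ and $w \in \Si_m$; the factors $f_w$ can then be absorbed into the final answer on the right.

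The first ingredient I would establish is an exchange expansion. Relations~\eqref{R3YPsi},~\eqref{ypsi} and~\eqref{psiy}, with $\de_{i_r,i_{r+1}} = 1$ throughout since the word is $i^m$, give $y_s\psi_r = \psi_r y_{s'} + c$ for some $s' \in \{s,s\pm 1\}$ and $c \in \{0,\pm 1\}$. Iterating along a reduced expression of $w$, I would obtain an expansion
\[
g\psi_w = \sum_{v\in \Si_m}\psi_v\,h^{g,w}_v
\]
for polynomials $h^{g,w}_v \in \Z[y_1,\ldots,y_m]$. Here one uses that $\psi_v$ is well-defined independent of the reduced expression in the nil-Hecke algebra, because~\eqref{braid} specializes to the error-free braid relation $\psi_r\psi_{r+1}\psi_r = \psi_{r+1}\psi_r\psi_{r+1}$ in the single-color setting (as $i_r=i_{r+1}=i_{r+2}$ rules out all three arrow-type cases in~\eqref{braid}), and non-reduced products of $\psi_r$'s vanish by $\psi_r^2=0$ together with braid moves.

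The second ingredient is the annihilation $\psi_{w_0}\psi_v = 0$ for every $v\neq e$. For a reduced expression $v = s_{r_1}\cdots s_{r_l}$ with $l\geq 1$, one chooses a reduced expression for $w_0$ ending in $s_{r_1}$, which exists because $w_0 s_{r_1} < w_0$, and writes $\psi_{w_0} = \psi_{w_0 s_{r_1}}\psi_{r_1}$; then $\psi_{w_0}\psi_{r_1} = \psi_{w_0 s_{r_1}}\psi_{r_1}^2 = 0$, from which $\psi_{w_0}\psi_v = 0$ follows. Combining the two ingredients, applying $\psi_{w_0}$ to the expansion above kills every term with $v\neq e$, leaving
\[
\psi_{w_0}g\psi_w = \psi_{w_0}\,h^{g,w}_e \in \psi_{w_0}\Z[y_1,\ldots,y_m],
\]
as required. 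As a sanity check, for $g = \prod_s y_s^{s-1}$ and $w = w_0$ the coefficient $h^{g,w}_e$ must equal $1$, which is consistent with~\eqref{EKLId}.

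The main obstacle I foresee is the bookkeeping in the exchange step: one must verify that when pushing a polynomial past $\psi_{r_1}\cdots\psi_{r_l}$, every resulting subword product $\psi_{r_{j_1}}\cdots\psi_{r_{j_k}}$ either equals $\psi_v$ for a well-defined $v$ or reduces to zero by a non-reduced-word argument. The absence of correction terms in~\eqref{braid} for the one-color setting is what makes this clean, and the induction is then routine.
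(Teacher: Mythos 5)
Your proposal is correct and follows essentially the same approach as the paper: expand in the $\{\psi_w y\}$ basis of the nil-Hecke algebra $R_{m\al_i}$ via Theorem~\ref{TBasis}, then kill all terms with $w\ne 1$ using $\psi_{w_0}\psi_v=0$ for $v\ne 1$. The paper is slightly more economical—it applies Theorem~\ref{TBasis} once to the single element $\bigl(\prod_{s=1}^m y_s^{s-1}\bigr)x$ rather than to $x$ followed by a separate straightening of polynomials past the $\psi_w$'s—but the underlying argument is identical.
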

\begin{proof}
By Theorem~\ref{TBasis}, we can write $(\prod_{s=1}^m y_s^{s-1})x=\sum_{w\in\Si_m}\psi_w y(w)$ for some $y(w)\in\Z[y_1,\dots,y_m]$. So $1_{i^{(m)}}x=\psi_{w_0}(\prod_{s=1}^m y_s^{s-1})x=\sum_{w\in\Si_m}\psi_{w_0}\psi_w y(w)=\psi_{w_0}y(1)$. 
\end{proof}

Let $\theta\in Q_+$. We define $I^{\theta}_{\di}$ to be the set of all expressions of the form
$(i_1^{(m_1)}, \ldots, i_r^{(m_r)})$ with
$m_1,\dots,m_r\in \Z_{\ge 0}$, $i_1,\dots,i_r\in I$
 and $m_1 \al_{i_1} + \cdots + m_r \al_{i_r} = \theta$. We refer to such expressions as {\em divided power words}. Analogously to the words, for $\kappa\in I=\Z/e\Z$ and a divided power word $\bi=(i_1^{(m_1)}, \ldots, i_r^{(m_r)})$, we define the divided power word $\bi^{+\kappa}:=((i_1+\kappa)^{(m_1)}, \ldots, (i_r+\kappa)^{(m_r)})$. 
 We identify $I^\theta$ with the subset of $I^\theta_\di$ which consists of all expressions as above with all $m_k=1$. 
 We use the same notation for concatenation of divided power words as for concatenation of words.

Fix $\bi = (i^{(m_1)},\ldots, i^{(m_r)}) \in I^{\theta}_{\di}$. We have the {\em divided power idempotent} 
\[
1_{\bi}:=\iota_{m_1\al_{i_1},\dots,m_r\al_{i_r}}(1_{i_1^{(m_1)}}\otimes\dots\otimes 1_{i_r^{(m_r)}}) \in R_{\theta}.
\]
Define $\bi! := [m_1]^! \cdots [m_r]^!$ and
\begin{equation}\label{EAngles}
\langle \bi \rangle := \sum_{k=1}^r m_k (m_k-1)/2.
\end{equation} 
Set 
\begin{equation}\label{EHatBi}
\hat\bi := (i_1,\ldots,i_1,\ldots,i_r,\ldots,i_r)\in I^{\theta},
\end{equation}
with $i_k$ repeated $m_k$ times. 
Note that $1_{\vphantom{\hat bi}\bi} 1_{\hat\bi} = 1_{\hat\bi} 
1_{\vphantom{\hat bi}\bi}
=1_{\vphantom{\hat bi}\bi}$.

\begin{Lemma}\label{LFact} {\rm \cite[\S2.5]{KL1}} 
 Let $U$ (resp.~$W$) be a left (resp.~right)
 $R_{\theta}$-module, free of finite rank as a $\Z$-module. 
 For  $\bi\in I^{\theta}_{\di}$, we have 
 \[
  \DIM (1_{\hat\bi} U) = \bi! q^{\langle \bi\rangle} \DIM(1_{\bi} U)
  \quad \text{and} \quad 
  \DIM (W 1_{\hat\bi})= \bi! q^{-\langle \bi\rangle} \DIM(W 1_{\bi}). 
 \]
\end{Lemma}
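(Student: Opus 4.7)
The plan is to reduce the claim to a one-factor computation and then invoke the Morita equivalence between the nil-Hecke algebra $R_{m\alpha_i}$ and its center. Write $\theta = \sum_{k=1}^{r} m_k \alpha_{i_k}$. Via the parabolic embedding~\eqref{EIota}, $1_{\hat\bi}$ becomes the identity $1_{m_1\alpha_{i_1},\ldots,m_r\alpha_{i_r}}$ of the parabolic subalgebra, while $1_\bi = 1_{i_1^{(m_1)}}\otimes\cdots\otimes 1_{i_r^{(m_r)}}$. Since the divided-power idempotents $1_{i_k^{(m_k)}}$ lie in pairwise commuting tensor factors, they can be applied one at a time; together with $\bi! = \prod_k [m_k]!$ and $\langle\bi\rangle = \sum_k m_k(m_k-1)/2$, this reduces the lemma to the case $r=1$, working inside $R := R_{m\alpha_i}$ with $e := 1_{i^{(m)}} = \psi_{w_0}\prod_{s=1}^{m} y_s^{s-1}$ and with $1_{i^m}$ equal to the identity of $R$.

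For the one-factor case, set $Z := eRe$. I claim that (i) $Re$ is a free graded right $Z$-module of graded rank $q^{m(m-1)/2}[m]!$, and (ii) $e$ is a full idempotent, i.e.~$ReR = R$. Granted these, the Morita equivalence supplied by the full idempotent $e$ produces a graded $\Z$-module isomorphism $U \cong Re\otimes_Z eU$ for any left $R$-module $U$ that is $\Z$-free of finite rank, and taking graded $\Z$-ranks then delivers $\DIM U = q^{m(m-1)/2}[m]! \cdot \DIM(eU)$, which is the desired formula.

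To prove (i), I would combine Lemma~\ref{LNHecke} with the equality $e\psi_{w_0} = \psi_{w_0}$ from \eqref{EKLId} and the observation that $\psi_s\psi_{w_0} = 0$ for every simple reflection $s \in \Si_m$ (which holds because $\psi_s^2 = 0$ for a repeated residue and $w_0$ admits a reduced decomposition starting with $s$). A direct calculation using Theorem~\ref{TBasis} then shows that $\{y_1^{a_1}\cdots y_m^{a_m}\cdot e : a\in\Z_{\ge 0}^m\}$ is a $\Z$-basis of $Re$, yielding a graded $\Z$-module isomorphism $Re\cong \Z[y_1,\ldots,y_m]$. The centrality of the subring $\Z[y_1,\ldots,y_m]^{\Si_m}\subseteq R$ identifies it with $Z$, and under this identification the right action of $Z$ on $Re$ becomes ordinary polynomial multiplication; the Chevalley--Shephard--Todd theorem then gives the graded rank $q^{m(m-1)/2}[m]!$. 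Claim (ii) is the classical fact that $R \cong \End_Z(\Z[y_1,\ldots,y_m])$ via the polynomial representation in which $y_k$ acts by multiplication and $\psi_s$ by a divided-difference operator, so $e$ becomes a rank-one idempotent in a matrix algebra over $Z$. The right-module statement is proved identically, except that $eR$ is a free left $Z$-module of graded rank $q^{-m(m-1)/2}[m]!$; the sign of the shift reverses because in the basis $\{e\cdot y_1^{a_1}\cdots y_m^{a_m}\}$ the polynomial factor $\prod_s y_s^{s-1}$ hidden in $e$ now contributes on the opposite side.

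The most delicate point in this plan is pinning down the precise graded shift $q^{\pm m(m-1)/2}$ in the rank computation, which ultimately stems from the explicit formula for $e$; the remaining ingredients (fullness of $e$ and the matrix-algebra description of $R$) are standard nil-Hecke theory and the relevant nil-Hecke identities are all available from the preceding part of the paper.
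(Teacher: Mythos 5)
The paper itself gives no proof of this lemma --- it is stated with the bare citation \cite[\S2.5]{KL1} --- so there is no in-text argument to compare against. Your proposal is a correct reconstruction of the argument that the citation points to: reduce to the single-block nil-Hecke algebra $R_{m\alpha_i}$ via the parabolic/tensor decomposition, identify $Re\cong\Z[y_1,\dots,y_m]$ and $eRe\cong\Z[y_1,\dots,y_m]^{\Si_m}$, and use the Morita equivalence together with the graded rank $q^{m(m-1)/2}[m]^!$ of the coinvariant algebra. The key computational step $\psi_s\psi_{w_0}=0$ is justified (braid relations hold exactly here since all residues coincide), the asymmetry $q^{\pm m(m-1)/2}$ is correctly traced to $\deg\psi_{w_0}=-m(m-1)$ entering on the left of $eR$ versus cancelling in $Re$, and the iterated one-factor-at-a-time reduction is legitimate because the divided-power idempotents commute across tensor factors. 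One minor pedantry: freeness of $\Z[y]$ over $\Z[y]^{\Si_m}$ over the integers is not literally Chevalley--Shephard--Todd (which is a characteristic-zero statement); it is a more elementary fact for $\Si_m$, but the conclusion you draw from it is correct.
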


\subsection{Semicuspidal  modules}
\label{SSSC}

Let $d,f\in \Z_{\ge 0}$. 
A word $\bi\in I^{d\de}$ is called {\em separated}\, if whenever $\bi=\bj\bk$ for $\bj\in I^\theta$ and $\bk\in I^\eta$, it follows that $\theta$ is a sum of positive roots in $\Phi_+^{\preceq\de}$ and $\eta$ is a sum of positive roots in $\Phi_+^{\succeq\de}$. 
We denote by $I^{d\de}_{\sep}$ the set of all  separated words in $I^{d\de}$. An $R_{d\de}$-module is {\em (imaginary) semicuspidal} if all of  its words are separated. 
Note that a shuffle of separated words is separated, so:

\begin{Lemma} \label{LIndCusp} 
If $U\in\mod{R_{d\de}}$ and $W\in\mod{R_{f\de}}$ are semicuspidal modules, then $U\circ W\in\mod{R_{(d+f)\de}}$ is semicuspidal. 
\end{Lemma}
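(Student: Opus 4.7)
The plan is to reduce the statement to the observation, already noted just before the lemma, that a shuffle of separated words is separated. Indeed, by \cite[Lemma 2.20]{KL1} (quoted in the excerpt), a word $\bi\in I^{(d+f)\de}$ occurs as a word of $U\circ W$ if and only if $\bi$ is a shuffle of some word $\bi^{(1)}$ of $U$ and some word $\bi^{(2)}$ of $W$. Since by assumption every word of $U$ and every word of $W$ is separated, it suffices to verify the claim on shuffles.

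For the shuffle claim, I would argue directly from the definition. Let $\bi^{(1)}\in I^{d\de}_{\sep}$ and $\bi^{(2)}\in I^{f\de}_{\sep}$, and let $\bi$ be any shuffle of $\bi^{(1)}$ and $\bi^{(2)}$. Suppose $\bi=\bj\bk$ with $\bj\in I^\th$ and $\bk\in I^\eta$. The shuffle structure induces compatible splittings $\bi^{(1)}=\bj^{(1)}\bk^{(1)}$ and $\bi^{(2)}=\bj^{(2)}\bk^{(2)}$ such that $\bj$ is a shuffle of $\bj^{(1)},\bj^{(2)}$ and $\bk$ is a shuffle of $\bk^{(1)},\bk^{(2)}$. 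In particular, $\th=|\bj|=|\bj^{(1)}|+|\bj^{(2)}|$ and $\eta=|\bk|=|\bk^{(1)}|+|\bk^{(2)}|$. By separatedness of $\bi^{(1)}$, each $|\bj^{(1)}|$ is a sum of positive roots in $\Phi_+^{\preceq\de}$ and $|\bk^{(1)}|$ a sum of positive roots in $\Phi_+^{\succeq\de}$, and similarly for $\bi^{(2)}$. Hence $\th$ is a sum of positive roots in $\Phi_+^{\preceq\de}$ and $\eta$ a sum of positive roots in $\Phi_+^{\succeq\de}$, proving $\bi$ is separated.

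Combining the two steps, every word of $U\circ W$ is separated, so $U\circ W$ is semicuspidal. There is no real obstacle here: the content of the lemma is entirely combinatorial, and the only thing to check is that the cones $\Phi_+^{\preceq\de}$ and $\Phi_+^{\succeq\de}$ are each closed under taking sums, which is immediate from their definitions in \S\ref{SSLie}.
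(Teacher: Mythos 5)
Your proposal is correct and matches the paper's argument: the paper dispatches the lemma with the remark preceding it ("a shuffle of separated words is separated") together with the cited word-of-induction criterion from \cite[Lemma 2.20]{KL1}, which is exactly the reduction you make. You have merely spelled out the elementary verification that a shuffle of two separated words is again separated, which the paper leaves implicit.
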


Set
$
1_{\nsep}
:=\sum_{\bi\in I^{d\de}\setminus I^{d\de}_{\sep}} 1_{\bi}. 
$  
The {\em (imaginary) semicuspidal algebra} is defined as 
\begin{equation}\label{ESCA}
\hat C_{d\de}:=R_{d\de}/R_{d\de} 1_{\nsep}R_{d\de}.
\end{equation}
The category of finitely generated semicuspidal $R_{d\de}$-modules is equivalent to the category $\mod{\hat C_{d\de}}$. 
A word $\bi\in I^{d\de}$ is called {\em semicuspidal} if the idempotent $1_\bi$ is non-zero in $\hat C_{d\de}$. Denote by $I^{d\de}_{\scusp}$ the set of all semicuspidal words. Then, setting $
1_{\noncusp}
:=\sum_{\bi\in I^{d\de}\setminus I^{d\de}_{\scusp}} 1_{\bi},
$ 
we have $\hat C_{d\de}\cong R_{d\de}/R_{d\de} 1_{\noncusp}R_{d\de}$. By definition, we always have  $I^{d\de}_{\scusp}\subseteq I^{d\de}_{\sep}$, but this containment may be strict, see Example~\ref{ExSep} below.

Everything in this subsection so far makes sense over any ground ring. In particular the notion of a semicuspidal module over $R_{d\de,\F}$ is defined for any field $\F$. We now explain the classification of the semicuspidal irreducible $R_{d\de,\F}$-modules for an arbitrary field $\F$.

We begin with the case $d=1$, in which case the semicuspidal irreducible $R_{d\de,\F}$-modules are parametrized by the elements of 
$
J=\{1,\dots,e-1\}=I\setminus\{0\}.
$
More precisely, let $j\in J$. We denote by $I^{\de,j}$ the set of all words in $I^\de$ of the form $0\bk j$ where $\bk$ is an arbitrary shuffle of the words $(1,2,\dots, j-1)$ and  $(e-1,e-2,\dots,j+1)$. 
Let $L_{\de,j}$ be the graded $\Z$-module with basis $\{v_\bi\mid\bi\in I^{\de,j}\}$ where all basis elements have degree $0$. 
By \cite[Theorem 3.4]{KRhomog}, there is a unique structure of a graded $R_\de$-module on $L_{\de,j}$ such that 
\begin{equation}\label{EMinuscule}
1_\bj v_\bi=\de_{\bi,\bj}v_\bi,\ y_r v_{\bi}=0,\ \psi_r v_{\bi}=
\left\{
\begin{array}{ll}
v_{s_r \bi} &\hbox{if $s_r\bi\in I^{\de,j}$,}\\
0 &\hbox{if $s_r\bi\not\in I^{\de,j}$}
\end{array}
\right.
\end{equation}
for all admissible $\bi,\bj,r$. 
All the words in $I^{\de,j}$ are separated, so the module $L_{\de,j}$ is semicuspidal, which implies that $I^{\de,j}\subseteq I^\de_{\scusp}$. 

For example, 
$$
I^{\de,1}=\{(0,e-1,e-2,\dots,1)\}\quad\text{and}\quad 
I^{\de,e-1}=\{(0,1,\dots, e-1)\},
$$
so $L_{\de,1}$ and $L_{\de,e-1}$ have $\Z$-rank $1$. On the other hand,  for $e\geq 3$, the module $L_{\de,e-2}$ has $\Z$-rank $e-2$, since
$$
I^{\de,e-2}=\{(0,1,\dots,r, e-1,r+1,r+2,\dots,e-2)\mid 0\leq r< e-2\}. 
$$

For a composition $\bd=(d_1,\dots,d_{e-1})\in\La(e-1,d)$, consider the semicuspidal $R_{d\de}$-module 
$
V^\bd:=L_{\de,1}^{\circ d_1}\circ\dots\circ L_{\de,e-1}^{\circ d_{e-1}}.
$

\begin{Theorem} \label{TVAl} 
Let $\F$ be an arbitrary field. 
There is an assignment $\ula\mapsto L(\ula)$ which maps every element $\ula\in\Par^J(d)$ to a semicuspidal irreducible $R_{d\de,\F}$-module $L(\ula)$ such that
\begin{enumerate}
\item[{\rm (i)}] $\{L(\ula)\mid \ula \in \Par^J(d)\}$ is a complete and irredundant set of irreducible semicuspidal $R_{d\de,\F}$-modules;
\item[{\rm (ii)}] Let $\bd=(d_1,\dots,d_{e-1})\in\La(e-1,d)$ and $$\Par^J(\bd)=\{\ula=(\la^{(1)},\dots,\la^{(e-1)})\in\Par^J(d)\mid |\la^{(j)}|=d_j \ \text{for all $j\in J$}\}.$$ 
Then $\{L(\ula)\mid \ula\in \Par^J(\bd)\}$ is the set of composition factors of $V^{\bd}_\F$. 
\end{enumerate}
\end{Theorem}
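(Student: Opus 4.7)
The plan is to build the modules $L(\ula)$ by induction on $d$, using the modules $V^\bd$ as standards and reducing the classification to the single-color case. The starting point is the $d=1$ case: one checks that the modules $L_{\delta,j}$ for $j \in J$ are pairwise non-isomorphic semicuspidal irreducibles (their word supports $I^{\delta,j}$ are disjoint and jointly exhaust $I^{\delta}_{\scusp}$), so $\{L_{\delta,j}\mid j\in J\}$ is the complete irredundant list for $d=1$. This matches $\Par^J(1)$ since the latter consists of multipartitions with a single box in one color. Define $L(\omega_j):= L_{\delta,j}$ where $\omega_j$ is the one-box multipartition in color $j$.

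The next step, which is the main obstacle, is the single-color classification: for fixed $j \in J$ and $m \in \Z_{>0}$, I must show that the semicuspidal $R_{m\delta,\F}$-modules whose words are shuffles of $m$ copies of words from $I^{\delta,j}$ are classified by $\Par(m)$. The plan is to construct irreducibles $L(\la,j)$ for $\la \in \Par(m)$ as simple quotients of $L_{\delta,j}^{\circ m}$, proving irreducibility and non-isomorphism via an analysis of the endomorphism algebra $\End_{R_{m\delta,\F}}(L_{\delta,j}^{\circ m})$; I expect this to realize (a quotient/cover of) the group algebra $\F\Si_m$ and to yield a Schur-functor-type labelling by $\Par(m)$. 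Alternatively, one can appeal to the machinery of cuspidal algebras for imaginary roots (as in Kleshchev--Ram, Kleshchev--Muth and Tingley--Webster).

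Granted the single-color case, I would define, for $\ula=(\la^{(1)},\dots,\la^{(e-1)})\in\Par^J(d)$ with $|\la^{(j)}|=d_j$,
\[
L(\ula):=\head\bigl(L(\la^{(1)},1)\circ L(\la^{(2)},2)\circ\cdots\circ L(\la^{(e-1)},e-1)\bigr).
\]
Semicuspidality of this product follows from Lemma~\ref{LIndCusp}. To see $L(\ula)$ is irreducible and that distinct $\ula$ yield non-isomorphic modules, I would restrict to the parabolic $R_{d_1\delta,\ldots,d_{e-1}\delta}$ and use Lemma~\ref{LMackey}: since a separated word in $I^{d\delta}$ determines the color composition $\bd=(d_1,\ldots,d_{e-1})$ uniquely, only the identity double coset contributes to the $\bd$-graded piece of $\Res_{d_1\delta,\ldots,d_{e-1}\delta}L(\ula)$, so this piece is the external tensor product $L(\la^{(1)},1)\boxtimes\cdots\boxtimes L(\la^{(e-1)},e-1)$, which is irreducible and determines $\ula$. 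Frobenius reciprocity (applied to the head) then yields irreducibility of $L(\ula)$ and distinctness across $\ula$. Completeness follows since any semicuspidal irreducible, by the same separation argument on its support, has a unique color-degree $\bd$ and, within that degree, is pinned down by Step~2.

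For part~(ii), the formal character $\CH V^\bd$ is the shuffle of $\CH L_{\delta,1}^{\circ d_1},\ldots,\CH L_{\delta,e-1}^{\circ d_{e-1}}$. By the single-color classification, the composition factors of $L_{\delta,j}^{\circ d_j}$ are precisely $\{L(\la^{(j)},j)\mid \la^{(j)}\in\Par(d_j)\}$ with appropriate multiplicities, and inducing via transitivity together with the parabolic restriction argument above identifies the composition factors of $V^\bd_\F$ with exactly $\{L(\ula)\mid\ula\in\Par^J(\bd)\}$. The bulk of the argument is Step~2; the remaining steps are formal once the single-color picture is in place.
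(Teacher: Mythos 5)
Your proposal follows essentially the same route as the paper: delegate the single-color classification to the imaginary Schur--Weyl duality machinery (Kleshchev--Muth), then manufacture the multi-color irreducibles as induction products of the single-color ones and check completeness via the color composition $\bd$. The paper cites \cite[Theorems 4, 6]{KM} for the single-color step and then cites \cite[Theorem 5.10]{Kcusp} for the statement that the product $L_1(\la^{(1)})\circ\cdots\circ L_{e-1}(\la^{(e-1)})$ is \emph{already} irreducible, complete and irredundant; you instead take $L(\ula)$ to be the head and try to re-derive irreducibility via a Mackey/restriction argument. Those definitions coincide once the product is known to be simple, so that part is a harmless repackaging.

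There is, however, a concrete gap in the restriction argument you sketch. The claim that ``a separated word in $I^{d\delta}$ determines the color composition $\bd$ uniquely'' is not correct as stated: separated words need not have any color decomposition at all (Example~\ref{ExSep} exhibits $0012342341\in I^{2\de}_{\sep}\sm I^{2\de}_{\scusp}$ for $e=5$), so the relevant notion is ``semicuspidal,'' and even then the uniqueness of the decomposition of a semicuspidal word into color blocks is not automatic --- it needs an argument (e.g.\ tracing the positions of the $0$'s and the terminal letters). Beyond that, ``only the identity double coset contributes'' to the relevant piece of $\Res_{d_1\de,\dots,d_{e-1}\de}L(\ula)$ does not follow merely from uniqueness of the color decomposition: one must actually analyze the Mackey filtration of Lemma~\ref{LMackey} and rule out contributions from non-trivial $w\in{}^{\bd e}\D^{\bd e}$ to the color-homogeneous piece. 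These are exactly the technicalities that \cite[Theorem 5.10]{Kcusp} absorbs, so your proposal, while in the right spirit, effectively re-derives that theorem with the hardest steps left unfilled; invoking it (as the paper does) would close the gap cleanly.
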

\begin{proof}
This is essentially contained in \cite{Kcusp} and \cite{KM}, but we  provide some details for the reader's convenience. 
In this proof, we drop the subscript $\F$ from our notation. Fix $n\in\Z_{\geq d}$. Let $j\in J$, $m\in\Z_{\geq 0}$, and $\nu\in\La(n,m)$. In \cite[\S1.4]{KM}, certain submodules $Z_j^\nu\subseteq L_{\de,j}^{\circ m}$ are constructed. Let $Z_j:=\bigoplus_{\nu\in\La(n,m)}Z_j^\nu$ and ${\mathscr S}_{m,j}:=R_{d\de}/\Ann_{R_{d\de}}(Z_j)$. In \cite[Theorems 4 and 6]{KM} a complete and irredundant family $\{L_j(\la)\mid \la\in\Par(m)\}$ of irreducible ${\mathscr S}_{m,j}$-modules is constructed and it is proved that $Z_j$ is a projective generator for ${\mathscr S}_{m,j}$, hence every $L_j(\la)$ appears as a composition factor of $Z_j$. But $Z_j$ is a direct sum of submodules of $L_{\de,j}^{\circ m}$ and one of the summands is $L_{\de,j}^{\circ m}$ itself. So every $L_j(\la)$ appears as a composition factor of $L_{\de,j}^{\circ m}$. 

Now, for $\ula\in\Par^J(\bd)$, we consider the $R_{d\de}$-module  $L(\ula):=L_1(\la^{(1)})\circ\dots\circ L_{e-1}(\la^{(e-1)})$. By 
\cite[Theorem 5.10]{Kcusp}, this module is semicuspidal and irreducible, and $\{L(\ula)\mid \ula \in \Par^J(d)\}$ is a complete and irredundant set of irreducible semicuspidal $R_{d\de,\F}$-modules, proving (i). Now (ii) follows from the description of the composition factors of each $L_{\de,j}^{\circ d_j}$ in the previous paragraph. 
\end{proof}

\begin{Corollary} \label{CSepSC} 
The set $I^{d\de}_{\scusp}$ is exactly the set of all shuffles of  words $\bi^{(1)},\dots,\bi^{(d)}$ such that each 
$\bi^{(a)}\in\bigsqcup_{j\in J}I^{\de,j}$. 
\end{Corollary}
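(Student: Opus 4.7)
The plan is to deduce both inclusions from Theorem~\ref{TVAl} and the shuffle description of words of induced modules (recalled from \cite[Lemma 2.20]{KL1} just after Theorem~\ref{TBasisCyc}).

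For the easy inclusion---that every shuffle is semicuspidal---given a shuffle $\bi$ of words $\bi^{(1)},\dots,\bi^{(d)}$ with $\bi^{(a)}\in I^{\de,j_a}$, I would set $d_j:=\#\{a:j_a=j\}$ and form $V^\bd=L_{\de,1}^{\circ d_1}\circ\cdots\circ L_{\de,e-1}^{\circ d_{e-1}}$. Each factor $L_{\de,j}$ is semicuspidal by construction, so iterating Lemma~\ref{LIndCusp} shows $V^\bd$ is semicuspidal. The shuffle description then identifies $\bi$ as a word of $V^\bd$, so $1_\bi V^\bd\neq 0$; since semicuspidal modules factor through $\hat C_{d\de}$, this forces $1_\bi\neq 0$ in $\hat C_{d\de}$, i.e.\ $\bi\in I^{d\de}_\scusp$.

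For the reverse inclusion, starting from $\bi\in I^{d\de}_\scusp$, I would first arrange a field $\F$ over which $1_\bi$ remains a nonzero idempotent of $\hat C_{d\de,\F}$: any nonzero idempotent in a $\Z$-algebra survives base change either to $\Q$ (if torsion-free) or to $\F_p$ for a prime $p$ dividing its minimal additive order. By the classification of simple semicuspidal modules in Theorem~\ref{TVAl}(i), together with the finite-dimensionality of $\hat C_{d\de,\F}$ available from the cuspidal-algebra literature \cite{Kcusp,KM}, the nonzero idempotent $1_\bi$ cannot lie in the Jacobson radical and so must act nontrivially on some simple module $L(\ula)_\F$. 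Theorem~\ref{TVAl}(ii) then exhibits $L(\ula)_\F$ as a composition factor of $V^{\bd(\ula)}_\F$ with $\bd(\ula):=(|\la^{(1)}|,\dots,|\la^{(e-1)}|)$, so $\bi$ is a word of $V^{\bd(\ula)}_\F$, and the shuffle description identifies it as a shuffle of words from the $I^{\de,j}$, completing the argument.

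The hardest piece is really bookkeeping rather than a deep mathematical difficulty: one must combine the descent of the idempotent $1_\bi$ to an appropriate field with an appeal to finite-dimensionality of $\hat C_{d\de,\F}$, since a priori $\hat C_{d\de}$ is not finitely generated over $\Z$. The shuffle arguments on both sides are then formal consequences of Theorem~\ref{TVAl} and the character formula for induction.
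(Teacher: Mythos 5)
Your proof follows essentially the same two-step route as the paper's: the forward inclusion via semicuspidality of induced modules and the shuffle description, and the reverse inclusion via descent of the idempotent to a field and the classification in Theorem~\ref{TVAl}. The choice of $V^\bd$ rather than $L_{\de,j_1}\circ\dots\circ L_{\de,j_d}$ in the forward direction is an immaterial variant, and the base-change step for the idempotent is correct (and is in fact what the paper's proof invokes without comment).

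There is, however, a genuine error in the converse direction: you assert that $\hat C_{d\de,\F}$ is finite-dimensional and cite this to exclude $1_\bi$ from the Jacobson radical. In fact $\hat C_{d\de,\F}$ is infinite-dimensional over $\F$: already for $d=1$, the algebra $\ga^\om\hat C_{\de}\ga^\om$ is the rank-one affine zigzag algebra of \cite{KM2}, which contains a polynomial part. The introduction of the paper explicitly notes that $\hat C_{d\de}$ is not finitely generated as a $\Z$-module, and you restate this yourself at the end of your write-up, so the finite-dimensionality claim contradicts your own discussion. Fortunately, the conclusion you want does not require it. For any unital ring $A$, a nonzero idempotent $e$ cannot lie in the Jacobson radical $J(A)$: if $e\in J(A)$, then $1-e$ is invertible, and $e(1-e)=0$ forces $e=0$. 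Since $J(A)$ is the intersection of the annihilators of the simple $A$-modules, $1_{\bi,\F}$ must act nontrivially on some simple $\hat C_{d\de,\F}$-module, which by Theorem~\ref{TVAl}(i) is some $L(\ula)$. Replacing the finite-dimensionality appeal by this elementary ring-theoretic fact repairs the gap, after which your argument matches the paper's.
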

\begin{proof}
If $\bi$ is a shuffle of words $\bi^{(1)},\dots,\bi^{(d)}$ such that $\bi^{(a)}\in I^{\de,j_a}$ for $a=1,\dots,d$, then $\bi$ is a word of the semicuspidal module $L_{\de,j_1}\circ\dots\circ L_{\de,j_d}$, so $\bi\in I^{d\de}_{\scusp}$. Conversely, let $\bi\in I^{d\de}_{\scusp}$. By definition, $1_\bi$ is non-zero in $\hat C_{d\de}$. Since $1_\bi$ is an idempotent, it follows that $1_{\bi,\F}:=1_\bi\otimes 1_{\F}$ is non-zero in $\hat C_{d\de,\F}$ for some field $\F$. Hence there is an irreducible semicuspidal $R_{d\de,\F}$-module $L$ such that $1_{\bi,\F} L\neq 0$. By Theorem~\ref{TVAl}, the word $\bi$ is a shuffle of words $\bi^{(1)},\dots,\bi^{(d)}$ such that each $\bi^{(a)}\in\bigsqcup_{j\in J}I^{\de,j}$.
\end{proof}

\begin{Example} \label{ExSep} 
{\rm 
Let $e=5$ and $d=2$. Then the word $0012342341$ is in $I^{d\de}_\sep$,  but is not in  $I^{d\de}_\scusp$ by Corollary~\ref{CSepSC}.
}
\end{Example}

\subsection{Induction and restriction of semicuspidal modules}\label{SSParabolicSC}
Throughout the subsection we fix $d\in \Z_{\ge 0}$, $n\in\Z_{>0}$ and $\la=(\la_1,\dots,\la_n)\in\La(n,d)$.
Denote
$$R_{\la\de}:=R_{\la_1\de,\dots,\la_n\de}\subseteq R_{d\de}.$$
Let $1_{\la\de}$ denote the identity element of $R_{\la\de}$.
Define the semicuspidal parabolic subalgebra
$$\hat C_{\la\de}\subseteq \hat C_{d\de}$$
to be the image of $R_{\la\de}$ under the natural projection $R_{d\de}\onto \hat C_{d\de}$.
Whereas the parabolic subalgebra $R_{\la\de}$ has been identified with $R_{\la_1\de}\otimes\dots\otimes R_{\la_n\de}$ via the embedding $\iota_{\la_1\de,\dots,\la_n\de}$, it is not clear a priori that $\hat C_{\la\de}\cong \hat C_{\la_1\de}\otimes\dots\otimes \hat C_{\la_n\de}$. This will be proved in Lemma~\ref{L030216}.

We call an $R_{\la_1\de}\otimes\dots\otimes R_{\la_n\de}$-module $W$ semicuspidal if $(1_{\bi^{(1)}}\otimes\dots\otimes 1_{\bi^{(n)}})W=0$ whenever $\bi^{(1)},\dots,\bi^{(n)}$ are not all separated. This is equivalent to the property that $W$ factors through the natural quotient $\hat C_{\la_1\de}\otimes\dots\otimes \hat C_{\la_n\de}$ of $R_{\la_1\de}\otimes\dots\otimes R_{\la_n\de}$.

\begin{Lemma} \label{LResCusp}
We have:
\begin{enumerate}
\item[{\rm (i)}] If $W$ is a semicuspidal $R_{d\de}$-module, then $\Res_{\la_1\de,\dots,\la_n\de}W$ is a semicuspidal $R_{\la_1\de}\otimes\dots\otimes R_{\la_n\de}$-module.
\item[{\rm (ii)}] If $\bi^{(1)}\in I^{\la_1\de},\dots,\bi^{(n)}\in I^{\la_n\de}$ and $\bi^{(1)}\dots\bi^{(n)}\in I^{d\de}_\scusp$, then we have that $\bi^{(1)}\in I^{\la_1\de}_\scusp,\dots,\bi^{(n)}\in I^{\la_n\de}_\scusp$.
\end{enumerate}
\end{Lemma}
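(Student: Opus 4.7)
The plan is to deduce (i) from (ii) and to prove (ii) by iterating a single-cut statement, whose proof will be a short weight count once one exploits Corollary~\ref{CSepSC}.

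Corollary~\ref{CSepSC} identifies $I^{d\de}_{\scusp}$ with the set of shuffles of $d$ words, each lying in some $I^{\de,j}$ for $j\in J$. For (ii) I would first reduce to the $n=2$ case: assuming $\bi=\bi'\bi''\in I^{d\de}_{\scusp}$ with $|\bi'|=m\de$ forces $\bi'\in I^{m\de}_{\scusp}$ and $\bi''\in I^{(d-m)\de}_{\scusp}$, a straightforward induction on $n$ (cut $\bi$ off at the right-hand end of $\bi^{(1)}$ and recurse on $\bi^{(2)}\cdots\bi^{(n)}$) then gives each $\bi^{(s)}\in I^{\la_s\de}_{\scusp}$.

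For the $n=2$ step I would fix a realization of $\bi$ as a shuffle of minuscule words $\bm_1,\dots,\bm_d$ and lean on two features of minuscule words: each begins with the letter $0$, and (having weight $\de$) contains every letter of $I$ exactly once. Since $|\bi'|=m\de$ has $\al_0$-coefficient $m$, the word $\bi'$ contains exactly $m$ zeros, so exactly $m$ of the $\bm_a$ have their initial letter inside $\bi'$; relabel these as $\bm_1,\dots,\bm_m$, so that $\bm_{m+1},\dots,\bm_d$ sit entirely inside $\bi''$. Writing $\bm_a^-$ for the nonempty prefix of $\bm_a$ lying in $\bi'$, the crucial identity is
\[
\sum_{a=1}^{m}|\bm_a^-|=|\bi'|=m\de.
\]
Each $\al_i$-coefficient of $|\bm_a^-|$ is $0$ or $1$ (since $\bm_a$ contains each letter of $I$ at most once), and summing over $a$ must give $m$, forcing every one of these coefficients to equal $1$. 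Thus $\bm_a^-=\bm_a$ for all $a\le m$, so $\bi'$ is a shuffle of $\bm_1,\dots,\bm_m$ and $\bi''$ a shuffle of $\bm_{m+1},\dots,\bm_d$, and both are semicuspidal by Corollary~\ref{CSepSC}.

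Part (i) should then drop out from (ii) together with the inclusion $I^{\la_s\de}_{\scusp}\subseteq I^{\la_s\de}_{\sep}$: on $\Res_{\la_1\de,\dots,\la_n\de}W$ the idempotent $1_{\bi^{(1)}}\otimes\cdots\otimes 1_{\bi^{(n)}}$ acts as $1_{\bi^{(1)}\cdots\bi^{(n)}}$ on $W$, and semicuspidality of $W$ forces this to vanish unless $\bi^{(1)}\cdots\bi^{(n)}\in I^{d\de}_{\scusp}$; in that case (ii) shows each $\bi^{(s)}\in I^{\la_s\de}_{\sep}$, so the contrapositive is exactly the semicuspidality condition over $R_{\la_1\de}\otimes\cdots\otimes R_{\la_n\de}$.

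I expect the only real content to be the $n=2$ step, and even there the argument is just a careful accounting once one commits to the shuffle-of-minuscule-words description. The trap I would want to avoid is trying to work directly with the monoid generated by $\Phi_+^{\preceq\de}$ (or $\Phi_+^{\succeq\de}$): this monoid is not saturated under subtraction of $\de$, so a naive attack through the definition of ``separated'' at the level of weights gets bogged down, whereas the shuffle picture makes the prefix structure transparent.
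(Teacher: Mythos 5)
Your argument is correct, and it carries out the combinatorial route that the paper names (``this is known and can be proved combinatorially using Corollary~\ref{CSepSC}'') but does not actually execute: the paper's proof instead reduces (ii) to (i), passes to an irreducible module over $\C$, and cites McNamara's Theorem~14.6. Your version is therefore more self-contained --- it needs only Corollary~\ref{CSepSC} and elementary bookkeeping with prefixes of minuscule words, and it avoids any scalar extension --- and it runs in the opposite direction (prove (ii) directly, then derive (i)). The $n=2$ step is exactly right: since a shuffle preserves the internal order of each $\bm_a$, the portion of $\bm_a$ landing in the initial segment $\bi'$ is a (possibly empty) prefix $\bm_a^-$, whose weight has every $\al_i$-coordinate equal to $0$ or $1$; the constraint $\sum_a|\bm_a^-|=m\de$ then forces all coordinates to be $1$, so each $\bm_a^-$ is all of $\bm_a$. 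Your caution about the monoids spanned by $\Phi_+^{\preceq\de}$ and $\Phi_+^{\succeq\de}$ is also well placed --- the paper's Example~\ref{ExSep} shows that separated does not imply semicuspidal, so the naive weight-level attack really does fail. One point worth making explicit when passing from (ii) to (i): by definition, semicuspidality of $W$ only gives $1_\bi W=0$ for $\bi\notin I^{d\de}_{\sep}$, whereas you invoke the stronger vanishing for $\bi\notin I^{d\de}_{\scusp}$; the latter holds because a semicuspidal $R_{d\de}$-module factors through $\hat C_{d\de}$ (as the paper records right after the definition), so any word of $W$ lies in $I^{d\de}_{\scusp}$, which is exactly the hypothesis (ii) needs.
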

\begin{proof}
This is known and can be proved combinatorially using Corollary~\ref{CSepSC}. We sketch a representation-theoretic proof for the reader's convenience. Clearly (i) and (ii) are equivalent, and hence it suffices to prove (i) with scalars extended to $\C$ in the case where  $W$ is irreducible. This follows for example from \cite[Theorem 14.6]{McNAff}.
\end{proof}

\begin{Lemma} \label{LIndCuspProj} 
If $\bi^{(1)}\in I^{\la_1\de}_\scusp,\dots,\bi^{(n)}\in I^{\la_n\de}_\scusp$, then there is an isomorphism of $R_{d\de}$-modules
\begin{align*}
\hat C_{d\de}1_{\bi^{(1)}\dots\bi^{(n)}}&\iso \hat C_{\la_1\de}1_{\bi^{(1)}} \circ\dots\circ \hat C_{\la_n\de}1_{\bi^{(n)}},
\\
1_{\bi^{(1)}\dots\bi^{(n)}}
&\mapsto
1_{\la_1\de,\dots,\la_n\de}\otimes 1_{\bi^{(1)}}\otimes \dots\otimes 1_{\bi^{(n)}}.\end{align*}
\end{Lemma}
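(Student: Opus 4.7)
The plan is to construct mutually inverse $R_{d\de}$-module maps between the two sides.

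For the forward map $\phi$: by Lemma~\ref{LIndCusp}, the RHS is semicuspidal and therefore descends to a $\hat C_{d\de}$-module. As an $R_{d\de}$-module it is cyclic, generated by $v:=1_{\la_1\de,\dots,\la_n\de}\otimes 1_{\bi^{(1)}}\otimes\dots\otimes 1_{\bi^{(n)}}$. The surjection $R_{d\de}1_{\bi^{(1)}\dots\bi^{(n)}}\twoheadrightarrow\text{RHS}$, $1_{\bi^{(1)}\dots\bi^{(n)}}\mapsto v$, vanishes on $R_{d\de}1_{\nsep}R_{d\de}1_{\bi^{(1)}\dots\bi^{(n)}}$ and so factors through a surjection $\phi\colon\hat C_{d\de}1_{\bi^{(1)}\dots\bi^{(n)}}\twoheadrightarrow\text{RHS}$ sending $1_{\bi^{(1)}\dots\bi^{(n)}}$ to $v$.

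For the inverse, I would use the tensor--hom adjunction between $\Ind_{\la_1\de,\dots,\la_n\de}$ and $\Res_{\la_1\de,\dots,\la_n\de}$: an $R_{d\de}$-homomorphism $\tilde\psi\colon\text{RHS}\to\hat C_{d\de}1_{\bi^{(1)}\dots\bi^{(n)}}$ corresponds to an $R_{\la_1\de,\dots,\la_n\de}$-homomorphism from $\hat C_{\la_1\de}1_{\bi^{(1)}}\boxtimes\dots\boxtimes\hat C_{\la_n\de}1_{\bi^{(n)}}$ to the restriction $1_{\la_1\de,\dots,\la_n\de}\hat C_{d\de}1_{\bi^{(1)}\dots\bi^{(n)}}$, which I would define by sending the generator $1_{\bi^{(1)}}\otimes\dots\otimes 1_{\bi^{(n)}}$ to $1_{\bi^{(1)}\dots\bi^{(n)}}$. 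Its well-definedness reduces to showing that, for each $k$, the image in $\hat C_{d\de}$ of the non-separated idempotent of $R_{\la_k\de}$, inserted in the $k$-th tensor slot via $\iota_{\la_1\de,\dots,\la_n\de}$, is zero.

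The crucial combinatorial input is: if some $\bj^{(k)}\in I^{\la_k\de}$ is non-separated, then for any $\bk^{(l)}\in I^{\la_l\de}$ with $l\ne k$, the concatenation $\bk^{(1)}\dots\bj^{(k)}\dots\bk^{(n)}\in I^{d\de}$ is non-separated. This follows from an explicit description of the cones: for $\theta=\sum_{i\in I}a_i\al_i\in Q_+$, one has that $\theta$ is a sum of positive roots from $\Phi_+^{\preceq\de}$ (respectively $\Phi_+^{\succeq\de}$) if and only if $a_0\ge a_i$ (respectively $a_0\le a_i$) for every $i\in I\setminus\{0\}$. Since each $|\bk^{(l)}|=\la_l\de$ contributes equally to every simple-root coefficient, any bad cut within $\bj^{(k)}$ extends to a bad cut of the full concatenation, making it non-separated and hence non-semicuspidal, so that its associated idempotent vanishes in $\hat C_{d\de}$.

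Granted these two constructions, $\phi(1_{\bi^{(1)}\dots\bi^{(n)}})=v$ and $\tilde\psi(v)=1_{\bi^{(1)}\dots\bi^{(n)}}$, so $\phi$ and $\tilde\psi$ are mutually inverse on the cyclic generators of the two modules, and therefore on the whole modules. The main technical obstacle I anticipate is the verification of the cone characterization above, which requires an explicit analysis of positive roots in affine type $A_{e-1}^{(1)}$; once this is in hand, the rest of the argument is essentially formal.
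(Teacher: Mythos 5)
Your overall strategy --- forward map from cyclicity and semicuspidality of the convolution product, inverse map via the induction/restriction adjunction --- is exactly the paper's. The place where you diverge, and where your argument breaks, is the well-definedness of the inverse: the paper invokes Lemma~\ref{LResCusp}(i) (restriction of a semicuspidal module is semicuspidal), whose proof is representation-theoretic, whereas you try to replace it with a direct combinatorics-of-words argument.

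The combinatorial input you propose is false. Your cone characterization --- that $\theta=\sum a_i\al_i\in Q_+$ is a $\Z_{\ge 0}$-combination of roots from $\Phi_+^{\preceq\de}$ if and only if $a_0\ge a_i$ for all $i\in J$ --- fails already for $e=4$ with $\theta=\al_0+\al_2$: here $a_0=1\ge a_i$ for all $i$, but every element of $\Phi_+^{\preceq\de}$ has $\al_0$-coefficient at least $1$, so any putative expression for $\theta$ has a single summand $\de-\be$ with $\be\in\Phi'_+$ and $\be\ge\al_1+\al_3$; the only such $\be$ is $\al_1+\al_2+\al_3$, which forces $\theta=\al_0+\al_2$ to equal $\al_0$, a contradiction. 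Moreover $\theta+\de=2\al_0+\al_1+2\al_2+\al_3=(\de-\al_1)+(\de-\al_3)$ \emph{is} such a sum, so a ``bad cut'' in one factor need not persist after prepending a factor of weight $\de$. Concretely, with $e=4$, the word $\bj^{(2)}=(0,2,1,3)\in I^\de$ is non-separated (the prefix $(0,2)$ has weight $\al_0+\al_2$), but the concatenation $(0,1,2,3)\cdot(0,2,1,3)=(0,1,2,3,0,2,1,3)$ is separated: one checks all eight cuts, and in particular the cut after position $6$ has prefix weight $2\al_0+\al_1+2\al_2+\al_3=(\de-\al_1)+(\de-\al_3)$ and suffix weight $\al_1+\al_3$. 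So the implication ``some $\bj^{(k)}$ non-separated $\Rightarrow$ concatenation non-separated'', which your argument needs, is simply not true.

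What \emph{is} true is the statement at the level of semicuspidal (not merely separated) words: if a concatenation $\bi^{(1)}\dots\bi^{(n)}$ is semicuspidal then each $\bi^{(k)}$ is semicuspidal (Lemma~\ref{LResCusp}(ii), equivalently part (i)), and this is what makes the inverse map well defined. In the counterexample above, the concatenation $(0,1,2,3,0,2,1,3)$ is indeed not semicuspidal (it is not a shuffle of two words from $\bigcup_j I^{\de,j}$), even though it is separated --- this is the same distinction illustrated in Example~\ref{ExSep}. So your argument cannot be repaired by a purely prefix/suffix cone calculation: you would need to either cite Lemma~\ref{LResCusp} directly or reproduce a proof of it, which in the paper goes through restriction of irreducible semicuspidal modules (via~\cite{McNAff}) rather than through the weight-cone combinatorics.
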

\begin{proof}
Since $\hat C_{\la_1\de}1_{\bi^{(1)}} \circ\dots\circ \hat C_{\la_n\de}1_{\bi^{(n)}}$ is semicuspidal, we can consider it as a $\hat C_{d\de}$-module. So there exists a homomorphism as in the lemma. To construct the inverse homomorphism, use adjointness of induction and restriction together with Lemma~\ref{LResCusp}(i).
\end{proof}

\begin{Lemma}\label{L030216}
The natural map
$
R_{\la_1\de}\otimes\dots\otimes R_{\la_n\de}\into R_{d\de}\onto \hat C_{d\de}
$
factors through $\hat C_{\la_1\de}\otimes\dots\otimes \hat C_{\la_n\de}$ and induces an isomorphism
$\hat C_{\la_1\de}\otimes\dots\otimes \hat C_{\la_n\de}\iso \hat C_{\la\de}$. Moreover, $\hat C_{d\de} 1_{\la\de}$ is a free right $\hat C_{\la\de}$-module with basis
$\{ \psi_w \mid w\in \D^{e\la}\}$.
\end{Lemma}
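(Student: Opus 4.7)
The plan is to prove both assertions together by establishing a single isomorphism of left $R_{d\de}$-modules,
\[
\Psi\colon R_{d\de}\,1_{\la\de}\otimes_{R_{\la\de}}\hat P\;\iso\;\hat C_{d\de}\,1_{\la\de},\qquad r\otimes \bar p\mapsto \overline{r\,\iota_{\la_1\de,\dots,\la_n\de}(p)},
\]
where $\hat P:=\hat C_{\la_1\de}\otimes\dots\otimes\hat C_{\la_n\de}$, regarded as an $R_{\la\de}$-module via the canonical surjection $R_{\la_1\de}\otimes\dots\otimes R_{\la_n\de}\cong R_{\la\de}\twoheadrightarrow \hat P$. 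Both conclusions of the lemma follow immediately from $\Psi$: multiplying by $1_{\la\de}$ on the left gives $\hat C_{\la\de}=1_{\la\de}\hat C_{d\de}\,1_{\la\de}\cong R_{\la\de}\otimes_{R_{\la\de}}\hat P=\hat P$, and combining $\Psi$ with the standard decomposition $R_{d\de}\,1_{\la\de}=\bigoplus_{w\in\D^{e\la}}\psi_w R_{\la\de}$ (a consequence of Theorem~\ref{TBasis} after choosing reduced expressions compatibly with the parabolic coset decomposition $\Si_{ed}=\bigsqcup_{w\in\D^{e\la}}w\Si_{e\la}$) gives the required free basis $\{\psi_w\mid w\in\D^{e\la}\}$.

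Two preliminary checks secure the definition of $\Psi$ and the factoring assertion of the lemma. Both reduce to the combinatorial observation that, if $\bj\in I^{\la_k\de}$ is non-separated and $\bi^{(j)}\in I^{\la_j\de}$ for $j\ne k$, then the concatenation $\bi^{(1)}\dots\bj\dots\bi^{(n)}\in I^{d\de}$ is again non-separated. Indeed, a prefix $\bj'$ of $\bj$ witnessing non-separation has some $\al_i$-coefficient strictly greater than its $\al_0$-coefficient; prepending words of weight $\la_j\de$ shifts every coordinate of the weight by the common amount $\la_1+\dots+\la_{k-1}$ and so preserves this defect. Consequently, each non-separated idempotent of $R_{\la_k\de}$ maps into $R_{d\de}1_{\nsep}R_{d\de}$, which simultaneously (i) shows that $R_{\la_1\de}\otimes\dots\otimes R_{\la_n\de}\to \hat C_{d\de}$ factors through $\hat P$, and (ii) guarantees that $\Psi$ is well-defined on the tensor product.

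I would then prove $\Psi$ is an isomorphism by decomposing both sides along tuples $(\bi^{(k)})\in\prod_k I^{\la_k\de}$. The target decomposes as $\hat C_{d\de}\,1_{\la\de}=\bigoplus \hat C_{d\de}\,1_{\bi^{(1)}\dots\bi^{(n)}}$, and by Lemma~\ref{LResCusp}(ii) the summand vanishes unless every $\bi^{(k)}\in I^{\la_k\de}_{\scusp}$. On the source, the analogous decomposition of $\hat P$ vanishes on the same set of tuples, and the tensor $\otimes_{R_{\la\de}}$ turns each surviving summand into the induced module $\hat C_{\la_1\de}1_{\bi^{(1)}}\circ\dots\circ\hat C_{\la_n\de}1_{\bi^{(n)}}$. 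Lemma~\ref{LIndCuspProj} identifies this induced module with $\hat C_{d\de}\,1_{\bi^{(1)}\dots\bi^{(n)}}$ via the map $1_{\la\de}\otimes 1_{\bi^{(1)}}\otimes\dots\otimes 1_{\bi^{(n)}}\mapsto 1_{\bi^{(1)}\dots\bi^{(n)}}$, which is precisely $\Psi$ applied to this cyclic generator. Assembling the matching summands gives the global isomorphism $\Psi$.

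The main obstacle is the compatibility check in the last step: Lemma~\ref{LIndCuspProj} produces an abstract isomorphism, and I must verify that its explicit generator formula truly records the image under the multiplication map used to define $\Psi$. Once this matching is checked summand by summand (which is straightforward from the definition of induction), the remainder of the proof is bookkeeping.
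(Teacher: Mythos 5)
Your second paragraph rests on a false combinatorial claim, and both of its ingredients fail. First, it is not true that a prefix $\bj'$ witnessing non-separation must have some $\al_i$-coefficient strictly greater than its $\al_0$-coefficient. For $e=4$, the weight $\al_0+\al_2$ is \emph{not} a sum of elements of $\Phi_+^{\preceq\de}$: any such sum has $\al_0$-coefficient at least equal to the number of summands, so there would be exactly one summand, but $\de-\al_1-\al_3=\al_0+\al_2$ is not in $\Phi_+^{\prec\de}$ because $\al_1+\al_3\notin\Phi'_+$. Yet all $\al_i$-coefficients of $\al_0+\al_2$ are $\le 1 = $ its $\al_0$-coefficient, so the ``defect'' you describe is absent. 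Second, and more damagingly, prepending a word of weight $m\de$ does \emph{not} preserve non-separation: adding $m\de$ can turn a non-decomposable weight into a decomposable one (e.g.\ $\de+\al_0+\al_2=(\de-\al_1)+(\de-\al_3)$). Concretely, for $e=4$ the word $\bj=(0,2,1,3)\in I^\de$ is non-separated (its prefix $(0,2)$ has weight $\al_0+\al_2$), but the concatenation $(0,1,2,3)\cdot(0,2,1,3)=(0,1,2,3,0,2,1,3)$ is separated — every prefix weight is a sum of elements of $\Phi_+^{\preceq\de}$ and every suffix weight a sum of elements of $\Phi_+^{\succeq\de}$, as one checks directly. (This word is a nice illustration of $I^{d\de}_{\sep}\supsetneq I^{d\de}_{\scusp}$, cf.\ Example~\ref{ExSep}.)

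What you actually need — and what is true — is the weaker statement that $1_{\bi^{(1)}\cdots\bj\cdots\bi^{(n)}}=0$ in $\hat C_{d\de}$ whenever $\bj$ is non-separated, which is precisely the contrapositive of Lemma~\ref{LResCusp}(ii): non-separated implies non-semicuspidal, and by that lemma the concatenation is then non-semicuspidal, hence its idempotent vanishes in $\hat C_{d\de}$. This is exactly how the paper establishes the factoring (``follows from Lemma~\ref{LResCusp}''), and you do invoke Lemma~\ref{LResCusp}(ii) later for the tuple decomposition, so the fix is simply to drop the combinatorial observation and cite Lemma~\ref{LResCusp} for both the factoring and the well-definedness of $\Psi$ (the two are equivalent, as you note implicitly). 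With that repair, the remainder of your argument — decomposing both sides of $\Psi$ along tuples $(\bi^{(k)})$, using Lemma~\ref{LResCusp}(ii) together with Corollary~\ref{CSepSC} to match the vanishing summands, and applying Lemma~\ref{LIndCuspProj} summand-by-summand — is correct and is essentially the paper's proof (the paper works with $W:=\hat C_{\la_1\de}\circ\cdots\circ\hat C_{\la_n\de}$ and the basis-theorem decomposition, leaving more of the bookkeeping implicit), just spelled out more explicitly.
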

\begin{proof}
That the map factors through $\hat C_{\la_1\de}\otimes\dots\otimes \hat C_{\la_n\de}$ follows from Lemma~\ref{LResCusp}.
For the remaining claims, let us consider the $R_{d\de}$-module
$W:=\hat C_{\la_1\de}\circ\dots\circ \hat C_{\la_n\de}$. By Lemma~\ref{LIndCusp}, the module $W$ factors through $\hat C_{d\de}$. On the other hand by the Basis Theorem~\ref{TBasis}
 for $R_{d\de}$, we can decompose $W=\bigoplus_{w\in \D^{e\la}} \psi_w 1_{\la\de}\otimes \hat C_{\la_1\de}\otimes\dots\otimes \hat C_{\la_n\de}$ as a $\Z$-module, with each summand being naturally isomorphic to $\hat C_{\la_1\de}\otimes\dots\otimes \hat C_{\la_n\de}$ as a $\Z$-module. The lemma follows.
\end{proof}

In view of the lemma we identify $\hat C_{\la_1\de}\otimes\dots\otimes \hat C_{\la_n\de}$ with $\hat C_{\la\de}$. Then:

\begin{Corollary} 
Suppose that for each $r=1,\dots,n$ we have a $\hat C_{\la_r\de}$-module $W_r$. Then there is a natural isomorphism of semicuspidal $R_{d\de}$-modules
\begin{align*}
W_1\circ\dots\circ W_n&\iso \hat C_{d\de}1_{\la\de}\otimes_{\hat C_{\la\de}} (W_1\boxtimes\dots\boxtimes W_n), \\
u1_{\la\de}\otimes w_1\otimes\dots\otimes w_n&\mapsto
\bar u1_{\la\de}\otimes w_1\otimes\dots\otimes w_n,
\end{align*}
where $\bar u\in\hat C_{d\de}$ is the image of $u\in R_{d\de}$ under the natural projection $R_{d\de}\onto \hat C_{d\de}$.
\end{Corollary}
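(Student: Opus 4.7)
The plan is to write down the candidate map explicitly, verify it is well-defined, and then show it is a bijection by comparing free-module decompositions on the two sides.

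First, I would observe that the left-hand side $W_1\circ\dots\circ W_n$ is semicuspidal by Lemma~\ref{LIndCusp}, since each $W_r$ (viewed as an $R_{\la_r\de}$-module via the surjection $R_{\la_r\de}\onto \hat C_{\la_r\de}$) is semicuspidal. Hence the $R_{d\de}$-action on it factors through $\hat C_{d\de}$, so it makes sense to look for a morphism of semicuspidal $R_{d\de}$-modules into the RHS. The candidate is the $\Z$-linear map $\phi$ defined on generators by $u 1_{\la\de}\otimes w_1\otimes \dots\otimes w_n\mapsto \bar u 1_{\la\de}\otimes w_1\otimes\dots\otimes w_n$. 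To see it is well-defined on the tensor product $R_{d\de}1_{\la\de}\otimes_{R_{\la\de}}(W_1\boxtimes\dots\boxtimes W_n)$, note that by Lemma~\ref{L030216} the action of $R_{\la\de}$ on $W_1\boxtimes\dots\boxtimes W_n$ factors through $\hat C_{\la\de}$, so for $r\in R_{\la\de}$, both $\bar{(ur)}1_{\la\de}\otimes(\ldots)$ and $\bar u 1_{\la\de}\otimes r(\ldots)$ equal $\bar u\bar r 1_{\la\de}\otimes (\ldots)$ in the target. The $R_{d\de}$-linearity of $\phi$ is immediate.

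Next, I would show $\phi$ is an isomorphism by comparing the two sides as $\Z$-modules. Setting $e\la:=(e\la_1,\dots,e\la_n)$ so that $\height(\la_r\de)=e\la_r$, the KLR basis theorem (Theorem~\ref{TBasis}) implies that $R_{d\de}1_{\la\de}$ is a free right $R_{\la\de}$-module with basis $\{\psi_w 1_{\la\de}\mid w\in\D^{e\la}\}$, by the same decomposition argument used in the proof of Lemma~\ref{L030216}. Hence
\[
R_{d\de}1_{\la\de}\otimes_{R_{\la\de}}(W_1\boxtimes\dots\boxtimes W_n)\;\cong\;\bigoplus_{w\in\D^{e\la}}\psi_w 1_{\la\de}\otimes(W_1\boxtimes\dots\boxtimes W_n),
\]
with each summand canonically identified with $W_1\boxtimes\dots\boxtimes W_n$. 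By Lemma~\ref{L030216}, the same basis $\{\psi_w1_{\la\de}\mid w\in\D^{e\la}\}$ witnesses freeness of $\hat C_{d\de}1_{\la\de}$ as a right $\hat C_{\la\de}$-module, so the RHS admits the entirely analogous direct-sum decomposition indexed by $\D^{e\la}$. Since $\phi$ sends $\psi_w 1_{\la\de}\otimes(\ldots)$ to $\bar\psi_w 1_{\la\de}\otimes(\ldots)$, it matches these two decompositions summand-by-summand, and is therefore an isomorphism.

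The only non-formal ingredient is the freeness of $R_{d\de}1_{\la\de}$ as a right $R_{\la\de}$-module with basis $\{\psi_w 1_{\la\de}\}_{w\in\D^{e\la}}$. This is the main technical point, but it is a standard consequence of Theorem~\ref{TBasis} (and is exactly the argument already invoked in Lemma~\ref{L030216}), so I do not expect serious obstacles. Everything else is formal: semicuspidality on the left transfers the action to $\hat C_{d\de}$, $\hat C_{\la\de}$ acts on $W_1\boxtimes\dots\boxtimes W_n$ by Lemma~\ref{L030216}, and the freeness results on both sides are compatible under $\phi$.
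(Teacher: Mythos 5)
Your proof is correct and matches what the paper intends: the corollary is stated without an explicit proof as an immediate consequence of Lemma~\ref{L030216}, and your argument---checking well-definedness via the fact that $K=\ker(R_{\la\de}\onto\hat C_{\la\de})$ acts as zero on $W_1\boxtimes\dots\boxtimes W_n$, then matching the two sides summand-by-summand using the common free basis $\{\psi_w1_{\la\de}\mid w\in\D^{e\la}\}$ supplied by Theorem~\ref{TBasis} and Lemma~\ref{L030216}---is precisely the content that lemma is set up to deliver. One small remark: the well-definedness and injectivity can also be obtained slightly more formally by noting that $W_1\circ\dots\circ W_n$ is semicuspidal (Lemma~\ref{LIndCusp}), so it is unchanged by applying $\hat C_{d\de}\otimes_{R_{d\de}}(-)$, and then using associativity of tensor products together with the observation that the right $R_{\la\de}$-action on $\hat C_{d\de}1_{\la\de}$ factors through $\hat C_{\la\de}$; but your basis argument is equally valid and more explicit.
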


From now on we identify the induced modules as in the corollary.

\section{Abaci, tableaux and RoCK blocks} 
\subsection{Abaci}\label{SSAb}

We will use the abacus notation for partitions, see~\cite[Section 2.7]{JK}. Recall that we have fixed a number $e\in\Z_{\geq 2}$ and $I=\Z/e\Z$. When convenient we identify $I$ with the subset 
$\{0,1,\dots,e-1\}\subset \Z$. We define the {\em abacus} $\Ab^e:=\N \times I$. For $i\in I$, the subset ${\mathsf R}_i:=\N \times \{ i\}\subset \Ab^e$ is referred to as the ($i$th) {\em runner} of $\Ab^e$.

Let $\la$ be a partition, and fix an integer $N\ge \ell(\la)$, so that we can write $\la=(\la_1,\dots,\la_N)$. 
Let 
\begin{equation}\label{E260216}
\Ab_N(\la):=\{\la_k+N-k\mid k=1,\dots,N\}\subset \Z_{\geq 0}.
\end{equation}
The {\em abacus display of $\la$} is
$$
\Ab_N^{e} (\la):=\{(t,i)\in \Ab^e\mid e t+i \in \Ab_N(\la)\}. 
$$ 
The elements of $\Ab_N^{e} (\la)$ are called the \emph{beads} of $\Ab_N^{e} (\la)$, and the elements of $\Ab^e \sm \Ab_N^e (\la)$ are called the {\em non-beads} of $\Ab_N^e (\la)$. 

We have the total order $<$ on $\Ab^e$ defined by the condition that 
$(t,i)<(q,j)$ if and only if $et+i<eq+j$. If $(t,i)<(q,j)$, we say that $(t,i)$ 
{\em precedes} $(q,j)$ and $(q,j)$ {\em succeeds} $(t,i)$. 
For any $r\in \Z_{>0}$, we say that a bead $(t,i)$ of $\Ab_N^e (\la)$ is
the {\em bead with number $r$} in $\Ab_N^e (\la)$ if exactly $r-1$ beads of $\Ab_N^e(\la)$ succeed $(t,i)$, and we say that a non-bead $(t,i)$
of $\Ab_N^e (\la)$ is the {\em non-bead with number $r$} in 
$\Ab_N^e(\la)$ if exactly $r-1$ non-beads of $\Ab_N^e (\la)$ precede 
$(t,i)$. 

It is easy to see that the bead $(t,i)$ with number $r$ of $\Ab_N^e(\la)$ satisfies $et+i=N+\la_r-r$. Moreover, if $(\la'_1,\la'_2,\ldots)$ is the conjugate partition to $\la$, then the non-bead $(t,i)$ with number $s$ of $\Ab_N^e(\la)$ satisfies 
$et+i = N-\la'_s +s-1$. 
Using these observations, it is easy to prove the following well-known fact:

\begin{Lemma} \label{LPrecSucc}
 Let $\la\in\Par$ and $(r,s)\in \Nodes$. Then $(r,s)\in \Y\la$ if and only if the bead with number $r$ succeeds the non-bead with number $s$ in $\Ab^e_N (\la)$. 
\end{Lemma}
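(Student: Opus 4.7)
The plan is to avoid invoking the non-bead position formula and prove the lemma by a direct counting argument based only on the bead formula. Throughout, I would identify $(t,i)\in\Ab^e$ with $et+i\in\N$ under the stated total order. The first step is to confirm that the bead with number $r$ sits at position $N+\la_r-r$: since $\la_1\ge\la_2\ge\cdots\ge\la_N\ge 0$, the shifts $\la_k+N-k$ are strictly decreasing in $k$ (because $\la_k\ge\la_{k+1}$ implies $\la_k-k\ge\la_{k+1}-k>\la_{k+1}-(k+1)$), so the $r$-th largest element of $\Ab_N(\la)$ equals $\la_r+N-r$, matching the count $r-1$ of beads succeeding it.

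Next I would count the non-beads strictly preceding this bead. There are exactly $N+\la_r-r$ positions strictly less than $N+\la_r-r$ in $\N$; among these, the ones lying in $\Ab_N(\la)$ correspond bijectively to the set $\{k:\la_k+N-k<N+\la_r-r\}$, which by the same monotonicity is simply $\{k:k>r\}$ and has cardinality $N-r$. Hence the bead with number $r$ is preceded by exactly $(N+\la_r-r)-(N-r)=\la_r$ non-beads.

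Since the non-beads are numbered $1,2,\dots$ from the smallest position upwards, the non-bead with number $s$ precedes the bead with number $r$ if and only if $s\le\la_r$, and this is precisely the condition $(r,s)\in\Y\la$. The only point requiring care is the opposite orientation of the two numbering conventions (bead numbers decrease with position, non-bead numbers increase), but once that bookkeeping is handled the argument reduces to a one-line count and no substantive obstacle arises; in particular, the non-bead position formula $N-\la'_s+s-1$ is not needed, though it falls out as a by-product by applying the same reasoning to the conjugate partition.
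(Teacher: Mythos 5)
Your argument is correct. The counting is sound: the bead with number $r$ occupies position $N+\la_r-r$; among the $N+\la_r-r$ positions strictly below it, exactly $N-r$ are beads (those $\la_k+N-k$ with $k>r$, by strict monotonicity of $k\mapsto\la_k+N-k$), so exactly $\la_r$ are non-beads; hence the non-bead with number $s$ lies below the bead with number $r$ if and only if $s\le\la_r$, which is the condition $(r,s)\in\Y\la$.

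This is a genuinely different (if closely related) route from the one the paper sketches. The paper records \emph{both} position formulas --- $N+\la_r-r$ for the $r$-th bead and $N-\la'_s+s-1$ for the $s$-th non-bead --- and then the lemma follows by comparing the two, which amounts to the standard equivalence $s\le\la_r\iff r\le\la'_s$. You avoid the conjugate partition and the non-bead formula entirely, replacing them with a one-step count of non-beads below a given bead. What your version buys is a slightly leaner argument that needs only one of the two formulas; what the paper's version buys is that the non-bead position formula is itself a reusable observation (it is the natural statement dual to the bead formula), and stating it explicitly makes the comparison transparent. Either way the content is the same and the proof is short; your bookkeeping of the two opposite numbering conventions (beads counted from the top, non-beads from the bottom) is handled correctly.
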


For $\la\in\Par$, we write 
$b_i (\la): = |\Ab_N^e (\la) \cap {\mathsf R}_i|$ for $i\in I$. The {\em $e$-core} of $\la$ is the partition $\operatorname{core}(\la)$ defined by 
$$
\Ab^e_N(\operatorname{core}(\la))=\{(t,i)\in \Ab^e\mid i\in I,\ 0\leq t<b_i(\la)\}. 
$$
Recall the notation (\ref{E260216}). The \emph{$e$-quotient} 
of $\la$ is defined as the multipartition $\operatorname{quot}_N(\la)=(\la^{(i)})_{i\in I}\in\Par^I$ such that for every $i\in I$, the partition $\la^{(i)}$ is determined from  
$\Ab_{b_i (\la)} (\la^{(i)})=\Ab_N^{e} (\la) \cap {\mathsf R}_i$, where we have identified ${\mathsf R}_i$ with $\Z_{\geq 0}$. 
The $e$-quotient of $\la$ depends on the residue of $N$ modulo $e$
and changes by a `cyclic permutation' of the components $\la^{(i)}$ when this residue changes. So the {\em $e$-weight} of $\la$, defined as 
$
\wt(\la):=|\operatorname{quot}_N(\la)|,
$
does not depend on $N$. 

Note that $\la=\operatorname{core}(\la)$ if and only $\operatorname{quot}_N(\la)=\varnothing$, in which case $\la$ is said to be an {\em $e$-core}. 
For any $e$-core $\rho$ and $d\in \Z_{\ge 0}$, we set
\[
\Par_\rho:= \{ \la\in \Par \mid \core (\la) = \rho\}, \quad 
\Par_{\rho,d}:= \{ \la \in \Par_\rho \mid \wt (\la) = d\}.  
\]
The following is easy to check and well known: 

\begin{Lemma} \label{LQuotUn} 
The map $\la\mapsto \quot(\la)$ is a bijection from $\Par_{\rho,d}$ to $\Par^I(d)$. 
\end{Lemma}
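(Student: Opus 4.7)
The plan is to show that knowing $\core(\la) = \rho$ and $\quot_N(\la) = (\la^{(i)})_{i\in I}$ is equivalent to knowing $\la$ itself, and that the weight matches under this correspondence. Fix $N$ large enough to be at least $\ell(\la)$ for every $\la\in\Par_{\rho,d}$ (since partitions of bounded weight over a given core have bounded length, such $N$ exists; one can take any $N\equiv N_0\pmod e$ sufficiently large for a fixed chosen residue~$N_0$).

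First I would unpack the structure of the abacus display. By definition of the $e$-core, the bead counts $b_i(\rho)=b_i(\la)$ are determined by $\rho$ alone for every $\la\in\Par_{\rho,d}$: indeed, $\Ab^e_N(\rho)$ has, on each runner $\Ru_i$, exactly $b_i(\la)$ beads occupying the bottom positions $0,1,\dots,b_i(\la)-1$. Now, by the definition of $\quot_N$, restricting $\Ab^e_N(\la)$ to the runner $\Ru_i$ and identifying $\Ru_i$ with $\Z_{\geq 0}$ gives precisely $\Ab_{b_i(\la)}(\la^{(i)})$. Since the map $\mu\mapsto \Ab_M(\mu)$ from partitions to $M$-element subsets of $\Z_{\geq 0}$ is injective for each $M$, the entire abacus display $\Ab^e_N(\la)$, and hence $\la$ itself, is reconstructed from the pair $(\rho,(\la^{(i)})_{i\in I})$. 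This already gives injectivity of $\la\mapsto\quot_N(\la)$ on $\Par_{\rho,d}$, and also ensures the map lands in $\Par^I(d)$ by the definition of $\wt$.

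For surjectivity, given any multipartition $\unu=(\nu^{(i)})_{i\in I}\in\Par^I(d)$, I would construct $\la$ by specifying its abacus display: on the runner $\Ru_i$ place beads at the positions corresponding to $\Ab_{b_i(\rho)}(\nu^{(i)})\subseteq\Z_{\geq 0}$. This yields a finite subset $A\subseteq \Ab^e$ containing exactly $N=\sum_{i\in I}b_i(\rho)$ beads and, reading off the values $et+i$ for the beads in decreasing order, produces a sequence $\la_1+N-1>\la_2+N-2>\dots$ that defines a partition $\la$ with $\Ab^e_N(\la)=A$ (for $N$ chosen large enough, the construction makes sense and yields $\ell(\la)\leq N$). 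By construction, restricting $A$ to $\Ru_i$ gives $\Ab_{b_i(\rho)}(\nu^{(i)})$, so $\quot_N(\la)=\unu$; moving all beads on each runner to the bottom recovers $\Ab^e_N(\rho)$, so $\core(\la)=\rho$; and $\wt(\la)=|\unu|=d$, so $\la\in\Par_{\rho,d}$.

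No step here should be a serious obstacle; the content is essentially a verification that the two halves of the abacus encoding — the core being what runner bead-counts record, and the quotient being what the within-runner bead distribution records — are independent coordinates. The only mildly delicate point is checking the choice of $N$: since both sides of the asserted bijection are independent of $N$ (as the residue of $N$ modulo $e$ only cyclically permutes the components of $\quot_N$, which is absorbed by the indexing set $I$), one may work with any single sufficiently large $N$ without loss of generality.
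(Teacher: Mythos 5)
Your proof is correct, and it is exactly the standard abacus argument that the authors allude to by calling the statement ``easy to check and well known'' (the paper gives no proof). The key point, as you identify, is that the abacus display $\Ab^e_N(\la)$ splits runner by runner into the data $(b_i(\la))_{i\in I}$ (recording the core) and the within-runner configurations (recording the quotient), and these two pieces of data freely determine and are determined by $\la$; your handling of the choice of $N$ and its effect on $\quot_N$ up to cyclic relabeling of components is also appropriate.
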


The {\em ($e$-)residue} of a node $(r,s)\in \Nodes$ is   
$\res(r,s) := s-r +e \Z\in I=\Z/e\Z.$ 
For $i\in I$, we say that $(r,s)$ is an {\em $i$-node} if its residue is $i$. 
For $\la\in\Par$, we define
$$
\cont(\la):= \sum_{u\in \Y\la} \al_{\res(u)} \in Q_+.
$$

\begin{Lemma} {\rm \cite[Theorem 2.7.41]{JK}} \label{LCoreCont} 
Let $\rho$ be an $e$-core, $d\in\Z_{\geq 0}$, and $\la\in\Par$. Then  
$\cont(\la)=\cont(\rho)+d\de$ if and only if $\la\in \Par_{\rho,d}$. 
\end{Lemma}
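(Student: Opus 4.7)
My plan is to establish the identity
\[\cont(\la) \;=\; \cont(\core(\la)) + \wt(\la)\,\de\]
for every partition $\la$, from which both directions of the lemma follow easily. The key local fact is that a rim $e$-hook of a partition --- i.e.\ a connected skew shape of size $e$ containing no $2\times 2$ square --- contains exactly one node of each residue $i\in I$. The cleanest way to see this is via the abacus: removal of a rim $e$-hook corresponds to sliding a single bead one position up on its runner, and the $e$ cells removed lie on $e$ consecutive diagonals of the Young diagram, so their residues cycle through $I$ exactly once. Iterating this, one passes from $\la$ to $\core(\la)$ by removing $\wt(\la)$ rim $e$-hooks, each of which decreases the content by $\de$, which proves the displayed identity.

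Granted this identity, the ``if'' direction is immediate: $\la\in\Par_{\rho,d}$ means $\core(\la)=\rho$ and $\wt(\la)=d$, hence $\cont(\la)=\cont(\rho)+d\de$. For the ``only if'' direction, if $\cont(\la)=\cont(\rho)+d\de$ then subtracting the identity applied to $\la$ itself gives
\[\cont(\core(\la))-\cont(\rho)\;=\;(d-\wt(\la))\,\de.\]
So it suffices to show that the content map restricted to $e$-cores is injective modulo $\Z\de$: if two $e$-cores have contents differing by an integer multiple of $\de$, they must coincide. This would then force $\core(\la)=\rho$ and, by comparing absolute sizes, $d=\wt(\la)$, so $\la\in\Par_{\rho,d}$ as required.

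To prove the injectivity I would fix a sufficiently large $N\in e\Z$ and recall that an $e$-core $\sigma$ is determined by its abacus bead vector $(b_0(\sigma),\dots,b_{e-1}(\sigma))$, with the $b_i(\sigma)$ beads on runner $i$ occupying positions $0,1,\dots,b_i(\sigma)-1$ and $\sum_i b_i(\sigma)=N$. A direct bead-by-bead count, using that a bead in position $(t,i)$ contributes $et+i$ to the sum of first column hook lengths and that the residue of the node $(r,s)$ is $s-r\pmod e$, expresses the multiplicity $n_i(\sigma)$ of $\al_i$ in $\cont(\sigma)$ as an explicit quadratic polynomial in $(b_0(\sigma),\dots,b_{e-1}(\sigma))$. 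Requiring that $n_i(\rho_1)-n_i(\rho_2)$ be the \emph{same} integer for every $i\in I$ then forces the two bead vectors to agree, and hence $\rho_1=\rho_2$. The main obstacle is precisely this last step --- the explicit formula and the uniqueness argument; for a final write-up I would either carry it out carefully using the abacus, or simply invoke the classical statement \cite[Theorem~2.7.41]{JK} once the hook-by-hook identity has been established.
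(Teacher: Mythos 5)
The paper gives no proof here — it simply cites \cite[Theorem 2.7.41]{JK} — whereas you are supplying an actual argument, so the two ``routes'' differ by design. Your reduction is sound: the identity $\cont(\la)=\cont(\core(\la))+\wt(\la)\,\de$ follows from the rim-$e$-hook observation (a rim $e$-hook has exactly one node of each residue, since the residue drops by $1$ as you travel along the rim, hence the $e$ cells realize each class in $\Z/e\Z$ once), and from this the ``if'' direction is immediate. You correctly isolate the crux for ``only if'': injectivity of $\sigma\mapsto\cont(\sigma)$ on $e$-cores modulo $\Z\de$. Your proposed abacus computation for that injectivity is plausible but left unexecuted; a cleaner way to close it, still within your framework and avoiding the quadratic formula, is this. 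Suppose $\cont(\sigma_1)-\cont(\sigma_2)=m\de$ for $e$-cores $\sigma_1,\sigma_2$; taking heights gives $|\sigma_1|=|\sigma_2|+me$, so WLOG $m\geq 0$. Pick any partition $\tau$ with $\core(\tau)=\sigma_2$ and $\wt(\tau)=m$ (add $m$ rim $e$-hooks on the abacus). Then $|\tau|=|\sigma_1|$ and $\cont(\tau)=\cont(\sigma_2)+m\de=\cont(\sigma_1)$, so the classical \cite[Theorem 2.7.41]{JK} applied to the equal-size partitions $\tau$ and $\sigma_1$ yields $\core(\tau)=\core(\sigma_1)=\sigma_1$. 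But $\core(\tau)=\sigma_2$, whence $\sigma_1=\sigma_2$ and $m=0$. With that step filled in, your proof is complete and gives a self-contained argument where the paper only points to the literature; the trade-off is that the paper's citation is shorter, while your write-up makes transparent exactly which combinatorial facts (hook residues and the equal-size case of James--Kerber) are being used.
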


\subsection{Tableaux}
Let $\nu$ be a partition. 
A node $u\in {\mathsf N}$ is called an \emph{addable} node for  $\nu$ if 
$u\notin \Y\nu$ and $\Y\nu\cup \{u\}$ is the Young diagram of a partition, 
and $u$ is called a \emph{removable} 
node of $\nu$ if $u\in \Y\nu$ and $\Y\nu \sm \{u\}$ 
is the Young diagram of a partition. For $i\in I$, we denote by $\Add(\nu,i)$ (resp.\ $\Rem(\nu,i)$) the set of all addable (resp.\ removable) $i$-nodes for $\nu$. 
We say that a node $(r,s)$ 
is \emph{above} a node $(r',s')$ if $r<r'$. Given a node $v\in\Nodes$ and a finite subset $U\subset \Nodes$, denote by $a(v,U)$ the number of elements of $U$ which are above $v$.  

Let $i\in I$ and $U$ be a set of removable $i$-nodes of $\nu$.  Define 
\[ d_U (\nu) = \sum_{v\in\Add(\nu,i)}a(v,U)-\sum_{v\in\Rem(\nu,i)\setminus U}a(v,U).
 \] 

Let $\la \sm \mu$ be a skew partition, 
and $\theta=\cont(\la\sm \mu) := \sum_{u\in \Y\la \sm \Y\mu} \al_{\res(u)} \in Q_+$. Fix $\bi=(i_1^{(m_1)},\dots,i_r^{(m_r)})\in I^\theta_\di$. An \emph{$\bi$-standard tableau} of shape $\la\sm\mu$ is a map 
$\t\colon \Y\la \sm \Y\mu \to \{1,\ldots,r\}$ such that 
\begin{enumerate}
\item[{\rm (i)}] $\t (u) < \t(u')$ whenever $u,u'\in \Y\la\sm\Y\mu$ 
and $u<u'$;
\item[{\rm (ii)}] for all $k=1,\dots,r$ and $u\in \t^{-1} (k)$, we have $\res u=i_k$;
\item[{\rm (iii)}] for all $k=1,\dots,r$, we have $|\t^{-1} (k) | = m_k$. 
\end{enumerate}
We denote the set of all $\bi$-standard tableaux of shape $\la\sm\mu$ by $\Std(\la\sm\mu, \bi)$. 
If $\t \in \Std(\la\sm \mu,\bi)$, we define 
\[
 \deg(\t) = \sum_{k=1}^r d_{\t^{-1}(k)} (\t^{-1} ([1,k]) \cup \Y\mu ).
\]
Note that $\deg(\t)$ depends on $\la$ and $\mu$, not just on the set $\Y\la\sm\Y\mu$. If  $\bi\in I^\theta$ and $\mu=\varnothing$, then the notion of an $\bi$-standard tableau is the same as the usual notion of  a standard tableau with residue sequence $\bi$ as in \cite[\S3.2]{BKW}, and the notion of the degree agrees with the one from \cite[\S3.5]{BKW}. 
If $\bi \in I^{\eta}_{\di}$ for some $\eta \ne \theta$, then we set 
$\Std(\la\sm \mu, \bi):=\varnothing$.
We denote $\Std(\la\sm\mu):=\bigsqcup_{\bi\in I^{\cont(\la\sm\mu)}}\Std(\la\sm\mu, \bi)$. 

Let $\bi=(i_1^{(m_1)},\dots,i_r^{(m_r)})\in I^\theta_\di$ and $\hat\bi\in I^\theta$ be as in (\ref{EHatBi}). Given $\t\in\Std(\la\sm\mu, \bi)$, a  tableau $\s\in \Std(\la\sm\mu, \hat\bi)$ is called a refinement of $\t$ if 
$$
\t^{-1}(k)=\s^{-1}\big([m_1+\dots+m_{k-1}+1, m_1+\dots+m_k]\big)
$$
for all $k=1,\dots,r$. Let  $\hat\t\subseteq \Std(\la\sm\mu, \hat\bi)$ denote the set of all refinements of $\t$.

\begin{Lemma}\label{LTabFact} 
For any $\t\in\Std(\la\sm\mu, \bi)$, we have $\sum_{\s\in\hat\t}q^{\deg(\s)}=\bi!q^{\deg(\t)}$. 
\end{Lemma}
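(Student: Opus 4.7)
The plan is to reduce the identity to a local $q$-factorial identity for each ``batch'' $T_k:=\t^{-1}(k)$ and then verify the local identity by direct bookkeeping on addable/removable nodes.

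First, condition (i) in the definition of $\Std(\la\sm\mu,\bi)$ forces each $T_k$ to be pairwise independent in $\Y\la\sm\Y\mu$: if $u<u'$ were both in $T_k$, then standardness would give $\t(u)<\t(u')=\t(u)$, a contradiction. Hence any tuple of bijections $\sigma_k\colon\{1,\dots,m_k\}\to T_k$ (one per $k$) yields a valid refinement $\s\in\hat\t$, so $\hat\t$ is parametrized by $\Si_{m_1}\times\dots\times\Si_{m_r}$. Writing $\nu_k^{(l)}:=\Y\mu\cup T_1\cup\dots\cup T_{k-1}\cup\{\sigma_k(1),\dots,\sigma_k(l)\}$, the definition of $\deg(\s)$ regroups as
\[
\deg(\s)=\sum_{k=1}^r\sum_{l=1}^{m_k}d_{\{\sigma_k(l)\}}\bigl(\nu_k^{(l)}\bigr),
\]
and the inner sum depends only on $\sigma_k$ and on the fixed prefix $\nu_k^{(0)}$. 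Hence the sum factorizes:
\[
\sum_{\s\in\hat\t}q^{\deg(\s)}\;=\;\prod_{k=1}^r\Bigl(\sum_{\sigma_k}q^{\sum_{l=1}^{m_k}d_{\{\sigma_k(l)\}}(\nu_k^{(l)})}\Bigr).
\]
Since $\deg(\t)=\sum_k d_{T_k}(\nu_k^{(m_k)})$, it suffices to prove, for each $k$, a local identity of the form
\[
\sum_{\sigma}q^{\sum_{l=1}^m d_{\{\sigma(l)\}}(\nu_l)}\;=\;[m]^!\,q^{d_U(\nu)},
\]
where $U$ is a pairwise independent set of $m$ removable $i$-nodes of a partition $\nu$, $\sigma$ runs over the $m!$ bijections $\{1,\dots,m\}\to U$, and $\nu_l:=(\nu\sm U)\cup\{\sigma(1),\dots,\sigma(l)\}$.

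To establish the local identity, I would track how $\Add(\nu_l,i)$ and $\Rem(\nu_l,i)$ evolve as the cells of $U$ are inserted one at a time. Since the cells of $U$ share the residue $i$ and are pairwise independent, they lie in pairwise distinct rows (and distinct columns), so at step $l$ the only changes in the addable/removable $i$-node sets are localized at $\sigma(l)$ and at the still-to-be-added cells $\sigma(l+1),\dots,\sigma(m)$. A careful pair-by-pair count, comparing the individual contributions $d_{\{\sigma(l)\}}(\nu_l)$ with the batch contribution $d_U(\nu)$, should give
\[
\sum_{l=1}^m d_{\{\sigma(l)\}}(\nu_l)-d_U(\nu)\;=\;\bigl|\{(l,l'):l<l',\,\sigma(l)\text{ above }\sigma(l')\}\bigr|-\bigl|\{(l,l'):l<l',\,\sigma(l')\text{ above }\sigma(l)\}\bigr|.
\]
Ordering the cells of $U$ as $u_1,\dots,u_m$ from top to bottom and writing $\sigma(l)=u_{w(l)}$ for $w\in\Si_m$, the right-hand side equals $\binom{m}{2}-2\ell(w)$, and the standard identity $\sum_{w\in\Si_m}q^{\binom{m}{2}-2\ell(w)}=[m]^!$ (equivalent to the Gaussian $q$-factorial formula) finishes the local step.

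Combining the factorization with the local identity gives $\sum_{\s\in\hat\t}q^{\deg(\s)}=\prod_k[m_k]^!\cdot q^{\deg(\t)}=\bi!\,q^{\deg(\t)}$, as desired. The main obstacle is the pair-by-pair bookkeeping in the local identity: inserting $\sigma(l)$ into $\nu_{l-1}$ can toggle whether a still-to-be-added cell $\sigma(l')$ with $l'>l$ currently appears as an addable or a removable $i$-node of $\nu_l$, and one must verify that each ordered pair $(\sigma(l),\sigma(l'))$ contributes $+1$ to the exponent on the left precisely when $\sigma(l)$ is above $\sigma(l')$ and $-1$ otherwise, after cancelling against the terms already present in $d_U(\nu)$. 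This is delicate but reduces to a finite case analysis based on the relative row/column positions of the two cells, since cells of $U$ on the same $i$-residue diagonal are the only ones whose addable/removable accounting can interact.
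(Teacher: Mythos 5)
Your approach is essentially the same as the paper's: reduce to one batch at a time (your factorization is the content behind the paper's "easily reduced to the case $r=1$"), parametrize refinements by permutations, and land on the Poincar\'e polynomial identity $\sum_{w\in\Si_m}q^{\binom{m}{2}-2\ell(w)}=[m]^!$. Your preliminary steps are correct: the pairwise independence of each $T_k$ (needed so that every bijection $\sigma_k$ gives a valid standard refinement) does follow from standardness as you argue, and the factorization of $\sum_{\s\in\hat\t}q^{\deg(\s)}$ into a product over $k$ is legitimate because $\nu_k^{(l)}$ depends only on $\sigma_k$.

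However, there is a genuine gap at the heart of the argument. The displayed identity
\[
\sum_{l=1}^m d_{\{\sigma(l)\}}(\nu_l)-d_U(\nu)\;=\;\bigl|\{(l,l'):l<l',\,\sigma(l)\text{ above }\sigma(l')\}\bigr|-\bigl|\{(l,l'):l<l',\,\sigma(l')\text{ above }\sigma(l)\}\bigr|
\]
is precisely the content of the statement $\deg(w\s)=\deg(\s)-2\ell(w)$ that the paper obtains by citing \cite[Proposition 3.13]{BKW} (together with the normalization $\deg(\s)=\deg(\t)+m(m-1)/2$ for the top-to-bottom filling, which in your framework is the $w=\mathrm{id}$ instance of the same identity). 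You describe this step as ``delicate'' bookkeeping that ``should give'' the claimed formula and flag it as the main obstacle, but you do not carry it out. Since this is exactly where the nontrivial combinatorics of addable/removable nodes and the sign structure of $d_U$ enters, asserting it without proof leaves the argument incomplete. Either execute the pair-by-pair case analysis in full (tracking how inserting $\sigma(l)$ toggles the addable/removable status of $\sigma(l')$ for $l'>l$ and verifying the $\pm 1$ contribution for each ordered pair), or, more efficiently, cite \cite[Proposition 3.13]{BKW} as the paper does.
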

\begin{proof}
The lemma is easily reduced to the case $r=1$. In that case, let $\s\in\hat t$ be the tableau such that for $u,v\in\Y\la\sm\Y\mu$ 
the node $u$ is above $v$ if and only if $\s(u)<\s(v)$, in other words we assign the numbers $1,\dots,m:=m_1$ to the nodes of $\la\sm\mu$ from top to bottom. Then $\deg(\s)=\deg(\t)+m(m-1)/2$. We have 
$\hat t=\{w\s\mid w\in\Si_m\}$, where $w\s$ is the tableaux defined by $(w\s)(u)=w(\s(u))$. In view of \cite[Proposition 3.13]{BKW}, we have $\deg(w\s)=\deg(\s)-2\ell(w)$, where $\ell(w)$ is the length of $w\in\Si_d$. So
$$
\sum_{\s\in\hat\t}q^{\deg(\s)}=q^{\deg(\t)+m(m-1)/2}\sum_{w\in \Si_m}q^{-2\ell(w)}=[m]^!q^{\deg(\t)},
$$
where the last equality comes from the well-known formula for the Poincar\'e polynomial of the symmetric group \cite[\S3.15]{Hu}. 
\end{proof}

\subsection{Dimensions and core algebras}\label{SSDimAndCore}
Recall the notation (\ref{EAngles}). The following is a variation of a known result:

\begin{Theorem}\label{TDim}  
 For any $\theta\in Q_+$ and $\bi,\bj\in I^\theta_{\di}$, the $\Z$-module $1_{\bi} R^{\La_0}_{\theta} 1_\bj$ is free of graded rank
\[
 \qdim \left( 1_{\bi} R^{\La_0}_{\theta} 1_\bj \right) = 
 q^{\langle \bj\rangle - \langle \bi \rangle}
\sum_{\substack{\mu\in \Par \\ 
\s \in \Std(\mu,\bi) \\ \t\in \Std(\mu,\bj) 
}}
q^{\deg(\s)+\deg(\t)}.
\]
In particular, 
the idempotent 
$1_{\bi}$ is non-zero in $R^{\La_0}_{\theta}$ 
if and only if $\Std(\mu,\bi)\ne \vn$ 
for some $\mu\in\Par$.
\end{Theorem}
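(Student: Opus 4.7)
The plan is to reduce the divided-power statement to the known formula for ordinary idempotents and then manipulate both sides in parallel using Lemma~\ref{LFact} and Lemma~\ref{LTabFact}. The non-divided-power version of the formula,
\[
\qdim\bigl( 1_{\bi'} R^{\La_0}_{\theta} 1_{\bj'} \bigr)
= \sum_{\mu\in\Par,\ \s'\in\Std(\mu,\bi'),\ \t'\in\Std(\mu,\bj')} q^{\deg(\s')+\deg(\t')}
\]
for $\bi',\bj'\in I^\theta$, is available in the literature on graded cellular bases for cyclotomic KLR/Hecke algebras (Brundan--Kleshchev--Wang or Hu--Mathas), and I would quote it as the starting point.

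Next, I would apply Lemma~\ref{LFact} twice to translate between $1_{\bi}R^{\La_0}_{\theta}1_{\bj}$ and $1_{\hat\bi}R^{\La_0}_{\theta}1_{\hat\bj}$. Setting $U:=R^{\La_0}_{\theta}1_{\hat\bj}$, which is a finitely generated free $\Z$-module by Theorem~\ref{TBasisCyc}, the left-module part of Lemma~\ref{LFact} gives
\[
\qdim\bigl(1_{\hat\bi}R^{\La_0}_{\theta}1_{\hat\bj}\bigr)
= \bi!\,q^{\langle\bi\rangle}\,\qdim\bigl(1_{\bi}R^{\La_0}_{\theta}1_{\hat\bj}\bigr).
\]
Then, applying the right-module part of Lemma~\ref{LFact} to $W:=1_{\bi}R^{\La_0}_{\theta}$, I obtain
\[
\qdim\bigl(1_{\bi}R^{\La_0}_{\theta}1_{\hat\bj}\bigr)
= \bj!\,q^{-\langle\bj\rangle}\,\qdim\bigl(1_{\bi}R^{\La_0}_{\theta}1_{\bj}\bigr),
\]
so combining the two yields
\[
\qdim\bigl(1_{\bi}R^{\La_0}_{\theta}1_{\bj}\bigr)
= (\bi!\,\bj!)^{-1}\,q^{\langle\bj\rangle-\langle\bi\rangle}\,\qdim\bigl(1_{\hat\bi}R^{\La_0}_{\theta}1_{\hat\bj}\bigr).
\]

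On the tableau side, every $\s'\in\Std(\mu,\hat\bi)$ refines a unique $\s\in\Std(\mu,\bi)$, namely the map obtained by coarsening $\{1,\dots,|\theta|\}$ according to the $m_k$-blocks prescribed by $\bi$; conversely, the fiber of this map above $\s$ is exactly the refinement set $\hat\s$. Hence, by Lemma~\ref{LTabFact},
\[
\sum_{\s'\in\Std(\mu,\hat\bi)} q^{\deg(\s')}
= \sum_{\s\in\Std(\mu,\bi)} \sum_{\s'\in\hat\s} q^{\deg(\s')}
= \bi!\sum_{\s\in\Std(\mu,\bi)} q^{\deg(\s)},
\]
and similarly for $\bj,\hat\bj$. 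Plugging these identities into the non-divided-power dimension formula cancels the $\bi!\,\bj!$ and leaves exactly the claimed expression for $\qdim(1_{\bi}R^{\La_0}_{\theta}1_{\bj})$.

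For the final sentence, the symmetry of $R^{\La_0}_{\theta,\k}$ (Theorem~\ref{TSVV}) shows that $1_{\bi}\neq 0$ iff $1_{\bi}R^{\La_0}_{\theta}1_{\bi}\neq 0$; by the formula just proved with $\bj=\bi$, this is equivalent to $\Std(\mu,\bi)\neq\vn$ for some $\mu\in\Par$. I do not anticipate a serious obstacle: everything is a bookkeeping reduction once the ordinary-word case is granted, and the only care needed is to verify the hypothesis of Lemma~\ref{LFact} (freeness and finite rank of the ambient module as a $\Z$-module), which is supplied by Theorem~\ref{TBasisCyc}.
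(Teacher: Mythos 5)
Your proposal is correct and follows essentially the same route the paper takes: freeness from Theorem~\ref{TBasisCyc}, the ordinary-word formula from the Brundan--Kleshchev (et al.) graded dimension results after extending scalars to $\C$, and then the divided-power case via Lemmas~\ref{LFact} and~\ref{LTabFact}, cancelling the common factor $\bi!\,\bj!$ in the integral domain $\Z[q,q^{-1}]$. The only superfluous step is the appeal to Theorem~\ref{TSVV} in the last paragraph: since $1_\bi$ is an idempotent, $1_\bi\in 1_\bi R^{\La_0}_\theta 1_\bi$, so $1_\bi\neq 0$ iff $1_\bi R^{\La_0}_\theta 1_\bi\neq 0$ without any symmetry hypothesis.
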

\begin{proof}
The freeness statement follows from Theorem~\ref{TBasisCyc}. 
Extending scalars to $\C$ and using \cite[Theorem 4.20]{BKllt} yields the graded rank formula in the case when $\bi,\bj \in I^{\theta}$, and the general case then follows from Lemmas~\ref{LFact} and~\ref{LTabFact}. 
\end{proof}

Recall the notation $\ell(A)$ for an algebra $A$ from~\S\ref{SSAlg}.

\begin{Theorem}\label{TNSimples}
Let $\k$ be a field, $\rho$ be an $e$-core and $d\in \Z_{\ge 0}$. Then
\[
\ell (R^{\La_0}_{\cont(\rho)+d\de,\k}) = |\Par^J (d)|.
\]
\end{Theorem}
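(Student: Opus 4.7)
The plan is to identify $\ell(R^{\La_0}_{\cont(\rho)+d\de,\k})$ with the weight multiplicity $\dim V(\La_0)_{\La_0-d\de}$ in the basic representation of $\widehat{\mathfrak{sl}}_e(\C)$ and then compute that multiplicity. First I would invoke the general categorification theorem for cyclotomic KLR algebras (Lauda--Vazirani in type $A$, extended by Kang--Kashiwara and Webster in general): over any field $\k$ and every $\th\in Q_+$, the simple $R^{\La_0}_{\th,\k}$-modules, taken up to isomorphism and grading shift, are in bijection with the weight-$(\La_0-\th)$ part of the crystal $B(\La_0)$, and in particular they are absolutely irreducible. Consequently, $\ell(R^{\La_0}_{\th,\k})=\dim V(\La_0)_{\La_0-\th}$, independently of $\k$.

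Next I would use $W$-invariance of weight multiplicities. By Lemma~\ref{LCoreCont}, $\La_0-\cont(\rho)$ is a weight of $V(\La_0)$ corresponding to the $e$-core $\rho$ (which is exactly the condition $\wt(\rho)=0$); hence $\La_0-\cont(\rho)\in W\La_0$, and
\[
\La_0-\cont(\rho)-d\de\in W\La_0-d\de=\mathcal O_d
\]
as described in Section~\ref{SIntro}. Therefore $\dim V(\La_0)_{\La_0-\cont(\rho)-d\de}=\dim V(\La_0)_{\La_0-d\de}$.

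Finally, I would apply the classical string function identity for the basic representation of $\widehat{\mathfrak{sl}}_e(\C)$:
\[
\sum_{d\ge 0}\dim V(\La_0)_{\La_0-d\de}\,q^d=\prod_{n\ge 1}(1-q^n)^{-(e-1)}.
\]
Since $|J|=e-1$, the right-hand side is precisely the generating function for $|\Par^J(d)|$, and combining the three steps delivers the theorem.

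The main obstacle is the field-independent identification in the first step: in characteristic zero this is already contained in Ariki--Grojnowski, but in positive characteristic one needs either Kang--Kashiwara/Webster or a direct crystal-theoretic argument. A more self-contained alternative would bypass crystals by using the Brundan--Kleshchev isomorphism $R^{\La_0}_{\th,\k}\cong H_\th(q)_\k$, parametrising the simples of the corresponding Hecke block by the $e$-regular partitions in $\Par_{\rho,d}$ (Dipper--James--Mathas), and then proving the block-by-block combinatorial identity $|\{\la\in\Par_{\rho,d}\mid \la\text{ is }e\text{-regular}\}|=|\Par^J(d)|$ by an abacus bijection or, less directly, by matching the Garvan--Kim--Stanton generating function for $e$-cores against $\prod_{e\nmid n}(1-q^n)^{-1}$.
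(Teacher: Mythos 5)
Your main argument is essentially the paper's proof: both steps (identifying $\ell$ with $\dim V(\La_0)_{\La_0-\cont(\rho)-d\de}$ via Kang--Kashiwara or Lauda--Vazirani, then computing that dimension from the well-known weight multiplicities of the basic representation) are exactly what the paper does, with the $W$-invariance reduction and string-function identity being the content behind the paper's citation of Kac (13.11.5). One caution about your proposed ``self-contained alternative'': the isomorphism $R^{\La_0}_{\th,\k}\cong H_\th(q)_\k$ does \emph{not} hold for an arbitrary field $\k$ --- as the paper itself remarks in~\S\ref{SSRoCK}, if $e=m\cdot\charact\k$ for some $m>1$ then $R^{\La_0}_{\th,\k}$ is in general not a block of a Hecke algebra --- so that route would require extra hypotheses on $\k$ and is not a drop-in replacement for the categorification argument.
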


\begin{proof}
By~\cite[Theorem 6.2]{KK} or~\cite[Theorem 7.5]{LV}, 
the number 
$\ell(R^{\La_0}_{\cont(\rho)+d\de,\k})$ 
is equal
to the dimension of the weight space 
$V(\La_0)_{\La_0-\cont(\rho)-d\de}$ for the integrable highest weight module $V(\La_0)$ over the Kac-Moody algebra $\g$ of type $A_{e-1}^{(1)}$. It is well known that this dimension is equal to $|\Par^J (d)|$, see e.g. \cite[(13.11.5)]{Kac} or \cite[Sections 4,5]{LLT}. 
\end{proof}

Let $\rho$ be an $e$-core. 
We pick an extremal word $(i_1^{a_1},\dots,i_r^{a_r})\in I^{\cont(\rho)}$ for the left regular module $R_{\cont(\rho)}^{\La_0}$, see \cite[\S2.8]{Kcusp}. In particular, $i_k\neq i_{k+1}$ for $1\leq k<r$. Let $\bi=(i_1^{(a_1)},\dots,i_r^{(a_r)})\in I^{\cont(\rho)}_\di$. 

\begin{Lemma} \label{LMatrix} 
Let $\rho$ be an $e$-core and $\bi\in  I^{\cont(\rho)}_\di$ be chosen as above. Then there is an isomorphism of graded $\Z$-algebras $R_{\cont(\rho)}^{\La_0}\iso \End_\Z(R_{\cont(\rho)}^{\La_0}1_\bi)$, where $x\in R_{\cont(\rho)}^{\La_0}$ gets mapped to the left multiplication by $x$. 
\end{Lemma}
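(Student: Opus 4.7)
The map $\phi\colon R_{\cont(\rho)}^{\La_0}\to\End_\Z(R_{\cont(\rho)}^{\La_0}1_\bi)$ sending $x$ to left multiplication by $x$ is visibly a degree-preserving $\Z$-algebra homomorphism between two free $\Z$-modules of finite rank (the source by Theorem~\ref{TBasisCyc}, the target as the $\Z$-endomorphism ring of a direct summand of the source). My plan is to show that $\phi_\k$ is bijective for every field $\k$ and then deduce the integral statement: $\phi_\Q$ being an isomorphism between free $\Z\otimes\Q$-modules of equal rank forces $\phi$ to be injective, while $\phi_{\F_p}$ surjective for every prime $p$ forces the cokernel of $\phi$, which is finitely generated and torsion, to have no $p$-torsion and hence to vanish.

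Fix a field $\k$. By Theorem~\ref{TNSimples} with $d=0$ and $|\Par^J(0)|=1$, the algebra $R_{\cont(\rho),\k}^{\La_0}$ has a unique irreducible module $L$. The defining property of the extremal divided-power word $\bi$ of~\cite[\S2.8]{Kcusp} provides $\dim_\k(1_\bi L)=1$. Let $J$ denote the Jacobson radical of $R_{\cont(\rho),\k}^{\La_0}$; then $R_{\cont(\rho),\k}^{\La_0}/J$ is simple artinian with image $\bar 1_\bi\ne 0$, so the two-sided ideal it generates fills the quotient. Since $J$ is nilpotent, Nakayama's lemma applied to the finitely generated module $R_{\cont(\rho),\k}^{\La_0}/R_{\cont(\rho),\k}^{\La_0}1_\bi R_{\cont(\rho),\k}^{\La_0}$ upgrades this to
\[
 R_{\cont(\rho),\k}^{\La_0}\,1_\bi\,R_{\cont(\rho),\k}^{\La_0}=R_{\cont(\rho),\k}^{\La_0}.
\]
Writing $1=\sum_i a_i1_\bi b_i$, any $x$ with $\phi_\k(x)=0$ satisfies $x\cdot R_{\cont(\rho),\k}^{\La_0}\cdot 1_\bi=0$, hence $xa_i1_\bi=0$ and $x=\sum_i(xa_i1_\bi)b_i=0$; thus $\phi_\k$ is injective.

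To match dimensions I will apply Theorem~\ref{TDim}: Lemma~\ref{LCoreCont} with $d=0$ leaves $\rho$ as the unique partition of content $\cont(\rho)$, so
\[
 \dim_\k R_{\cont(\rho),\k}^{\La_0}=(f^\rho)^2
 \quad\text{and}\quad
 \dim_\k R_{\cont(\rho),\k}^{\La_0}1_\bi=f^\rho\cdot|\Std(\rho,\bi)|,
\]
where $f^\rho:=|\Std(\rho)|$. The extremal $\bi$ is chosen precisely so that $|\Std(\rho,\bi)|=1$; granting this, $\dim_\k\End_\k(R_{\cont(\rho),\k}^{\La_0}1_\bi)=(f^\rho)^2=\dim_\k R_{\cont(\rho),\k}^{\La_0}$, and an injective $\k$-linear map between $\k$-vector spaces of equal finite dimension is bijective. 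The analogous equality of graded Hilbert series follows directly from the graded form of Theorem~\ref{TDim}, so $\phi_\k$ is in fact a graded isomorphism, and the reduction in the first paragraph then yields the graded $\Z$-algebra isomorphism.

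The main obstacle will be verifying the combinatorial equality $|\Std(\rho,\bi)|=1$ cleanly from the construction of extremal words in~\cite{Kcusp}. The cleanest workaround I see, should the direct combinatorial argument prove awkward, is to invoke the Brundan--Kleshchev isomorphism identifying $R_{\cont(\rho),\k}^{\La_0}$ with a defect-zero block of a Hecke algebra, hence with $M_{f^\rho}(\k)$; combined with $\dim_\k(1_\bi L)=1$ this recovers $|\Std(\rho,\bi)|=1$ from the dimension identities above.
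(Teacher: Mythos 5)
Your proposal is correct in substance and reaches the same conclusion by a slightly different route, but you should be aware of one subtlety around the key combinatorial input $|\Std(\rho,\bi)|=1$.

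Your Nakayama argument for injectivity of $\phi_\k$ (establishing $R^{\La_0}_{\cont(\rho),\k}\,1_\bi\, R^{\La_0}_{\cont(\rho),\k}=R^{\La_0}_{\cont(\rho),\k}$ from $1_\bi L\ne 0$ for the unique simple $L$) is a clean alternative to the paper's route, which instead shows that $R^{\La_0}_{\cont(\rho),\k}1_\bi$ is the projective cover of $L$ and in fact isomorphic to $L$, so that $R^{\La_0}_{\cont(\rho),\k}$ is a simple algebra. For surjectivity you count dimensions via Theorem~\ref{TDim}, whereas the paper gets it for free once simplicity of $R^{\La_0}_{\cont(\rho),\k}$ is known.

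The place where you must be careful is the claim that ``the extremal $\bi$ is chosen precisely so that $|\Std(\rho,\bi)|=1$.'' The defining property of an extremal word as in \cite[\S2.8]{Kcusp} yields $\dim_\k(1_\bi L)=1$, but this is \emph{not} a priori the same as $|\Std(\rho,\bi)|=1$: by Theorem~\ref{TDim}, $|\Std(\rho,\bi)|^2 = \dim 1_\bi R^{\La_0}_{\cont(\rho)} 1_\bi$, which measures the idempotent truncation of the whole algebra, not of $L$. The two agree only once one knows $R^{\La_0}_{\cont(\rho),\k}1_\bi\cong L$, i.e.\ once the algebra is known to be simple. Your fallback — the Brundan--Kleshchev identification of $R^{\La_0}_{\cont(\rho)}$ with a defect-zero Hecke block, hence a matrix algebra — is exactly the paper's device and does close the gap, but note it is only available over a field admitting an element of quantum characteristic $e$ (e.g.\ $\C$), not over an arbitrary $\k$. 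That is enough: once you know $1_\bi R^{\La_0}_{\cont(\rho),\C} 1_\bi$ is one-dimensional, freeness (Theorem~\ref{TBasisCyc}) gives that $1_\bi R^{\La_0}_{\cont(\rho)}1_\bi$ has $\Z$-rank $1$, hence $|\Std(\rho,\bi)|=1$ as a purely combinatorial statement valid in your dimension count over every $\k$. With that correction your argument is sound; your base-change step ($\phi_\Q$ iso plus $\phi_{\F_p}$ surjective for all $p$) and the paper's ($\phi_\k$ iso for all algebraically closed $\k$) are equivalent ways of descending to $\Z$.
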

\begin{proof}
We clearly have a homomorphism $\phi\colon R_{\cont(\rho)}^{\La_0}\to \End_\Z(R_{\cont(\rho)}^{\La_0}1_\bi)$ as in the statement. In view of Theorem~\ref{TBasisCyc}, to prove that $\phi$ is an isomorphism, it suffices to prove its scalar extension $\phi_\k$ is an isomorphism for any algebraically closed field $\k$. By Theorem~\ref{TNSimples}, the algebra $R^{\La_0}_{\cont(\rho),\k}$ has only one irreducible module $L$ up to isomorphism and degree shift. Considering the composition series of the left regular module over $R^{\La_0}_{\cont(\rho),\k}$, we see that $\bi$ is an extremal weight for $L$, hence by \cite[Lemma 2.8]{Kcusp}, the space $1_\bi L$ is $1$-dimensional. It follows that $\Hom_{R_{\cont(\rho),\k}^{\La_0}}(R_{\cont(\rho),\k}^{\La_0}1_\bi,L)\cong 1_\bi L$ is $1$-dimensional, so $R_{\cont(\rho),\k}^{\La_0}1_\bi$ is the projective cover of $L$. We claim that in fact $R_{\cont(\rho),\k}^{\La_0}1_\bi\cong L$. This is known for $\k=\C$ since $R_{\cont(\rho),\C}^{\La_0}$ is a simple algebra: indeed, by \cite{BK} it is a defect zero block of an Iwahori-Hecke algebra at an $e$th root of unity. Hence $\Hom_{R_{\cont(\rho),\C}^{\La_0}}(R_{\cont(\rho),\C}^{\La_0}1_\bi,R_{\cont(\rho),\C}^{\La_0}1_\bi)\cong 1_\bi R_{\cont(\rho),\C}^{\La_0} 1_\bi$ is $1$-dimensional. This proves that $1_\bi R_{\cont(\rho)}^{\La_0} 1_\bi$ has rank $1$ as a $\Z$-module, whence $1_\bi R_{\cont(\rho),\k}^{\La_0} 1_\bi\cong \Hom_{R_{\cont(\rho),\k}^{\La_0}}(R_{\cont(\rho),\k}^{\La_0} 1_\bi,R_{\cont(\rho),\k}^{\La_0} 1_\bi)$ has dimension $1$. Hence, $R_{\cont(\rho),\k}^{\La_0}1_\bi\cong L$. We deduce that $R_{\cont(\rho),\k}^{\La_0}$ is a simple algebra and $\phi_\k$ is an isomorphism. 
\end{proof}

Recall the map $\zeta_{\theta,\eta}$ from (\ref{EZetaHom}). 

\begin{Lemma} \label{LZeta}
If $\rho$ is an $e$-core and $d\in\Z_{\geq 0}$, then the map 
$$\zeta_{\cont(\rho),d\de}\colon R_{\cont(\rho)}^{\La_0}\to  1_{\cont(\rho),d\de}R_{\cont(\rho)+d\de}^{\La_0}1_{\cont(\rho),d\de}$$ is injective. 
\end{Lemma}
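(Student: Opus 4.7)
The plan is to combine the matrix-algebra structure of $R_{\cont(\rho)}^{\La_0}$ with a standard-tableau existence result. First, Lemma~\ref{LMatrix} together with Theorem~\ref{TBasisCyc} identifies $R_{\cont(\rho)}^{\La_0}$ with $M_k(\Z)$, where $k$ is the rank of $R_{\cont(\rho)}^{\La_0} 1_\bi$ as a free $\Z$-module (it is free, being a direct summand of the finitely generated free $\Z$-module $R_{\cont(\rho)}^{\La_0}$). The two-sided ideals of $M_k(\Z)$ are exactly $nM_k(\Z)$ for $n\in\Z_{\ge 0}$, so the kernel of the unital ring homomorphism $\zeta := \zeta_{\cont(\rho),d\de}$ has this form. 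Hence injectivity reduces to showing that $n\cdot 1_{\cont(\rho),d\de}\ne 0$ in $R_{\cont(\rho)+d\de}^{\La_0}$ for every $n>0$, and since $R_{\cont(\rho)+d\de}^{\La_0}$ is $\Z$-free by Theorem~\ref{TBasisCyc}, this further reduces to the single assertion $1_{\cont(\rho),d\de}\ne 0$.

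Next, I will expand
\[
1_{\cont(\rho),d\de}=\sum_{\bi\in I^{\cont(\rho)},\,\bj\in I^{d\de}}1_{\bi\bj}
\]
as a sum of pairwise orthogonal idempotents, which are $\Z$-linearly independent whenever nonzero. Hence it is enough to produce a single pair $(\bi,\bj)$ with $1_{\bi\bj}\ne 0$, and by Theorem~\ref{TDim} this amounts to exhibiting $\mu\in\Par$ with $\Std(\mu,\bi\bj)\ne\vn$.

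Finally, I will choose any $\mu\in\Par_{\rho,d}$, which is nonempty by Lemma~\ref{LQuotUn}. Since the $e$-core $\rho$ of $\mu$ is obtained from $\mu$ by successive removal of $e$-rim hooks along the border, we have $\Y\rho\subseteq\Y\mu$. I will then construct a standard tableau $\t$ of shape $\mu$ by first filling $\Y\rho$ with $1,\dots,|\rho|$ in any standard manner and then filling $\Y\mu\sm\Y\rho$ with $|\rho|+1,\dots,|\mu|$ in any standard manner. By Lemma~\ref{LCoreCont} the residue sequence of $\t$ factors as $\bi\bj$ with $\bi\in I^{\cont(\rho)}$ and $\bj\in I^{\cont(\mu)-\cont(\rho)}=I^{d\de}$, supplying the required tableau and completing the argument.

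The main conceptual ingredient is the matrix-algebra reduction of the first step (which relies crucially on Lemma~\ref{LMatrix} and the $\Z$-freeness provided by Theorem~\ref{TBasisCyc}); the remaining arguments are combinatorial bookkeeping, the only mild subtlety being the well-known containment $\Y\rho\subseteq\Y\mu$ for $\mu\in\Par_{\rho,d}$, which is what allows the residue sequence of $\t$ to split in the required way.
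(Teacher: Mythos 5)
Your proof is correct and takes essentially the same approach as the paper: both arguments rest on Lemma~\ref{LMatrix} (matrix-algebra structure of $R^{\La_0}_{\cont(\rho)}$), Theorem~\ref{TBasisCyc} ($\Z$-freeness), and Theorem~\ref{TDim} (to show $1_{\cont(\rho),d\de}\neq 0$). The only small variation is that the paper first extends scalars to $\C$ and then invokes simplicity of $R^{\La_0}_{\cont(\rho),\C}$, whereas you stay over $\Z$ by classifying the two-sided ideals of $M_k(\Z)$; both reductions then bottom out in exactly the same place, and your explicit construction of a standard tableau of shape $\mu\in\Par_{\rho,d}$ (using $\Y\rho\subseteq\Y\mu$) fills in the detail the paper leaves to the reader.
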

\begin{proof}
By Theorem~\ref{TBasisCyc}, it suffices to prove that the scalar extension of the map to $\C$ is injective. By Lemma~\ref{LMatrix}, $R_{\cont(\rho),\C}^{\La_0}$ is a simple algebra, so it is enough to show that $1_{\cont(\rho),d\de}R_{\cont(\rho)+d\de}^{\La_0}1_{\cont(\rho),d\de}\neq 0$. The last fact follows easily from Theorem~\ref{TDim}.
\end{proof}

\subsection{RoCK blocks} \label{SSRoCK}
 Let $\rho$ be an $e$-core and $d\in\Z_{\geq 1}$.  
Following \cite[Definition 52]{Turner}, we say that $\rho$ is a \emph{$d$-Rouquier core} if there exists an integer $N\ge \ell (\rho)$ such that for all 
$i=0,\ldots,e-2$, the abacus display 
$\Ab_N^e (\rho)$ has at least $d-1$ more beads on runner $i+1$ than on runner $i$. In this case, 
 $$\kappa:=-N+e\Z \in \Z/e\Z$$ is well-defined and is called the  \emph{residue} of $\rho$. 
 
If $\rho$ is a $d$-Rouquier core, we refer to the cyclotomic KLR algebra $R^{\La_0}_{\cont(\rho)+d\de}$ as a {\em RoCK block}.

\begin{Remark} 
{\rm 
The term {\em RoCK} comes from the names of Rouquier~\cite{RoTh},  Chuang and Kessar \cite{CK}. 
We refer to the algebra $R^{\La_0}_{\cont(\rho)+d\de}$ as a {\em block} since, with notation as in Section~\ref{SIntro}, 
the block $H_{\cont(\rho)+d\de} (q)$ of an Iwahori--Hecke algebra
is isomorphic to the 
$\F$-algebra $R^{\La_0}_{\cont(\rho)+d\de,\F}$, 
see~\cite{BK,R}. Note however that the analogous isomorphism in general does not make sense over $\Z$. Moreover, if $\k$ is a field such that $e=m \cha k $ for some $m\in\Z_{>1}$, the  algebra $R_{\theta,\k}^{\La_0}$ is not in general isomorphic to a block of a Hecke algebra. 
}
\end{Remark} 

We now review and develop some results from \cite[Section 4]{Evseev}. 
Throughout the subsection, we fix $d\in\Z_{>0}$ and a $d$-Rouquier core $\rho$ of residue $\kappa$. We then set 
$$\al := \cont(\rho)+d\de\in Q_+.$$ 
Let 
$$
\Omega\colon R_{d\de}\to R_{\cont(\rho),d\de}^{\La_0},\ x\mapsto \pi_\al\big(\iota_{\cont(\rho),d\de}(1_{\cont(\rho)}\otimes \rot_\kappa(x))\big),
$$
cf.~\eqref{EPi}, \eqref{ERot} and~\eqref{EIota}. 
Note that $\Omega$ is in general a non-unital algebra homomorphism with $\Omega(1)=1_{\cont(\rho),d\de}$.


\begin{Lemma}\label{LOmegaFact} 
Let $\bi\in I^{d\de}$, and $\bj\in I^{\rho}$ be such that  $\Std(\rho,\bj)\neq \vn$. If $1_{\bj(\bi^{+\kappa})}$ is non-zero in $R^{\La_0}_{\cont(\rho)+d\de}$, then $\bi\in I^{d\de}_{\scusp}$. In particular, $\Omega$ factors through $\hat C_{d\de}$. 
\end{Lemma}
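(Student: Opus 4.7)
The plan is to analyze $1_{\bj(\bi^{+\kappa})}\ne 0$ via Theorem~\ref{TDim} and then exploit the abacus combinatorics of the Rouquier core $\rho$. First I would observe that non-vanishing of the idempotent yields $\mu\in\Par$ and a standard tableau $\t\in\Std(\mu,\bj(\bi^{+\kappa}))$. Restricting $\t$ to the first $|\rho|$ labels gives a standard tableau of some shape $\nu\subseteq\mu$ with residue sequence $\bj$, so $\cont(\nu)=|\bj|=\cont(\rho)$; Lemma~\ref{LCoreCont} applied with $d'=0$ then forces $\nu$ to be weight-zero with core $\rho$, i.e.\ $\nu=\rho$. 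The remaining $de$ labels thus fill $\mu\sm\rho$ in residue order $\bi^{+\kappa}$, and a second application of Lemma~\ref{LCoreCont} gives $\mu\in\Par_{\rho,d}$.

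The combinatorial heart of the argument is to show that the $d$-Rouquier spacing on $\rho$ forces every such skew filling of $\mu\sm\rho$ to split as an independent union of $d$ $e$-rim hooks $R_1,\dots,R_d$, each attached to $\rho$ by moving a single bead one step down its runner on $\Ab_N^e(\rho)$. The inequality $b_{i+1}(\rho)-b_i(\rho)\ge d-1$ ensures that moving at most $d$ beads in this way produces hooks sitting in pairwise independent regions of $\Nodes$. Since two independent ribbons cannot interleave their labels in any standard tableau without violating the row/column monotonicity of $\t$, the labels of each $R_a$ form a consecutive block of length $e$ inside $\{|\rho|+1,\dots,|\rho|+de\}$.

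Next I would compute the residue word of a single such hook directly from the abacus. Moving a bead on runner $i$ from level $b_i(\rho)-1$ to level $b_i(\rho)$ traverses the linear positions $e(b_i(\rho)-1)+i+1,\dots,eb_i(\rho)+i-1$, and the Rouquier spacing ensures that every intermediate position on a runner $i'>i$ is blocked by a bead while every intermediate position on a runner $i'<i$ is empty. A direct unpacking of this data shows that, after subtracting $\kappa$ from every entry, the word read off from $R_a$ in the order imposed by $\t$ lies in $I^{\de,j}$ for a specific $j=j_a\in J$ depending on $i$ and $\kappa$; the different legitimate orderings within $R_a$ match exactly the shuffles in the definition of $I^{\de,j}$ in~\S\ref{SSSC}. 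Combining the $d$ hooks, $\bi$ itself is a shuffle of words drawn from $\bigsqcup_{j\in J}I^{\de,j}$, and Corollary~\ref{CSepSC} gives $\bi\in I^{d\de}_\scusp$.

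For the \emph{in particular} clause, I would expand
\[
\Omega(1_\bi)=\sum_{\bj\in I^{\cont(\rho)}}1_{\bj(\bi^{+\kappa})}\quad\text{in}\quad R^{\La_0}_{\cont(\rho)+d\de},
\]
and argue summand by summand. Summands with $\Std(\rho,\bj)\ne\vn$ vanish when $\bi\notin I^{d\de}_\scusp$ by the main part. For the remaining $\bj$, Lemma~\ref{LCoreCont} forces $\Std(\mu,\bj)=\vn$ for every $\mu$, so Theorem~\ref{TDim} yields $1_\bj=0$ in $R^{\La_0}_{\cont(\rho)}$; applying $\zeta_{\cont(\rho),d\de}$ from~\eqref{EZetaHom} and using the orthogonality of the idempotents $1_{\bj(\bi^{+\kappa})}$ then kills $1_{\bj(\bi^{+\kappa})}$ in $R^{\La_0}_{\cont(\rho)+d\de}$. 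Since the kernel of $R_{d\de}\onto\hat C_{d\de}$ is the two-sided ideal generated by $\{1_\bi\mid\bi\notin I^{d\de}_\scusp\}$, the map $\Omega$ factors as claimed. The main obstacle is the combinatorial step in paragraphs 2--3; I would lean heavily on the abacus analyses in~\cite{Evseev, CK, RoTh} rather than redoing them from scratch.
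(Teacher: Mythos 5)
The paper's own proof is a one-line citation to \cite[Lemma 4.6]{Evseev} together with Theorem~\ref{TDim} and Corollary~\ref{CSepSC}, so your attempt to reconstruct the abacus combinatorics from scratch is a reasonable thing to try, but the reconstruction has two genuine errors in the combinatorial heart of the argument.

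First, the claim that the $d$-Rouquier spacing forces $\Y\mu\sm\Y\rho$ to split as a \emph{pairwise independent} union of $d$ $e$-rim hooks is false. Lemma~\ref{LUnion} does give $\Y\mu=\Y\rho\sqcup\bigsqcup_{u\in\Y{\quot(\mu)}}\Hook(u)$, and Lemma~\ref{LHandFoot} gives independence of $\Hook(u)$ and $\Hook(v)$ \emph{when $u$ and $v$ are independent in the quotient}; but comparable quotient nodes yield hooks that are right or bottom extensions of one another (Lemma~\ref{LHRS}) and are emphatically not independent — already $\Hook(1,1,i)$ and $\Hook(1,2,i)$ fail this. Second, the inference that ``two independent ribbons cannot interleave their labels \dots\ so the labels of each $R_a$ form a consecutive block'' is backwards: independence of two convex regions is exactly what permits their labels to interleave arbitrarily in a standard tableau, whereas it is comparability that imposes order constraints. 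So even if the hooks were all independent, labels would \emph{not} be forced into consecutive blocks; and if they were forced into consecutive blocks, $\bi$ would be a concatenation rather than a general shuffle, which is not what Corollary~\ref{CSepSC} describes. The correct route needs neither false claim: because $\t$ is standard on $\mu\sm\rho$ and each $\Hook(u)$ is a convex subset of $\Y\mu$, the restriction of $\t$ to $\Hook(u)$ is automatically a standard filling of that hook, whose residue word lies in $(I^{\de,j})^{+\ka}$ for the appropriate $j$ (this is where Lemma~\ref{LHookTab}-type analysis and the Rouquier condition enter, via Lemma~\ref{LUnion} guaranteeing all hook vertices have residue $\ka$), and $\bi^{+\ka}$ is automatically a shuffle of the $d$ hook-words simply because the hooks partition $\Y\mu\sm\Y\rho$. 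Your handling of the ``in particular'' clause, including the vanishing of $1_{\bj(\bi^{+\ka})}$ for $\bj$ with $\Std(\rho,\bj)=\vn$ via the unital map $\zeta_{\cont(\rho),d\de}$ and orthogonality of idempotents, is correct.
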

\begin{proof}
This follows from \cite[Lemma 4.6]{Evseev} thanks to Theorem~\ref{TDim} and Corollary~\ref{CSepSC}. 
\end{proof}

In view of the lemma, from now on, we will consider $\Om$ as a homomorphism 
\begin{equation}\label{EOm}
\Om: \hat C_{d\de}\to R^{\La_0}_{\cont(\rho),d\de}.
\end{equation}

\begin{Lemma}\label{Lcoreab}
 Let $\si$ be a partition such that $\Y\si\subsetneq \Y\rho$.
 Then the number of nodes of residue $\ka$ 
 in $\Y\rho\sm \Y\si$ is less than $(|\rho|-|\si|)/e$. 
\end{Lemma}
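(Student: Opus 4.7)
The plan is to reformulate the inequality as a single comparison on the abacus and then exploit the packing forced by the Rouquier condition. Fix $N\ge \ell(\rho)$ as in the definition of a $d$-Rouquier core, pad $\rho$ and $\si$ to length $N$, and for any partition $\la$ set $\beta(\la):=\{N+\la_k-k\mid k=1,\dots,N\}$ and
\[T(\la) := \sum_{p\in\beta(\la)}(p\bmod e) \;=\; \sum_{i\in I} i\, b_i(\la).\]
A node $(k,s)$ has residue $\ka = -N+e\Z$ precisely when its abacus position $N+s-k$ is divisible by $e$. Labelling the beads of $\rho$ and $\si$ decreasingly as $A_1>\cdots>A_N$ and $B_1>\cdots>B_N$, the inclusion $\Y\si\subseteq\Y\rho$ forces $A_k\ge B_k$, and the number of residue-$\ka$ nodes in $\Y\rho\sm\Y\si$ equals $K:=\sum_k(\lfloor A_k/e\rfloor-\lfloor B_k/e\rfloor)$. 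Writing $A_k-B_k=e(\lfloor A_k/e\rfloor-\lfloor B_k/e\rfloor)+(A_k\bmod e)-(B_k\bmod e)$ and summing over $k$ yields
\[|\rho|-|\si|\;=\;eK+T(\rho)-T(\si),\]
so the lemma is equivalent to the strict inequality $T(\si)<T(\rho)$.

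Next I reduce to the case that $\si$ is itself an $e$-core. Removing a rim $e$-hook from a partition yields a subpartition and preserves every runner bead-count $b_i$, so $\core(\si)\subseteq\si\subsetneq\rho$ and $T(\core(\si))=T(\si)$; replacing $\si$ by $\core(\si)$, I may assume $\si$ is an $e$-core strictly contained in $\rho$.

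For such $\si$, for each $j\in\{1,\dots,e-1\}$ set $S_j:=\{p\ge 0\mid p\bmod e\ge j\}$ and $P_j(\la):=|\beta(\la)\cap S_j|=\sum_{i\ge j}b_i(\la)$. Abel summation gives $T(\rho)-T(\si)=\sum_{j=1}^{e-1}(P_j(\rho)-P_j(\si))$, so it suffices to show $P_j(\si)\le P_j(\rho)$ for every $j$, with strict inequality for at least one $j$. I plan to do this using the cutoff $M_j:=e\,b_{j-1}(\rho)$: the Rouquier chain $b_0(\rho)\le b_1(\rho)\le\cdots\le b_{e-1}(\rho)$ implies that $[0,M_j)$ contains every bead of $\rho$ on runners $<j$ and exactly $b_{j-1}(\rho)$ beads of $\rho$ on each runner $i\ge j$, whence
\[|\beta(\rho)\cap[0,M_j)|\;=\;\sum_{i<j}b_i(\rho)+(e-j)\,b_{j-1}(\rho).\]
Combining this with the bead-wise reformulation $|\beta(\si)\cap[0,M_j)|\ge|\beta(\rho)\cap[0,M_j)|$ of $\si\subseteq\rho$ and the trivial bound $|\beta(\si)\cap[0,M_j)\cap S_j|\le(e-j)\,b_{j-1}(\rho)$ yields $|\beta(\si)\cap S_j^c|\ge|\beta(\rho)\cap S_j^c|$, i.e.\ $P_j(\si)\le P_j(\rho)$. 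Strictness for some $j$ follows because equality for every $j$ forces all runner counts to agree, making $\si=\rho$ (both being $e$-cores), which contradicts $\si\subsetneq\rho$.

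The main technical point will be the precise bead count $|\beta(\rho)\cap[0,M_j)|$: this is where the Rouquier hypothesis is used twice, as the inequality $b_i(\rho)\le b_{j-1}(\rho)$ for $i<j$ is needed to fit all of $\rho$'s lower-runner beads inside $[0,M_j)$, while $b_i(\rho)\ge b_{j-1}(\rho)$ for $i\ge j$ is needed to ensure each higher runner really contributes $b_{j-1}(\rho)$ beads to that initial segment.
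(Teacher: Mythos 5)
Your proof is correct. The heart of the argument --- reducing the lemma to the chain of inequalities $b_{\geq l}(\si)\leq b_{\geq l}(\rho)$ for $l=1,\dots,e-1$ with strict inequality somewhere, and then establishing each such inequality by a cutoff argument that exploits the Rouquier packing $b_0(\rho)\leq b_1(\rho)\leq\cdots\leq b_{e-1}(\rho)$ --- coincides with what the paper does. (Your cutoff level $e\,b_{j-1}(\rho)$ and the paper's $e\,b_l(\rho)$ differ only by reindexing, and both exploit the same rectangle of beads.) Where you genuinely diverge is in how you arrive at that reduction. The paper proves, for every partition $\tau$, an explicit identity $e(\La_\ka,\cont(\tau))-|\tau| = C - \sum_{l\geq 1}b_{\geq l}(\tau)$ with $C$ a constant depending only on $N$ and $m$, verified by box-by-box induction and an appeal to a lemma of Evseev, and then subtracts the $\tau=\rho$ and $\tau=\si$ instances. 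You instead compare the two $\beta$-sets directly, split each difference $A_k-B_k$ by Euclidean division by $e$, and sum to get $|\rho|-|\si|=eK+T(\rho)-T(\si)$, which is the same identity in disguise but derived by elementary bookkeeping, without invoking the Lie-theoretic pairing $(\La_\ka,\cdot)$ or any external reference. This is a small but real simplification: your reduction is self-contained and avoids the ``$\varnothing$ case plus adding a box'' verification. Two minor remarks: your intermediate reduction to $\si$ an $e$-core is sound (it preserves $T$ and the strict inclusion) but unnecessary, since both the inequalities $P_j(\si)\leq P_j(\rho)$ and the strictness argument (``all $b_i$ equal $\Rightarrow$ $\core(\si)=\rho\subseteq\si\subsetneq\rho$'') go through for arbitrary $\si\subsetneq\rho$; and in the cutoff step one should note, as you do, that the Rouquier monotonicity is used in both directions ($b_i\leq b_{j-1}$ for $i<j$ and $b_i\geq b_{j-1}$ for $i\geq j$), which is exactly what makes $M_j$ the right threshold.
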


\begin{proof}
In this proof we use abacus displays with $N$ beads, where  $N$ is greater than the number of parts in all the partitions involved and 
 $N+e\Z = -\ka$. Recall from~\S\ref{SSAb} that for $\tau\in\Par$, we denote 
 $b_i(\tau):=|\Ab_N^e (\tau) \cap {\mathsf R}_i|$. For $0\leq l<e$, we denote
 $b_{\geq l}(\tau):=\sum_{i=l}^{e-1}b_i(\tau)$.  
 Recall also the fundamental dominant weights $\La_i$ from \S\ref{SSLie}. Let $0\leq m<e$ be the integer such that $m+e\Z=-\kappa$. 

For any  $\tau\in\Par$, we claim that 
\begin{equation}\label{contab}
e(\La_\ka, \cont(\tau)) - |\tau| 
= \frac{(e-1)N-(e-m)m}{2} -\sum_{l=1}^{e-1} b_{\geq l} (\tau).
\end{equation}
Indeed, it is straightforward to check 
that both sides are $0$ when $\tau=\varnothing$, since 
 $b_0(\varnothing)=\cdots = b_{m-1}(\varnothing) = b_m(\varnothing)+1=\cdots=b_{e-1}(\varnothing)+1$. 
Furthermore, adding a box of residue $i\in I$ to $\tau$ changes both sides by $e-1$ if $i=\ka$ and by $-1$ if $i\ne \ka$ (for the right-hand side, consult~\cite[Lemma 4.2]{Evseev}). The claim is proved. 

Let $l\in \{0,\ldots,e-1\}$ and $b= b_l(\rho)$. 
Suppose for a contradiction that $b_{\geq l} (\si) >  b_{\geq l} (\rho)$. As $\rho $ is a Rouquier core, $\Ab_N^e (\rho)$ contains the rectangle $[0,b-1]\times [ l,e-1]$, whence  
\[
|\Ab^e_N (\si)\cap (\Z_{\ge b} \times [ l,e-1])|
> |\Ab^e_N (\rho) \cap (\Z_{\ge b} \times [ l,e-1])|,
\]
and it follows that 
$
|\Ab_N (\si) \cap \Z_{\ge be}| > |\Ab_N (\rho)\cap \Z_{\ge be}|.
$
This is a contradiction to the hypothesis $\Y\si \subseteq \Y\rho$.
Hence, $b_{\geq l} (\si) \le  b_{\geq l}  (\rho)$ for all $l\in \{0,\ldots,e-1\}$.
Moreover, the inequality must be strict for at least one $l\in \{1,\ldots,e-1\}$, for otherwise we have $b_i (\si)=b_i (\rho)$ for all $i\in I$, and so $\rho=\core(\si)$, contradicting the hypothesis $\Y\si\subsetneq \Y\rho$. 
Hence, using~\eqref{contab}, we deduce that 
$e(\La_\ka, \cont(\si)) - |\si|>e(\La_\ka, \cont(\rho)) - |\rho|$, which implies the lemma. 
\end{proof}

Recall that throughout the subsection $\al=\rho+d\de$ is a RoCK block. 

\begin{Lemma}\label{Ptens1}
We have $1_{\cont(\rho),d\de}R^{\La_0}_{\al}1_{\cont(\rho),d\de}=R^{\La_0}_{\cont(\rho),d\de}$.
\end{Lemma}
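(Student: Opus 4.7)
The inclusion $R^{\La_0}_{\cont(\rho),d\de}\subseteq 1_{\cont(\rho),d\de}R^{\La_0}_{\al}1_{\cont(\rho),d\de}$ is immediate from the definitions, so I focus on the reverse. It is enough to show that for all $\bi,\bi'\in I^{\cont(\rho)}$ and $\bj,\bj'\in I^{d\de}$, the graded rank of $1_{\bi\bj}R^{\La_0}_{\al}1_{\bi'\bj'}$ coincides with that of $1_{\bi\bj}R^{\La_0}_{\cont(\rho),d\de}1_{\bi'\bj'}$, since the latter is contained in the former. By Theorem~\ref{TDim}, the first graded rank equals $\sum_{\mu}\sum_{\s,\t}q^{\deg\s+\deg\t}$ where $\mu$ ranges over $\Par_{\rho,d}$ (using Lemma~\ref{LCoreCont}), $\s\in\Std(\mu,\bi\bj)$, and $\t\in\Std(\mu,\bi'\bj')$.

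The crux is the following combinatorial claim: for $\mu\in\Par_{\rho,d}$ and $\s\in\Std(\mu,\bi\bj)$, the intermediate subpartition $\si:=\s^{-1}([1,|\rho|])$ must equal $\rho$. Indeed, $|\si|=|\rho|$ and $\cont(\si)=\cont(\bi)=\cont(\rho)$. Assume for a contradiction that $\si\ne\rho$ and set $\tau:=\si\cap\rho\subsetneq\rho$, viewing Young diagrams as sets of nodes. Then the skew shapes $\rho\sm\tau$ and $\si\sm\tau$ have equal sizes and equal contents. Applying Lemma~\ref{Lcoreab} to $\tau\subsetneq\rho$ gives that $\rho\sm\tau$ contains strictly fewer than $(|\rho|-|\tau|)/e$ nodes of residue $\ka$, so the same bound holds for $\si\sm\tau$. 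On the other hand, $\si\sm\tau$ lies inside $\mu\sm\rho$, and a dual estimate at the abacus level --- reflecting that the beads producing $\mu$ from $\rho$ can only be moved upward into positions that, by the Rouquier inequality $b_{i+1}(\rho)\ge b_i(\rho)+d-1$, lie above the packed-down portion of $\Ab^e_N(\rho)$ --- forces $\si\sm\tau$ to contain at least $|\si\sm\tau|/e$ nodes of residue $\ka$. This contradiction yields $\si=\rho$.

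Granting the claim, each $\s\in\Std(\mu,\bi\bj)$ factors uniquely as the concatenation of $\s_1\in\Std(\rho,\bi)$ with $\s_2\in\Std(\mu/\rho,\bj)$, and the degree is additive, $\deg\s=\deg\s_1+\deg\s_2$, since the functions $d_U$ appearing in the definition of $\deg$ depend only on the intermediate partitions. Theorem~\ref{TDim} then yields
\[
\qdim\bigl(1_{\bi\bj}R^{\La_0}_{\al}1_{\bi'\bj'}\bigr)=\qdim\bigl(1_{\bi}R^{\La_0}_{\cont(\rho)}1_{\bi'}\bigr)\cdot\!\!\sum_{\mu\in\Par_{\rho,d}}\!\!\Bigl(\sum_{\substack{\s_2\in\Std(\mu/\rho,\bj)\\\t_2\in\Std(\mu/\rho,\bj')}}\!\!q^{\deg\s_2+\deg\t_2}\Bigr).
\]
A parallel analysis of $R^{\La_0}_{\cont(\rho),d\de}$ --- using that the natural surjection $R_{\cont(\rho)}\otimes R_{d\de}\twoheadrightarrow R^{\La_0}_{\cont(\rho),d\de}$ factors through $R^{\La_0}_{\cont(\rho)}\otimes R_{d\de}$, that $R^{\La_0}_{\cont(\rho)}$ is a matrix algebra by Lemma~\ref{LMatrix}, and that the image of the $R_{d\de}$-factor is governed by exactly the skew-tableau data of $\mu/\rho$ --- produces the identical formula for the graded rank of $1_{\bi\bj}R^{\La_0}_{\cont(\rho),d\de}1_{\bi'\bj'}$. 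The main obstacle is the combinatorial uniqueness $\si=\rho$: the Rouquier hypothesis enters crucially there through both Lemma~\ref{Lcoreab} and its abacus-level dual, while the remainder is routine graded-rank bookkeeping via Theorem~\ref{TDim}.
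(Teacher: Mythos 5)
Your proposal takes a genuinely different route from the paper — a graded-rank count rather than the paper's Mackey decomposition — but as written it has gaps, and the two most important ones are in exactly the places where the Rouquier hypothesis must be doing the work.

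First, the combinatorial claim that $\si:=\s^{-1}([1,|\rho|])$ must equal $\rho$ is correct, but your elaborate argument via Lemma~\ref{Lcoreab} and the "dual abacus estimate" is unnecessary (and the dual estimate itself is only gestured at). Once you know $\si$ is a partition with $\cont(\si)=\cont(\rho)$, Lemma~\ref{LCoreCont} with $d=0$ immediately gives $\si\in\Par_{\rho,0}=\{\rho\}$ — no Rouquier hypothesis, no bead-counting, needed. (The paper uses precisely this observation implicitly in the proof of Lemma~\ref{Ldim1}, when constructing the bijection $\Std(\mu,\bj\bi)\iso\Std(\rho,\bj)\times\Std(\mu\sm\rho,\bi)$.) The fact that you inserted the Rouquier hypothesis here, where it is not needed, is a sign that the proof has been misrouted.

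Second, and more seriously, the place where Rouquier \emph{must} enter — your "parallel analysis" showing $\qdim\bigl(1_{\bi\bj}R^{\La_0}_{\cont(\rho),d\de}1_{\bi'\bj'}\bigr)$ equals the same formula — is simply not given. The statement of Lemma~\ref{Ptens1} is false without the Rouquier hypothesis, so this is where the real content has to live, and the sentence "the image of the $R_{d\de}$-factor is governed by exactly the skew-tableau data of $\mu/\rho$" begs the question: $R^{\La_0}_{\cont(\rho),d\de}$ is by definition an image, and computing its graded rank is essentially equivalent to determining the kernel of $\pi_\al\circ\iota_{\cont(\rho),d\de}$, which is what the lemma is about. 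Any attempt to invoke the tensor factorization of Corollary~\ref{Ctens}, or the dimension formula of Lemma~\ref{Ldim1}, would be circular since those results come \emph{after} (and use) Lemma~\ref{Ptens1}. Third, even granting both graded ranks, your opening reduction ("it is enough to show the graded ranks coincide, since the latter is contained in the former") is not valid over $\Z$: a free submodule of the same rank as the ambient free module need not be the whole thing (e.g.\ $2\Z\subsetneq\Z$), so you would still need a purity/torsion-freeness argument.

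The paper's actual proof is shorter and structural rather than enumerative: it applies the Mackey lemma (Lemma~\ref{LMackey}) to write $1_{\cont(\rho),d\de}R^{\La_0}_\al1_{\cont(\rho),d\de}$ as $R^{\La_0}_{\cont(\rho),d\de}$ plus the double-coset pieces $1_{\cont(\rho),d\de}\psi_w1_{\cont(\rho),d\de}$ for $w\ne1$, and kills each such piece directly. The point is that $w$ moves $m>0$ entries from the $d\de$-block into the $\cont(\rho)$-block; semicuspidality of the $d\de$-block (via Corollary~\ref{CSepSC}) forces at least $m/e$ of the moved entries to have residue $\ka$, and then Lemma~\ref{Lcoreab} forbids a residue sequence of a standard $\rho$-tableau from ending in $m$ entries that $\kappa$-dense. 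This is where the Rouquier hypothesis is actually used, and it is a genuinely different — and considerably more efficient — mechanism than what you proposed.
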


\begin{proof} 
By Lemma~\ref{LMackey}, $1_{\cont(\rho),d\de}R_{\al}^{\La_0}1_{\cont(\rho),d\de}$ is generated by $R_{\cont(\rho),d\de}^{\La_0}$ together with the elements $\psi_w$ for $w\in {}^{(|\rho|,de)}\D^{(|\rho|,de|)} \sm \{1\}$. Thus, it will suffice to show that $1_{\cont(\rho),d\de}\psi_w1_{\cont(\rho),d\de}=0$ in $R^{\La_0}_{\al}$ for each such $w$. 
 If not, then $1_{\bj' ((\bi')^{+\ka})} \psi_w 1_{\bj (\bi^{+\ka})}\ne 0$
 for some $\bj,\bj'\in I^{\cont(\rho)}$ such that $\Std(\rho,\bj), \Std(\rho,\bj')$ are non-empty, and some  
 $\bi,\bi'\in I^{d\de}_{\scusp}$, see Theorem~\ref{TDim} and Lemma~\ref{LOmegaFact}. In this case 
 $w(\bj (\bi^{+\ka}))= \bj' ((\bi')^{+\ka})$. 
Moreover, 
 $w=\prod_{t=1}^m (|\rho|-m+t, |\rho|+t)$ for some $m>0$, and therefore
 the last $m$ entries of $\bj'$ are 
 $i_1+\kappa,\ldots,i_m+\kappa$. Since 
 $\bi$ is semicuspidal, the number of entries $\ka$ in the tuple
 $(i_1+\kappa,\ldots,i_m+\kappa)$ is at least $m/e$. But by Lemma~\ref{Lcoreab}, this means that $\Std(\rho,\bj')=\varnothing$, a contradiction. 
\end{proof}

By Lemmas~\ref{LZeta} and \ref{Ptens1}, there is a natural unital algebra embedding 
$$\zeta_{\cont(\rho),d\de}\colon R_{\cont(\rho)}^{\La_0}\to  R_{\cont(\rho),d\de}^{\La_0}=1_{\cont(\rho),d\de}R^{\La_0}_{\al}1_{\cont(\rho),d\de}.$$ 
We always identify $R_{\cont(\rho)}^{\La_0}$ with a subalgebra of $R_{\cont(\rho),d\de}^{\La_0}$ via this embedding. 
We consider the centralizer  of $R^{\La_0}_{\cont(\rho)}$ in $R_{\cont(\rho),d\de}^{\La_0}$:
$$\Cent_{\rho,d} := \Cent_{R_{\cont(\rho),d\de}^{\La_0}} (R^{\La_0}_{\cont(\rho)}).$$

\begin{Lemma} \label{L270116} 
We have an algebra isomorphism 
$R^{\La_0}_{\cont(\rho)} \otimes \Cent_{\rho,d} \iso R_{\cont(\rho),d\de}^{\La_0}$
given by $a\otimes b \mapsto ab$. 
\end{Lemma}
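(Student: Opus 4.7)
The plan is to exploit Lemma~\ref{LMatrix}, which exhibits $A := R^{\La_0}_{\cont(\rho)}$ as $\End_\Z(M)$ with $M := R^{\La_0}_{\cont(\rho)} 1_\bi$ a graded free $\Z$-module of finite rank $n$. This realizes $A$ as a graded matrix algebra $M_n(\Z)$ over $\Z$, and reduces the claim to the standard algebraic fact that if $M_n(\Z)$ sits inside an algebra $B$ with the same identity, then multiplication gives an isomorphism $M_n(\Z) \otimes_\Z Z_B(M_n(\Z)) \iso B$. Here $B := R^{\La_0}_{\cont(\rho),d\de}$, which by Lemma~\ref{Ptens1} equals $1_{\cont(\rho),d\de} R^{\La_0}_\al 1_{\cont(\rho),d\de}$, and the embedding $\zeta_{\cont(\rho),d\de}$ is unital, so $A$ embeds unitally into $B$.

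First I would fix a homogeneous $\Z$-basis of $M$ and transport via Lemma~\ref{LMatrix} the corresponding matrix units $\{e_{kl}\}_{1\leq k,l\leq n}\subset A\subseteq B$, so that $\sum_k e_{kk} = 1_B$ and $e_{ij}e_{kl}=\delta_{jk}e_{il}$. The multiplication map $\mu \colon A \otimes_\Z \Cent_{\rho,d} \to B$, $a \otimes c \mapsto ac$, is then a well-defined homomorphism of graded $\Z$-algebras since $A$ and $\Cent_{\rho,d}$ commute inside $B$.

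For surjectivity of $\mu$, given $b \in B$ I set $c_{ij} := \sum_{k=1}^n e_{ki} b e_{jk}$; a direct check gives $e_{lm} c_{ij} = e_{li} b e_{jm} = c_{ij} e_{lm}$ for all $l,m$, so $c_{ij} \in \Cent_{\rho,d}$, while
\[
\sum_{i,j} e_{ij} c_{ij} \;=\; \sum_{i,j,k} e_{ij} e_{ki} b e_{jk} \;=\; \sum_{i,j} e_{ii} b e_{jj} \;=\; b.
\]
For injectivity, suppose $\sum_{i,j} e_{ij} c_{ij} = 0$ with $c_{ij} \in \Cent_{\rho,d}$. Multiplying on the left by $e_{1l}$ and on the right by $e_{m1}$, and moving each $c_{ij}$ past the matrix units (which is legal since $c_{ij}\in\Cent_{\rho,d}$), one extracts $e_{11} c_{lm} e_{11} = 0$ for all $l,m$. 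Then, since each $c_{lm}$ commutes with $A$,
\[
c_{lm} \;=\; \sum_i e_{ii} c_{lm} e_{ii} \;=\; \sum_i e_{i1}(e_{11} c_{lm} e_{11})e_{1i} \;=\; 0.
\]

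I expect no serious obstacle: the argument is the standard matrix-units reconstruction of a centralizer decomposition, and all its inputs are in place (the matrix-algebra structure of $A$ from Lemma~\ref{LMatrix} and the unital identification from Lemma~\ref{Ptens1}). Gradedness of the isomorphism is automatic because the chosen matrix units $e_{kl}$ are homogeneous (with $\deg e_{kl}$ equal to the difference of the degrees of the corresponding basis vectors of $M$), and a short degree bookkeeping shows $\mu$ respects the grading.
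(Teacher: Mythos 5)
Your proof is correct and takes essentially the same approach as the paper: both reduce to Lemma~\ref{LMatrix} (the graded matrix-algebra structure on $R^{\La_0}_{\cont(\rho)}$) and then invoke the standard centralizer decomposition for a matrix subalgebra sharing the identity. The only difference is that the paper cites this decomposition as a black box (\cite[Proposition 4.10]{Evseev}, noting it works over $\Z$), whereas you spell out the matrix-unit reconstruction explicitly; the content is the same.
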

\begin{proof}
This follows from Lemma~\ref{LMatrix} using \cite[Proposition 4.10]{Evseev} (whose proof goes through over $\Z$). 
\end{proof}

Recalling (\ref{EOm}), we denote 
\begin{equation}\label{ECRhoD}
C_{\rho,d}:=\hat C_{d\de}/\ker\Om.
\end{equation}
We have the induced embedding $\bar\Om\colon C_{\rho,d}\to R_{\cont(\rho),d\de}^{\La_0}$. 
By Theorem~\ref{TBasisCyc}, $R^{\La_0}_{\cont(\rho)+d\de}$ is $\Z$-free, so 

\begin{Lemma}\label{LCFree}
The $\Z$-module $C_{\rho,d}$ is free of finite rank. 
\end{Lemma}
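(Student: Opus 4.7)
The plan is short because $\bar\Omega$ is already set up as an injection into something manageable. The key observation is that the embedding $\bar\Omega\colon C_{\rho,d}\hookrightarrow R^{\La_0}_{\cont(\rho),d\de}$, which exists by construction of $C_{\rho,d}=\hat C_{d\de}/\ker\Omega$, identifies $C_{\rho,d}$ (as a $\Z$-module) with a $\Z$-submodule of $R^{\La_0}_{\cont(\rho),d\de}$. So the task reduces to showing that this target algebra is a finitely generated free $\Z$-module.

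First I would observe that $R^{\La_0}_{\cont(\rho),d\de}=\pi_{\al}(R_{\cont(\rho),d\de})$ is by definition a $\Z$-subalgebra of $R^{\La_0}_{\al}$. By Theorem~\ref{TBasisCyc}, applied with $\th=\al=\cont(\rho)+d\de$, the ambient $\Z$-module $R^{\La_0}_{\al}$ is free of finite rank. Since $\Z$ is a principal ideal domain, every $\Z$-submodule of a finitely generated free $\Z$-module is itself finitely generated and free; hence $R^{\La_0}_{\cont(\rho),d\de}$ is free of finite rank over $\Z$.

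Finally, applying the same PID argument a second time to the injective $\Z$-linear map $\bar\Omega\colon C_{\rho,d}\hookrightarrow R^{\La_0}_{\cont(\rho),d\de}$, we conclude that $C_{\rho,d}$ is also a free $\Z$-module of finite rank. There is no serious obstacle here: the only input beyond basic PID theory is Theorem~\ref{TBasisCyc}, and the injectivity of $\bar\Omega$ is built into the definition $C_{\rho,d}:=\hat C_{d\de}/\ker\Omega$. (Note that one should not try to prove the statement by exhibiting a basis inside $\hat C_{d\de}$ directly, since $\hat C_{d\de}$ is explicitly remarked in~\S\ref{SSRoCK} to be not finitely generated as a $\Z$-module; the finiteness only appears after passing to the quotient by $\ker\Omega$, and is seen most cleanly by transporting the question to the target of $\bar\Omega$.)
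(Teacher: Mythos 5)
Your proof is correct and is the same as the paper's: the paper establishes this lemma exactly by noting that $\bar\Om$ embeds $C_{\rho,d}$ into $R^{\La_0}_{\cont(\rho),d\de}\subseteq R^{\La_0}_{\al}$, which is $\Z$-free of finite rank by Theorem~\ref{TBasisCyc}, and then invoking (implicitly) the standard PID fact about submodules of finitely generated free modules. Your added remark about why one must not try to work inside $\hat C_{d\de}$ directly matches what the paper also points out in \S\ref{SSRoCK}.
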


\begin{Lemma}\label{Ltens2}
We have $\Cent_{\rho,d}=\bar\Omega(C_{\rho,d})$. 
\end{Lemma}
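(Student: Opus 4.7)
The plan is to prove the two containments separately, using Lemma~\ref{L270116} as the key structural input on the reverse inclusion.

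\medskip

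\noindent\textbf{Step 1: $\bar\Om(C_{\rho,d}) \subseteq \Cent_{\rho,d}$.}  First I would observe that $\bar\Om(C_{\rho,d})$ lies in $R^{\La_0}_{\cont(\rho),d\de}$ by construction of $\Om$. Next, since $\iota_{\cont(\rho),d\de}\colon R_{\cont(\rho)}\otimes R_{d\de}\to R_{\cont(\rho)+d\de}$ is an algebra homomorphism, its restrictions to $R_{\cont(\rho)}\otimes 1$ and $1\otimes R_{d\de}$ have commuting images. Applying $\pi_\al$, the image of $R_{\cont(\rho)}\otimes 1$ becomes exactly $\zeta_{\cont(\rho),d\de}(R^{\La_0}_{\cont(\rho)})$, which we identify with $R^{\La_0}_{\cont(\rho)}$ (see the paragraph after Lemma~\ref{Ptens1}), while the image of $1\otimes R_{d\de}$ coincides with $\Om(R_{d\de})=\bar\Om(C_{\rho,d})$ since $\rot_\ka$ is an automorphism of $R_{d\de}$. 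These two subsets commute inside $R^{\La_0}_{\cont(\rho),d\de}$, giving the first containment.

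\medskip

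\noindent\textbf{Step 2: Generation.}  I would next argue that $R^{\La_0}_{\cont(\rho),d\de}$ is generated as an algebra by $R^{\La_0}_{\cont(\rho)}$ together with $\bar\Om(C_{\rho,d})$. Indeed, $R_{\cont(\rho),d\de}=\iota_{\cont(\rho),d\de}(R_{\cont(\rho)}\otimes R_{d\de})$ is generated as an algebra by $\iota(R_{\cont(\rho)}\otimes 1)$ and $\iota(1\otimes R_{d\de})$; passing to the cyclotomic quotient through $\pi_\al$ and using the identifications from Step~1 yields the claim. Because the two subalgebras commute, every element of $R^{\La_0}_{\cont(\rho),d\de}$ can be written as a finite sum $\sum_j a_j c_j$ with $a_j\in R^{\La_0}_{\cont(\rho)}$ and $c_j\in \bar\Om(C_{\rho,d})$.

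\medskip

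\noindent\textbf{Step 3: Reverse containment via uniqueness.}  By Theorem~\ref{TBasisCyc}, the $\Z$-module $R^{\La_0}_{\cont(\rho)}$ is free; I would fix a $\Z$-basis $\{e_1,\dots,e_n\}$ with $e_1=1$. Lemma~\ref{L270116} then gives a direct sum decomposition
\[
R^{\La_0}_{\cont(\rho),d\de}=\bigoplus_{i=1}^n e_i\,\Cent_{\rho,d}.
\]
Take any $x\in \Cent_{\rho,d}$. Using Step~2, write $x=\sum_j a_j c_j$ with $a_j\in R^{\La_0}_{\cont(\rho)}$ and $c_j\in\bar\Om(C_{\rho,d})$; expanding each $a_j$ in the basis and regrouping yields $x=\sum_i e_i b_i$ with $b_i\in \bar\Om(C_{\rho,d})$. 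By Step~1 we have $b_i\in \Cent_{\rho,d}$, so this is a valid expression for $x$ in the decomposition above. On the other hand, the element $x\in\Cent_{\rho,d}$ already has the decomposition $x=e_1\cdot x+\sum_{i\ge 2}e_i\cdot 0$, so uniqueness forces $x=b_1\in \bar\Om(C_{\rho,d})$, completing the proof.

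\medskip

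\noindent I do not anticipate a serious obstacle: Step~1 is a formal consequence of the parabolic tensor structure, Step~2 is a generation statement that follows directly from the definition of $\iota_{\cont(\rho),d\de}$, and Step~3 is a linear algebra argument made possible by Lemma~\ref{L270116}. The only mild subtlety is making sure the identification of $R^{\La_0}_{\cont(\rho)}$ inside $R^{\La_0}_{\cont(\rho),d\de}$ is consistent on both sides of Lemma~\ref{L270116}, which is handled by invoking the embedding $\zeta_{\cont(\rho),d\de}$ established via Lemmas~\ref{LZeta} and~\ref{Ptens1}.
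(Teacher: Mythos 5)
Your proof is correct and follows essentially the same strategy as the paper: establish the easy containment $\bar\Om(C_{\rho,d})\subseteq\Cent_{\rho,d}$, observe that $R^{\La_0}_{\cont(\rho),d\de}$ is spanned by products $ab$ with $a\in R^{\La_0}_{\cont(\rho)}$ and $b\in\bar\Om(C_{\rho,d})$, and then use the tensor decomposition of Lemma~\ref{L270116} to force $x=b_1$. You are slightly more explicit than the paper in spelling out the generation step and in passing to a full $\Z$-basis; the one micro-point you take for granted is that $1$ extends to a $\Z$-basis of $R^{\La_0}_{\cont(\rho)}$ (equivalently, $\Z\cdot 1$ is a direct summand), which is true but deserves a sentence — note that if $1\in pR^{\La_0}_{\cont(\rho)}$ then $R^{\La_0}_{\cont(\rho),\F_p}$ would be the zero ring, contradicting Theorem~\ref{TNSimples}; the paper sidesteps this by working with a finite linearly independent set $\{a_1,\dots,a_m\}$ with $a_1=1$ rather than a basis.
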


\begin{proof}
It is clear from the definitions that $\bar\Omega(C_{\rho,d})=\Omega(\hat C_{d\de}) \subseteq \Cent_{\rho,d}$. 
Conversely, let $x\in \Cent_{\rho,d}$. We can write 
$x= \sum_{i=1}^m a_i b_i$ for some $a_1,\ldots,a_m \in R^{\La_0}_{\cont(\rho)}$
and $b_1,\ldots,b_m \in \Omega(\hat C_{d\de})=\bar\Omega(C_{\rho,d})$, and we may assume that $a_1,\ldots,a_m$ are linearly independent with  $a_1=1$. 
By Lemma~\ref{L270116},  
$x=b_1$, so $x\in \bar\Omega(C_{\rho,d})$. 
\end{proof}

In view of Lemma~\ref{L270116}, we deduce:

\begin{Corollary}\label{Ctens} We have:
\begin{enumerate}
\item[{\rm (i)}] The map $\bar\Om\colon C_{\rho,d} \to \Cent_{\rho,d}$ is an algebra isomorphism. 
\item[{\rm (ii)}]
There is an algebra isomorphism $R^{\La_0}_{\cont(\rho)}\otimes C_{\rho,d}\iso R_{\cont(\rho),d\de}^{\La_0}$ given by 
$a\otimes b \mapsto a \bar\Omega(b)$. 
\end{enumerate}
\end{Corollary}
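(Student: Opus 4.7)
The plan is to observe that Corollary~\ref{Ctens} is essentially a formal consequence of Lemmas~\ref{L270116} and~\ref{Ltens2}; the real work has already been done.

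For part~(i), I would first note that by construction $\bar\Om\colon C_{\rho,d}\to R^{\La_0}_{\cont(\rho),d\de}$ is the map induced by $\Om$ on the quotient $C_{\rho,d}=\hat C_{d\de}/\ker\Om$, and is therefore injective as a map to $R^{\La_0}_{\cont(\rho),d\de}$. Since $\bar\Om(C_{\rho,d})=\Om(\hat C_{d\de})$ lies in $\Cent_{\rho,d}$ (as already observed in the proof of Lemma~\ref{Ltens2}), we may regard $\bar\Om$ as an injective algebra homomorphism $C_{\rho,d}\to \Cent_{\rho,d}$. Lemma~\ref{Ltens2} asserts that this map is surjective, so it is an algebra isomorphism.

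For part~(ii), I would use~(i) to identify $C_{\rho,d}$ with $\Cent_{\rho,d}$ via $\bar\Om$, and then transport the isomorphism of Lemma~\ref{L270116} along this identification: the composition
\[
R^{\La_0}_{\cont(\rho)}\otimes C_{\rho,d}\xrightarrow{\,\id\otimes\bar\Om\,} R^{\La_0}_{\cont(\rho)}\otimes \Cent_{\rho,d}\xrightarrow{\,\sim\,} R^{\La_0}_{\cont(\rho),d\de}
\]
sends $a\otimes b$ to $a\bar\Om(b)$ and is an algebra isomorphism, since the first factor is an isomorphism by part~(i) and the second is the isomorphism of Lemma~\ref{L270116}.

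There is no real obstacle here; the only thing to verify is that $\bar\Om$ takes values in $\Cent_{\rho,d}$, which follows because $\Om(\hat C_{d\de})$ is the image of elements $\iota_{\cont(\rho),d\de}(1_{\cont(\rho)}\otimes \rot_\kappa(x))$ that commute with $\iota_{\cont(\rho),d\de}(R_{\cont(\rho)}\otimes 1_{d\de})$ inside $R^{\La_0}_{\cont(\rho),d\de}$, hence with $\zeta_{\cont(\rho),d\de}(R^{\La_0}_{\cont(\rho)})=R^{\La_0}_{\cont(\rho)}$; this is exactly the argument contained in the proof of Lemma~\ref{Ltens2}.
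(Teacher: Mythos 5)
Your proof is correct and follows exactly the paper's intended argument: part (i) combines the injectivity of $\bar\Om$ (automatic from the definition $C_{\rho,d}=\hat C_{d\de}/\ker\Om$) with the surjectivity statement in Lemma~\ref{Ltens2}, and part (ii) is obtained by composing with the isomorphism of Lemma~\ref{L270116}. The paper states this as an immediate consequence of these two lemmas, and your write-up simply makes the deduction explicit, including the correct justification that $\Om(\hat C_{d\de})\subseteq\Cent_{\rho,d}$ because elements of the form $\iota_{\cont(\rho),d\de}(1_{\cont(\rho)}\otimes\rot_\kappa(x))$ commute with $\zeta_{\cont(\rho),d\de}(R^{\La_0}_{\cont(\rho)})$.
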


\begin{Remark} \label{RCrazy} 
{\rm 
By Lemma~\ref{LMatrix}, the algebra $R_{\cont(\rho)}^{\La_0}$ is isomorphic to a graded matrix algebra. 
Consider the homogeneous matrix unit $E_{1,1}$ in 
$R_{\cont(\rho)}^{\La_0}\subseteq R^{\La_0}_{\cont(\rho),d\de}$.
By Corollary~\ref{Ctens}, we have 
$C_{\rho,d}\cong E_{1,1} R_{\cont(\rho),d\de}^{\La_0} E_{1,1}$.
 So by Lemma~\ref{Ptens1}, we have 
 $C_{\rho,d}\cong E_{1,1} 1_{\cont(\rho),d\de}R^{\La_0}_{\al}E_{1,1}1_{\cont(\rho),d\de}$. 
 Note that $\mathbf{e}:=E_{1,1} 1_{\cont(\rho),d\de}$ is an idempotent in $R^{\La_0}_{\al}$, so $C_{\rho,d}\cong \mathbf{e}R^{\La_0}_{\al}\mathbf{e}$ is an idempotent truncation of $R^{\La_0}_{\al}$. 

The definition of $C_{\rho,d}$, Lemma~\ref{LMatrix} and Corollary~\ref{Ctens} make sense and can be proved over an arbitrary unital commutative ring $\k$, so the algebra $C_{\rho,d}$ defined over $\k$ is isomorphic to the idempotent truncation $$(\mathbf{e}\otimes 1)R^{\La_0}_{\al,\k}(\mathbf{e}\otimes 1)
\cong (\mathbf{e}R^{\La_0}_{\al}\mathbf{e})\otimes \k\cong C_{\rho,d,\k}.$$ 
}
\end{Remark}

\begin{Corollary} \label{CCSymmetric} 
For any field $\k$, the algebra $C_{\rho,d,\k}$ is symmetric. More precisely, it admits a symmetrizing form of degree $-2d$. 
\end{Corollary}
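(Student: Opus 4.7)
The plan is to realize $C_{\rho,d,\k}$ as an idempotent truncation of the cyclotomic KLR algebra $R^{\La_0}_{\al,\k}$ and to transport to it the symmetrizing form supplied by Theorem~\ref{TSVV}. First I would apply Remark~\ref{RCrazy} over $\k$ to obtain an isomorphism $C_{\rho,d,\k} \iso \mathbf{e} R^{\La_0}_{\al,\k} \mathbf{e}$ of graded $\k$-algebras, where $\mathbf{e} = E_{1,1}\cdot 1_{\cont(\rho),d\de}$ is a homogeneous idempotent of degree $0$. Homogeneity of $\mathbf{e}$ guarantees that no grading shift is introduced when passing to the truncation.

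Next I would invoke Theorem~\ref{TSVV} to obtain a symmetrizing form $\tau$ on $R^{\La_0}_{\al,\k}$ of degree $(\La_0-\al,\La_0-\al)$, and then record the standard fact that for any graded finite-dimensional symmetric $\k$-algebra $A$ with symmetrizing form of degree $d_0$ and any homogeneous idempotent $e\in A$ of degree $0$, the restriction $\tau|_{eAe}$ is a symmetrizing form on $eAe$ of the same degree $d_0$. The trace property is automatic; non-degeneracy is a short exercise using the cyclic identity $\tau(xa)=\tau(exea)=\tau(x\cdot eae)$ valid for $x\in eAe$ and $a\in A$. Applied to $R^{\La_0}_{\al,\k}$ and $\mathbf{e}$, this yields the desired symmetric structure on $C_{\rho,d,\k}$, with symmetrizing form of degree $(\La_0-\al,\La_0-\al)$.

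Finally, I would compute $(\La_0-\al,\La_0-\al) = -2d$. Since $\rho$ is an $e$-core, the weight $\La_0-\cont(\rho)$ lies in the Weyl orbit $W\La_0$ (as recalled in the introduction via the orbits $\O_d$), and Weyl-invariance of the form gives $(\La_0-\cont(\rho),\La_0-\cont(\rho))=(\La_0,\La_0)=0$. Combining this with $(\de,\de)=0$, $(\al_i,\de)=0$ for all $i\in I$ (hence $(\cont(\rho),\de)=0$), and $(\La_0,\de)=1$, the expansion $\al=\cont(\rho)+d\de$ yields
\[
(\La_0-\al,\La_0-\al)=(\La_0-\cont(\rho),\La_0-\cont(\rho))-2d(\La_0-\cont(\rho),\de)+d^2(\de,\de)=-2d.
\]

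There is no genuine obstacle: once Remark~\ref{RCrazy} is in hand, the proof is essentially bookkeeping together with the inner product computation. The only subtlety worth checking is that $\mathbf{e}$ is homogeneous of degree $0$, so that the grading on $C_{\rho,d,\k}$ matches the induced grading on $\mathbf{e} R^{\La_0}_{\al,\k}\mathbf{e}$ without any shift and the degree of the form transfers unchanged.
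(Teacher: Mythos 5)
Your proposal is correct and matches the paper's proof essentially step for step: both realize $C_{\rho,d,\k}$ as the idempotent truncation $\mathbf{e} R^{\La_0}_{\al,\k}\mathbf{e}$ via Remark~\ref{RCrazy}, invoke Theorem~\ref{TSVV} for the symmetrizing form on the RoCK block, pass it to the truncation (the paper cites {[}SY, Theorem~IV.4.1{]} where you give the short direct argument), and conclude with the Lie-theoretic computation $(\La_0-\al,\La_0-\al)=-2d$ (the paper cites {[}Kbook, Lemma~11.1.4{]} where you spell it out). The only cosmetic difference is that you inline the two citations, and your inlined computation and non-degeneracy argument are both correct.
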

\begin{proof}
By Remark~\ref{RCrazy}, $C_{\rho,d}$ is an idempotent truncation of $R^{\La_0}_\al$. 
By\cite[Theorem IV.4.1]{SY}, an idempotent truncation of a symmetric algebra is symmetric, with a symmetrizing form obtained by restriction. So it suffices to prove that $R^{\La_0}_{\al,\k}$ is symmetric with a symmetrizing form of degree $-2d$. But this follows from Theorem~\ref{TSVV} and an easy Lie-theoretic computation, see \cite[Lemma~11.1.4]{Kbook}.
\end{proof}

\section{Dimensions}
Throughout the section we fix $d\in\Z_{>0}$, a $d$-Rouquier core $\rho$ of residue $\kappa$, and $n\in\Z_{>0}$. We also fix an integer $N\ge |\rho|+de$ such that $N+e\Z = -\ka$ and assume in this section that {\em all abaci have $N$ beads}, cf.~\S\ref{SSAb}.

The main goal of this section is to compute dimensions of certain idempotent truncations of the algebras $C_{\rho,d}$. The idempotents we use here are the so-called Gelfand-Graev idempotents first considered in \cite{KM}.

\subsection{Gelfand-Graev idempotents}
\label{SSGG}
Recall from \S\ref{SSSC} that for all $j\in J$, we have defined special $R_\de$-modules $L_{\de,j}$ with $\CH L_{\de,j}=\sum_{\bi\in I^{\de,j}}\bi$. From now on, for every $j\in J$, we fix an arbitrary word  
\begin{equation}\label{Eli}
\bl^j=(l_{j,1},\dots,l_{j,e})\in I^{\de,j}.
\end{equation}
Consider the divided power words
\begin{equation}\label{EL(d)}
\bl^j(d):=(l_{j,1}^{(d)},\dots,l_{j,e}^{(d)})\in I^{d\de}_\di\qquad(j\in J).
\end{equation} 
Recall the notation~\eqref{ELaCol} and 
let $(\la,\bc)\in\La^\col(n,d)$. 
We set
\begin{align*}
\bl(\la,\bc)&:=\bl^{c_1}(\la_1)\dots \bl^{c_n}(\la_{n})\in I^{d\de}_\di.
\end{align*}
Now, we define the {\em Gelfand-Graev idempotent} $\ga^{\la,\bc}$ and the integer $a_\la$ as follows:
\begin{align}\label{EGG}
\ga^{\la,\bc}&:=1_{\bl(\la,\bc)} \in R_{d\de}, \\
\label{Eala}
a_{\la}&:= -\langle \bl(\la,\bc) \rangle 
= - e \sum_{t=1}^n \la_t (\la_t-1)/2, 
\end{align}
cf.~\S\ref{SSDiv}.
In the special case $n=1$, $\la=(d)$, $\bc=(j)$, we also use the notation
\begin{equation}\label{ESpecialCaseGa}
\ga^{d,j}:=1_{\bl^j(d)}.
\end{equation}
We set
\begin{align}
\om&:=(1,\dots,1) \in \La(d,d), \label{ELittleOm} \\
\ga^\om&:=\sum_{\bb\in J^d}\ga^{\om,\bb}\in R_{d\de}.
\label{EGaOm}
\end{align}

\begin{Lemma}\label{Ldim1}
For any $(\la,\bc),(\la',\bc')\in\La^\col(n,d)$, we have
\begin{equation}\label{Ldim1_0}
\qdim(\ga^{\la,\bc} C_{\rho,d} \ga^{\la',\bc'}) = 
q^{a_{\la}-a_{\la'}} \hspace{-5mm}
\sum_{\substack{\mu\in \Par_{\rho,d} \\ 
\t \in \Std(\mu\sm\rho,\, \bl(\la,\bc)^{+\ka}) \\
\t'\in \Std(\mu\sm\rho,\, \bl(\la',\bc')^{+\ka})}}
q^{\deg(\t) + \deg(\t')}. 
\end{equation}
\end{Lemma}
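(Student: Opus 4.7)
The plan is to transport the computation into the RoCK block $R^{\La_0}_{\al}$ via the identifications of \S\ref{SSRoCK}, apply the dimension formula of Theorem~\ref{TDim}, and factor out the contribution of the $e$-core $\rho$. Throughout I identify $C_{\rho,d}$ with its image $\Cent_{\rho,d}$ inside $R^{\La_0}_{\cont(\rho),d\de}$ via the isomorphism $\bar\Om$ of Corollary~\ref{Ctens}(i), so that $\ga^{\la,\bc}$ corresponds to
\[
\bar\Om(\ga^{\la,\bc})=1_{\cont(\rho),\,\bl(\la,\bc)^{+\ka}}=\sum_{\bj\in I^{\cont(\rho)}}1_{\bj\cdot \bl(\la,\bc)^{+\ka}}.
\]

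First I would invoke the tensor product decomposition of Corollary~\ref{Ctens}(ii); since $\bar\Om(\ga^{\la,\bc})$ is the image of $1\otimes \ga^{\la,\bc}$, I obtain
\[
\bar\Om(\ga^{\la,\bc})\,R^{\La_0}_{\cont(\rho),d\de}\,\bar\Om(\ga^{\la',\bc'})\ \cong\ R^{\La_0}_{\cont(\rho)}\otimes \ga^{\la,\bc}C_{\rho,d}\ga^{\la',\bc'}
\]
as graded $\Z$-modules. Combining with Lemma~\ref{Ptens1}, which rewrites the left-hand side as $\bar\Om(\ga^{\la,\bc})R^{\La_0}_\al \bar\Om(\ga^{\la',\bc'})$, this reduces the lemma to computing $\qdim\bigl(\bar\Om(\ga^{\la,\bc})\,R^{\La_0}_\al\,\bar\Om(\ga^{\la',\bc'})\bigr)\big/\qdim(R^{\La_0}_{\cont(\rho)})$.

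Next I would apply Theorem~\ref{TDim} to each summand $1_{\bj\cdot\bl(\la,\bc)^{+\ka}}\,R^{\La_0}_\al\,1_{\bj'\cdot\bl(\la',\bc')^{+\ka}}$ indexed by pairs $(\bj,\bj')\in I^{\cont(\rho)}\times I^{\cont(\rho)}$. Since $\bj,\bj'$ are ordinary words and $\ka$-shifting preserves $\langle\cdot\rangle$, the grading shift of Theorem~\ref{TDim} collapses to $q^{a_\la-a_{\la'}}$ by~\eqref{Eala}. The key combinatorial step, where I expect the main work to lie, is the splitting of the standard tableaux at level $|\rho|$: any $\s\in\Std(\mu,\bj\cdot\bl(\la,\bc)^{+\ka})$ restricts on the cells labelled $1,\dots,|\rho|$ to a standard tableau of shape $\nu\subseteq\mu$ with $|\nu|=|\rho|$ and $\cont(\nu)=\cont(\rho)$, whence Lemma~\ref{LCoreCont} (applied with $d=0$) forces $\nu=\rho$. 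In particular $\mu\in\Par_{\rho,d}$, and $\s$ decomposes uniquely as $\s_0\sqcup\s_1$ with $\s_0\in\Std(\rho,\bj)$ and $\s_1\in\Std(\mu\sm\rho,\bl(\la,\bc)^{+\ka})$; unpacking the definition of $\deg$ shows additivity $\deg(\s)=\deg(\s_0)+\deg(\s_1)$ along this split, and analogously for $\t$.

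Summing, the total factors as
\[
\Bigl(\sum_{\s_0\in\Std(\rho)}q^{\deg\s_0}\Bigr)\Bigl(\sum_{\t_0\in\Std(\rho)}q^{\deg\t_0}\Bigr)\sum_{\mu\in\Par_{\rho,d},\,\s_1,\,\t_1}q^{\deg\s_1+\deg\t_1},
\]
and a second application of Theorem~\ref{TDim} to $R^{\La_0}_{\cont(\rho)}$ itself (where Lemma~\ref{LCoreCont} again pins down $\mu=\rho$) recognises the first two factors as exactly $\qdim(R^{\La_0}_{\cont(\rho)})$. Cancelling yields the desired formula. The only non-routine step I foresee is the rigidity $\nu=\rho$ above, but this is immediate from Lemma~\ref{LCoreCont}, so I anticipate no serious obstacle.
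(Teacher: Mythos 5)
Your proposal is correct and follows essentially the same route as the paper's proof: identify $C_{\rho,d}$ with $\Cent_{\rho,d}$, invoke Corollary~\ref{Ctens} and Lemma~\ref{Ptens1} to write $\qdim(R^{\La_0}_{\cont(\rho)})\cdot\qdim(\ga^{\la,\bc}C_{\rho,d}\ga^{\la',\bc'})$ as a sum over standard tableaux via Theorem~\ref{TDim}, then split each tableau at level $|\rho|$ using Lemma~\ref{LCoreCont} and degree additivity, and cancel the $\qdim(R^{\La_0}_{\cont(\rho)})$ factor. The details you flag (rigidity $\nu=\rho$, the collapse of the $\langle\cdot\rangle$-shift to $q^{a_\la-a_{\la'}}$, the identification of the core factor with $\qdim R^{\La_0}_{\cont(\rho)}$) all check out.
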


\begin{proof}
It follows from Lemma~\ref{LCoreCont}, Theorem~\ref{TDim} and Corollary~\ref{Ctens} 
that 
\begin{align} \notag
 \qdim(R^{\La_0}_{\cont(\rho)})
 \qdim(\ga^{\la,\bc} C_{\rho,d} \ga^{\la',\bc'}) =
 \\
&\hspace{-2cm} = 
q^{a_{\la}-a_{\la'}} \hspace{-2mm}
\sum_{\substack{\mu\in \Par_{\rho,d},\, \bj,\bj'\in I^{\cont(\rho)}
\\ \t \in \Std(\mu,\, \bj (\bl(\la,\bc)^{+\ka})) \\
\t' \in \Std(\mu,\, \bj (\bl(\la',\bc')^{+\ka})) }}
q^{\deg(\t)+\deg(\t')}.
\label{Ldim1_1}
\end{align}
For each $\mu\in \Par_{\rho,d}$ and $\bj\in I^{\cont(\rho)}$, in view of Lemma~\ref{LCoreCont}, 
we have a bijection 
$$\Std(\mu, \bj(\bl(\la,\bc)^{+\ka}))\iso \Std(\rho,\bj)\times \Std(\mu\sm\rho, \bl(\la,\bc)^{+\ka}),\ \t \mapsto (\t_0,\t_1)$$ 
where $\t_0=\t|_{\Y\rho}$ and
$\t_1 (u) = \t(u)-|\rho|$ for all $u\in \Y\mu\sm \Y\rho$.
Moreover, by definition, $\deg(\t) = \deg(\t_0)+ \deg(\t_1)$. Hence, the right-hand side of~\eqref{Ldim1_1} is equal to the right-hand side of~\eqref{Ldim1_0} multiplied by 
\[
\sum_{\t_0,\t'_0\in \Std(\rho)} q^{\deg(\t_0)+\deg(\t'_0)} = 
\qdim(R^{\La_0}_{\cont(\rho)}),
\]
and the result follows after dividing both sides of~\eqref{Ldim1_1}
by $\qdim(R^{\La_0}_{\cont(\rho)})$. 
\end{proof}

The main aim of the rest of this section is to determine the rank of the free $\Z$-module $\ga^{\la,\bc} C_{\rho,d} \ga^\om$ for any $(\la,\bc)\in \La^{\col} (n,d)$, see Corollaries~\ref{CDimLaOm} and~\ref{CDimEqual}.

\subsection{Colored tableaux}\label{SSSST}
 A {\em horizontal strip} is
a convex subset $U$ of $\Nodes$ 
such that whenever $(r,s)\ne (k,l)$ are in $U$ we have $s\ne l$. A {\em vertical strip} is a convex subset $U$ of $\Nodes$ such that whenever 
$(r,s)\ne (k,l)$ are in $U$ we have $r\ne k$.

Recalling the notation of \S\ref{SSPar}, for any $i\in I$, we set 
$\Nodes^{I,i}=\Z_{>0} \times \Z_{>0} \times \{i\}\subset \Nodes^I$. 
Identifying $\Nodes^{I,i}$ with $\Nodes$, we have a notion of what it means for a subset of $\Nodes^{I,i}$ to be 
a {\em horizontal} or {\em vertical} strip. 
Given $j\in J$, we say that a subset $U$ of $\Nodes^I$ is a 
{\em $j$-bend} if the following conditions are satisfied:
\begin{enumerate}
\item $U\subset \Nodes^{I,j-1} \cup \Nodes^{I,j}$;
\item $U\cap \Nodes^{I,j-1}$ is a horizontal strip in $\Nodes^{I,j-1}$, and
$U\cap \Nodes^{I,j}$ is a vertical strip in $\Nodes^{I,j}$. 
\end{enumerate}

Now let $\bmu\in\Par^I (d)$. 
Given $(\la,\bc)\in \La^{\col}(n,d)$, 
we denote by $\CT(\bmu;\la,\bc)$ 
the set of all weakly increasing maps 
$\T \colon \Y\bmu \to \{1,\ldots,n\}$ such that for all $r=1,\ldots,n$ the set $\T^{-1} (r)$ is a $c_r$-bend  and 
$|\T^{-1} (r)|=\la_r$. We refer to the elements of $\CT(\bmu;\la,\bc)$ as the {\em colored tableaux of shape $\umu$ and type $(\la,\bc)$}. 

Colored tableaux will play the role of a combinatorial intermediary connecting the standard tableaux appearing in Lemma~\ref{Ldim1} and the explicit expression for $\dim \ga^{\la,\bc} C_{\rho,d} \ga^\om$ appearing on the right hand side of the formula in Corollary~\ref{CDimLaOm}.

\subsection{Counting standard tableaux in terms of colored tableaux}
\label{SSquot}

Given $0\leq i<e$ and $u\in \Z\times \Z$, we call the image of 
$\Y{(i+1,1^{e-i-1})}$ under the translation of $\Z\times \Z$ 
sending $(1,1)$ to $u$ the {\em $e$-hook with vertex $u$ and arm length $i$}, or simply an {\em $e$-hook}. 

Recall the abacus notation from \S\ref{SSAb}. 
For any $i\in I$, let $b_i=b_i (\rho)$, $b_{>i} = \sum_{j=i+1}^{e-1} b_j$ and 
$b_{<i} = \sum_{j=0}^{i-1} b_j$. Since $\rho$ is $d$-Rouquier, we have $b_{i+1}\ge b_i+d-1$ for $i=0,\dots,e-2$, and hence, for all $i\le j$ in $I$, 
\begin{align}
b_{>i} - b_{>j} & \ge (b_i+d-1) (j-i), \label{Eij1} \\
b_{<j} - b_{<i} & \le (b_j-d+1)(j-i). \label{Eij2}
\end{align}
Given $(r,s,i)\in \Nodes^I$, define the integers
\begin{align*}
x(r,s,i)&:=r- (e-i-1)(b_i-r+s)+b_{>i}, \\
y(r,s,i)&:= s+i(b_i-r+s)-b_{<i}.
\end{align*}
Define 
$$\Hook(r,s,i) \subset \Z\times \Z$$ 
be the $e$-hook with arm length $i$ and vertex $(x(r,s,i),y(r,s,i))$. 
The following lemma is a refinement of~\cite[Lemma 4]{CK} and \cite[Lemma 4.3]{Evseev}.

\begin{Lemma}\label{LUnion}
Let $\mu\in \Par_{\rho,d}$ and $\bmu=\quot(\mu)$. Then
\[
\Y\mu = \Y\rho \sqcup \bigsqcup_{u\in \Y\bmu} \Hook(u).
\]
Moreover, every $\Hook(u)$ with $u\in \Y\bmu$ has vertex of residue $\kappa$. 
\end{Lemma}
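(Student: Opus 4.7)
The plan is to verify the lemma by direct abacus computation, exploiting the standard correspondence between $e$-hook additions and upward bead moves in $\Ab_N^e$.

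First, I would dispatch the residue claim. Expanding $y(r,s,i) - x(r,s,i)$ with $t = b_i - r + s$ and using $b_{<i} + b_i + b_{>i} = N$, one obtains
\[
 y(r,s,i) - x(r,s,i) = (s - r) + (e-1)t + b_i - N = e(s - r + b_i) - N,
\]
so $y(r,s,i) - x(r,s,i) \equiv -N \equiv \ka \pmod{e}$, whence the vertex of $\Hook(r,s,i)$ has residue $\ka$.

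For the decomposition, I would process the nodes of $\Y\bmu$ in a suitable order, each one corresponding to a single upward bead move in $\Ab_N^e$, and verify that the $e$-hook added at each step is exactly $\Hook(r,s,i)$. The node $(r,s,i)\in \Y\bmu$ is identified with the $s$-th upward step of the $r$-th highest bead on runner $i$, which moves that bead from row $b_i - r + s - 1$ to row $t := b_i - r + s$. I would order the moves by increasing $t$, breaking ties among moves sharing the same $t$ by decreasing $i$. In $\Ab_N$, the intermediate positions between the bead's old and new location consist of the $e - 1 - i$ positions on runners $i+1,\ldots,e-1$ at row $t - 1$, together with the $i$ positions on runners $0,\ldots,i-1$ at row $t$. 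Using the Rouquier inequalities~\eqref{Eij1}--\eqref{Eij2} together with the chosen order, one verifies that all $e - 1 - i$ positions of the first kind are occupied by beads that originated in $\rho$ and have not yet moved, while all $i$ positions of the second kind are unoccupied (those moves have not yet been performed); in particular, the move to $(t, i)$ is legal. This yields an added $e$-hook of arm length $i$, and a direct translation from the rank of the moved bead and its new position in $\Ab_N$ to partition coordinates identifies its vertex with $(x(r,s,i), y(r,s,i))$. Induction on the number of processed moves then yields the claimed disjoint tiling.

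The main obstacle is the careful bookkeeping required throughout the sequence of moves: one must verify, using the Rouquier inequalities with the correct margin, that the predicted occupancy pattern of the intermediate positions is preserved at every step. The bound $b_{>i} - b_{>j} \geq (b_i + d - 1)(j - i)$ is precisely what forces runners with larger index to remain populated at all rows traversed by smaller-index moves processed earlier, while the dual bound $b_{<j} - b_{<i} \leq (b_j - d + 1)(j - i)$ prevents premature bead accumulation on runners with smaller index; combined with the chosen processing order and the constraint $r \le b_i$, $s \le \mu^{(i)}_r \le d$ for $(r,s,i)\in \Y\bmu$, these inequalities supply the required bead configuration at each stage.
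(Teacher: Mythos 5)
Your verification of the residue claim is correct and coincides with the paper's calculation. However, the decomposition argument has a genuine gap, and it lies in the claim that ordering by increasing $t := b_i - r + s$ (with ties broken by decreasing runner index) yields a sequence of legal bead moves.

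The problem is on a single runner. For the move attached to the node $(r,s,i)$ to be legal, position $(t-1,i)$ must be occupied and $(t,i)$ must be empty at the time of processing, but your occupancy verification only addresses the intermediate positions on runners $j>i$ (at row $t-1$) and $j<i$ (at row $t$), not runner $i$ itself. In fact the $t$-order is incompatible with addability on runner $i$: if $(r,s,i)\in\Y\bmu$ with $r>1$, then $(r-1,s,i)\in\Y\bmu$ as well and has $t$-value $b_i-(r-1)+s = t+1 > t$, so your order processes $(r,s,i)$ \emph{before} $(r-1,s,i)$. Concretely, take $\mu^{(i)}=(1,1)$ (possible once $d\ge 2$): the node $(2,1,i)$ has $t=b_i-1$ and $(1,1,i)$ has $t=b_i$, so $(2,1,i)$ is processed first; but then bead $2$ must move from row $b_i-2$ to row $b_i-1$, which is still occupied by bead $1$. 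The Rouquier inequalities \eqref{Eij1}--\eqref{Eij2} cannot rescue this, since the obstruction lives entirely on runner $i$ and is a matter of ordering, not of bead counts across runners. To repair a ``build-up'' argument you would need an order compatible with Young-diagram addability on each runner, at which point the occupancy analysis for $j\neq i$ (which currently relies on the $t$-order) would need to be redone. The paper sidesteps the whole issue by inducting downwards: it peels off \emph{any} removable node of $\bmu$ and applies \cite[Lemma 4(1)]{CK} to the abacus of the final partition $\mu$ to get the rectangle conditions \eqref{ERect1}--\eqref{ERect2} in one stroke, so no intermediate abacus configurations need to be controlled.
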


\begin{proof}
It is easy to check that $y(r,s,i)-x(r,s,i)\equiv -N\pmod{e}$\!, so the second statement holds. 

For the first statement, there is nothing to prove when $|\bmu|=0$, so we assume that $|\bmu|\ge 1$ and choose $(r,s,i)\in \Y\bmu$ such that 
$\Y\bmu \sm \{(r,s,i)\}=\Y\bnu$ for some 
$\bnu \in \Par^I (d-1)$. Arguing by induction on $d$, 
we may assume that 
the lemma holds for the partition $\nu\in \Par_{\rho,d-1}$ determined
from $\quot (\nu) = \bnu$, so it is enough to show that 
$\Y\mu\sm \Y\nu=\Hook(r,s,i)$.

Let $\bmu=(\mu^{(0)},\dots,\mu^{(e-1)})$ and 
$\bnu=(\nu^{(0)},\dots,\nu^{(e-1)})$. Then 
$\Y{ \mu^{(i)} } \sm \Y { \nu^{(i)} } = \{ (r,s) \}$ and 
$\Y{ \mu^{(j)} }= \Y{ \nu^{(j)} }$ for all $j\in I\sm\{ i\}$.
We have 
\begin{equation}\label{Emunu}
\Ab^e_N (\mu) = 
\big(\Ab^e_N (\nu) \sm \{ (a-1,i)\} \big) \cup \{ (a,i)\}
\end{equation}
for some $a\in\Z_{>0}$. 
In view of Lemma~\ref{LPrecSucc}, $\Ab^e_N (\mu)$ has $b_i-r$ beads and $s$ non-beads belonging to the runner $\Ru_i$ and preceding $(a,i)$, so
$a=b_i-r+s$. 
By~\cite[Lemma 4(1)]{CK}, we have 
\begin{align}
 \Ab_N^e (\mu)&\supseteq [0,a-1]\times [i+1,e-1],
 \label{ERect1} \\
\Ab_N^e (\mu)&\cap (\Z_{\ge a}\times [0, i-1]) = \vn. 
 \label{ERect2}
\end{align}
In particular, each of $(a-1,i+1),\ldots, (a-1,e-1)$ is a bead of 
$\Ab_N^e (\mu)$, and each of $(a,0), \ldots,(a,i-1)$ is a non-bead of $\Ab_N^e(\mu)$. 
By~\eqref{Emunu} and Lemma~\ref{LPrecSucc}, it follows that 
$\Y\mu\sm \Y\nu$ is an $e$-hook with arm length $i$ and vertex $(x,y)$ where $x$ is the number of the bead $(a,i)$ and $y$ is the number of the non-bead $(a-1,i)$ of $\Ab^e_N (\mu)$, cf.~the proof of~\cite[Lemma 4(2)]{CK}.
Using~\eqref{ERect1},~\eqref{ERect2} and the fact that there are $r-1$ beads of $\Ab_N^e (\mu)$ on $\Ru_i$ succeeding $(a,i)$, we obtain 
 $x=r+b_{>i} - a(e-i-1)=x(r,s,i)$. 
 Similarly, $y=s+ia-b_{<i} = y(r,s,i)$. 
\end{proof}

\begin{Corollary} \label{CDonkey} 
Let $0\leq f\leq d$, and $\mu\in \Par_{\rho,d}$, $\nu\in \Par_{\rho,f}$ be partitions with the $e$-quotients  $\bmu$, $\bnu$ respectively. Then $\Y\nu\subseteq \Y\mu$ if and only if $\Y{\bnu}\subseteq  \Y{\bmu}$. 
\end{Corollary}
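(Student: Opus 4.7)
The backward direction is immediate from Lemma~\ref{LUnion}: if $\Y\bnu\subseteq \Y\bmu$, then
\[
\Y\nu=\Y\rho\sqcup\bigsqcup_{u\in\Y\bnu}\Hook(u)\subseteq \Y\rho\sqcup\bigsqcup_{v\in\Y\bmu}\Hook(v)=\Y\mu.
\]

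For the forward direction, my plan is to proceed by induction on $f$. The base case $f=0$ forces $\nu=\rho$, giving $\Y\bnu=\varnothing\subseteq\Y\bmu$. For the inductive step, pick a removable node $u^*=(r^*,s^*,i^*)$ of $\Y\bnu$ and form $\nu^-\in\Par_{\rho,f-1}$ with $\Y{\bnu^-}=\Y\bnu\sm\{u^*\}$, so that $\Y{\nu^-}=\Y\nu\sm\Hook(u^*)\subseteq\Y\mu$ by Lemma~\ref{LUnion}. The inductive hypothesis yields $\Y{\bnu^-}\subseteq \Y\bmu$, reducing the problem to showing $u^*\in\Y\bmu$.

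Suppose for contradiction that $u^*\notin\Y\bmu$. The containment $\Y{\bnu^-}\subseteq \Y\bmu$ already constrains $\bmu$ tightly near $(r^*,s^*,i^*)$: it forces $\bmu^{(i^*)}_{r^*}=s^*-1$ (or $0$ if $s^*=1$), and along the $i^*$-component $\bmu^{(i^*)}$ contains every node of $\bnu^{(i^*)}$ other than $u^*$ itself. Translating to the abacus $\Ab^e_N(\mu)$, the $r^*$-th largest bead on runner $i^*$ sits one slot below its counterpart in $\Ab^e_N(\nu)$. I would then analyze the threshold position $p^*:=e(\mu^{(i^*)}_{r^*}+b_{i^*}-r^*+1)+i^*$ and compare $N_\nu(p^*)$ with $N_\mu(p^*)$, the numbers of beads of $\nu$ and $\mu$ at positions $\ge p^*$, split by runner. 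On runner $i^*$ alone we obtain a surplus of at least one in favor of $\nu$; the crux is to rule out that this surplus can be cancelled by the contributions from runners $j\ne i^*$. For this, I would invoke the slot-range bound $b_j-r\le \la^{(j)}_r\le b_j+d-r$ (valid for every $\la\in\Par_{\rho,\le d}$, since weights are bounded by $d$) together with the $d$-Rouquier inequality $b_{j+1}(\rho)\ge b_j(\rho)+d-1$, which together constrain the possible bead configurations on neighboring runners tightly enough that no cancellation is possible. This forces $N_\nu(p^*)>N_\mu(p^*)$, contradicting $\Y\nu\subseteq\Y\mu$. The boundary case $r^*=s^*=1$ (where $\bmu^{(i^*)}=\varnothing$) is handled separately by observing that the ``far'' cell of $\Hook(1,1,i^*)$ cannot lie in $\Y\mu$, again a consequence of the Rouquier inequality.

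The main obstacle is the quantitative bookkeeping on the non-$i^*$ runners: the $d$-Rouquier hypothesis must be used essentially here, as without it bead rearrangements on other runners could genuinely compensate for the runner-$i^*$ surplus, and the argument would fail. Verifying the no-cancellation claim reduces to a case analysis of how the bead ranges on consecutive runners overlap, and the Rouquier inequality is exactly what makes these ranges sufficiently separated.
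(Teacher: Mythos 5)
Your backward direction is correct and matches the paper. But the forward direction has a genuine gap: the entire argument hinges on the claim that the removed node $u^*$ lies in $\Y\bmu$, and you never actually prove it. You acknowledge this yourself — the paragraph from ``I would then analyze the threshold position $p^*$'' onward is a plan, not a proof. You invoke a ``slot-range bound,'' a ``no-cancellation claim,'' and ``a case analysis of how the bead ranges on consecutive runners overlap,'' but none of these are carried out, and it is not at all clear they can be. The difficulty is structural: knowing $\Hook(u^*)\subseteq\bigsqcup_{w\in\Y\bmu}\Hook(w)$ does not immediately force $u^*\in\Y\bmu$, because two $e$-hooks can share a vertex of residue $\ka$ while having different arm lengths, and the cells of $\Hook(u^*)$ could a priori be distributed across several of the hooks $\Hook(w)$.

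The paper avoids this entirely by choosing the opposite induction. It inducts on $d-f$, going \emph{up} toward $\mu$ rather than down toward $\rho$. If $\Y\bnu\not\subseteq\Y\bmu$, a short combinatorial argument (taking $\si:=\bnu\cap\bmu$, which is properly contained in both since $|\si|\le f<d$, and choosing an addable node of $\si$ inside $\bmu$) produces a node $u\in\Y\bmu\sm\Y\bnu$ that is addable to $\bnu$. Setting $\bla:=\bnu\cup\{u\}$ and $\la\in\Par_{\rho,f+1}$ with $\quot(\la)=\bla$, Lemma~\ref{LUnion} gives $\Y\la=\Y\nu\sqcup\Hook(u)\subseteq\Y\mu$ (because $u\in\Y\bmu$ means $\Hook(u)\subseteq\Y\mu$), and the inductive hypothesis for $d-(f+1)$ gives $\Y\bla\subseteq\Y\bmu$, hence $\Y\bnu\subseteq\Y\bmu$, contradiction. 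The crucial difference is that by \emph{adding} a node that is chosen in advance to lie in $\bmu$, the membership question you struggle with becomes automatic. Your removal-based induction forces you to recover that membership after the fact, which appears to require exactly the kind of delicate, Rouquier-dependent abacus estimates you gesture at but do not supply. Unless you can actually complete that bookkeeping, your argument does not prove the forward direction.
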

\begin{proof}
The if-part follows from Lemma~\ref{LUnion}. For the only-if-part, we apply induction on $d-f$, the case $d=f$ being obvious. Let $d-f>0$. 
If $\Y{\bnu}\not\subseteq  \Y{\bmu}$, then there is a node $(r,s,i)\in \Y{\bmu}\sm \Y{\bnu}$ such that $\Y{\bnu}\cup\{(r,s,i)\}=\Y{\quot(\la)}$ for some $\la \in \Par_{\rho,f+1}$. Then $\Y\la=\Y\nu\sqcup \Hook(r,s,i)\subseteq \Y\mu$ by Lemma~\ref{LUnion}. By induction, $\Y{\quot(\la)}\subseteq\Y{\umu}$, which is a contradiction. 
\end{proof}

\begin{Lemma}\label{LHookTab}
For any $j\in J$, the set of standard $\bl^j$-tableaux whose shape is a partition consists of exactly two elements, $\t$ and $\s$, where 
\begin{enumerate}
\item[(a)] $\t$ has shape $(j,1^{e-j})$, with 
$\t(e-j+1,1)=e$, and $\deg (\t)=0$. 
\item[(b)] $\s$ has shape $(j+1,1^{e-j-1})$, with $\s(1,j+1)=e$, 
 and $\deg(\s)=1$. 
\end{enumerate}
\end{Lemma}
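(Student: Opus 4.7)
The plan is to prove the lemma in three stages: classify the possible shapes, establish uniqueness of the standard tableau on each shape, and compute the degrees.

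For the first stage, any standard $\bl^j$-tableau has shape $\la$ with $|\la|=e$ and $\cont(\la)=\delta$; combining the identity $|\la|=e\wt(\la)+|\core(\la)|$ with Lemma~\ref{LCoreCont} forces $\wt(\la)=1$ and $\core(\la)=\vn$, so $\la$ is an $e$-hook $\nu_a:=(a+1,1^{e-a-1})$ for some $0\le a\le e-1$. The cells of $\nu_a$ have residues $0,1,\dots,a$ along the top row and $e-1,e-2,\dots,a+1$ down the first column below $(1,1)$. Since $\t^{-1}(e)$ must sit at a removable corner of residue $l_{j,e}=j$, and the removable corners of $\nu_a$ are $(1,a+1)$ of residue $a$ (for $a\ge 1$) and $(e-a,1)$ of residue $a+1 \pmod e$ (for $a\le e-2$), the only possibilities are $a=j-1$ (shape $(j,1^{e-j})$ with final cell $(e-j+1,1)$, giving $\t$) and $a=j$ (shape $(j+1,1^{e-j-1})$ with final cell $(1,j+1)$, giving $\s$).

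For the second stage, recall $\bl^j=(0,\bk,j)$ where $\bk$ is a fixed shuffle of $(1,2,\dots,j-1)$ and $(e-1,e-2,\dots,j+1)$. In either of the two shapes, the cells carrying residues in $\{1,\dots,j-1\}$ lie at $(1,2),\dots,(1,j)$ in the top row, so the standard condition forces them to be filled in left-to-right order; likewise the cells carrying residues in $\{j+1,\dots,e-1\}$ lie at $(2,1),\dots,(e-j,1)$ in the first column and must be filled top to bottom. Since $\bk$ is fixed, the assignment of the labels $2,\dots,e-1$ to cells is uniquely determined, and together with $\t^{-1}(1)=(1,1)$ and the prescribed removable $j$-corner for $\t^{-1}(e)$, the whole tableau is forced; thus each of the two shapes supports exactly one standard $\bl^j$-tableau.

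For the third stage, I apply $\deg(\t)=\sum_{k=1}^{e} d_{\{\t^{-1}(k)\}}(\t^{-1}(\{1,\dots,k\}))$ step by step. At step $1$ (adding $(1,1)$), the shape $(1)$ has no addable or removable $0$-nodes besides $v_1$, so the contribution vanishes. At any subsequent non-final step $k\in\{2,\dots,e-1\}$, the shape is a sub-hook $(t,1^c)$ with $t+c=k$, and the five candidate addable/removable nodes $(1,t+1),(2,2),(c+2,1),(1,t),(c+1,1)$ have residues $t, 0, -c-1, t-1, -c$ modulo $e$. A brief residue check shows none of them (other than $v_k$ itself) can coincide in residue with $l_{j,k}$: each such congruence reduces to $t+c\equiv 0$ or $1\pmod e$ (impossible under $1\le t+c\le e-1$), or to a constraint excluded by $t\ge 2$ for top-row additions and $c\ge 1$ for column additions. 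Hence all non-final steps contribute $0$. Finally at step $e$ of $\t$, the cell $v_e=(e-j+1,1)$ lies strictly below the unique addable $j$-node $(1,j+1)$ of $(j,1^{e-j})$, so $v_e$ is not above it and the contribution is $0$, giving $\deg(\t)=0$. At step $e$ of $\s$, the cell $v_e=(1,j+1)$ lies strictly above the unique addable $j$-node $(e-j+1,1)$ of $(j+1,1^{e-j-1})$, yielding a contribution of $+1$ and hence $\deg(\s)=1$. The main technical obstacle is the residue case analysis at non-final steps, but the symmetry between top-row and column additions makes it routine.
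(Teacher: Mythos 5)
Your proof is correct and takes the same approach as the paper's: invoke Lemma~\ref{LCoreCont} to restrict the shape to an $e$-hook in $\Par_{\vn,1}$, then use the residue condition on the last entry to pin down the two candidate hooks, and finally compute the degree step by step. The paper dispatches everything after the appeal to Lemma~\ref{LCoreCont} with ``the rest is easy to see''; you have carried out that routine but somewhat fiddly residue and degree bookkeeping accurately, including the key observations that every intermediate shape is a sub-hook, that the residue conditions $t+c\equiv 0,1\pmod e$ cannot hold for $1\le k\le e-1$, and that the entire degree is concentrated at the final step where the hand/foot position of the last box relative to the other addable $j$-node of the hook determines whether the contribution is $0$ or $1$.
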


\begin{proof}
By Lemma~\ref{LCoreCont}, the shape of any standard tableau in question must be an element of $\Par_{\vn,1}$, and the rest is easy to see. 
\end{proof}

The graded dimension of $C_{\rho,d}$ for $d=1$ can be easily computed:

\begin{Lemma} \label{LDimD=1} 
For any $k,j\in J$, we have:
$$
\DIM (1_{\bl^k}C_{\rho,1}1_{\bl^j})=
\left\{
\begin{array}{ll}
1+q^2 &\hbox{if $k=j$,}\\
q &\hbox{if $k$ and $j$ are neighbors,}
 \\
0 &\hbox{otherwise.}
\end{array}
\right.
$$
\end{Lemma}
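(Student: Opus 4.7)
The plan is to read off the graded dimension from Lemma~\ref{Ldim1} in the case $d=1$, $(\la,\bc)=((1),(k))$, $(\la',\bc')=((1),(j))$. Note that $\la_t(\la_t-1)/2=0$ so $a_\la=a_{\la'}=0$, and $\bl(\la,\bc)=\bl^k(1)=\bl^k$, $\bl(\la',\bc')=\bl^j$. Hence Lemma~\ref{Ldim1} gives
\[
\DIM(1_{\bl^k}C_{\rho,1}1_{\bl^j})
=\sum_{\mu\in\Par_{\rho,1}}
\ \sum_{\substack{\t\in\Std(\mu\sm\rho,\bl^k{}^{+\ka})\\ \t'\in\Std(\mu\sm\rho,\bl^j{}^{+\ka})}}
q^{\deg(\t)+\deg(\t')}.
\]

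Next, I would use Lemma~\ref{LQuotUn} together with Lemma~\ref{LUnion} to enumerate $\Par_{\rho,1}$: the partitions $\mu\in\Par_{\rho,1}$ correspond bijectively to the $e$ choices of $i\in I$, where $\quot(\mu)$ has a single node $(1,1,i)$ and $\mu\sm\rho=\Hook(1,1,i)$ is the $e$-hook of arm length $i$ with vertex of residue $\ka$. Translating $\Hook(1,1,i)$ to $(1,1)$ shifts all residues by $-\ka$, so there is a natural bijection
\[
\Std(\Hook(1,1,i),\bl^k{}^{+\ka})\iso \Std((i+1,1^{e-i-1}),\bl^k)
\]
and similarly for $j$. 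I claim this bijection preserves degrees; this is the main obstacle (see below). Granting this, Lemma~\ref{LHookTab} applies to the right-hand side: a standard $\bl^k$-tableau whose shape is the partition $(i+1,1^{e-i-1})$ exists only when $i=k-1$ (giving $\t_k$ of degree $0$) or $i=k$ (giving $\s_k$ of degree $1$), and similarly for $\bl^j$.

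Enumerating cases on the common index $i$ then finishes the computation: if $k=j$ the contributing values are $i=k-1$ (contribution $q^{0+0}=1$) and $i=k$ (contribution $q^{1+1}=q^2$), giving $1+q^2$; if $|k-j|=1$ only $i=\min(k,j)$ is common to both shape conditions, and the degree on one side is $0$ and on the other is $1$, giving $q$; if $|k-j|\ge 2$ the sets of valid $i$'s are disjoint and the sum is $0$.

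The one nontrivial point is the degree-preservation claim. The degree formula $d_U(\nu)$ counts addable and removable $i$-nodes of $\nu$ lying strictly above $U$. When $U$ is a single node of $\Hook(1,1,i)$ and $\nu$ is the union of $\rho$ with an initial segment of the hook, the contribution of the addable/removable $i$-nodes of $\rho$ above $U$ matches the contribution made when $U$ is translated to $(1,1)$ and $\rho$ is replaced by $\varnothing$ — precisely because $\rho$ is a $d$-Rouquier core of residue $\ka$, so the abacus of $\rho$ has no beads above the relevant runner positions (by \eqref{Eij1}--\eqref{Eij2}), and hence the net count of addable minus removable $\ka$-related nodes of $\rho$ above the hook is zero. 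I would verify this bookkeeping from the abacus description of $\Add(\rho,\cdot)$ and $\Rem(\rho,\cdot)$, after which the argument above is immediate.
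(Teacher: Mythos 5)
Your proposal follows the paper's proof essentially verbatim: apply Lemma~\ref{Ldim1} with $d=1$, enumerate $\Par_{\rho,1}$ via the $e$-hooks of Lemma~\ref{LUnion}, translate each hook to a shape $(i+1,1^{e-i-1})\in\Par_{\varnothing,1}$, and finish with Lemma~\ref{LHookTab}. The degree-preservation under translation that you flag as the nontrivial step is exactly where the paper cites \cite[(4.6)]{Evseev} rather than rederiving it; your abacus heuristic about the Rouquier condition cancelling $\rho$'s contribution points in the right direction but is left as a sketch.
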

\begin{proof}
By Lemma~\ref{Ldim1}, we have
\[
\DIM (1_{\bl^k}C_{\rho,1}1_{\bl^j})=
\sum_{\substack{\mu\in \Par_{\rho,1} 
\\ \t \in \Std(\mu\sm \rho, (\bl^k)^{+\ka})
\\ \t' \in \Std(\mu\sm \rho, (\bl^j)^{+\ka})
}}
q^{\deg(\t) + \deg(\t')}.
\]
Let $\mu \in \Par_{\rho,1}$. By Lemma~\ref{LUnion}, the set 
$\Y \mu\sm \Y \rho$ is an $e$-hook with a vertex $v$ of residue $\ka$.
Let $i$ be the arm length of this $e$-hook and $\nu=(i+1,1^{e-i-1})$. 
Denoting by $\tau$ the translation of $\Z\times \Z$ which maps $(1,1)$ to $v$, we have a bijection 
$\Std(\mu\sm \rho, (\bl^k)^{+\ka}) \iso \Std(\nu, \bl^k)$ given by $\t\mapsto \s$ where $\s(u)=\t(\tau(u))$ for all $u\in \Y\nu$ (and similarly for $\bl^j$). Moreover, we have $\deg(\s)=\deg(\t)$ by~\cite[(4.6)]{Evseev}. 
Hence, 
$$\DIM(1_{\bl^k}C_{\rho,1}1_{\bl^j})=\sum q^{\deg(\s)+\deg(\s')},
$$ 
where the sum is over all $\mu\in \Par_{\varnothing,1}$ and all pairs 
$(\s,\s') \in \Std(\mu, \bl^k) \times \Std(\mu,\bl^j)$.
The result now follows by Lemma~\ref{LHookTab}. 
\end{proof}

Let $\Hook$ be an $e$-hook with arm length $i$ and vertex 
$(x,y)\in \Z\times \Z$, and let $\Gook$ be another $e$-hook. We refer to the node 
$(x,y+i)$ as the {\em hand} of $\Hook$ and to $(x+e-i-1,y)$ as the {\em foot} of $\Hook$. We call $\Gook$ a {\em right extension} of $\Hook$ if the foot of $\Gook$ is the right neighbor of the hand of $\Hook$. We call $\Gook$ a {\em bottom extension} of $\Hook$ if the hand of $\Gook$ is the bottom neighbor of the foot of $\Hook$. 
The following is deduced from the definition of $\Hook(r,s,i)$ by an easy calculation:

\begin{Lemma} \label{LHRS} 
Let $(r,s,i)\in\Nodes^I$. 
\begin{enumerate}
\item The hook $\Hook(r,s+1,i)$ is a right extension of $\Hook(r,s,i)$. 
\item The hook $\Hook(r+1,s,i)$ is a bottom extension of $\Hook(r,s,i)$.  
\end{enumerate} 
\end{Lemma}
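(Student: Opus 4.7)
The plan is to prove Lemma~\ref{LHRS} by direct computation, unwinding the definitions of the vertex coordinates $x(r,s,i),y(r,s,i)$, and of the hand, foot, right extension, and bottom extension of an $e$-hook. No combinatorial input beyond these formulas is required; the rest is arithmetic.

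For part~(1), I would first compute how $x$ and $y$ change when $s$ is replaced by $s+1$. The terms $(e-i-1)(b_i-r+s)$ and $i(b_i-r+s)$ in the definitions increase by $e-i-1$ and $i$ respectively, yielding
\begin{equation*}
x(r,s+1,i)=x(r,s,i)-(e-i-1),\qquad y(r,s+1,i)=y(r,s,i)+i+1.
\end{equation*}
The foot of $\Hook(r,s+1,i)$ is therefore
\begin{equation*}
\bigl(x(r,s+1,i)+e-i-1,\,y(r,s+1,i)\bigr)=\bigl(x(r,s,i),\,y(r,s,i)+i+1\bigr),
\end{equation*}
which is exactly the right neighbor of the hand $(x(r,s,i),y(r,s,i)+i)$ of $\Hook(r,s,i)$, as required.

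For part~(2), an analogous computation with $r$ replaced by $r+1$ gives
\begin{equation*}
x(r+1,s,i)=x(r,s,i)+e-i,\qquad y(r+1,s,i)=y(r,s,i)-i,
\end{equation*}
so the hand of $\Hook(r+1,s,i)$ is
\begin{equation*}
\bigl(x(r+1,s,i),\,y(r+1,s,i)+i\bigr)=\bigl(x(r,s,i)+e-i,\,y(r,s,i)\bigr),
\end{equation*}
which is the bottom neighbor of the foot $(x(r,s,i)+e-i-1,y(r,s,i))$ of $\Hook(r,s,i)$.

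There is no real obstacle here — the lemma is essentially a sanity check that the formulas defining $\Hook(r,s,i)$ are compatible with the geometry of hook extensions on the abacus. The only thing to be careful about is a sign convention check: confirming that right neighbor means incrementing the column coordinate $y$ by $1$ and bottom neighbor means incrementing the row coordinate $x$ by $1$, in accordance with the conventions fixed in~\S\ref{SSPar}.
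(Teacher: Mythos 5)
Your computation is correct and is precisely the calculation the paper alludes to but omits (the paper states only that the lemma ``is deduced from the definition of $\Hook(r,s,i)$ by an easy calculation''). The difference formulas for $x$ and $y$ and the comparison of hand and foot coordinates check out against the conventions for right and bottom neighbors in the plane.
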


\begin{Lemma}\label{LHandFoot}
Let $\bmu\in \Par^I (d)$. If nodes 
$(r,s,i),(k,l,j)\in \Y\bmu$ are independent, then
$\Hook(r,s,i)$ and $\Hook(k,l,j)$ are independent. 
\end{Lemma}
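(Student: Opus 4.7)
The plan is to reduce the problem to two elementary inequalities in the vertex coordinates of the hooks. Each $e$-hook $\Hook(r,s,i)$ is $L$-shaped with its vertex $(x(r,s,i), y(r,s,i))$ the unique coordinate-wise minimum and its hand and foot the coordinate-wise maxima. From this one checks that two hooks $\Hook_1$ and $\Hook_2$, with the vertex of $\Hook_1$ having, say, larger row-coordinate and smaller column-coordinate than that of $\Hook_2$, are independent if and only if the smallest row of $\Hook_1$ strictly exceeds the largest row of $\Hook_2$ and the smallest column of $\Hook_2$ strictly exceeds the largest column of $\Hook_1$. Hence it suffices to verify two explicit inequalities in the $x$- and $y$-coordinates of the vertices.

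The proof then splits into two cases. If $i = j$, independence of $(r,s,i)$ and $(k,l,i)$ in $\Y\bmu$ forces, after a possible swap, $r < k$ and $s > l$, and a direct substitution into the definitions of $x$ and $y$ produces the clean identities
\[ x(k,l,i) - x(r,s,i) = (k-r)(e-i) + (s-l)(e-i-1), \quad y(r,s,i) - y(k,l,i) = (s-l)(i+1) + (k-r)i, \]
in which the $b$-values cancel. Each exceeds the relevant threshold ($e-i-1$ and $i$, respectively) by a comfortable margin using only $k-r, s-l \ge 1$, and this case requires no use of the Rouquier hypotheses.

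The case $i \ne j$, WLOG $i<j$, is the delicate one. Here the nodes are automatically independent in $\Nodes^I$, so $r,s,k,l$ are subject only to the constraint $rs + kl \le |\mu^{(i)}| + |\mu^{(j)}| \le d$ coming from $\bmu \in \Par^I (d)$. Writing $\delta_u := b_u - b_{u-1}$ (so $\delta_u \ge d-1$ by the $d$-Rouquier condition on $\rho$) and isolating the constant parts $X_i := b_{>i} - (e-i-1) b_i$ and $Y_i := i b_i - b_{<i}$ of $x(r,s,i)$ and $y(r,s,i)$, a telescoping computation yields the clean identities
\[ X_i - X_j = \textstyle\sum_{u=i+1}^j (e-u)\delta_u, \qquad Y_j - Y_i = \textstyle\sum_{u=i+1}^j u\delta_u. \]
Lower-bounding the Rouquier contributions by $(d-1)$ times the corresponding integer sums, and lower-bounding the combinatorial remainder $(e-i)r - (e-i-1)s - (e-j)k + (e-j-1)l$ by minimizing over the constraint $rs+kl\le d$ with $r,s,k,l\ge 1$, the desired inequalities $x(r,s,i) > x(k,l,j) + (e-j-1)$ and $y(k,l,j) > y(r,s,i) + i$ follow.

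The main obstacle lies in this last step: the individual bounds $r,s,k,l \le d$ are much too weak, as extreme configurations such as $(r,s)=(1,d-1)$ could in principle overwhelm the Rouquier gap. The critical use of $rs + kl \le d$ is that it prevents both $\mu^{(i)}$ and $\mu^{(j)}$ from containing such a cell at once; the worst case is $(r,s,k,l) = (1,d-1,1,1)$ (and its mirror image), at which the combinatorial minimum equals exactly $-(e-i-1)(d-2)$, and this combines with the lower bound $(d-1)(e-i-1)$ on $X_i - X_j$ (improving for $j-i\ge 2$) to clear the threshold by precisely $1$. A symmetric analysis handles the $y$-inequality.
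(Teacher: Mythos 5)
Your proof is correct and follows essentially the same route as the paper's: reduce independence of the two $e$-hooks to the two strict coordinate inequalities $x(r,s,i)>x(k,l,j)+(e-j-1)$ and $y(k,l,j)>y(r,s,i)+i$ (with the easy $i=j$ case requiring no Rouquier input), and then verify them using the Rouquier gaps $b_{u}-b_{u-1}\ge d-1$ together with the size constraint coming from $|\bmu|=d$. The only difference is organizational: the paper substitutes and bounds directly using the inequalities $b_{>i}-b_{>j}\ge(b_i+d-1)(j-i)$, $b_{<j}-b_{<i}\le(b_j-d+1)(j-i)$ and $k+s\le d$, whereas you telescope the $b$-dependent part into $\sum_{u=i+1}^{j}(e-u)\delta_u$ and $\sum_{u=i+1}^{j}u\,\delta_u$ and minimize the residual linear form over $rs+kl\le d$, which makes the exact tightness of the Rouquier bound visible but amounts to the same argument.
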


\begin{proof}
First, suppose that $i\ne j$. Without loss of generality, $i<j$. 
Since $|\bmu|= d$, we have $k+s\le d$. 
Also, $b_j-b_i\ge d-1$ as $\rho$ is $d$-Rouquier. 
We have 
\begin{align*}
y(k,l,j)-y(r,s,i)&= (l-s)+i(b_j-k+l-b_i+r-s)+(j-i)(b_j-k+l) \\
&\ \ \ -(b_{<j}-b_{<i}) \\
&\ge
1-s +i(b_j - b_i+2-k-s)+(j-i)(-k+l+d-1)\\
& \ge 1+i+d-k-s\\ 
&\ge 1+i,
\end{align*}
where we have used~\eqref{Eij2} for the second step. 
Hence, the vertex of $\Hook(k,l,j)$ has a greater second coordinate than the hand of $\Hook(r,s,i)$. 
A similar calculation using~\eqref{Eij1} shows that $x(r,s,i)-x(k,l,j)\geq e-j$, hence the vertex of $\Hook(r,s,i)$ has first coordinate  greater than that of the foot of $\Hook(k,l,j)$. 
Thus, 
$\Hook(r,s,i)$ and $\Hook(k,l,j)$ are independent. 

Now let $i=j$. Without loss of generality, $k<r$ and $l>s$. We have 
\begin{align*}
y(k,l,i)-y(r,s,i)&=(l-s)+i(l-k+r-s)\ge 1+2i\ge 1+i, \\
x(r,s,i)-x(k,l,i)&=(r-k)+(e-i-1)(r-s+l-k)\ge e-i,
\end{align*}
and it follows again that $\Hook(r,s,i)$ and $\Hook(k,l,j)$ are independent.  
\end{proof}

Recall from \eqref{Eli} that for every $j\in J$, we have fixed a word 
$\bl^j=(l_{j,1},\dots,l_{j,e})\in I^{\de,j}$.  
Define the map $q\colon J\times I\to\{1,\dots,e\}$ by the condition that 
$l_{j,q(j,i)}=i$ for all $j\in J$ and $i\in I$. 
Let $\mu\in \Par_{\rho,d}$ and $0<f\le d$. Suppose that $\nu\in \Par_{\rho,d-f}$ 
is a partition with $\Y\nu\subseteq \Y\mu$. 
Note that $\cont(\mu\sm \nu) = f \de $. 
For any $j\in J$, define the function  
$$\t_{\mu\sm \nu,j}\colon \Y\mu \sm \Y\nu \to \{1,\dots,e\},\ u\mapsto q(j,\res(u)-\kappa).$$ 
For the notation $\bl^j (f)=(l_{j,1}^{(f)},\dots,l_{j,e}^{(f)})\in I^{f\de}_\di$ in the following lemma see (\ref{EL(d)}). 
 
 \begin{Lemma}\label{LBend}
Let $j\in J$ and $\mu\in \Par_{\rho,d}$ with $e$-quotient $\umu$. Let  $0<f\le d$ and $\nu\in \Par_{\rho,d-f}$ with $e$-quotient $\unu$ satisfy $\Y\nu\subseteq \Y\mu$. Then
$$
\Std(\mu\sm \nu, \bl^j (f)^{+\ka})=
\left\{
\begin{array}{ll}
\varnothing &\hbox{if $\Y{\bmu}\sm \Y{\bnu}$ is not a $j$-bend;}\\
\{\t_{\mu\sm\nu,j}\} &\hbox{if $\Y{\bmu}\sm \Y{\bnu}$ is a $j$-bend.}
\end{array}
\right.
$$
\end{Lemma}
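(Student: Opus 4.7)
The plan is to observe that the divided-power word $\bl^j(f)^{+\ka}$ uses each residue in $I$ exactly $f$ times, since $\bl^j$ contains each element of $I$ once, while by Lemma~\ref{LUnion} the skew shape $\Y\mu\sm\Y\nu$ is a disjoint union of the $f$ hooks $\Hook(u)$ with $u\in\Y\bmu\sm\Y\bnu$, each of which contains exactly one node of each residue. Hence each residue appears $f$ times in $\Y\mu\sm\Y\nu$, which forces every $\t\in\Std(\mu\sm\nu,\bl^j(f)^{+\ka})$ to send each node $u$ to the unique $k\in\{1,\dots,e\}$ with $l_{j,k}=\res(u)-\ka$; in other words $\t$ must coincide with $\t_{\mu\sm\nu,j}$. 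Thus the lemma reduces to deciding when $\t_{\mu\sm\nu,j}$ is strictly increasing in the partition order on $\Nodes$.

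Next I analyse $\t_{\mu\sm\nu,j}$ on a single hook $\Hook(r,s,i)$ with vertex $(x,y)$: its arm $(x,y),(x,y+1),\dots,(x,y+i)$ has residues $\ka,\ka+1,\dots,\ka+i$ and its leg $(x+1,y),\dots,(x+e-i-1,y)$ has residues $\ka-1,\ka-2,\dots,\ka-(e-i-1)$. The positions $q(j,\cdot)$ defined from $\bl^j$ satisfy
\[
 q(j,0)=1<q(j,1)<\dots<q(j,j-1)<q(j,j)=e
 \quad\text{and}\quad
 q(j,j)=e>q(j,j+1)>\dots>q(j,e-1).
\]
Tracing $\t_{\mu\sm\nu,j}$ along the arm and the leg, one checks that its restriction to $\Hook(r,s,i)$ is strictly increasing if and only if $i\in\{j-1,j\}$; in any other case the hook contains two partition-adjacent nodes on which $\t_{\mu\sm\nu,j}$ drops from $q(j,j)=e$ to $q(j,j-1)<e$ (when $i<j-1$, inside the leg) or to $q(j,j+1)<e$ (when $i>j$, inside the arm). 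The containment $\Y\bmu\sm\Y\bnu\subseteq\Nodes^{I,j-1}\cup\Nodes^{I,j}$ is thus necessary.

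Assuming this containment, I pass to pairs of nodes in distinct hooks. For independent $u,u'\in\Y\bmu\sm\Y\bnu$ Lemma~\ref{LHandFoot} makes $\Hook(u),\Hook(u')$ independent subsets of $\Nodes$, so the standard-tableau condition is vacuous between them; only comparable pairs $u<u'$ in a single component $i\in\{j-1,j\}$ remain. Row- and column-convexity of the skew shape $\bmu^{(i)}\sm\bnu^{(i)}$ ensures that any such pair contains an adjacent pair $\{(r,s,i),(r+1,s,i)\}$ (same column, bottom extension of hooks by Lemma~\ref{LHRS}) or $\{(r,s,i),(r,s+1,i)\}$ (same row, right extension). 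Comparing the foot of the first hook with the hand of the next, one finds that a bottom extension in component $j-1$ produces the forbidden value sequence $e,q(j,j-1)$, and a right extension in component $j$ produces the forbidden sequence $e,q(j,j+1)$; whereas a right extension in component $j-1$ produces the admissible sequence $q(j,j-1),e$ and a bottom extension in component $j$ the admissible sequence $q(j,j+1),e$, the remaining comparable cross-pairs in these two admissible cases being covered by the same explicit position estimates. Hence $\t_{\mu\sm\nu,j}$ is valid exactly when the $(j-1)$-component of $\Y\bmu\sm\Y\bnu$ is a horizontal strip and its $j$-component is a vertical strip, i.e.\ precisely when $\Y\bmu\sm\Y\bnu$ is a $j$-bend. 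The principal technical point is the adjacent-hook comparison; everything else is combinatorial bookkeeping on top of Lemmas~\ref{LUnion},~\ref{LHandFoot} and~\ref{LHRS}.
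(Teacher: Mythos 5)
Your proof is correct and follows essentially the same approach as the paper's: both first pin down the unique candidate $\t_{\mu\sm\nu,j}$, then analyse it hook-by-hook (the paper via Lemma~\ref{LHookTab}, you directly via the positions $q(j,\cdot)$), and finally handle cross-hook comparisons using Lemmas~\ref{LHandFoot} and~\ref{LHRS} together with the hand/foot value comparison. The only difference is presentational: you spell out the $q(j,\cdot)$ monotonicity explicitly, while the paper compresses the single-hook case into Lemma~\ref{LHookTab} and leaves the final cross-hook check as ``easily seen.''
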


\begin{proof} Since $l_{j,1},\dots,l_{j,e}\in I$ are all distinct, any element of $\Std(\mu\sm \nu, \bl^j (f)^{+\ka})$ must assign $q(j,i-\kappa)$ to every node of residue $i$, i.e.\ such an element must be $\t_{\mu\sm\nu,j}$. So it suffices to prove the following:

\vspace{2 mm}
\noindent
{\em Claim.} We have  $\t_{\mu\sm\nu,j}\in \Std(\mu\sm \nu, \bl^j (f)^{+\ka})$ if and only if $\Y{\bmu}\sm \Y{\bnu}$ is a $j$-bend. 

\vspace{ 2mm}
For the claim, by Lemma~\ref{LUnion} and Corollary~\ref{CDonkey}, we have $\unu\subset\umu$ and 
$\Y{ \mu}\sm\Y{\nu} = \bigsqcup_{u\in \Y\bmu\sm \Y\bnu} \Hook(u)$.
Suppose that $\t_{\mu\sm\nu,j} \in \Std(\mu\sm \nu, \bl^j (f)^{+\ka})$. Then, for every $u\in \Y\bmu \sm \Y\bnu$, the restriction $\t_{\mu\sm\nu,j}|_{\Hook(u)}$ is $(\bl^j)^{+\ka}$-standard. By Lemmas~\ref{LHookTab} and~\ref{LUnion}, we deduce that  $\Y{\bmu}\sm \Y{\bnu}\subset \Nodes^{I,j-1} \cup \Nodes^{I,j}$. 
Suppose for contradiction that 
$(\Y{\bmu} \sm \Y{\bnu}) \cap \Nodes^{I,j}$ is not a vertical strip.
Then $(r,s,j),(r,s+1,j)\in \Y\bmu\sm \Y\bnu$ for some $r,s$. Let $u$ be the hand of $\Hook(r,s,j)$ and $v$ be the foot of $\Hook(r,s+1,j)$. By Lemma~\ref{LHRS}(i), the node 
$v$ is the right neighbor of $u$. By Lemma~\ref{LHookTab}(b), we have 
$\t_{\mu\sm\nu,j}(u)=e>\t_{\mu\sm\nu,j}(v)$, which contradicts the standardness of $\t_{\mu\sm\nu,j}$. Hence, $(\Y{\bmu} \sm \Y{\bnu}) \cap \Nodes^{I,j}$ is a vertical strip. A similar argument, using Lemmas~\ref{LHRS}(ii) and~\ref{LHookTab}(a), shows that $(\Y{\bmu} \sm \Y{\bnu}) \cap \Nodes^{I,j-1}$ is a horizontal strip. 

Conversely, suppose that $\Y{\bmu}\sm \Y{\bnu}$ is a $j$-bend. By Lemmas~\ref{LHandFoot} and~\ref{LHRS},  $\Y{\mu}\sm \Y{\nu}$ is a disjoint union of independent sets of two types: (1) consecutive right extensions of hooks with arm length $j-1$; (2) consecutive bottom extensions of hooks with arm length $j$. In fact, we may assume that either $\Y{\mu}\sm \Y{\nu}$ is of type (1) or  $\Y{\mu}\sm \Y{\nu}$ is of type (2). If $\Y{\mu}\sm \Y{\nu}$ is of type (1), i.e. $\Y{\mu}\sm \Y{\nu}$ is a union $\Hook_1\sqcup\dots\sqcup\Hook_m$ of hooks with arm length $j-1$, 
then by Lemma~\ref{LHookTab}(a), $\t_{\mu\sm\nu,j}(v)=e$ for any $v$ which is a foot of $\Hook_a$ for $a=1,\dots,m$. So $\t_{\mu\sm\nu,j}$ is standard if $\t_{\mu\sm\nu,j}|_{\Hook_a}$ is standard for all $a$. Hence we may assume that $m=1$. But in this case $\t_{\mu\sm\nu,j}$ is easily seen to be standard using Lemma~\ref{LHookTab}(a) one more time. The case where $\Y{\mu}\sm \Y{\nu}$ is of type (2) is similar. 
\end{proof}

Recall the set $\La^\col(n,d)$ of colored compositions defined by~\eqref{ELaCol} and the set $\CT(\bmu;\la,\bc)$ of colored tableaux of shape $\umu$ and type $(\la,\bc)$ from \S\ref{SSSST}. 

\begin{Corollary}\label{CStTRB}
Let $\mu\in \Par_{\rho,d}$, 
$\bmu=\quot (\mu)$ and 
$(\la,\bc)\in \La^{\col} (n,d)$. 
Then 
$|\Std(\mu\sm\rho,\bl(\la,\bc)^{+\ka})|=|\CT(\bmu;\la,\bc)|$.
\end{Corollary}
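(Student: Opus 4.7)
The plan is to exhibit an explicit bijection between $\Std(\mu\sm\rho,\bl(\la,\bc)^{+\ka})$ and $\CT(\bmu;\la,\bc)$ that passes through an intermediate set of chains of partitions in $\Par_\rho$. First, I would split the $en$-block divided power word $\bl(\la,\bc)$ into the $n$ consecutive subwords $\bl^{c_r}(\la_r)$, occupying indices $\{(r-1)e+1,\ldots,re\}$ for $r=1,\ldots,n$. Given $\t\in \Std(\mu\sm\rho,\bl(\la,\bc)^{+\ka})$, I would set $M_r:=\t^{-1}(\{(r-1)e+1,\ldots,re\})$ and $\Y{\mu^{(r)}}:=\Y\rho \sqcup M_1\sqcup\cdots\sqcup M_r$. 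The strict increase condition on $\t$ forces each $\Y{\mu^{(r)}}$ to be a lower set in $\Y\mu$, hence the Young diagram of a partition $\mu^{(r)}$ with $\rho=\mu^{(0)}\subseteq\mu^{(1)}\subseteq\cdots\subseteq\mu^{(n)}=\mu$. Since $\t$ restricts to an $\bl^{c_r}(\la_r)^{+\ka}$-standard tableau on $\mu^{(r)}\sm\mu^{(r-1)}$, the content of that skew shape is $\la_r\de$; inductively $\cont(\mu^{(r)})=\cont(\rho)+(\la_1+\cdots+\la_r)\de$, so Lemma~\ref{LCoreCont} places $\mu^{(r)}$ in $\Par_\rho$.

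Next, I would apply Lemma~\ref{LBend} to each restriction $\t|_{\mu^{(r)}\sm\mu^{(r-1)}}$: this set of standard tableaux is either empty or a singleton, and is non-empty precisely when $\quot(\mu^{(r)})\sm\quot(\mu^{(r-1)})$ is a $c_r$-bend of size $\la_r$. This observation does two things simultaneously: it shows that the correspondence $\t\mapsto(\mu^{(0)},\ldots,\mu^{(n)})$ is injective (the pieces of $\t$ are the forced $\t_{\mu^{(r)}\sm\mu^{(r-1)},c_r}$), and it identifies the image as the set of chains in $\Par_\rho$ whose $e$-quotient increments are $c_r$-bends of size $\la_r$. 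Using Corollary~\ref{CDonkey}, such chains transfer bijectively to chains $\vn=\bnu^{(0)}\subseteq\bnu^{(1)}\subseteq\cdots\subseteq\bnu^{(n)}=\bmu$ in $\Par^I$ with $\bnu^{(r)}=\quot(\mu^{(r)})$, and the bend-and-size condition carries across verbatim.

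Finally, I would convert such a chain to a colored tableau $\T\in \CT(\bmu;\la,\bc)$ by declaring $\T(u):=r$ for $u\in \bnu^{(r)}\sm \bnu^{(r-1)}$. The weak monotonicity of $\T$ is exactly the condition that each $\bnu^{(r)}$ is a Young diagram inside $\bmu$, while the conditions that $\T^{-1}(r)$ be a $c_r$-bend of size $\la_r$ are built in. Composing the three bijections yields the cardinality identity. The only real obstacle is the first step: one must verify that cutting $\t$ along the block structure of the divided power word produces a \emph{genuine} chain inside $\Par_\rho$, i.e.\ that the down-set property holds and that the content bookkeeping lands each $\mu^{(r)}$ in $\Par_\rho$ so that $\quot$ can be applied. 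Everything after that is a clean composition of Lemma~\ref{LBend} with Corollary~\ref{CDonkey}.
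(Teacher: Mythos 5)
Your argument is correct and is essentially the paper's proof unrolled: the paper does an induction on $n$, peeling off the last block $\bl^{c_n}(\la_n)$ and applying Lemma~\ref{LBend} to the top step $\mu\sm\nu$, while you make the full chain $\rho=\mu^{(0)}\subseteq\cdots\subseteq\mu^{(n)}=\mu$ explicit and apply Lemma~\ref{LBend} at every step, obtaining an honest bijection $\t\leftrightarrow(\mu^{(r)})_r\leftrightarrow(\bnu^{(r)})_r\leftrightarrow\T$ rather than a recursive count. The two routes rest on the same ingredients (block-by-block use of Lemma~\ref{LBend} together with Corollary~\ref{CDonkey} and Lemma~\ref{LCoreCont}), and your version has the mild bonus of exhibiting the cardinality identity as a bijection; the one thing worth flagging in both versions is that Lemma~\ref{LBend} is invoked at intermediate stages with $\mu^{(r)}\in\Par_{\rho,\la_1+\cdots+\la_r}$ rather than $\Par_{\rho,d}$, which is harmless since $\rho$ is $d'$-Rouquier for every $d'\le d$ (and the $\la_r=0$ steps are vacuous).
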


\begin{proof} 
Recall that
$
\bl(\la,\bc)=\bl^{c_1}(\la_1)\dots \bl^{c_{n-1}}(\la_{n-1})\bl^{c_n}(\la_n).
$ 
We have 
$$
\bl^{c_1}(\la_1)\dots \bl^{c_{n-1}}(\la_{n-1})=\bl(\la',\bc')
$$
for $\la'=(\la_1,\dots,\la_{n-1})$ and $\bc'=(c_1,\dots,c_{n-1})$. 
Then 
\begin{align*}
|\Std(\mu\sm\rho,\bl(\la,\bc)^{+\ka})|
=\sum_{\Y\rho\subseteq \Y\nu\subseteq \Y\mu}
|\Std(\nu\sm\rho,\bl(\la',\bc')^{+\ka})||\Std(\mu\sm\nu,\bl^{c_n}(\la_n))|.
\end{align*}
If $|\Std(\nu\sm\rho,\bl(\la',\bc')^{+\ka})|\neq 0$, then $\cont(\nu)=\cont(\rho)+(d-\la_n)\de$, whence by Lemma~\ref{LCoreCont}, we have $\nu\in\Par_{\rho,d-\la_n}$. Arguing by induction on $n$, for such $\nu$ we have $|\Std(\nu\sm\rho,\bl(\la',\bc')^{+\ka})|=|\CT(\bnu;\la',\bc')|$, where $\unu=\quot(\nu)$. Moreover, by Lemma~\ref{LBend}, we have 
$$
|\Std(\mu\sm\nu,\bl^{c_n}(\la_n))|=
\left\{
\begin{array}{ll}
1 &\hbox{if $\Y\umu\sm\Y\unu$ is a $c_n$-bend,}\\
0 &\hbox{otherwise.}
\end{array}
\right.
$$
 The result follows. 
\end{proof}

\subsection{Counting colored tableaux}\label{SSCount}
In view of Lemma~\ref{Ldim1} and Corollary~\ref{CStTRB}, we can understand the dimensions of $\ga^{\la,\bc} C_{\rho,d} \ga^{\la',\bc'}$ for $(\la,\bc),(\la',\bc')\in \La^\col (n,d)$ by counting appropriate colored tableaux. The first main goal of this subsection is a formula for $|\CT(\bmu;\la,\bc)|$.

Recall that $J=\{1,\dots,e-1\} = I\sm \{0\}$. 
For $j\in J$, we define
\begin{equation*} 
\Inc(j):=\{j,j-1\}\subseteq I.
\end{equation*}

\begin{Remark} 
{\rm 
The notation $\Inc(j)$ is motivated by the following considerations. The irreducible semicuspidal $R_{\de,\F}$-modules are exactly the irreducible $R_{\de,\F}$-modules which factor through $R_{\de,\F}^{\La_0}$, see \cite[Lemma 5.1]{Kcusp}. The algebra $R_{\de,\F}^{\La_0}$ is a Brauer tree algebra of type $A_e$ with vertices $I$ and edges in natural bijection with $J$, so that $\Inc(j)$ is just the set of vertices incident to the edge $j$.  
}
\end{Remark}

Let $\Char:=\bigoplus_{t\in\Z_{\geq 0}}\Z\Irr(\Si_t)$
 be the $\Z$-module of all formal $\Z$-linear combinations of irreducible characters of $\Si_t$ for $t=0,1,\dots$. We have the inner product $\langle\cdot,\cdot\rangle$ on 
$\Char$ such that on each summand it is the standard inner product on (generalized) characters and $\Z\Irr(\Si_t)$ is orthogonal to $\Z\Irr(\Si_u)$ for $t\neq u$. Let 
$$
\Char^I:=\bigotimes_{i\in I}\Char,
$$ 
with the induced inner product. For every $\umu=(\mu^{(0)},\dots,\mu^{(e-1)})\in \Par^I$, we define
$$
\chi^\umu:=\chi^{\mu^{(0)}}\otimes\dots\otimes \chi^{\mu^{(e-1)}}\in\Char^I,
$$
where $\chi^\mu$ denotes the irreducible character of $\Si_t$ corresponding to the partition $\mu\in\Par(t)$. 

Let $S,T$ be finite sets and $m,l\in\Z_{>0}$. We denote by 
$\Mat(S,T)$ the set of all matrices $A=(a_{s,t})_{s\in S,t\in T}$ with non-negative integer entries. Given $A\in \Mat(S,T)$, we set 
\begin{align*}
\al_s(A)&:=\sum_{t\in T} a_{s,t}  \qquad (s\in S), \\
\be_t(A)&:=\sum_{s\in S} a_{s,t} \qquad (t\in T). 
\end{align*}
We write $\Mat(m,T):=\Mat([1,m], T)$, 
$\Mat(S,m):=\Mat(S,[1,m])$, etc.
Given $\mu\in \La(m)$ and $\la\in \La(l)$, we define 
\begin{align*}
{}_\mu \Mat(m,T)&:=\{ A \in \Mat(m,T) \mid \al_r(A) = \mu_r\text{ for all } r\in [1,m]\}, \\
\Mat(S,m)_{\mu} &:= \{ A \in \Mat(S,m) \mid \be_r (A) = \mu_r \text{ for all } r\in [1,m]\}, \\
{}_\la \Mat(m,l)_{\mu}&:= {}_\la \Mat(m,l) \cap \Mat(m,l)_{\mu}. 
\end{align*}

Let $(\la,\bc)\in\La^\col(n,d)$. We define 
\begin{equation}\label{ELaCMat}
{}_{(\la,\bc)}\Mat(n,I):=
\{ A = (a_{r,i})\in {}_{\la} \Mat(n,I) \mid 
a_{r,i}=0 \text{ if } i\notin\Inc(c_r) \}
\end{equation}
Let $A =(a_{r,i})\in {}_{(\la,\bc)}\Mat(n,I)$.
For each $i\in I$, define
the parabolic subgroup
$$
\Si_{A,i}:=\Si_{a_{1,i}}\times \dots\times \Si_{a_{n,i}}\leq \Si_{\be_i(A)}
$$
and the induced character 
$$
\chi^{A,i}:=\ind_{\Si_{A,i}}^{\Si_{\be_i(A)}} \big(\sgn_{a_{1,i}}^{\de_{c_1,i}}\boxtimes\dots\boxtimes \sgn_{a_{n,i}}^{\de_{c_n,i}}\big),
$$
where, for $a\in\Z_{\geq 0}$ and $j\in J$, we interpret 
$
\sgn_a^{\de_{j,i}}$
as the trivial character of $\Si_a$ when $j\neq i$ and as the sign character of $\Si_a$ when $j= i$. 
Then set 
\begin{align*}
\chi^A&:=\chi^{A,0}\otimes\dots\otimes \chi^{A,e-1}\in\Char^I,
\\
\chi^{(\la,\bc)}&:=\sum_{A\in {}_{(\la,\bc)}\Mat(n,I)}\chi^A.
\end{align*}

\begin{Lemma} \label{LChar} 
Let $\umu\in \Par^I(d)$ and 
$(\la,\bc)\in \La^{\col} (n,d)$. Then $$|\CT(\bmu;\la,\bc)|=\langle \chi^{(\la,\bc)},\chi^\umu\rangle.$$ 
\end{Lemma}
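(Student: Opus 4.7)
The plan is to realize a colored tableau as an increasing flag of multipartitions and then use Frobenius reciprocity together with Pieri's rule.

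First I would set up a bijection. Given a colored tableau $\T\in \CT(\bmu;\la,\bc)$, for each $r\in \{0,1,\dots,n\}$ let $\Y{\umu_r}:=\T^{-1}([1,r])$. Because $\T$ is weakly increasing with respect to the partial order on $\Nodes^I$, each set $\Y{\umu_r}$ is a down-set inside $\Y\umu$, hence the Young diagram of a multipartition $\umu_r=(\mu_r^{(0)},\dots,\mu_r^{(e-1)})\in \Par^I$ with $\umu_0=\vn$ and $\umu_n=\umu$. Setting $a_{r,i}:=|\mu_r^{(i)}\sm \mu_{r-1}^{(i)}|$, the hypothesis that $\T^{-1}(r)$ is a $c_r$-bend of size $\la_r$ translates into: $a_{r,i}=0$ unless $i\in \Inc(c_r)$; $\sum_{i\in I} a_{r,i}=\la_r$; the skew shape $\mu_r^{(c_r-1)}\sm \mu_{r-1}^{(c_r-1)}$ is a horizontal strip of size $a_{r,c_r-1}$; and $\mu_r^{(c_r)}\sm \mu_{r-1}^{(c_r)}$ is a vertical strip of size $a_{r,c_r}$. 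In particular the matrix $A=(a_{r,i})$ belongs to ${}_{(\la,\bc)}\Mat(n,I)$. Conversely, any such flag reconstructs $\T$ uniquely via $\T(u)=r$ for $u\in \Y{\umu_r}\sm \Y{\umu_{r-1}}$.

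Next I would fix $A\in {}_{(\la,\bc)}\Mat(n,I)$ and count the colored tableaux mapping to $A$. Since the condition on each flag decomposes over the components $i\in I$, this count equals
\[
\prod_{i\in I} N_{A,i},
\]
where $N_{A,i}$ is the number of chains of partitions $\vn = \mu_0^{(i)}\subseteq \mu_1^{(i)}\subseteq \dots \subseteq \mu_n^{(i)} = \mu^{(i)}$ such that $|\mu_r^{(i)}\sm \mu_{r-1}^{(i)}|=a_{r,i}$, and the $r$-th skew shape is a horizontal strip when $c_r\ne i$ (equivalently when $\de_{c_r,i}=0$) and a vertical strip when $c_r=i$.

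The last step is to identify $N_{A,i}$ with $\langle \chi^{A,i},\chi^{\mu^{(i)}}\rangle$. By Frobenius reciprocity,
\[
\langle \chi^{A,i},\chi^{\mu^{(i)}}\rangle = \bigl\langle \sgn_{a_{1,i}}^{\de_{c_1,i}}\boxtimes\dots\boxtimes \sgn_{a_{n,i}}^{\de_{c_n,i}},\, \Res_{\Si_{A,i}}^{\Si_{\be_i(A)}}\chi^{\mu^{(i)}}\bigr\rangle.
\]
Applying the classical Pieri rule (or equivalently the branching formula for the sign/trivial characters of $\Si_{a_{r,i}}$), one obtains precisely the Littlewood--Richardson count of chains of partitions from $\vn$ to $\mu^{(i)}$ with successive skews of size $a_{r,i}$ that are horizontal strips when $c_r\ne i$ and vertical strips when $c_r=i$; this is exactly $N_{A,i}$.

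Putting the pieces together and summing over $A$,
\[
|\CT(\bmu;\la,\bc)| = \sum_{A\in {}_{(\la,\bc)}\Mat(n,I)} \prod_{i\in I} N_{A,i} = \sum_{A} \langle \chi^A,\chi^{\umu}\rangle = \langle \chi^{(\la,\bc)},\chi^{\umu}\rangle,
\]
as required. The only genuinely delicate point is the verification that the bend condition corresponds correctly to the ``horizontal in component $c_r-1$, vertical in component $c_r$'' Pieri data, which amounts to unravelling the definitions of $\Inc(c_r)$ and of a $c_r$-bend; the remainder is a routine application of Frobenius reciprocity and Pieri.
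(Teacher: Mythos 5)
Your proof is correct, and it is a perfectly valid alternative to the paper's. The paper argues by induction on $n$: it peels off the last component $(\la_n,c_n)$, applies transitivity of induction, Frobenius reciprocity, and the Pieri/Littlewood--Richardson rule to reduce from $n$ to $n-1$, matching the recursion satisfied by the colored-tableau count (via the characterization of when $\Y\umu\sm\Y\unu$ is a $c_n$-bend). You instead give a direct, non-inductive argument: you first biject $\CT(\umu;\la,\bc)$ with flags $\vn=\umu_0\subseteq\cdots\subseteq\umu_n=\umu$ of multipartitions whose $r$-th layer is a $c_r$-bend, record the size matrix $A=(a_{r,i})$, note that the flags decompose runner-by-runner, and then apply Frobenius reciprocity and iterated Pieri within each runner to identify $N_{A,i}=\langle\chi^{A,i},\chi^{\mu^{(i)}}\rangle$. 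The two proofs use exactly the same key tools; the difference is organizational. Your version makes the bijection with flags explicit, which is arguably cleaner and avoids bookkeeping the inductive hypothesis; the paper's version is slightly shorter because the recursion hides the runner-by-runner decomposition inside the induction. One small point worth making explicit in your write-up: for $i\notin\Inc(c_r)$ one has $a_{r,i}=0$, so the corresponding skew shape is empty and trivially both a horizontal and a vertical strip, ensuring the Pieri factors for those $(r,i)$ are trivially accounted for; you implicitly use this when you assert that only $A\in{}_{(\la,\bc)}\Mat(n,I)$ occur.
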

\begin{proof}
We apply induction on $n=1,2,\dots$, the induction base being immediate from the definitions. Let $n>1$. Set $\la'=(\la_1,\dots,\la_{n-1})\in\La(n-1)$ and $\bc'=(c_1,\dots,c_{n-1})$. 
We denote ${}_{((\la_n),(c_n))}\Mat(1,I)$ by ${}_{(\la_n,c_n)}\Mat(1,I)$. 
For a matrix $A'\in {}_{(\la',\bc')}\Mat(n-1,I)$ and a one-row matrix $B\in{}_{(\la_n,c_n)}\Mat(1,I)$ we denote by 
$
\left(
\begin{matrix}
 A'   \\
 B 
\end{matrix}
\right)\in {}_{(\la,\bc)}\Mat(n,I)$ 
the vertical concatenation of $A'$ and $B$. 
Then
$$
{}_{(\la',\bc')}\Mat(n-1,I)\times {}_{(\la_n,c_n)}\Mat(1,I)\to {}_{(\la,\bc)}\Mat(n,I),\ (A',B)\mapsto \left(
\begin{matrix}
 A'   \\
 B 
\end{matrix}
\right)
$$
is a bijection. Denoting the entries of $B\in {}_{(\la_n,c_n)}\Mat(1,I)$
 by $b_i$, we have by transitivity of induction 
\begin{align*}
\chi^{(\la,\bc)}=\sum_{\substack{A'\in {}_{(\la',\bc')}\Mat(n-1,I)\\ B\in {}_{(\la_n,c_n)}\Mat(1,I)}} \bigotimes_{i\in I} 
\ind_{\Si_{\be_i(A'),b_i}}^{\Si_{\be_i(A)}}\left(\chi^{A',i}\boxtimes \sgn_{b_i}^{\de_{c_n,i}}\right).
\end{align*}
The proof is concluded by the following computation:
\begin{align*}
\langle\chi^\umu,\chi^{(\la,\bc)}\rangle&=\sum_{\substack{A'\in {}_{(\la',\bc')}\Mat(n-1,I)\\ B\in {}_{(\la_n,c_n)}\Mat(1,I)}}  \prod_{i\in I} \Big\langle  \chi^{\mu^{(i)}}, \ind_{\Si_{\be_i(A'),b_i}}^{\Si_{\be_i(A)}}\left(\chi^{A',i}\boxtimes \sgn_{b_i}^{\de_{c_n,i}}\right)\Big\rangle
\\
&=\sum_{\substack{A'\in {}_{(\la',\bc')}\Mat(n-1,I)\\ B\in {}_{(\la_n,c_n)}\Mat(1,I)}}  \prod_{i\in I} \de_{\be_i(A),|\mu^{(i)}|}\langle  \res_{\Si_{\be_i(A'),b_i}}\chi^{\mu^{(i)}}, \chi^{A',i}\boxtimes \sgn_{b_i}^{\de_{c_n,i}}\rangle
\\
&=
\sum_{\substack{A'\in {}_{(\la',\bc')}\Mat(n-1,I)\\ B\in {}_{(\la_n,c_n)}\Mat(1,I)}}
\sum_{\substack{\unu\subseteq \umu\\
|\mu^{(i)}|-|\nu^{(i)}|=b_i,\ \forall i\in I\\
\umu\setminus\unu \ \text{is a $c_n$-bend}
}}
\prod_{i\in I} \langle  \chi^{\nu^{(i)}}, \chi^{A',i}\rangle
\\
&=
\sum_{\substack{B\in {}_{(\la_n,c_n)}\Mat(1,I)}}
\sum_{\substack{\unu\subseteq \umu\\
|\mu^{(i)}|-|\nu^{(i)}|=b_i,\ \forall i\in I\\
\umu\setminus\unu \ \text{is a $c_n$-bend}
}}
\langle  \chi^{\unu}, \chi^{(\la',\bc')}\rangle
\\
&=
\sum_{\substack{B\in {}_{(\la_n,c_n)}\Mat(1,I)}}
\sum_{\substack{\unu\subseteq \umu\\
|\mu^{(i)}|-|\nu^{(i)}|=b_i,\ \forall i\in I\\
\umu\setminus\unu \ \text{is a $c_n$-bend}
}}
|\CT(\bnu;\la',\bc')|
\\
&=
\sum_{\substack{\unu\subseteq \umu\\
\umu\setminus\unu \ \text{is a $c_n$-bend}
}}
|\CT(\bnu;\la',\bc')|
\\
&=|\CT(\bmu;\la,\bc)|,
\end{align*}
where the second equality holds by Frobenius reciprocity, the third equality comes from the Littlewood-Richardson rule, the fifth equality holds by the inductive assumption and the remaining equalities are clear. 
\end{proof}

 Let $\bb\in J^d$ so that $(\om,\bb)\in\La^\col(d,d)$, cf.~\eqref{ELittleOm}.
Recalling~\eqref{ELaCMat}, we set 
\begin{align*}
\Mat(I,d)_{(\om,\bb)}&:=\{B\in \Mat(I,d)\mid B^\tr\in {}_{(\om,\bb)}\Mat(d,I)\}.
\end{align*}
Define the set 
\[
{}_{(\la,\bc)} \Mat(n,I,d)_{(\om,\bb)} = \{ (A,B) \in {}_{(\la,\bc)} \Mat(n,I) \times 
\Mat(I,d)_{(\om,\bb)} \mid \be_i (A) = \al_i(B)\ \forall i\in I \}. 
\]

\begin{Lemma}\label{LScProd}
For any $(\la,\bc)\in\La^\col(n,d)$, we have
$$
\langle \chi^{(\la,\bc)},\chi^{(\om,\bb)}\rangle
=
\sum_{(A,B) \in {}_{(\la,\bc)} \Mat(n,I,d)_{(\om,\bb)}}
\prod_{i\in I} |\Si_{\be_i(A)}:\Si_{A,i}|.
$$
\end{Lemma}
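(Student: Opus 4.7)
The plan is to expand both sides using the definition of $\chi^{(\la,\bc)}$ and $\chi^{(\om,\bb)}$ as sums of characters $\chi^A$ and $\chi^{B'}$ indexed by matrices, and then reduce the computation to evaluating the inner product $\langle \chi^{A,i}, \chi^{B',i}\rangle$ componentwise for each $i\in I$. Since the characters involved are outer tensor products, orthogonality on $\Char^I$ splits the inner product factor by factor, and the sum is automatically restricted to pairs $(A,B')$ with $\be_i(A)=\be_i(B')$ for all $i$.

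The key observation is the following. Take any $B'=(b'_{r,i})\in {}_{(\om,\bb)}\Mat(d,I)$; since $\al_r(B')=\om_r=1$, each row of $B'$ has exactly one entry equal to $1$ and all others $0$, so every factor $\Si_{b'_{r,i}}$ of the parabolic subgroup $\Si_{B',i}$ is trivial, and consequently $\Si_{B',i}=\{1\}$. Moreover, both the trivial and the sign character of $\Si_1$ coincide with the trivial character, so $\chi^{B',i}$ is simply the induced character $\ind_{\{1\}}^{\Si_{\be_i(B')}}\b1$, i.e.\ the regular character $\reg_{\Si_{\be_i(B')}}$. Therefore
\[
\langle \chi^{A,i},\chi^{B',i}\rangle = \de_{\be_i(A),\be_i(B')}\,\dim\chi^{A,i} = \de_{\be_i(A),\be_i(B')}\,|\Si_{\be_i(A)}:\Si_{A,i}|,
\]
the last equality holding because the character induced in the definition of $\chi^{A,i}$ is one-dimensional.

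Combining these ingredients gives
\[
\langle\chi^{(\la,\bc)},\chi^{(\om,\bb)}\rangle
=\sum_{\substack{A\in{}_{(\la,\bc)}\Mat(n,I)\\ B'\in{}_{(\om,\bb)}\Mat(d,I)\\ \be_i(A)=\be_i(B')\ \forall i}}\prod_{i\in I}|\Si_{\be_i(A)}:\Si_{A,i}|.
\]
Finally, the transposition map $B\mapsto B^\tr$ furnishes a bijection $\Mat(I,d)_{(\om,\bb)}\iso{}_{(\om,\bb)}\Mat(d,I)$ that exchanges $\al_i$ with $\be_i$, and the definition of ${}_{(\la,\bc)}\Mat(n,I,d)_{(\om,\bb)}$ then identifies the above indexing set with that appearing on the right-hand side of the lemma, which yields the claim.

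There is essentially no serious obstacle in this argument; the only mild subtlety is verifying the identification $\chi^{B',i}=\reg_{\Si_{\be_i(B')}}$, which depends on the two observations that every row of $B'$ sums to $1$ and that the sign character of $\Si_1$ is trivial, so the choice of $\bb$ plays no role on the $B'$ side and enters the right-hand side only through the support condition on $B$.
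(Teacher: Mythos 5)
Your proof is correct and follows essentially the same route as the paper's: expand both $\chi^{(\la,\bc)}$ and $\chi^{(\om,\bb)}$ as sums over matrices, note that each $\chi^{B',i}$ is the regular character because the rows of $B'$ sum to $1$, and use $\langle\chi^{A,i},\reg\rangle = \dim\chi^{A,i} = |\Si_{\be_i(A)}:\Si_{A,i}|$ when the degrees match. The paper leaves the regular-character observation and the transposition bijection $B\mapsto B^\tr$ somewhat implicit, so your write-up is if anything a touch more explicit on those two minor points, but it is the same argument.
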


\begin{proof}
Denoting by $\reg_{\Si_t}$ the regular character of $\Si_t$, we have 
\begin{align*}
\langle \chi^{(\la,\bc)},\chi^{(\om,\bb)}\rangle
&=\Big\langle \sum_{A\in {}_{(\la,\bc)}\Mat(n,I)}\chi^A, \sum_{B\in {}_{(\om,\bb)}\Mat(d,I)}\chi^B\Big\rangle 
\\
&=\sum_{\substack{A\in {}_{(\la,\bc)}\Mat(n,I)\\ B\in {}_{(\om,\bb)}\Mat(d,I)}} \prod_{i\in I}
\langle \chi^{A,i}, \reg_{\Si_{\be_i(B)}} \rangle,
\end{align*}
which implies the lemma since 
$$
\langle \chi^{A,i}, \reg_{\Si_{\be_i(B)}} \rangle=
\left\{
\begin{array}{ll}
\chi^{A,i}(1)= |\Si_{\be_i(A)}:\Si_{A,i}|&\hbox{if $\be_i(A)=\be_i(B)$,}\\
0 &\hbox{otherwise}
\end{array}
\right.
$$
for any $i\in I$.
\end{proof}  

For $(\la,\bc)\in\La^\col(n,d)$ and  $(\om,\bb)\in\La^\col(d,d)$ as above, we define the set ${}_{(\la,\bc)}\underline{\Mat}(n,d)_{(\om,\bb)}$ of tuples $(T^0,\dots,T^{e-1})$ such that 
(1) $T^i=(t^i_{r,s})\in \Mat(n,d)$ for all $i\in I$; (2) $T^0+\dots+T^{e-1}\in {}_\la\Mat(n,d)_\om$; (3) $t^i_{r,s}=0$ unless $i\in\Inc(c_r)\cap \Inc(b_s)$.

\begin{Lemma} \label{L020216} 
For any $(\la,\bc)\in\La^\col(n,d)$ and $(\om,\bb)\in\La^\col(d,d)$, we have
$$
|{}_{(\la,\bc)}\underline{\Mat}(n,d)_{(\om,\bb)}|=\sum_{(A,B) \in {}_{(\la,\bc)} \Mat(n,I,d)_{(\om,\bb)}}
\prod_{i\in I} |\Si_{\be_i(A)}:\Si_{A,i}|.
$$
\end{Lemma}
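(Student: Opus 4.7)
The plan is to interpret both sides as counts of the same set, by factoring the problem through a natural marginal map. Specifically, I would define
\[
\Phi \colon {}_{(\la,\bc)}\underline{\Mat}(n,d)_{(\om,\bb)} \to {}_{(\la,\bc)}\Mat(n,I,d)_{(\om,\bb)}
\]
sending $(T^0,\dots,T^{e-1})$ to $(A,B)$, where $a_{r,i}:=\sum_{s}t^i_{r,s}$ and $b_{i,s}:=\sum_{r}t^i_{r,s}$. Checking that $(A,B)$ lies in the target is routine: the row sums of $A$ equal the row sums of $T^0+\dots+T^{e-1}$ (namely $\la$), the column sums of $B$ equal the column sums of $T^0+\dots+T^{e-1}$ (namely $\om$), the $\Inc$-support conditions on the $T^i$ pass directly to $A$ and to $B$, and $\be_i(A)=\sum_{r,s}t^i_{r,s}=\al_i(B)$.

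The heart of the argument is computing the fiber of $\Phi$ over a fixed $(A,B)$, and the key input is that $\om=(1,\dots,1)$. Since $\sum_{i}b_{i,s}=1$ for each $s$, there is a unique $i(s)\in I$ with $b_{i(s),s}=1$, and $b_{j,s}=0$ for all $j\ne i(s)$. Consequently, for any preimage $(T^0,\dots,T^{e-1})$ of $(A,B)$, column $s$ of $T^j$ vanishes for $j\ne i(s)$, while column $s$ of $T^{i(s)}$ contains exactly one entry equal to $1$. Thus a preimage is equivalent to a function $r\colon[1,d]\to[1,n]$ recording, for each $s$, the row containing this $1$. The prescribed row sums $\sum_{s}t^i_{r,s}=a_{r,i}$ then translate into the requirement that on each $S_i:=\{s\mid i(s)=i\}$, the restriction $r|_{S_i}$ has fibers of sizes $a_{1,i},\dots,a_{n,i}$. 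Since $|S_i|=\al_i(B)=\be_i(A)$, the number of such choices is the multinomial coefficient $|\Si_{\be_i(A)}:\Si_{A,i}|$, and multiplying over $i\in I$ and summing over $(A,B)$ yields the claimed identity.

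The argument presents no serious obstacle; the only point that requires attention is the verification that the $\Inc$-support conditions on the $T^i$ are automatically enforced by the support conditions already built into ${}_{(\la,\bc)}\Mat(n,I)$ and $\Mat(I,d)_{(\om,\bb)}$. For the row-sum count this reduces to observing that $a_{r,i}=0$ whenever $i\notin\Inc(c_r)$, so no $s\in S_i$ is ever assigned to row $r$ in that case; for the column structure the condition $i(s)\in\Inc(b_s)$ is built into the definition of $\Mat(I,d)_{(\om,\bb)}$. Once these compatibilities are noted, no extra restriction on $r$ is needed beyond the multinomial count recorded by $A$.
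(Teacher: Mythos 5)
Your proof is correct and takes essentially the same approach as the paper: you define the same marginal map $(T^0,\dots,T^{e-1})\mapsto (A,B)$ with $a_{r,i}=\sum_s t^i_{r,s}$, $b_{i,s}=\sum_r t^i_{r,s}$, and compute its fibers by exploiting $\om=(1,\dots,1)$ to reduce the count over each residue $i$ to a multinomial coefficient $|\Si_{\be_i(A)}:\Si_{A,i}|$. The paper phrases the fiber count in terms of set partitions $X=\bigsqcup_r X_r$ rather than a function $r\colon[1,d]\to[1,n]$, but the argument is the same.
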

\begin{proof}
Consider the map $\theta\colon {}_{(\la,\bc)}\underline{\Mat}(n,d)_{(\om,\bb)} \to
\Mat(n,I) \times \Mat(I,d)$ defined as follows. 
Given ${\mathbf T}=(T^0,\dots,T^{e-1}) \in {}_{(\la,\bc)}\underline{\Mat}(n,d)_{(\om,\bb)}$, we set $\theta({\mathbf T}) = (A,B)$ where 
 $A=(a_{r,i})\in \Mat(n,I)$ and 
$B=(b_{i,s})\in \Mat(I,d)$ are given by $a_{r,i}:=\al_r (T^i)$ and $b_{i,s}:=\be_s(T^i)$. 
Clearly, the image of $\theta$ is contained in 
${}_{(\la,\bc)} \Mat(n,I,d)_{(\om,\bb)}$.

 Let $(A,B) \in {}_{(\la,\bc)} \Mat(n,I,d)_{(\om,\bb)}$.
 Then the preimage $\theta^{-1}(A,B)$ consists of all tuples $(T^0,\dots,T^{e-1})$ of matrices in $\Mat(n,d)$ such that $ \al_r(T^i)=a_{r,i}$ and $\be_s(T^i)=b_{i,s}$  for all $i\in I$, $r\in[1,n]$ and $s\in[1,d]$. So, denoting 
 $$
 S_i:=\{T\in \Mat(n,d)\mid \al_r(T)=a_{r,i},\ \be_s(T)=b_{i,s}\ \text{for all $r\in[1,n]$, $s\in[1,d]$}\}
 $$
 for any $i\in I$, we have $|\theta^{-1}(A,B)|=\prod_{i\in I}|S_i|$. 

To compute $|S_i|$ for a fixed $i\in I$, let $X=\{ s \in [1,d] \mid b_{i,s}=1 \}$, so that $|X|=\al_i(B) = \be_i(A)$. 
 Then the set of partitions of $X$ into a disjoint union of subsets $X_r$, $r\in [1,n]$, with $|X_r|=a_{r,i}$ for each $r$, is in bijection with the set $S_i$: a bijection is given by 
 assigning to each such partition $X=\bigsqcup_{r=1}^n X_r$ the matrix 
 $T=(t_{r,s})$ given by 
\[
t_{r,s}=
\begin{cases}
1 & \text{if } s\in X_r, \\
0 & \text{otherwise.}
\end{cases}
\] 
Therefore, $|S_i| = |\Si_{\be_i(A)}: \Si_{A,i}|$, proving the lemma. 
\end{proof}

\begin{Theorem} \label{TComb} 
For any $(\la,\bc)\in\La^\col(n,d)$ and $\bb\in J^d$, we have 
$$
\dim(\ga^{\la,\bc} C_{\rho,d} \ga^{\om,\bb}) = |{}_{(\la,\bc)}\underline{\Mat}(n,d)_{(\om,\bb)}|.
$$
\end{Theorem}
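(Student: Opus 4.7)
My plan is to obtain the ungraded dimension by specializing the graded formula of Lemma \ref{Ldim1} at $q=1$ and then translate through the combinatorial dictionary established in Sections \ref{SSSST}--\ref{SSCount}. Concretely, setting $q=1$ in Lemma \ref{Ldim1} gives
\[
\dim(\ga^{\la,\bc} C_{\rho,d} \ga^{\om,\bb}) = \sum_{\mu \in \Par_{\rho,d}} |\Std(\mu\sm\rho,\bl(\la,\bc)^{+\ka})|\cdot|\Std(\mu\sm\rho,\bl(\om,\bb)^{+\ka})|.
\]
Apply Corollary \ref{CStTRB} to each factor to convert standard tableaux into colored tableaux, and use the bijection $\mu\mapsto \bmu=\quot(\mu)$ from $\Par_{\rho,d}$ to $\Par^I(d)$ (Lemma \ref{LQuotUn}) to rewrite the sum as
\[
\sum_{\bmu\in\Par^I(d)} |\CT(\bmu;\la,\bc)|\cdot |\CT(\bmu;\om,\bb)|.
\]

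Next, I would apply Lemma \ref{LChar} to each factor, obtaining
\[
\sum_{\bmu\in\Par^I(d)} \langle \chi^{(\la,\bc)},\chi^\bmu\rangle \,\langle \chi^{(\om,\bb)},\chi^\bmu\rangle.
\]
The characters $\chi^A$ and $\chi^B$ defining $\chi^{(\la,\bc)}$ and $\chi^{(\om,\bb)}$ are supported on multipartitions $\bmu \in \Par^I(d)$, because the parts $\be_i(A)$ (respectively $\be_i(B)$) sum to $d$. Since the $\chi^\bmu$ with $\bmu\in\Par^I(d)$ form an orthonormal family in $\Char^I$ spanning the subspace containing both elements, the sum collapses by Parseval/orthonormality to $\langle \chi^{(\la,\bc)},\chi^{(\om,\bb)}\rangle$. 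Now Lemma \ref{LScProd} rewrites this inner product as $\sum_{(A,B)\in {}_{(\la,\bc)}\Mat(n,I,d)_{(\om,\bb)}} \prod_{i\in I}|\Si_{\be_i(A)}:\Si_{A,i}|$, and Lemma \ref{L020216} identifies this sum with $|{}_{(\la,\bc)}\underline{\Mat}(n,d)_{(\om,\bb)}|$, completing the chain.

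This is essentially a bookkeeping argument: the dimension of an idempotent-truncated piece of $C_{\rho,d}$ is extracted from the abacus/tableau description of the RoCK block and then converted via representation theory of symmetric groups into a matrix-counting problem. The main (and really only) conceptual step is the passage from a sum of products of tableau counts to an inner product of characters; everything else is direct invocation of earlier results. The chief thing to be careful about is ensuring the support of $\chi^{(\la,\bc)}$ is exactly $\Par^I(d)$, so that the orthonormality sum matches the summation range $\Par^I(d)$ produced in the previous step — but this is immediate from the construction of $\chi^{(\la,\bc)}$ as a sum of induced characters from parabolic subgroups of $\Si_{\be_0(A)}\times\cdots\times \Si_{\be_{e-1}(A)}$ with $\sum_i \be_i(A) = d$.
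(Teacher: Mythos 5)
Your proposal is correct and follows essentially the same chain of equalities as the paper's own proof: specialize Lemma~\ref{Ldim1} at $q=1$, convert to colored tableaux via Corollary~\ref{CStTRB} and Lemma~\ref{LQuotUn}, rewrite as inner products via Lemma~\ref{LChar}, collapse the sum by orthonormality of the $\chi^{\bmu}$, and conclude with Lemmas~\ref{LScProd} and~\ref{L020216}. Your attention to the support of $\chi^{(\la,\bc)}$ being contained in $\Par^I(d)$ is the right thing to check, and it is correct for exactly the reason you give.
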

\begin{proof}
We have
\begin{align*}
\dim(\ga^{\la,\bc} C_{\rho,d} \ga^{\om,\bb}) &=\sum_{\mu\in\Par_{\rho,d}}|\Std(\mu\sm\rho, \bl(\la,\bc)^{+\ka})|\,|\Std(\mu\sm\rho, \bl(\om,\bb)^{+\ka})|
\\
&=\sum_{\umu\in\Par^I(d)}
|\CT(\bmu;\la,\bc)|\, |\CT(\bmu;\om,\bb)|
\\
&=\sum_{\umu\in\Par^I(d)}\langle \chi^{(\la,\bc)},\chi^\umu\rangle
\, \langle \chi^{(\om,\bb)},\chi^\umu\rangle
\\
&=\langle \chi^{(\la,\bc)}, \chi^{(\om,\bb)}\rangle
\\&=|{}_{(\la,\bc)}\underline{\Mat}(n,d)_{(\om,\bb)}|,
\end{align*}
where the first equality comes from Lemma~\ref{Ldim1}, the second equality uses Corollary~\ref{CStTRB} and Lemma~\ref{LQuotUn}, the third equality uses Lemma~\ref{LChar}, the fourth equality holds since the elements $\chi^\umu$ form an orthonormal basis of $\Char^I$, and the final equality comes from Lemmas~\ref{LScProd} and \ref{L020216}. 
\end{proof}

\begin{Corollary}\label{CDimLaOm} 
Let $(\la,\bc) \in\La^{\col} (n,d)$. 
For all $j\in J$, set  
$d_j=\sum_{\substack{1\le r\le n \\ c_r = j}} \la_r$.  
Then
$$
\dim(\ga^{\la,\bc} C_{\rho,d} \ga^{\om}) = 
\left\{
\begin{array}{ll}
|\Si_d:\Si_\la|\,3^{d_1+d_{e-1}}\,4^{\sum_{j=2}^{e-2}d_j} &\hbox{if $e>2$,}\\
|\Si_d:\Si_\la|\, 2^{d_1} &\hbox{if $e=2$.}
\end{array}
\right.
$$
\end{Corollary}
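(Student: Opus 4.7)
The plan is to derive the corollary directly from Theorem~\ref{TComb} by summing over $\bb\in J^d$. Using~\eqref{EGaOm} and Theorem~\ref{TComb},
\[
\dim(\ga^{\la,\bc} C_{\rho,d} \ga^{\om})
= \sum_{\bb\in J^d} |{}_{(\la,\bc)}\underline{\Mat}(n,d)_{(\om,\bb)}|
= |X|,
\]
where $X$ denotes the disjoint union $\bigsqcup_{\bb\in J^d} {}_{(\la,\bc)}\underline{\Mat}(n,d)_{(\om,\bb)}$, i.e., the set of tuples $(T^0,\dots,T^{e-1},\bb)$ satisfying the defining conditions (1)--(3) with $\bb\in J^d$ now a free parameter.

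Because $\om=(1,\ldots,1)$, each column of $T^0+\cdots+T^{e-1}$ has exactly one nonzero entry, which is necessarily equal to $1$. Hence the data of an element of $X$ are equivalent to a triple of sequences $(r_s, i_s, b_s)_{s=1}^d$ in $[1,n]\times I \times J$, where $r_s$ is the row of the unique nonzero entry in column $s$, $i_s$ is the runner on which it sits, and $b_s$ is the $s$th component of $\bb$. The conditions translate to: \textbf{(a)} $|\{s : r_s=r\}|=\la_r$ for each $r\in[1,n]$; and \textbf{(b)} $i_s\in \Inc(c_{r_s})\cap \Inc(b_s)$ for each $s$. The number of sequences $(r_s)_{s=1}^d$ satisfying (a) is the multinomial coefficient $|\Si_d:\Si_\la|$, and once $(r_s)$ is fixed, the pair $(i_s,b_s)$ may be chosen independently for each $s$ subject to (b).

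Thus the remaining task is to count, for each $c\in J$, the number
\[
N_c := |\{(i,b)\in I\times J : i\in \Inc(c)\cap \Inc(b)\}|
=\sum_{b\in J} |\Inc(c)\cap \Inc(b)|.
\]
Since $\Inc(j)=\{j-1,j\}$, the intersection $\Inc(c)\cap \Inc(b)$ has size $2$ when $b=c$, size $1$ when $|b-c|=1$, and size $0$ otherwise. Summing over $b\in J=\{1,\ldots,e-1\}$ gives
\[
N_c = 2 + [c-1\geq 1] + [c+1\leq e-1],
\]
so $N_c=4$ when $1<c<e-1$, $N_c=3$ when $c\in\{1,e-1\}$ and $e>2$, while $N_1=2$ when $e=2$. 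Combining,
\[
|X| = |\Si_d:\Si_\la|\,\prod_{r=1}^n N_{c_r}^{\la_r} = |\Si_d:\Si_\la|\,\prod_{j\in J} N_j^{d_j},
\]
which yields both cases of the stated formula. The argument is essentially routine bookkeeping once Theorem~\ref{TComb} is available; the one slightly delicate point to verify carefully is the boundary behavior of $\Inc$ at $c\in\{1,e-1\}$ (in particular the collapse when $e=2$), but no substantial obstacle arises.
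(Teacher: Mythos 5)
Your proof is correct and takes essentially the same route as the paper: both sum Theorem~\ref{TComb} over $\bb\in J^d$, reinterpret (since $\om=(1^d)$) each tuple in ${}_{(\la,\bc)}\underline{\Mat}(n,d)_{(\om,\bb)}$ as an assignment $s\mapsto(r_s,i_s)$, separate out the $|\Si_d:\Si_\la|$ choices of the row-partition, and then count pairs $(i,b)$ column by column via what the paper calls $\Inc^2(c)$ and you call $N_c$.
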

\begin{proof}
In this paragraph we fix $\bb\in J^d$. Let $Y_\bb$ be the set of all maps 
$\phi \colon [1,d]\to [1,n]\times I$ such that 
(1) $|\phi^{-1}(\{r\}\times I)|=\la_r$ for all $r\in [1,n]$; (2) for all $s\in [1,d]$, if $\phi(s)= (r,i)$, then $i\in\Inc(c_r)\cap\Inc(b_s)$. 
Observe that there is a bijection $f\colon {}_{(\la,\bc)}\underline{\Mat}(n,d)_{(\om,\bb)}\iso Y_\bb$ such that $f({\mathbf T})(s)$ is the unique 
$(r,i)\in [1,n]\times I$ such that $t^i_{r,s}=1$, if we write ${\mathbf T}=(T^0,\dots,T^{e-1})$ with $T^i=(t^i_{r,s})$. 

Now, let $Y:=\{(\phi,\bb)\mid \bb\in J^d,\ \phi\in Y_\bb\}$. By Theorem~\ref{TComb} and (\ref{EGaOm}), we have $\dim(\ga^{\la,\bc} C_{\rho,d} \ga^{\om}) = |Y|$. Let $W$ be the set of all set partitions $[1,d]=\bigsqcup_{r\in [1,n]}\Om_{r}$ such that $|\Om_{r}|=\la_r$ for all $r$. Note that $|W|=|\Si_d:\Si_\la|$.

We define the map $\xi\colon Y\to W$ by setting $\xi(\phi,\bb)$ to be the partition $[1,d]= \bigsqcup_{r\in [1,n]}\phi^{-1}(\{r\}\times I)$. To complete the proof, we fix a set partition 
$\Om\colon [1,d]=\bigsqcup_{r\in [1,n]}\Om_{r}$ in $W$ and compute 
$|\xi^{-1}(\Om)|$. 
Given $j\in J$, set $\Inc^2(j):=\{(i,l)\in I\times J\mid i\in\Inc(j)\cap\Inc(l)\}$. Note that 
$$
\Inc^2(j)=
\left\{
\begin{array}{ll}
\{(j,j),(j,j+1),(j-1,j),(j-1,j-1)\} &\hbox{if $1<j<e-1$,}\\
\{(1,1),(1,2),(0,1)\} &\hbox{if $j=1$ and $e>2$,}
\\
\{(e-1,e-1),(e-2,e-1),(e-2,e-2)\} &\hbox{if $j=e-1$ and $e>2$,}
\\
\{(0,1),(1,1)\} &\hbox{if $j=1$ and $e=2$.}
\end{array}
\right.
$$

Note that $\xi^{-1}(\Om)$ consists of all pairs $(\phi,\bb)$ where 
$\phi\colon [1,d]\to [1,n]\times I$ and $\bb\in J^d$ are such that for any $r\in J\times[1,n]$ and any $s\in\Om_{r}$ we have 
$\phi(s)=(r,i)$ with $(i,b_s)\in\Inc^2(c_r)$. So 
\begin{align*}
|\xi^{-1}(\Om)|&=\prod_{r\in [1,n]}\prod_{s\in\Om_{r}}|\Inc^2(c_r)|
=\prod_{j\in J}|\Inc^2(j)|^{d_j}
\\
&=\left\{
\begin{array}{ll}
3^{d_1+d_{e-1}}\,4^{\sum_{j=2}^{e-2}d_j} &\hbox{if $e>2$,}\\
2^{d_1} &\hbox{if $e=2$,}
\end{array}
\right.
\end{align*}
and the corollary follows. 
\end{proof}

Recall the algebra $W_d$ and the right $W_d$-modules $M_{\la,\bc}$ defined in~\S\ref{SSQ}. 
Combining Lemma~\ref{LBasisMLaC} and Corollary~\ref{CDimLaOm}, we obtain:

\begin{Corollary}\label{CDimEqual}
For all $(\la,\bc)\in \La^\col (n,d)$, we have 
$\dim (\ga^{\la,\bc} C_{\rho,d} \ga^\om) = \dim M_{\la,\bc}$. 
\end{Corollary}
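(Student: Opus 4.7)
The proof is immediate from the two results cited just before the corollary, so my plan is simply to observe that both sides of the claimed equality are given by the same explicit closed-form expression in $|\Si_d : \Si_\la|$ and the part-sums $d_j = \sum_{r : c_r = j} \la_r$.

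Concretely, I would proceed as follows. First, I would apply Lemma~\ref{LBasisMLaC} to write
\[
 \dim M_{\la,\bc} =
\begin{cases}
|\Si_d : \Si_\la| \, 3^{d_1+d_{e-1}} \, 4^{\sum_{j=2}^{e-2} d_j} & \text{if } e>2, \\
|\Si_d: \Si_\la| \, 2^{d_1} & \text{if } e=2.
\end{cases}
\]
Then I would apply Corollary~\ref{CDimLaOm} to write $\dim(\ga^{\la,\bc} C_{\rho,d} \ga^\om)$ in exactly the same form. A direct comparison of the right-hand sides finishes the proof.

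There is no genuine obstacle here: all of the real work has been done in establishing Corollary~\ref{CDimLaOm} (which rests on the combinatorial chain Lemma~\ref{Ldim1} $\to$ Corollary~\ref{CStTRB} $\to$ Lemma~\ref{LChar} $\to$ Lemma~\ref{LScProd} $\to$ Lemma~\ref{L020216} $\to$ Theorem~\ref{TComb}) and in the character-theoretic rank computation underlying Lemma~\ref{LBasisMLaC}. Thus the corollary should be stated as an immediate consequence, with a one-line proof simply invoking the two results.
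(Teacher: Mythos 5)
Your proposal matches the paper exactly: the authors state the corollary with no proof beyond the phrase ``Combining Lemma~\ref{LBasisMLaC} and Corollary~\ref{CDimLaOm}, we obtain,'' which is precisely the one-line comparison of the two identical closed-form expressions you describe. Nothing more is needed.
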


\begin{Corollary} \label{CDimOmOm}
We have 
$
 \dim (\ga^\om C_{\rho,d} \ga^\om) = d! (4e-6)^d = \dim W_d
$.
\end{Corollary}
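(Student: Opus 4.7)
The plan is to decompose both sides and match them term by term using the colored-composition indexed results already established.

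First, I would decompose
\[
\ga^\om C_{\rho,d} \ga^\om = \bigoplus_{\ba,\bb\in J^d} \ga^{\om,\ba} C_{\rho,d} \ga^{\om,\bb}
\]
by definition~\eqref{EGaOm}, so that
\[
\dim(\ga^\om C_{\rho,d}\ga^\om) = \sum_{\ba\in J^d}\dim(\ga^{\om,\ba} C_{\rho,d}\ga^\om).
\]
Next, for each fixed $\ba\in J^d$, the pair $(\om,\ba)$ is a colored composition in $\La^\col(d,d)$ (with $\om=(1,\dots,1)$), so Corollary~\ref{CDimEqual} applied to $(\la,\bc)=(\om,\ba)$ gives $\dim(\ga^{\om,\ba}C_{\rho,d}\ga^\om)=\dim M_{\om,\ba}$.

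Then I would invoke Lemma~\ref{LBasisMLaC}: since $\Si_\om$ is trivial we have $|\Si_d:\Si_\om|=d!$, and for $\ba\in J^d$ the numbers $d_j(\ba):=|\{s : a_s = j\}|$ satisfy $\sum_j d_j(\ba)=d$. For $e>2$ this gives $\dim M_{\om,\ba}=d!\,3^{d_1(\ba)+d_{e-1}(\ba)}\,4^{\sum_{j=2}^{e-2}d_j(\ba)}$. Summing over all $\ba$ factors over positions: each position contributes $3+4(e-3)+3=4e-6$ (with $3$ for the choices $a_s\in\{1,e-1\}$ and $4$ for each of the $e-3$ intermediate choices), so
\[
\sum_{\ba\in J^d}\dim M_{\om,\ba} = d!\,(4e-6)^d.
\]
The case $e=2$ is immediate: $J=\{1\}$, forcing $\ba=(1,\dots,1)$, and $\dim M_{\om,\ba}=d!\,2^d = d!(4e-6)^d$.

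Finally, for the second equality I would compute $\dim W_d$ directly from the definition $W_d=\Zig^{\otimes d}\rtimes \Z\Si_d$, giving $\dim W_d = d!\,(\dim \Zig)^d$, and read off $\dim\Zig=4e-6$ by specializing $q=1$ in~\eqref{EDimZZ}. There is no significant obstacle here: the work has already been done in Corollary~\ref{CDimLaOm} / Corollary~\ref{CDimEqual}, and what remains is a short combinatorial summation.
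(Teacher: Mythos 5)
Your proof is correct and follows essentially the same route as the paper's: decompose $\ga^\om = \sum_{\bc\in J^d}\ga^{\om,\bc}$, compute each summand's rank, and sum. The only cosmetic difference is that you pass through Corollary~\ref{CDimEqual} and Lemma~\ref{LBasisMLaC}, whereas the paper invokes Corollary~\ref{CDimLaOm} directly; these give the identical formula, so the two arguments are the same.
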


\begin{proof}
This can be derived from the algebra isomorphism in 
\cite[Theorem 3.4]{Evseev}. We give a more direct proof for the reader's convenience.
By~\eqref{EDimZZ}, we have $\dim \Zig = 4e-6$, and the second equality follows. For any $\bc \in J^d$ and $j\in J$, set 
$d_j (\bc) := |\{ r\in [1,d] \mid c_r = j\} |$.
For $e>2$, we compute:
\begin{align*}
\dim(\ga^\om C_{\rho,d} \ga^\om) &= \sum_{\bc \in J^d} \dim(\ga^{\om,\bc} C_{\rho,d} \ga^\om) \\
&= d! \sum_{\bc \in J^d} 3^{d_1(\bc)+d_{e-1} (\bc)} 4^{d_2(\bc)+\dots+d_{e-2}(\bc)}\\
&= d! (3+ 4(e-3) + 3)^d = d! (4e-6)^d,
\end{align*}
where the second equality is due to Corollary~\ref{CDimLaOm}.
For $e=2$, the same computation yields $\dim(\ga^\om C_{\rho,d} \ga^\om) = d! 2^d = d! (4e-6)^d$. 
\end{proof}

\section{The semicuspidal algebra}\label{SSemiCusp}
As usual, $d\in\Z_{>0}$ is fixed. Recall the semicuspidal algebra $\hat C_{d\de}$ from \S\ref{SSSC}.
In this section we prove some results on the structure of $\hat C_{d\de}$. These results are used in Section~\ref{SRoCKSchur} to study the quotient $C_{\rho,d}$ of $\hat C_{d\de}$ in the context of a RoCK block, cf.~\eqref{ECRhoD}.

\subsection{Preliminary results on the semicuspidal algebra}\label{SSTrunc}
We have the parabolic subalgebra 
\[
\hat C_{\om\de}\cong \hat C_\de\otimes\dots\otimes \hat C_\de\subseteq \hat C_{d\de}
\]
with the identity element $1_{\om\de}$, cf.~Lemma~\ref{L030216}. 

\begin{Lemma} \label{LPos} 
We have:
\begin{enumerate}
\item[{\rm (i)}] The algebra $1_{\om\de}\hat C_{d\de}1_{\om\de}$ is non-negatively graded.
\item[{\rm (ii)}] $1_{\om\de}  \hat C_{d\de}^{>0}1_{\om\de}=
1_{\om\de} \hat C_{d\de} \hat C_{\om\de}^{>0}=1_{\om\de} \hat C_{d\de}1_{\om\de}\hat C_{\om\de}^{>0}$.
\end{enumerate}
\end{Lemma}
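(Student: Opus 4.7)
The plan is to exploit the free module decomposition from Lemma~\ref{L030216}: since $\hat C_{d\de} 1_{\om\de}$ is free as a right $\hat C_{\om\de}$-module on the basis $\{\psi_w \mid w \in \D^{e\om}\}$, every element of $1_{\om\de}\hat C_{d\de}1_{\om\de}$ may be written uniquely as $\sum_{w\in \D^{e\om}} 1_{\om\de}\psi_w\cdot y_w$ with $y_w\in \hat C_{\om\de}$. The key intermediate claim I would establish is: for any $w\in \D^{e\om}$ and $\bi\in I^{\om\de}_{\scusp}$ with $w\bi\in I^{\om\de}_{\scusp}$, the element $1_{\om\de}\psi_w 1_{\bi}$ has degree zero in $\hat C_{d\de}$.

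To prove this key degree claim, I would first observe that $\bi\in I^{\om\de}_{\scusp}$ means each $e$-block of $\bi$ lies in $\bigsqcup_{j\in J} I^{\de,j}$, and by the defining form $0\bk j$ of the elements of $I^{\de,j}$, every such word begins with the residue $0$. Hence the positions of residue $0$ in both $\bi$ and $w\bi$ are exactly the block-starts $\{1,e+1,\ldots,(d-1)e+1\}$, which forces $w$ to permute this set; write $w(e(k-1)+1)=e(\sigma(k)-1)+1$ for some $\sigma\in\Si_d$. Combined with $w\in \D^{e\om}$ being strictly increasing on each block $B_k=\{e(k-1)+1,\ldots,ek\}$, this will imply that $w(B_k)=B_{\sigma(k)}$ for each $k$: were $w$ to exit $B_{\sigma(k)}$ in the middle of the increasing traversal of $B_k$, the next position outside $B_{\sigma(k)}$ that is a block-start would need to be in the image of a different block-start by the first observation, but the only position in this set that $w$ can legally land on from within $B_k$ is forced by injectivity to coincide with one we have already used, a contradiction. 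So $w$ is a block permutation, and a reduced expression of $\sigma\in \Si_d$ lifts to a reduced expression of $w$ as a product of elementary block-swaps. Each elementary block-swap, applied to a configuration where two adjacent $\de$-blocks each contain every residue in $I$ exactly once, contributes degree $-\sum_{i,j\in I}c_{ij}$; a direct verification for both $e=2$ and $e>2$ shows that this sum vanishes for the type $A_{e-1}^{(1)}$ Cartan matrix, whence $\deg(1_{\om\de}\psi_w 1_{\bi})=0$.

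Given the key claim, both parts follow rapidly. For (i), I first note that $\hat C_{\om\de}\cong \hat C_{\de}^{\otimes d}$ is non-negatively graded: indeed, on a semicuspidal $\de$-word (a permutation of the distinct residues in $I$) the generator $\psi_r$ swaps two distinct residues $i\ne j$ and hence has degree $-c_{ij}\ge 0$, while $y_r$ has degree $2$. Combined with the key claim that $1_{\om\de}\hat C_{d\de}1_{\om\de}$ is spanned as a right $\hat C_{\om\de}$-module by the degree-zero elements $1_{\om\de}\psi_w$, we conclude that the truncation is non-negatively graded. For (ii), the equality $1_{\om\de}\hat C_{d\de}\hat C_{\om\de}^{>0}=1_{\om\de}\hat C_{d\de}1_{\om\de}\hat C_{\om\de}^{>0}$ is immediate since $\hat C_{\om\de}^{>0}=1_{\om\de}\hat C_{\om\de}^{>0}1_{\om\de}$. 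The inclusion $\supseteq$ of the remaining equality follows from (i): whenever $a\in 1_{\om\de}\hat C_{d\de}1_{\om\de}$ and $b\in \hat C_{\om\de}^{>0}$, we have $\deg(ab)\ge 0+1>0$, so $ab\in 1_{\om\de}\hat C_{d\de}^{>0}1_{\om\de}$. For $\subseteq$, decompose any homogeneous positive-degree element as $x=\sum_w 1_{\om\de}\psi_w\cdot y_w$; since each $1_{\om\de}\psi_w$ has degree zero, positivity of $\deg(x)$ forces each contributing $y_w$ to lie in $\hat C_{\om\de}^{>0}$, whence $x\in 1_{\om\de}\hat C_{d\de}\hat C_{\om\de}^{>0}$.

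The main obstacle will be the combinatorial step establishing that the semicuspidal constraint on both $\bi$ and $w\bi$, together with the minimality $w\in \D^{e\om}$, forces $w$ to act by a block permutation. The leverage is that the positions of residue $0$ in any semicuspidal word of weight $d\de$ arranged in $e\om$-blocks are pinned to the block-starts; once $w$ must permute these starts, the strict increase of $w$ on each $B_k$ combined with the fact that every intermediate residue in each block must again appear exactly once in both $\bi$ and $w\bi$ narrows down the possibilities to precisely the block permutations, which a careful injectivity-tracking argument then establishes.
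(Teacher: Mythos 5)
Your proof is correct, and it is a genuine direct argument: the paper's own proof of this lemma merely cites \cite[Lemma 6.9]{Evseev}, so your work effectively supplies what the authors leave to the reference, and the ingredients you use (block-permutation forcing via the residue-$0$ positions, the vanishing of $\sum_{i,j\in I}{\mathtt c}_{ij}$ for the affine Cartan matrix, non-negativity of $\hat C_{\om\de}$ because adjacent residues in any semicuspidal $\de$-word differ) are surely the same ones underlying the cited result.

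Two points are worth tightening. First, $1_{\om\de}\psi_w$ is not literally homogeneous of degree zero as a whole element; the precise statement, which you do establish, is that the only summands $\psi_w 1_{\bi}$ that survive multiplication by $1_{\om\de}$ on both sides are those with $\bi$ and $w\bi$ both in the concatenated semicuspidal form, and for those the degree is zero. Refining the decomposition of Lemma~\ref{L030216} to $\hat C_{d\de}1_{\om\de}=\bigoplus_{w,\bi}\psi_w 1_{\bi}\hat C_{\om\de}$ (a graded direct sum of shifted copies of $1_\bi\hat C_{\om\de}$) makes the degree bookkeeping for (i) and for the inclusion $\subseteq$ in (ii) rigorous. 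Second, your deduction that ``$w$ preserves block-starts and is increasing on blocks $\Rightarrow$ $w$ is a block permutation'' is the right statement, but the ``injectivity-tracking'' sentence is more a signpost than a proof. One clean version: $w$ bijects non-block-starts onto non-block-starts; the block $B_{k_0}$ whose image starts smallest must map onto $B_1$ outright, since otherwise some non-block-start in $\{2,\dots,e\}$ would have to be hit from a block $B_l$ with $l\ne k_0$, forcing the block-start $w(e(l-1)+1)$ to be $<e$, hence equal to $1$, contradicting $l\ne k_0$; then remove $B_{k_0}$, $B_1$ and repeat. Finally, lifting a reduced expression of $\sigma$ is an unneeded detour: $\deg(\psi_w 1_\bi)=-\sum_{a<b,\,w(a)>w(b)}{\mathtt c}_{i_a,i_b}$ directly, and for a block permutation the inversions split into full $e\times e$ blocks, each summing to $-\sum_{i,j\in I}{\mathtt c}_{ij}=0$ because $\de$ is a null-root.
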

\begin{proof}
(i) follows from  \cite[Lemma 6.9(iii)]{Evseev}. 
The second equality in (ii) is obvious and the first one follows from \cite[Lemma 6.9(i)(ii)]{Evseev}. 
\end{proof}

For $\bj=(j_1,\dots,j_d)\in J^d$, we define 
\begin{equation}\label{EEBk}
e_\bj:=1_{\bl^{j_1}\dots\bl^{j_d}}\in R_{d\de}.
\end{equation}
In particular, for $j\in J$, we interpret $e_j$ as $1_{\bl^j}$.  
In fact, the idempotent $e_\bj$ is also known as $\ga^{\om,\bj}$,
cf.~\eqref{ELittleOm},~\eqref{EGaOm}. So we have $\ga^\om=\sum_{\bj\in J^d}e_\bj \in R_{d\de}.
$

Following \cite{KM2}, we consider the $R_{\de}$-modules $\De_{\de,j}:=\hat C_\de e_{j}$ for every $j\in J$. Note that $R_\de$ and hence $\hat C_\de$ is non-negatively graded. Recalling the modules $L_{\de,j}$ with basis  $\{v_\bi\mid \bi\in I^{\de,j}\}$ from \S\ref{SSSC}, the following is immediate from \cite[Proposition 4.13]{KM2}:

\begin{Lemma} \label{L030216_2} 
Let $j\in J$. Then $\De_{\de,j}$ is non-negatively graded and there is an isomorphism of $R_{\de}$-modules  
$$\De_{\de,j}/\De_{\de,j}^{>0}\iso L_{\de,j}, \; e_j+\De_{\de,j}^{>0}\mapsto v_{\bl^j}.$$ 
\end{Lemma}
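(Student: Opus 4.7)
The first assertion is immediate: the idempotent $e_j=1_{\bl^j}$ lies in degree $0$, and by Lemma~\ref{LPos}(i) applied in the case $d=1$ (where $\om=(1)$ and $1_{\om\de}$ is the identity of $\hat C_\de$) the algebra $\hat C_\de$ is non-negatively graded. Hence the cyclic module $\De_{\de,j}=\hat C_\de e_j$ is non-negatively graded.

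For the isomorphism, my plan is to first construct the map and then verify bijectivity by a dimension count. All the words of $L_{\de,j}$ lie in $I^{\de,j}\subseteq I^\de_\scusp$, so $L_{\de,j}$ is semicuspidal and factors through $\hat C_\de$. From the explicit action~\eqref{EMinuscule} one sees that $L_{\de,j}$ is cyclic, generated by $v_{\bl^j}$, which has degree $0$ and satisfies $1_{\bl^j} v_{\bl^j}=v_{\bl^j}$. By the universal property of the cyclic module $\De_{\de,j}=\hat C_\de e_j$, there is therefore a surjective graded $R_\de$-homomorphism
\[
\phi\colon \De_{\de,j}\twoheadrightarrow L_{\de,j},\qquad e_j\mapsto v_{\bl^j}.
\]
Since $L_{\de,j}$ is concentrated in degree $0$ while $\De_{\de,j}^{>0}$ lives in strictly positive degrees, $\phi$ kills $\De_{\de,j}^{>0}$ and descends to a surjection $\bar\phi\colon \De_{\de,j}/\De_{\de,j}^{>0}\twoheadrightarrow L_{\de,j}$ of graded $\Z$-modules concentrated in degree~$0$.

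It remains to check that $\bar\phi$ is injective. This is where I would invoke \cite[Proposition 4.13]{KM2}, which describes the cuspidal standard module $\De_{\de,j}$ as an affine highest weight standard object; in particular it identifies the graded character of $\De_{\de,j}$, and reading off the degree-zero component shows that the $\Z$-module $(\De_{\de,j}/\De_{\de,j}^{>0})^0$ is free with basis indexed by $I^{\de,j}$, with $1_\bi$ acting as a rank-$1$ projector for each $\bi\in I^{\de,j}$. Comparing with the basis $\{v_\bi\}_{\bi\in I^{\de,j}}$ of $L_{\de,j}$, the surjection $\bar\phi$ is a map of free $\Z$-modules of equal finite rank, and checking that it sends $1_\bi e_j$ to $v_\bi$ (up to sign) for every $\bi\in I^{\de,j}$—which follows from the fact that $\bar\phi$ is an $R_\de$-map and that both sides are cyclic on $e_j\mapsto v_{\bl^j}$—forces $\bar\phi$ to be an isomorphism.

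The only non-formal ingredient is thus the character/rank computation for $\De_{\de,j}$, which is imported directly from \cite{KM2}; this is the step that would be the main obstacle if one wished to give a self-contained argument, but granted that input the proof is essentially immediate.
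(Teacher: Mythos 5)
Your argument is correct and relies on the same key input as the paper: the paper simply declares the lemma "immediate from \cite[Proposition 4.13]{KM2}" (after noting that $R_\de$, and hence $\hat C_\de$, is non-negatively graded), and you are unpacking exactly that citation. Your extra scaffolding—building the surjection $\phi$ from the universal property of the cyclic module and then matching ranks via the graded character from \cite{KM2}—is valid, though the final "checking that it sends $1_\bi e_j$ to $v_\bi$" step is superfluous, since a surjection of free $\Z$-modules of equal finite rank is automatically an isomorphism.
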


\begin{Lemma}\label{LDeIso} 
For any $\bj\in J^d$, we have an isomorphism of $R_{d\de}$-modules
$$
\hat C_{d\de} e_{\bj}\iso \De_{\de,j_1}\circ\dots\circ \De_{\de,j_d}, \; e_\bj\mapsto 1_{\om\de}\otimes e_{j_1}\otimes\dots\otimes e_{j_d}.
$$
\end{Lemma}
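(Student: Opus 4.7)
The plan is to observe that this lemma is essentially a direct specialization of Lemma~\ref{LIndCuspProj}, so the main task is to match up the notation and verify the semicuspidal hypothesis.

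First, I would take $n=d$ and $\la_1=\dots=\la_d=1$ in Lemma~\ref{LIndCuspProj}, so that $\la=\om$ and each factor in the parabolic subalgebra is $\hat C_\de$. The words to plug in are $\bi^{(r)}:=\bl^{j_r}\in I^\de$ for $r=1,\dots,d$. To apply Lemma~\ref{LIndCuspProj} I need each $\bl^{j_r}$ to lie in $I^\de_\scusp$, and this was already noted in~\S\ref{SSSC} immediately after the construction of $L_{\de,j}$: the module $L_{\de,j}$ is semicuspidal with $\bl^{j_r}$ appearing as one of its words, whence $I^{\de,j_r}\subseteq I^\de_\scusp$ and in particular $\bl^{j_r}\in I^\de_\scusp$.

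With these choices, Lemma~\ref{LIndCuspProj} yields an isomorphism of $R_{d\de}$-modules
\[
\hat C_{d\de}1_{\bl^{j_1}\dots\bl^{j_d}}\iso \hat C_\de 1_{\bl^{j_1}}\circ\dots\circ \hat C_\de 1_{\bl^{j_d}},
\]
sending the generator $1_{\bl^{j_1}\dots\bl^{j_d}}$ to $1_{\om\de}\otimes 1_{\bl^{j_1}}\otimes\dots\otimes 1_{\bl^{j_d}}$.

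Finally, I would translate back into the notation of the lemma: by definition~\eqref{EEBk} we have $e_\bj=1_{\bl^{j_1}\dots\bl^{j_d}}$ and $e_{j_r}=1_{\bl^{j_r}}$, so $\hat C_\de e_{j_r}=\De_{\de,j_r}$. Substituting these identifications converts the displayed isomorphism into exactly the statement of Lemma~\ref{LDeIso}, with the generator mapped as required. There is no real obstacle here, since all the work has been done in Lemma~\ref{LIndCuspProj}; the only thing to be careful about is recording that $\bl^{j_r}$ is a semicuspidal word, which is immediate from its appearance in $L_{\de,j_r}$.
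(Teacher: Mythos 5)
Your proposal is correct and matches the paper's proof, which simply invokes Lemma~\ref{LIndCuspProj}; you've just spelled out the specialization $n=d$, $\la=\om$, $\bi^{(r)}=\bl^{j_r}$ and the semicuspidality check that the paper leaves implicit.
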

\begin{proof}
This follows from Lemma~\ref{LIndCuspProj}. 
\end{proof}

Let $\bj\in J^d$. We consider the following submodule of the $\hat C_{d\de}$-module $\hat C_{d\de} e_{\bj}$:
\begin{equation}\label{ENk}
N_\bj:=\hat C_{d\de} (\hat C_{\om\de}^{>0}) e_{\bj}.
\end{equation}

\begin{Lemma}\label{L030216_3}
For any $\bj\in J^d$, we have an isomorphism of $R_{d\de}$-modules
\begin{align*}
\hat C_{d\de} e_{\bj}/N_\bj\iso
L_{\de,j_1} \circ \cdots \circ L_{\de,j_d}, \ 
e_{\bj} +N_\bj \mapsto 
1_{\om\de} \otimes v_{\bl^{j_1}} \otimes \cdots \otimes v_{\bl^{j_d}}.
\end{align*}
\end{Lemma}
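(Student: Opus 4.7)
The plan is to realize the desired isomorphism as the result of applying parabolic induction to the natural quotients $\De_{\de,j_i}\twoheadrightarrow L_{\de,j_i}$ on each tensor factor, and then to identify the resulting kernel submodule with $N_\bj$.

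First, by Lemma~\ref{LDeIso}, the module $\hat C_{d\de}e_\bj$ is identified with $\De_{\de,j_1}\circ\dots\circ\De_{\de,j_d}$, which by the corollary to Lemma~\ref{L030216} equals $\hat C_{d\de}1_{\om\de}\otimes_{\hat C_{\om\de}}(\De_{\de,j_1}\boxtimes\dots\boxtimes\De_{\de,j_d})$; similarly $L_{\de,j_1}\circ\dots\circ L_{\de,j_d}=\hat C_{d\de}1_{\om\de}\otimes_{\hat C_{\om\de}}(L_{\de,j_1}\boxtimes\dots\boxtimes L_{\de,j_d})$ by definition. By Lemma~\ref{L030216_2}, for each $i$ the natural surjection $\De_{\de,j_i}\twoheadrightarrow L_{\de,j_i}$ has kernel $\De_{\de,j_i}^{>0}=\hat C_\de^{>0}e_{j_i}$, using that $e_{j_i}$ is homogeneous of degree zero. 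Tensoring these surjections over $\Z$ gives a short exact sequence of $\hat C_{\om\de}$-modules
\[
0\to K\to \De_{\de,j_1}\boxtimes\dots\boxtimes\De_{\de,j_d}\to L_{\de,j_1}\boxtimes\dots\boxtimes L_{\de,j_d}\to 0,
\]
where $K$ is the submodule spanned by tensors having at least one factor of positive degree. Since $xe_{j_i}=x$ for every $x\in\De_{\de,j_i}$, this $K$ coincides with $\hat C_{\om\de}^{>0}(e_{j_1}\otimes\dots\otimes e_{j_d})=\hat C_{\om\de}^{>0}e_\bj$ under the identification $\hat C_\de^{\otimes d}\iso \hat C_{\om\de}$ of Lemma~\ref{L030216}.

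By Lemma~\ref{L030216}, $\hat C_{d\de}1_{\om\de}$ is a free right $\hat C_{\om\de}$-module, so the induction functor $\hat C_{d\de}1_{\om\de}\otimes_{\hat C_{\om\de}}-$ is exact. Applying it to the short exact sequence above produces an exact sequence whose middle term is identified with $\hat C_{d\de}e_\bj$ and whose right-hand term is $L_{\de,j_1}\circ\dots\circ L_{\de,j_d}$. The left-hand term is $\hat C_{d\de}1_{\om\de}\otimes_{\hat C_{\om\de}}\hat C_{\om\de}^{>0}e_\bj$, which under the isomorphism of Lemma~\ref{LDeIso} maps onto the submodule $\hat C_{d\de}\hat C_{\om\de}^{>0}e_\bj=N_\bj$ of $\hat C_{d\de}e_\bj$. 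The induced isomorphism $\hat C_{d\de}e_\bj/N_\bj\iso L_{\de,j_1}\circ\dots\circ L_{\de,j_d}$ then sends the generator $e_\bj+N_\bj$, which corresponds to $1_{\om\de}\otimes e_{j_1}\otimes\dots\otimes e_{j_d}$ via Lemma~\ref{LDeIso}, to $1_{\om\de}\otimes v_{\bl^{j_1}}\otimes\dots\otimes v_{\bl^{j_d}}$ by Lemma~\ref{L030216_2}, as required.

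The step requiring the most care will be the identification of the image of the induced submodule with $N_\bj$, which amounts to matching the external-tensor and internal-multiplication descriptions of the action of $\hat C_{\om\de}\subseteq\hat C_{d\de}$ via the embedding $\iota_{\de,\dots,\de}$. Once this bookkeeping is in place, no new ingredients beyond exactness of induction and the previously established Lemmas~\ref{LDeIso},~\ref{L030216}, and~\ref{L030216_2} are needed.
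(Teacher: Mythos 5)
Your proof is correct and is essentially the same approach as the paper's, which deduces both surjectivity and injectivity from Lemmas~\ref{LIndCusp}, \ref{L030216_2}, \ref{LDeIso}, and \ref{L030216}; you simply package the two directions into a single application of exactness of parabolic induction (guaranteed by the freeness statement in Lemma~\ref{L030216}) to the short exact sequence $0\to(\De_{\de,j_1}\boxtimes\dots\boxtimes\De_{\de,j_d})^{>0}\to\De_{\de,j_1}\boxtimes\dots\boxtimes\De_{\de,j_d}\to L_{\de,j_1}\boxtimes\dots\boxtimes L_{\de,j_d}\to 0$, which makes the bookkeeping that identifies the kernel with $N_\bj$ fully explicit.
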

\begin{proof}
By Lemmas~\ref{LIndCusp}, \ref{L030216_2} and \ref{LDeIso}, there is a surjective $R_{d\de}$-module homomorphism as in the statement  of the lemma. That the homomorphism is injective follows from Lemmas~\ref{L030216} and \ref{L030216_2} again.
\end{proof}

\begin{Lemma}\label{LParTr}
If $\bj \in J^d$, then $e_{\bj} \hat C^0_{\om\de} e_{\bj}=\Z  e_{\bj}$.
\end{Lemma}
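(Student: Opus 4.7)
The strategy is to reduce to the case $d=1$ via the tensor-product factorization of $\hat C_{\om\de}$ and then use the known structure of the semicuspidal standard module $\De_{\de,j}$.

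First I would invoke Lemma~\ref{L030216} to identify
\[
 \hat C_{\om\de}\;\iso\;\hat C_\de\otimes\dots\otimes \hat C_\de
\]
(with $d$ tensor factors), under which $e_\bj = e_{j_1}\otimes\cdots\otimes e_{j_d}$ and hence
\[
 e_\bj\,\hat C_{\om\de}\,e_\bj \;\iso\; \bigotimes_{k=1}^d\; e_{j_k}\,\hat C_\de\, e_{j_k}.
\]
By Lemma~\ref{LPos}(i), the algebra $1_{\om\de}\hat C_{d\de}1_{\om\de}$, and in particular its subalgebra $\hat C_{\om\de}$, is non-negatively graded; so each tensor factor $e_{j_k}\hat C_\de e_{j_k}$ is non-negatively graded. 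Consequently the degree-zero part of the tensor product decomposes as a tensor product of the degree-zero parts:
\[
 \bigl(e_\bj\,\hat C_{\om\de}\,e_\bj\bigr)^0 \;=\; \bigotimes_{k=1}^d \bigl(e_{j_k}\hat C_\de e_{j_k}\bigr)^0.
\]
This reduces the lemma to the single claim $\bigl(e_j\hat C_\de e_j\bigr)^0=\Z e_j$ for every $j\in J$.

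For the $d=1$ claim I would apply Lemma~\ref{L030216_2}: the module $\De_{\de,j}=\hat C_\de e_j$ is non-negatively graded and satisfies $\De_{\de,j}/\De_{\de,j}^{>0}\iso L_{\de,j}$ with $e_j+\De_{\de,j}^{>0}\mapsto v_{\bl^j}$. Since $L_{\de,j}$ is concentrated in degree $0$, this isomorphism identifies $\De_{\de,j}^0$ with $L_{\de,j}$ as graded $\Z$-modules, sending the degree-$0$ element $e_j$ to $v_{\bl^j}$. Now $e_j$ acts on $L_{\de,j}$ as the projector $1_{\bl^j}$ onto the summand $\Z v_{\bl^j}$ by~\eqref{EMinuscule}, so
\[
 \bigl(e_j\hat C_\de e_j\bigr)^0 \;=\; e_j\cdot \De_{\de,j}^0 \;\cong\; e_j\cdot L_{\de,j} \;=\; \Z v_{\bl^j}.
\]
Pulling this rank-one $\Z$-module back through the identification gives $\bigl(e_j\hat C_\de e_j\bigr)^0=\Z e_j$, as required.

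Putting everything together, $\bigl(e_\bj\hat C_{\om\de}e_\bj\bigr)^0=\bigotimes_{k=1}^d \Z e_{j_k}=\Z e_\bj$, which is the statement of the lemma. The only mildly delicate point is the first step: one must be sure that the tensor-product identification of Lemma~\ref{L030216} respects gradings and carries $e_\bj$ to the claimed pure tensor, and that the non-negativity of the grading (which makes $(A\otimes B)^0=A^0\otimes B^0$) is legitimate---both of which are provided directly by Lemmas~\ref{L030216} and~\ref{LPos}(i).
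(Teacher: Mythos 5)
Your proof is correct, but it takes a different route from the paper's. The paper also reduces immediately to $d=1$, but then argues directly from the KLR Basis Theorem~\ref{TBasis}: since every word in $I^\de$ has distinct entries, no nonidentity element of $\Si_e$ fixes $\bl^j$, so $e_j R_\de e_j = \Z[y_1,\dots,y_e]e_j$; as the $y$'s have degree $2$, the degree-zero part of this (and of its quotient $e_j\hat C_\de e_j$) is just $\Z e_j$. You instead appeal to the structure theorem for the standard module $\De_{\de,j}$ (Lemma~\ref{L030216_2}, which comes from \cite{KM2}), using non-negativity of the grading to identify $\De_{\de,j}^0\cong L_{\de,j}$ as $\Z$-modules compatibly with multiplication by the degree-zero idempotent $e_j$, and then reading off $e_j L_{\de,j}=\Z v_{\bl^j}$ from the explicit description~\eqref{EMinuscule}. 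Both work; the paper's argument is more elementary and self-contained (pure KLR combinatorics), while yours trades the basis-theorem calculation for a clean module-theoretic fact already available in the paper. One small thing worth making explicit in your write-up: the identification $\De_{\de,j}^0\cong L_{\de,j}$ is a priori only a $\Z$-module isomorphism, and you need it to intertwine the $e_j$-action; this holds because $e_j$ has degree $0$ and both the projection $\De_{\de,j}\twoheadrightarrow \De_{\de,j}/\De_{\de,j}^{>0}$ and the isomorphism with $L_{\de,j}$ are $R_\de$-equivariant, so left multiplication by $e_j$ descends correctly to degree zero.
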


\begin{proof}
Clearly, it suffices to prove the lemma in the case $d=1$. 
For any word in $\bi\in I^{\de}$, the entries $i_1,\dots,i_e$ are distinct. Hence, by Theorem~\ref{TBasis}, we have 
$e_{\bj} R_{\de} e_{\bj}= \Z[y_1,\dots,y_e] e_{\bj}$, and the lemma follows. 
\end{proof}

\subsection{Some explicit elements of $\ga^\om  \hat C_{d\de}\ga^\om$}\label{SSExplicit}
Let $\De_\de:=\bigoplus_{j\in J}\De_{\de,j}$. 
In view of Lemma~\ref{LDeIso}, we have an isomorphism
\begin{equation}\label{EDeDeIso}
 \Delta_\de^{\circ d} \cong \hat C_{d\de} \ga^\om
\end{equation}
of left $\hat C_{d\de}$-modules. More precisely, we
can explicitly identity 
$\De_\de^{\circ d}$ with $\hat C_{d\de}\ga^\om$ so that the element $1_{\om\de}\otimes e_{j_1}\otimes\dots\otimes e_{j_d}$ of the natural direct summand $\De_{\de,j_1}\circ\dots\circ \De_{\de,j_d}$ of $\De_\de^{\circ d}$ corresponds to $e_\bj=e_\bj\ga^\om\in \hat C_{d\de}\ga^\om$ for all $\bj=(j_1,\dots,j_d)\in J^d$. 
So $\ga^\om  \hat C_{d\de}\ga^\om$ is naturally identified with 
$\End_{\hat C_{d\de}}(\De_\de^{\circ d})^\op$. The algebra $\End_{\hat C_{d\de}}(\De_\de^{\circ d})$ is 
described in \cite{KM2} as an {\em affine zigzag algebra} of rank $d$, 
so we can reinterpret this as a description of $\ga^\om \hat C_{d\de}\ga^\om$. 
We now define some explicit elements of $\ga^\om \hat C_{d\de}\ga^\om$ which correspond 
(up to an antiautomorphism and signs) to the elements of $\End_{\hat C_{d\de}}(\De_\de^{\circ d})$ with the same names   introduced in \cite[\S5.1]{KM2}.

For neighbors $k,j\in J$, we define $w_{k,j}\in\Si_e$ to be the unique permutation such that $w_{k,j}\bl^j=\bl^k$. Set $a^{k,j}:=\psi_{w_{k,j}}e_j\in \hat C_\de$. 
Further, define 
\begin{equation}\label{EEJ}
e_J:=\sum_{j\in J} e_j\in \hat C_\de, 
\end{equation}
and 
set $c:=(y_1-y_e) e_J \in \hat C_\de$, 
 $z:=y_1 e_J\in \hat C_\de$. Recall that, in view of Lemma~\ref{L030216}, we have identified the parabolic subalgebra $\hat C_{\om\de}\subseteq \hat C_{d\de}$ with $\hat C_\de\otimes\dots\otimes \hat C_\de$. For $t=1,\dots,d$ and $x\in\hat C_\de$, we define 
$$
x_t:=e_J^{\otimes t-1}\otimes x\otimes e_J^{\otimes d-t}\in\hat C_{\om\de}\subseteq \hat C_{d\de}.
$$
In particular we have the elements  $c_t,z_t, a^{j,k}_t \in  \ga^\om \hat C_{\om\de}\ga^\om$.

Recall the algebra $W_d$ and the signs $\zeta_1,\dots,\zeta_{e-1}$ defined by~\eqref{EWreath} and~\eqref{EZeta}.
Let $1\leq t<d$.
Consider the product of transpositions
\begin{equation}\label{EBl}
w_t:=\prod_{a=(t-1)e+1}^{te}(a,a+e)\in\Si_{de},
\end{equation}
and let $w:=w_1 \in \Si_{2e}$.
We set 
\begin{equation}\label{Erhat}
\hat r_t:=\sum_{j,k\in J}e_J^{\otimes t-1}\otimes (-\psi_{w}-\de_{k,j}\zeta_k)(e_k\otimes e_j)\otimes  e_J^{\otimes d-t-1}\in \ga^\om \hat C_{d\de}\ga^\om.
\end{equation}
Note that the sign here is opposite to the one in \cite{KM2}, which is technically more convenient for us, but does not affect the result below.

\begin{Theorem}\label{THomZZ} 
We have:
\begin{enumerate}
\item
There is an injective algebra homomorphism 
$\Theta\colon W_d\to \ga^{\om} \hat C_{d\de} \ga^{\om}$ with 
\[ 
\ze_\bj \mapsto e_{\bj}, \quad s_u \mapsto \hat r_u, \quad 
\zc^{(j)}[t] \mapsto \pm (ce_j)_{t}, \quad \za^{k,j}[t] 
\mapsto \pm a^{k,j}_t
\]
for all $\bj\in J^d$, $1\leq u<d$, $1\leq t\leq d$, and all admissible $k,j\in J$, where the signs depend on $k$ and $j$.
\item
For each $a\in \{0,1\}$, the map $\Theta$ restricts to a $\Z$-module isomorphism of graded components $W_d^a \iso \ga^\om \hat C_{d\de}^a\ga^\om$.
\item 
The algebra $\ga^\om \hat C_{d\de} \ga^{\om}$ is generated by 
$\Theta(W_d)$ together with $y_1 \ga^\om$. 
\end{enumerate}
\end{Theorem}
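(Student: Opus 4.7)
The strategy is to leverage the identification, noted immediately before the theorem statement, of $\ga^\om \hat C_{d\de}\ga^\om$ with $\End_{\hat C_{d\de}}(\De_\de^{\circ d})^\op$ via the isomorphism~\eqref{EDeDeIso}. Under this identification, the endomorphism algebra is described in \cite{KM2} as an affine zigzag algebra of rank $d$, and the elements $e_\bj$, $\hat r_u$, $(ce_j)_t$, $a^{k,j}_t$ correspond (up to our sign conventions) to the standard generators denoted by the same names in \cite{KM2}. Thus almost all verifications reduce to transporting results from \cite{KM2}; the novel content is the packaging into the map $\Theta$ from the wreath product $W_d$ and the precise identification of a generating set of the affine zigzag algebra over $\Z$.

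For part (i), I would first observe that the $e_\bj$ are a complete set of pairwise orthogonal idempotents matching the $\ze_\bj\in W_d$. Next, since the elements $(ce_j)_t$ and $a^{k,j}_t$ for different values of $t$ are supported in different tensor factors of $\hat C_{\om\de}\cong \hat C_\de^{\otimes d}$ (under the identification of Lemma~\ref{L030216}), they commute for distinct $t$'s, which matches the tensor product structure of $\Zig^{\otimes d}$. Inside a single slot $t$, the zigzag relations (length-$3$ paths vanish, non-cyclic length-$2$ paths vanish, equal length-$2$ cycles at a common vertex) are established in \cite{KM2} for these elements, up to the fixed signs which absorb the ambiguity in choosing $\zc^{(j)}$; these signs can be fixed once and for all vertex-by-vertex. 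The braid and quadratic relations for the $\hat r_u$, as well as the relation~\eqref{EWreathProductDetermined} which describes the interaction between the $\hat r_u$ and the $a^{k,j}_t, (ce_j)_t$, will then follow from the corresponding computations in \cite{KM2}: the deliberate sign flip in~\eqref{Erhat} relative to \cite{KM2} is exactly what turns the (signed) place-permutation action encoded by $\zeta_j$ in~\eqref{EZeta} into the symmetric group relations $\hat r_u^2 = e_{\om\de}$ and the braid relations on the nose. Injectivity of $\Theta$ is then immediate from the corresponding structural theorem in \cite{KM2}, which exhibits a $\Z$-basis of the affine zigzag algebra containing the images of a $\Z$-basis of $W_d$ as a graded subset.

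For part (ii), I would use a rank count. The image $\Theta(W_d)$ is supported in degrees $0,1,2$, and the generator $y_1\ga^\om$ has degree $2$, so the only way to produce elements of degree $\le 1$ is via $\Theta(W_d)$. To prove $\Theta$ is a graded $\Z$-module isomorphism in degrees $0$ and $1$, I would compare with the induced module $L_{\de,j_1}\circ\dots\circ L_{\de,j_d}$ via Lemma~\ref{L030216_3}: since $\hat C_{\om\de}^{>0}$ is concentrated in degrees $\ge 2$ (because $y_r$, $c$, and $z$ all have degree $2$ and $a^{k,j}$ has degree $1$ but lives outside $\hat C_\de \cap \ga^\om\hat C_\de\ga^\om$ in the right way — needs a short check), the submodule $N_\bj$ is concentrated in degrees $\ge 2$, so the degree $0$ and $1$ parts of $e_\bi \hat C_{d\de} e_\bj$ are isomorphic to those of $1_{\bl^{i_1}}\otimes\dots\otimes 1_{\bl^{i_d}}$ acting on $L_{\de,j_1}\circ\dots\circ L_{\de,j_d}$; summing over $\bi$ and $\bj$ and comparing to $\DIM W_d$ via~\eqref{EDimZZ} gives equality of graded ranks in degrees $0$ and $1$. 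Part (iii) follows from the corresponding generation statement for the affine zigzag algebra in \cite{KM2}: that algebra is generated by its zigzag part, corresponding to $\Theta(W_d)$, together with a polynomial generator in a single tensor slot, corresponding to $z_1 = y_1 e_J$, and conjugation by $\Theta(s_u)$'s transports $z_1$ into every other slot. Observing that $y_1\ga^\om = z_1 \otimes e_J^{\otimes d-1}$ up to the idempotent $\ga^\om$ completes the argument.

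The main obstacle is the careful bookkeeping of signs, both in matching the elements $(ce_j)_t$ and $a^{k,j}_t$ (which the statement leaves up to $\pm$) to specific generators of $\Zig$ via~\eqref{EBZig} so as to get a well-defined algebra map, and in reconciling the signed place-permutation convention~\eqref{EWreathProductDetermined},~\eqref{EZeta} with the unsigned convention used in \cite{KM2}; the flipped sign in~\eqref{Erhat} relative to \cite{KM2} is the first place where this shift enters, and the remaining sign choices must be made consistently so that the map $\Theta$ preserves all wreath product relations over $\Z$ (not just up to $\pm 1$). A secondary technical point is that all the structural results in \cite{KM2} must be verified to hold over $\Z$ rather than over a field, which should be a matter of checking that the bases constructed there are in fact $\Z$-bases.
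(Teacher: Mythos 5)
Your overall strategy — transporting everything through the identification of $\ga^\om\hat C_{d\de}\ga^\om$ with the affine zigzag algebra of \cite{KM2} — matches the paper's: part (i) there cites \cite[Theorem 5.9]{KM2} together with $a^{j,k}a^{k,j}=\pm ce_j$ from the proof of \cite[Theorem 4.17]{KM2}, and parts (ii)--(iii) follow from \cite[Theorem 5.9]{KM2} plus the facts that the affine zigzag algebra is non-negatively graded and is generated by the finite zigzag algebra (isomorphic to $W_d$) and a degree-$2$ element.

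For part (ii), however, your rank-count detour contains a genuine gap. The claim that $\hat C_{\om\de}^{>0}$ is concentrated in degrees $\ge 2$ is false: for neighbors $k,j\in J$, the element $a^{k,j}=\psi_{w_{k,j}}e_j$ lies in $e_k\hat C_\de^{1}e_j$ (consistent with $\DIM(1_{\bl^k}C_{\rho,1}1_{\bl^j})=q$ in Lemma~\ref{LDimD=1}), and so do its embeddings $a^{k,j}_t$ into $\hat C_{\om\de}$. Your parenthetical that $a^{k,j}$ ``lives outside $\hat C_\de\cap\ga^\om\hat C_\de\ga^\om$ in the right way'' does not rescue this: $a^{k,j}=e_ka^{k,j}e_j$ lies in $\ga^\om\hat C_\de\ga^\om$. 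Consequently $N_\bj$ is not confined to degree $\ge 2$, and the comparison with $L_{\de,j_1}\circ\dots\circ L_{\de,j_d}$ via Lemma~\ref{L030216_3} does not control the degree-$1$ component. A secondary slip: $\Theta(W_d)$ is \emph{not} supported in degrees $0,1,2$ (e.g.\ $\Theta(\zc[1]\cdots\zc[d])$ has degree $2d$); what is true is only that the \emph{generators} $\Theta(\ze_\bj)$, $\Theta(s_u)$, $\Theta(\za^{k,j}[t])$, $\Theta(\zc^{(j)}[t])$ and $y_1\ga^\om$ have degree in $\{0,1,2\}$.

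In fact no rank count is needed. Once (iii) is established and $\ga^\om\hat C_{d\de}\ga^\om$ is known to be non-negatively graded (Lemma~\ref{LPos}(i)), any element of degree $0$ or $1$ is a $\Z$-linear combination of products of homogeneous generators with total degree $\le 1$; since the only generators of degree $\le 1$ come from $W_d$, such elements lie in $\Theta(W_d^0)$ or $\Theta(W_d^1)$. This gives surjectivity of $\Theta$ in degrees $0,1$, and combined with injectivity from (i) it proves (ii) directly — which is the paper's argument.
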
 
\begin{proof}
Part (i) follows from  \cite[Theorem 5.9]{KM2} together with the fact that $a^{j,k}a^{k,j}=\pm ce_j$ for all neighbors $k,j\in J$, as observed in the proof of \cite[Theorem 4.17]{KM2}. Parts (ii) and (iii) follow from \cite[Theorem 5.9]{KM2} and the easy facts that the affine zigzag algebra is non-negatively graded and is generated by the finite zigzag algebra isomorphic to $W_d$ and a homogeneous element $\zz_1$ of degree $2$, see \cite[Section 3]{KM2}.
\end{proof}

Considering scalar extensions to a field $\k$, we also have the following result. Here and in the sequel, we denote
$\ga^{\om}:=\ga^\om \otimes 1 \in \hat C_{d\de,\k}$, 

\begin{Lemma}\label{LProjGen}
Let $\k$ be a field with $\charact \k =0$ or $\charact \k>d$.
The left $\hat C_{d\de,\k}$-module $\hat C_{d\de,\k} \ga^\om$ is a projective generator for the algebra $\hat C_{d\de,\k}$. 
\end{Lemma}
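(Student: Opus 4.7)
The plan splits into two steps of very unequal difficulty. Projectivity of $\hat C_{d\de,\k}\ga^\om$ is automatic since $\ga^\om$ is an idempotent in $\hat C_{d\de,\k}$, so the substance is to show that $\ga^\om L\ne 0$ for every irreducible $\hat C_{d\de,\k}$-module $L$.

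By the definition of $\hat C_{d\de}$, every such $L$ is an irreducible semicuspidal $R_{d\de,\k}$-module, and by Theorem~\ref{TVAl}(i) has the form $L(\ula)=L_1(\la^{(1)})\circ\cdots\circ L_{e-1}(\la^{(e-1)})$ for some $\ula=(\la^{(1)},\dots,\la^{(e-1)})\in\Par^J(d)$. Setting $d_j:=|\la^{(j)}|$ and choosing the ordered tuple
\[
\bj:=(\underbrace{1,\dots,1}_{d_1},\underbrace{2,\dots,2}_{d_2},\dots,\underbrace{e-1,\dots,e-1}_{d_{e-1}})\in J^d,
\]
the orthogonal decomposition $\ga^\om=\sum_{\bj'\in J^d}e_{\bj'}$ (the idempotents $e_{\bj'}=1_{\bl^{j'_1}\cdots\bl^{j'_d}}$ are pairwise orthogonal since their concatenated words, read off in blocks of length $e$, recover $\bj'$) reduces the task to $e_\bj L(\ula)\ne 0$. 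By the shuffle description of words of an induced module (\cite[Lemma 2.20]{KL1}), this will follow once one shows, for every $j\in J$, that the concatenation $(\bl^j)^{d_j}$ is a word of the single-color semicuspidal simple $L_j(\la^{(j)})$.

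The last statement I would extract from~\cite{KM}, where the single-color theory describes $L_j(\la^{(j)})$ via the divided-power Gelfand--Graev idempotent $\ga^{d_j,j}=1_{\bl^j(d_j)}$ and in particular establishes $1_{\bl^j(d_j)}L_j(\la^{(j)})\ne 0$ for every partition $\la^{(j)}$ of $d_j$. To translate this into the desired nonvanishing of $1_{(\bl^j)^{d_j}}L_j(\la^{(j)})$, one has to pass between the divided-power word $\bl^j(d_j)$ and the concatenated word $(\bl^j)^{d_j}$, which lie in the same $\Si_{d_je}$-orbit, using KLR $\psi$-elements; the relevant intertwiner involves the factor $[d_j]^!$, which is invertible in $\k$ exactly when $\charact\k=0$ or $\charact\k>d$.

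The main obstacle is this characteristic-sensitive translation step: in bad characteristic the denominators $[d_j]^!$ can vanish and the link between the Gelfand--Graev idempotent (controlled by~\cite{KM}) and the concatenated-word idempotent appearing in $\ga^\om$ breaks. One could instead short-circuit the argument by appealing directly to the Morita equivalence between $\hat C_{d\de,\k}$ and the affine zigzag algebra from~\cite{KM2}: under the isomorphism~\eqref{EDeDeIso}, $\hat C_{d\de,\k}\ga^\om\cong\Delta_{\de,\k}^{\circ d}$ is precisely the progenerator witnessing that equivalence, and the hypothesis on $\charact\k$ is exactly the hypothesis under which the equivalence is established.
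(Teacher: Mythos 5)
Your final paragraph is essentially the paper's proof: it cites \cite[Lemma 6.22]{KM1} for the fact that $\Delta_\de^{\circ d}\otimes_\Z\k$ is a projective generator under the stated hypothesis on $\charact\k$, and then identifies $\Delta_\de^{\circ d}\otimes_\Z\k\cong\hat C_{d\de,\k}\ga^\om$ via~\eqref{EDeDeIso}. That short-circuit is the whole argument, and you correctly flag it as the route to take.

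The main track of your proposal, however, has a gap in the last two steps. The reduction to the single-colour claim $1_{(\bl^j)^{d_j}}L_j(\la^{(j)})\ne 0$ is fine (the $e_{\bj'}$ are indeed pairwise orthogonal, and the identity shuffle is always a word of the induced module), but the passage from the Gelfand--Graev nonvanishing to this claim is not a $[d_j]!$-renormalisation. Lemma~\ref{LFact} relates $\ga^{d_j,j}=1_{\bl^j(d_j)}$ to the idempotent at the word $\widehat{\bl^j(d_j)}=(l_{j,1}^{\,d_j},\ldots,l_{j,e}^{\,d_j})$, in which the letters are grouped by value; this is a \emph{different} word from the concatenation $(\bl^j)^{d_j}$, and the two differ by the nontrivial permutation $h_{d_j}$ of~\S\ref{SSLaOm}. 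For an arbitrary $\hat C_{d_j\de,\k}$-module $L$ there is no implication $1_{\widehat{\bl^j(d_j)}}L\ne 0\Rightarrow 1_{(\bl^j)^{d_j}}L\ne 0$, since applying $\psi_{h_{d_j}^{-1}}$ can annihilate a vector. Moreover, even the genuine content of Lemma~\ref{LFact} is not characteristic-sensitive in the way you suggest: since the nilHecke algebra $R_{m\al_i,\k}$ is a matrix algebra of size $m!$ over its centre, one has $1_{\bi}L=0\iff 1_{\hat\bi}L=0$ over every field, so $[d_j]!$ only rescales graded dimensions and is not the source of the hypothesis on $\charact\k$. That hypothesis enters at a structurally deeper level, inside the result \cite[Lemma 6.22]{KM1} that you invoke in the short-circuit; so the honest proof is the one you give last, and the main track as written does not close.
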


\begin{proof}
By~\cite[Lemma 6.22]{KM1}, the $\hat C_{d\de,\k}$-module 
$\Delta_\de^{\circ d}\otimes_{\Z} \k \cong (\Delta_{\de} \otimes_{\Z}\k)^{\circ d}$ is a projective generator.
By~\eqref{EDeDeIso}, we have 
$\Delta_{\de}^{\circ d} \otimes_{\Z} \k\cong \hat C_{d\de,\k} \ga^\om$, and the lemma follows. 
\end{proof}

\subsection{Imaginary tensor spaces}\label{SSIm}
Let $j\in J$. Following \cite{KM}, we refer to $T_{d,j}:=L_{\de,j}^{\circ d}$ as the {\em imaginary tensor space of color $j$}. 
In \cite[(4.2.9)]{KM}, an action of the symmetric group $\Si_d$ on $T_{d,j}$ with $R_{d\de}$-endomorphisms is defined as follows: 
$$
(1_{\om\de}\otimes v_{\bl^j}^{\otimes d})\cdot s_t=
(\zeta_j\psi_{w_t}+1_{d\de})1_{\om\de}\otimes v_{\bl^j}^{\otimes d} \qquad(1\leq t<d).
$$ 
Comparing with (\ref{Erhat}), we see that 
\begin{equation}\label{E060216}
(1_{\om\de}\otimes v_{\bl^j}^{\otimes d})\cdot s_t=
-\zeta_j \hat r_t \otimes v_{\bl^j}^{\otimes d}.
\end{equation}

As in \cite[\S5.2]{KM}, we define
$$
Z_{d,j}:=\{v\in T_{d,j}\mid v \cdot g =(-1)^{\ell(g)}v\ \text{for all $g\in\Si_d$}\}.
$$
Recall the Gelfand-Graev idempotent $\ga^{d,j}$ from (\ref{ESpecialCaseGa}).

\begin{Lemma} \label{L6.4.1} {\rm \cite[Lemma 6.4.1(ii)]{KM}} 
We have $Z_{d,j}=R_{d\de}\ga^{d,j}T_{d,j}$.
\end{Lemma}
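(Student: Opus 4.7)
The plan is to establish the two inclusions separately, both relying on the fact that the $\Sigma_d$-action on $T_{d,j}$ is by $R_{d\delta}$-endomorphisms, so $Z_{d,j}$ is automatically a left $R_{d\delta}$-submodule of $T_{d,j}$.

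For the forward inclusion $R_{d\delta}\gamma^{d,j}T_{d,j}\subseteq Z_{d,j}$, it is enough to prove that $\gamma^{d,j}v\in Z_{d,j}$ for every $v\in T_{d,j}$. Since $T_{d,j}=L_{\delta,j}^{\circ d}$ is generated as a left $R_{d\delta}$-module by the image of $L_{\delta,j}^{\boxtimes d}$, and each $L_{\delta,j}$ is in turn generated by $v_{\bl^j}$, the $R_{d\delta}$-linearity of the $\Sigma_d$-action reduces the task to checking that $\gamma^{d,j}(v_0\cdot s_t)=-\gamma^{d,j}v_0$ for $v_0:=1_{\omega\delta}\otimes v_{\bl^j}^{\otimes d}$ and each $1\le t<d$. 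By~\eqref{E060216}, this is equivalent to the identity $\gamma^{d,j}\hat r_t\cdot v_0=\zeta_j\gamma^{d,j}\cdot v_0$. I would then expand $\gamma^{d,j}=1_{\bl^j(d)}$ as a product of commuting nil-Hecke divided-power idempotents $1_{l^{(d)}}=\psi_{w_0}\prod_{r}y_r^{r-1}$ (one for each residue $l$ appearing in $\bl^j$), expand $\hat r_t$ via~\eqref{Erhat}, and reduce the identity to a computation inside each nil-Hecke factor. The principal tool is the absorption rule~\eqref{EKLId} $1_{l^{(d)}}\psi_{w_0}=\psi_{w_0}$, together with the quadratic and braid KLR relations, which convert $\psi_{w_t}\cdot v_0$ into a sum of terms each of which, after absorption into $\gamma^{d,j}$, contributes $\pm 1$, with the total sign being $\zeta_j$ governed by the combinatorics of the minuscule word $\bl^j\in I^{\delta,j}$ from~\eqref{EMinuscule}.

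For the reverse inclusion $Z_{d,j}\subseteq R_{d\delta}\gamma^{d,j}T_{d,j}$, my strategy is to exhibit the antisymmetrizing operator $\operatorname{Alt}\colon T_{d,j}\to Z_{d,j}$, $v\mapsto \sum_{g\in\Sigma_d}(-1)^{\ell(g)}v\cdot g$, and to relate it to left multiplication by $\gamma^{d,j}$. Working over $\Q$, I expect the two operators to coincide (up to a nonzero rational scalar) when evaluated at $v_0$: both produce the fully antisymmetrized vector associated with the longest-element contributions $\psi_{w_0}$ in each of the $e$ nil-Hecke factors comprising $\gamma^{d,j}$. Once this is settled rationally, the image of $\operatorname{Alt}$ spans $Z_{d,j}\otimes\Q$, and the integral statement is recovered by noting that $\gamma^{d,j}$ is an idempotent in $R_{d\delta}$, so $\gamma^{d,j}T_{d,j}$ is a $\Z$-pure summand of $T_{d,j}$, and by invoking Theorem~\ref{TBasis} and the freeness of the relevant $\Z$-modules to promote the $\Q$-equality to a $\Z$-equality of submodules.

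The main obstacle is the sign bookkeeping in the forward direction: tracking precisely how many factors of $-1$ arise from each application of a quadratic KLR relation as $\psi_{w_t}$ is commuted across $\gamma^{d,j}$, and matching the aggregate with $\zeta_j=(-1)^{j-1}$. The choice of $\bl^j$ via~\eqref{EMinuscule} makes this a combinatorial sign count depending on how the residues of $\bl^j$ split between the ascending subword $(1,\ldots,j-1)$ and the descending subword $(e-1,\ldots,j+1)$; tedious but mechanical. The reverse direction is then essentially formal, with the identification of $\operatorname{Alt}(v_0)$ as a scalar multiple of $\gamma^{d,j}v_0$ being the only non-trivial ingredient.
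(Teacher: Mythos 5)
The paper does not prove this lemma itself --- it cites [KM, Lemma 6.4.1(ii)] --- so your argument can only be assessed on its own terms. There is a fundamental problem: the element $\ga^{d,j}v_0$ around which both halves of your argument are built is \emph{zero} once $d\ge 2$. The word of $v_0=1_{\om\de}\otimes v_{\bl^j}^{\otimes d}$ is the concatenation $(\bl^j)^d=(l_{j,1},\ldots,l_{j,e},l_{j,1},\ldots,l_{j,e},\ldots)$, whereas $\ga^{d,j}=1_{\bl^j(d)}$ satisfies $\ga^{d,j}=\ga^{d,j}1_{\widehat{\bl^j(d)}}$ with $\widehat{\bl^j(d)}=(l_{j,1}^{d},\ldots,l_{j,e}^{d})$. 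Since these two words differ for $d\ge 2$, we get $\ga^{d,j}v_0=\ga^{d,j}1_{\widehat{\bl^j(d)}}1_{(\bl^j)^d}v_0=0$. Your forward-direction identity $\ga^{d,j}(v_0\cdot s_t)=-\ga^{d,j}v_0$ therefore reads $0=0$ (note $v_0\cdot s_t=(\zeta_j\psi_{w_t}+1)v_0$ also has word $(\bl^j)^d$) and verifies nothing, and your reverse-direction claim that $\operatorname{Alt}(v_0)$ agrees with $\ga^{d,j}v_0$ up to a nonzero rational scalar is false: for $e=d=2$ one computes $v_0\cdot\operatorname{Alt}=-\psi_{w_1}v_0\ne 0$, whereas $\ga^{2,1}v_0=0$.

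Independently of the above, the reduction to $v_0$ in the forward direction is not logically valid: knowing $\ga^{d,j}v_0\in Z_{d,j}$ does not imply $\ga^{d,j}(xv_0)\in Z_{d,j}$ for $x\in R_{d\de}$, because left multiplication by $\ga^{d,j}$ is not an $R_{d\de}$-endomorphism of $T_{d,j}$; only the right $\Si_d$-action commutes with the $R_{d\de}$-structure. What the forward inclusion actually requires is that $\ga^{d,j}$ annihilate the entire $R_{d\de}$-submodule $T_{d,j}\cdot(1+s_t)=R_{d\de}\big((2+\zeta_j\psi_{w_t})v_0\big)$ for each $t$, i.e.\ one must control $\ga^{d,j}R_{d\de}1_{(\bl^j)^d}$ rather than the single vector $\ga^{d,j}v_0$. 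This is where the real work in [KM] lies, and the double-coset and divided-power analysis in Lemmas~\ref{LExcellent} and~\ref{LStyagBlPerm} of the present paper gives some indication of what is involved; it cannot be shortcut by the $v_0$-computation you propose.
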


More generally, fix $(\la,\bc)\in\La^\col(n,d)$ for some $n\in\Z_{>0}$. Define the semicuspidal $R_{d\de}$-module
$$
T_{\la,\bc}:=T_{\la_1,c_1}\circ\dots\circ T_{\la_n,c_n}.
$$
By the $n=1$ case considered above, we have the right action of $\Si_{\la_1}\times\dots\times \Si_{\la_n}=\Si_\la$ on $T_{\la_1,c_1}\boxtimes\dots\boxtimes T_{\la_n,c_n}$ with $R_{\la\de}$-endomorphisms. By functoriality of induction, this induces a right action of $\Si_\la$ on $T_{\la,\bc}$ with $R_{d\de}$-endomorphisms. 
Define
\[
Z_{\la,\bc}:= \{ v\in T_{\la,\bc} \mid v \cdot g = (-1)^{\ell(g)}v \text{ for all } g\in \Si_\la\}. 
\]
Recall the idempotent $\ga^{\la,\bc}$ from~\eqref{EGG}, and note that
$\ga^{\la,\bc} = \ga^{\la,\bc} 1_{\la\de}$. 

\begin{Lemma}\label{L6.4.1Mult}
We have $Z_{\la,\bc}= R_{d\de} (\ga^{\la,\bc} \otimes (T_{\la_1,c_1} \boxtimes \dots \boxtimes T_{\la_n,c_n}))$. 
\end{Lemma}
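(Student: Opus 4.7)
The plan is to identify $T_{\la,\bc}$ with the induced module $R_{d\de}1_{\la\de} \otimes_{R_{\la\de}} V$, where $V := T_{\la_1,c_1} \boxtimes \cdots \boxtimes T_{\la_n,c_n}$, and to reduce the computation of $Z_{\la,\bc}$ to its analogue for $V$. By Theorem~\ref{TBasis} (exactly as in the argument proving Lemma~\ref{L030216}), $R_{d\de}1_{\la\de}$ is a free right $R_{\la\de}$-module on $\{\psi_w \mid w\in \D^{e\la}\}$, so as a $\Z$-module $T_{\la,\bc}$ decomposes as $\bigoplus_{w\in \D^{e\la}} \psi_w \otimes V$. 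The right $\Si_\la$-action on $T_{\la,\bc}$ is by $R_{d\de}$-endomorphisms obtained by functoriality from the componentwise action on $V$, so concretely $(\psi_w \otimes v)\cdot g = \psi_w \otimes (vg)$; each summand $\psi_w\otimes V$ is therefore preserved and the action on it is precisely the given action on $V$. Setting $Z := \{v\in V \mid vg=(-1)^{\ell(g)}v \text{ for all } g\in\Si_\la\}$, this yields $Z_{\la,\bc} = R_{d\de}1_{\la\de} \otimes_{R_{\la\de}} Z$.

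Next, I would describe $Z$ explicitly via the single-block case. Because $\Si_\la = \Si_{\la_1}\times\cdots\times \Si_{\la_n}$ acts componentwise on $V = T_{\la_1,c_1}\otimes\cdots\otimes T_{\la_n,c_n}$ and each $T_{\la_i,c_i}$ is $\Z$-free, an element is $\Si_\la$-antisymmetric precisely when it is antisymmetric in each factor $\Si_{\la_i}$, so $Z = Z_{\la_1,c_1} \otimes\cdots\otimes Z_{\la_n,c_n}$. Applying Lemma~\ref{L6.4.1} in each component gives $Z_{\la_i,c_i} = R_{\la_i\de}\,\ga^{\la_i,c_i}\,T_{\la_i,c_i}$. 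Under the identification $R_{\la\de}=R_{\la_1\de}\otimes\cdots\otimes R_{\la_n\de}$, the idempotent $\ga^{\la,\bc}=1_{\bl(\la,\bc)}$ factors as $\ga^{\la_1,c_1}\otimes\cdots\otimes\ga^{\la_n,c_n}$ directly from the definition $\bl(\la,\bc) = \bl^{c_1}(\la_1)\cdots\bl^{c_n}(\la_n)$. Tensoring the single-block equalities together then yields $Z = R_{\la\de}\,\ga^{\la,\bc}\,V$.

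Combining the two steps,
\[
Z_{\la,\bc} = R_{d\de}1_{\la\de} \otimes_{R_{\la\de}} R_{\la\de}\,\ga^{\la,\bc}\,V.
\]
Since $\ga^{\la,\bc} = 1_{\la\de}\,\ga^{\la,\bc} \in R_{d\de}1_{\la\de}$, one can slide $\ga^{\la,\bc}$ across the tensor via $x \otimes (\ga^{\la,\bc} v) = (x\ga^{\la,\bc}) \otimes v$, rewriting the right-hand side as $R_{d\de}(\ga^{\la,\bc}\otimes V)$, which is the desired formula. The delicate point of the argument is the first paragraph: one must verify that the $\Si_\la$-antisymmetric part of the induced module coincides with the induction of the antisymmetric part of $V$, and this relies essentially on the freeness of $R_{d\de}1_{\la\de}$ as a right $R_{\la\de}$-module together with the $R_{d\de}$-equivariance of the $\Si_\la$-action. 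Every remaining step is formal bookkeeping once $\ga^{\la,\bc}$ is recognized as the external tensor product of its single-block analogues.
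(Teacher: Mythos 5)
Your proof is correct and takes essentially the same route as the paper: decompose $T_{\la,\bc}$ over $\D^{e\la}$ via the free-module structure from Lemma~\ref{L030216}, observe that the $\Si_\la$-action is $R_{d\de}$-equivariant so the antisymmetric part passes through the decomposition, and reduce to the $n=1$ case of Lemma~\ref{L6.4.1} by factoring the antisymmetric subspace of the outer tensor product componentwise. The only cosmetic difference is that you make explicit the role of $\Z$-freeness in the componentwise factorization step and you frame the final identity through the tensor-sliding $\ga^{\la,\bc}=1_{\la\de}\ga^{\la,\bc}$, whereas the paper records the same computation more tersely.
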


\begin{proof}
By Lemma~\ref{L6.4.1}, we have 
\begin{align*}
\{ v\in T_{\la_1,c_1} \boxtimes \cdots \boxtimes  T_{\la_n,c_n} \mid v \cdot g = (-1)^{\ell(g)} \text{ for all } g\in \Si_{\la}\} \hspace{2cm} &
\\ 
& \hspace{-7cm} =
R_{\la_1,\de}\ga^{\la_1,c_1} T_{\la_1,c_1} \boxtimes \cdots \boxtimes R_{\la_n\de}\ga^{\la_n,c_n} T_{\la_n,c_n}. 
\end{align*}
Moreover, for each $w\in {\D^{e\la}}$, we have an isomorphism of $\Z$-modules 
\[
T_{\la_1,c_1} \boxtimes \cdots \boxtimes  T_{\la_n,c_n} 
\iso \psi_w \otimes T_{\la_1,c_1} \boxtimes \cdots \boxtimes  T_{\la_n,c_n}, \; v \mapsto \psi_w \otimes v,
\]
which is equivariant with respect to the right action of $\Si_{\la}$. 
Therefore,
\begin{align*}
Z_{\la,\bc} &= \sum_{w\in \D^{e\la}} \psi_w \otimes R_{\la_1,\de}\ga^{\la_1,c_1} T_{\la_1,c_1} \boxtimes \cdots \boxtimes R_{\la_n\de}\ga^{\la_n,c_n} T_{\la_n,c_n} \\
&=
R_{d\de} \ga^{\la,\bc} \otimes (T_{\la_1,c_1} \boxtimes \dots \boxtimes T_{\la_n,c_n}),
\end{align*}
as required.
\end{proof}

Define the idempotent 
\begin{equation}\label{EElac}
e_{\la,\bc}:= e_{c_1^{\la_1}\ldots c_{\vphantom{1}n}^{\la_{\vphantom{1} n} }} \in \hat C_{d\de}
\end{equation}
and the $\hat C_{d\de}$-module
\begin{equation}\label{EMlac}
\hat T_{\la,\bc}:=\hat C_{d\de}e_{\la,\bc}.
\end{equation}
Recalling the notation~\eqref{ENk}, define the left 
$\hat C_{d\de}$-module
\begin{equation}\label{ENLaC}
 N_{\la,\bc} := N_{c_1^{\la_1}\ldots c_{\vphantom{1}n}^{\la_{\vphantom{1} n} }}
  = \hat C_{d\de} \hat C_{\om\de}^{>0} e_{\la,\bc}\subseteq \hat T_{\la,\bc}.
\end{equation}
By Lemma~\ref{L030216_3}, we have an isomorphism of left $R_{d\de}$-modules
\begin{equation}\label{EHatMM}
 T_{\la,\bc} \iso \hat T_{\la,\bc}/N_{\la,\bc}, \; 
1_{\om\de} \otimes 
v_{\bl^{c_1}}^{\otimes \la_1} \otimes \cdots \otimes v_{\bl^{c_n}}^{\otimes \la_n} \mapsto e_{\la,\bc} + N_{\la,\bc}. 
\end{equation}

Let $\Theta\colon W_d \to \ga^\om \hat C_{d\de} \ga^{\om}$ be the algebra  homomorphism of Theorem~\ref{THomZZ}. 
Recalling the element $\ze_{\la,\bc} \in W_d$ defined by~(\ref{EFancyE}), 
note that by Theorem~\ref{THomZZ}(i) we have 
\begin{equation}\label{EThLaC}
\Theta(\ze_{\la,\bc})=e_{\la,\bc}.
\end{equation}

Recall the function $\eps_{\la,\bc}$ from~\eqref{Eeps}. 
 Define the left $\hat C_{d\de} $-submodule
\begin{equation}\label{EZt}
 \Zt_{\la,\bc}:= 
 \{ v\in \hat T_{\la,\bc} \mid v \Theta(g) -
 \eps_{\la,\bc} (g)v \in N_{\la,\bc} \text{ for all } g\in \Si_{\la} \} \subseteq \hat T_{\la,\bc}. 
\end{equation}

\begin{Lemma}\label{LgTh}
For every $g\in \Si_{\la}$, we have $e_{\la,\bc} \Theta(g) = \Theta(g)e_{\la,\bc}$.
\end{Lemma}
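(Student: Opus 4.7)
The plan is to reduce this to a commutation statement inside the wreath product $W_d$ and then transfer it via the algebra homomorphism $\Theta$ of Theorem~\ref{THomZZ}. By the identity~\eqref{EThLaC}, we have $e_{\la,\bc} = \Theta(\ze_{\la,\bc})$, so since $\Theta$ is a homomorphism of $\Z$-algebras, it suffices to prove
\[
 \ze_{\la,\bc}\, g = g\, \ze_{\la,\bc} \quad \text{in } W_d
\]
for every $g\in \Si_\la$, and then apply $\Theta$ to both sides.

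For this commutation, recall from~\eqref{EFancyE} that
\[
 \ze_{\la,\bc} = \ze_{c_1}^{\otimes \la_1}\otimes \cdots \otimes \ze_{c_n}^{\otimes \la_n} \in \Zig^{\otimes d},
\]
and the defining relation~\eqref{EWreathProductDetermined} of the wreath product gives
$g^{-1}(x_1\otimes\cdots\otimes x_d)g = x_{g1}\otimes\cdots\otimes x_{gd}$. An element $g \in \Si_{\la} = \Si_{\la_1}\times\cdots\times\Si_{\la_n}$ acts by permuting each block $\{\la_1+\cdots+\la_{r-1}+1,\ldots,\la_1+\cdots+\la_r\}$ within itself. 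On each such block, all tensor factors of $\ze_{\la,\bc}$ are equal (namely to $\ze_{c_r}$), so the permutation leaves the tensor unchanged: $g^{-1}\ze_{\la,\bc} g = \ze_{\la,\bc}$, whence $\ze_{\la,\bc} g = g\ze_{\la,\bc}$.

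Combining the two observations yields
\[
 e_{\la,\bc}\Theta(g) = \Theta(\ze_{\la,\bc})\Theta(g) = \Theta(\ze_{\la,\bc}g) = \Theta(g\ze_{\la,\bc}) = \Theta(g)\Theta(\ze_{\la,\bc}) = \Theta(g)e_{\la,\bc},
\]
as required. There is no real obstacle here: the only thing to check is the elementary commutation in the wreath product, and the rest is purely formal. The statement is essentially a bookkeeping lemma whose role is to ensure that the definition~\eqref{EZt} of $\Zt_{\la,\bc}$ and the subsequent comparison of $\Zt_{\la,\bc}/N_{\la,\bc}$ with $Z_{\la,\bc}$ behave compatibly with the right $W_d$-module structure transported along $\Theta$.
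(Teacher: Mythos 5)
Your proof is correct and follows essentially the same route as the paper: observe $\ze_{\la,\bc}\,g = g\,\ze_{\la,\bc}$ in $W_d$ and push this through $\Theta$ via~\eqref{EThLaC}. The only difference is that you spell out the elementary wreath-product commutation, which the paper leaves as obvious.
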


\begin{proof}
Since we have 
 $\ze_{\la,\bc} g = g \ze_{\la,\bc}$ in $W_d$,  the lemma follows from~\eqref{EThLaC}.
\end{proof}

\begin{Lemma}\label{LZt}
We have 
$\Zt_{\la,\bc} = 
\hat C_{d\de} \ga^{\la,\bc} \hat C_{\la\de}e_{\la,\bc}+ 
N_{\la,\bc}$. 
\end{Lemma}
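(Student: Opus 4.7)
The plan is to show that the canonical surjection $\hat T_{\la,\bc}\onto T_{\la,\bc}$ identifies $\Zt_{\la,\bc}/N_{\la,\bc}$ with the alternating subspace $Z_{\la,\bc}$, and then to invoke Lemma~\ref{L6.4.1Mult} to identify the preimage of $Z_{\la,\bc}$ with the desired sum.

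The heart of the argument, and the main technical obstacle, will be establishing the operator identity
\[
\overline{v\Theta(g)} \;=\; (-1)^{\ell(g)}\eps_{\la,\bc}(g)\,\bar v\cdot g
\qquad (v\in \hat T_{\la,\bc},\ g\in\Si_\la),
\]
where $\cdot g$ denotes the right $\Si_\la$-action on $T_{\la,\bc}$ defined in~\S\ref{SSIm}. I would first check this for $g=s_t$ an elementary transposition inside the $r$th block of $\Si_\la$: using Lemma~\ref{LgTh} together with the explicit expression~\eqref{Erhat} for $\hat r_t=\Theta(s_t)$, only the $k=j=c_r$ summand survives after multiplication by $e_{\la,\bc}$, and a direct comparison with the defining formula~\eqref{E060216} of the $\Si_\la$-action yields $\Theta(s_t)\cdot \overline{e_{\la,\bc}}=-\zeta_{c_r}\,\overline{e_{\la,\bc}}\cdot s_t$. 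Since $T_{\la,\bc}$ is cyclic over $\hat C_{d\de}$ with generator $\overline{e_{\la,\bc}}$ and since the right $\Si_\la$-action commutes with the left $\hat C_{d\de}$-action, this extends to an equality of $\hat C_{d\de}$-linear operators on all of $T_{\la,\bc}$; in particular right multiplication by $\Theta(s_t)$ preserves $N_{\la,\bc}$, so the quotient is well defined. (If one prefers to confirm this preservation directly, it follows from Lemma~\ref{LPos}(ii) together with $\Theta(s_t)\in 1_{\om\de}\hat C_{d\de}1_{\om\de}$.) Composing over a reduced decomposition of $g$ and collecting the signs $-\zeta_{c_r}$ then gives the displayed identity for arbitrary $g\in\Si_\la$.

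Given this identity, $v\in\Zt_{\la,\bc}$ is equivalent to $\bar v\cdot g = (-1)^{\ell(g)}\bar v$ for every $g\in\Si_\la$, i.e.\ $\bar v\in Z_{\la,\bc}$, so $\Zt_{\la,\bc}$ is precisely the preimage of $Z_{\la,\bc}$ under $\hat T_{\la,\bc}\onto T_{\la,\bc}$. To identify this preimage with the right-hand side of the lemma, I would apply Lemma~\ref{L030216_3} factor-by-factor via the identification $\hat C_{\la\de}\cong \hat C_{\la_1\de}\otimes\cdots\otimes \hat C_{\la_n\de}$ from Lemma~\ref{L030216} to obtain
\[
\hat C_{\la\de}e_{\la,\bc}/\hat C_{\la\de}\hat C_{\om\de}^{>0}e_{\la,\bc}\;\iso\; T_{\la_1,c_1}\boxtimes\cdots\boxtimes T_{\la_n,c_n}.
\]
Combined with Lemma~\ref{L6.4.1Mult}, this identifies the image of $\hat C_{d\de}\ga^{\la,\bc}\hat C_{\la\de}e_{\la,\bc}$ in $T_{\la,\bc}$ with $\hat C_{d\de}\bigl(\ga^{\la,\bc}\otimes (T_{\la_1,c_1}\boxtimes\cdots\boxtimes T_{\la_n,c_n})\bigr)=Z_{\la,\bc}$. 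Since $N_{\la,\bc}\subseteq \Zt_{\la,\bc}$ is immediate from the definition, taking the full preimage yields the claimed equality $\Zt_{\la,\bc}=\hat C_{d\de}\ga^{\la,\bc}\hat C_{\la\de}e_{\la,\bc}+N_{\la,\bc}$.
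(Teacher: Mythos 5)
Your proposal is correct and follows essentially the same approach as the paper: identify $T_{\la,\bc}$ with $\hat T_{\la,\bc}/N_{\la,\bc}$, use \eqref{E060216}, \eqref{Erhat}, and Lemma~\ref{LgTh} to match the right $\Theta(\Si_\la)$-multiplication on $\hat T_{\la,\bc}$ with the signed right $\Si_\la$-action on $T_{\la,\bc}$ (deducing en route that $N_{\la,\bc}\Theta(g)\subseteq N_{\la,\bc}$), and then invoke Lemma~\ref{L6.4.1Mult} to describe the preimage of $Z_{\la,\bc}$. The only cosmetic difference is that you check the intertwining identity on the cyclic generator $e_{\la,\bc}$ and extend by $\hat C_{d\de}$-linearity, whereas the paper writes the chain of equalities directly for a general $v=ve_{\la,\bc}$; these are the same argument.
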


\begin{proof}
Throughout the proof, we identify $T_{\la,\bc}$ with $\hat T_{\la,\bc}/N_{\la,\bc}$ via the isomorphism (\ref{EHatMM}), so 
\[
1_{\la\de} \otimes T_{\la_1,c_1} \boxtimes \cdots \boxtimes T_{\la_n,c_n} = (\hat C_{\la\de}e_{\la,\bc}+N_{\la,\bc})/N_{\la,\bc}
\]
and we have a right action of $\Si_\la$ on $\hat T_{\la,\bc}/N_{\la,\bc}$.
The space $Z_{\la,\bc}$ of signed invariants under this action
 becomes a $\hat C_{d\de}$-submodule of 
$\hat T_{\la,\bc}/N_{\la,\bc}$, and by 
Lemma~\ref{L6.4.1Mult}, we have 
$Z_{\la,\bc}=(\hat C_{d\de} \ga^{\la,\bc} \hat C_{\la\de}e_{\la,\bc}+N_{\la,\bc})/N_{\la,\bc}$. 

Let $1\leq t<d$ satisfy $s_t\in\Si_{\la}$, and moreover,
let $q\in [1,n]$ be defined by the condition that $s_t$ lies in the $\Si_{\la_q}$-component of $\Si_\la$. 
By~\eqref{E060216}, we have 
\[
(e_{\la,\bc} + N_{\la,\bc})\cdot s_t = - \zeta_{c_q} \hat r_t e_{\la,\bc}+N_{\la,\bc}.
\]
Let $v=ve_{\la,\bc}\in \hat T_{\la,\bc}$. Then 
\begin{align*}
(v + N_{\la,\bc})\cdot s_t &= - \zeta_{c_q} v \hat r_t e_{\la,\bc}+N_{\la,\bc} = 
 - \zeta_{c_q} v \Theta(s_t) e_{\la,\bc} + N_{\la,\bc} \\
 & = -\zeta_{c_q} v e_{\la,\bc} \Theta(s_t) + N_{\la,\bc}
 = -\zeta_{c_q} v \Theta(s_t) + N_{\la,\bc},
\end{align*}
using Lemma~\ref{LgTh} for the third equality. 
So for any $g\in \Si_{\la}$, we have 
\[
(-1)^{\ell(g)} (v+N_{\la,\bc})\cdot g = 
\eps_{\la,\bc} (g)v \Theta(g) + N_{\la,\bc}.
\]
In particular, $N_{\la,\bc} \Theta(g) \subseteq N_{\la,\bc}$ for all 
$g\in \Si_{\la}$. It follows that $\Zt_{\la,\bc}$ is the preimage of 
$Z_{\la,\bc}$ under the canonical projection 
$\hat T_{\la,\bc} \twoheadrightarrow \hat T_{\la,\bc}/N_{\la,\bc}$. 
So $\Zt_{\la,\bc} = \hat C_{d\de} \ga^{\la,\bc} \hat C_{\la\de}e_{\la,\bc}+N_{\la,\bc}$ by the first paragraph of the proof.
\end{proof}

\subsection{The structure of $\ga^{\la,\bc} \hat C_{d\de} \ga^{\om}$} \label{SSLaOm}
In view of (\ref{EHatBi}) to $\bi\in I^\theta_\di$ we associate $\hat\bi\in I^\theta$. Throughout this subsection we drop the hats and usually write  $\bi$ for $\hat \bi$. For example, $\widehat{\bl^j(d)}$ is written simply as $\bl^j(d)$.

Let $h_d\in \Si_{ed}$ be defined by $h_d((t-1)e+q)=(q-1)d+t$ for all $t=1,\dots,d$ and $q=1,\dots,e$. In other words, $h_d$ is the shortest element of $\Si_{de}$ such that  $h_d ((\bl^j)^d) =  \bl^j(d)$ for all $j\in J$.  Let $w_{0,d}\in \Si_{ed}$ be the longest element of 
$\Si_{(d^e)}$, i.e.~$w_{0,d} ((q-1)d+t)=(q-1)d+d+1-t$ for all 
$q=1,\dots,e$ and $t=1,\dots,d$. Let $j\in J$ and note that $e_{j^d}=1_{(\bl^j)^d}$.  
We set 
\[
u_{d,j}:= \psi_{w_{0,d}}\psi_{h_d} e_{j^d} \in \hat C_{d\de}.
\]
More generally, fix $n\in\Z_{>0}$ and $(\la,\bc)\in\La^\col(n,d)$.
Recalling~\eqref{EElac}, we define
\begin{align*}
h_{\la}&:=(h_{\la_1},\dots,h_{\la_n})\in\Si_{e\la_1}\times\dots\times \Si_{e\la_n}=\Si_{e\la}\leq \Si_{ed},
\\
w_{0,\la}&:=(w_{0,\la_1},\dots,w_{0,\la_n})\in\Si_{e\la_1}\times\dots\times \Si_{e\la_n}=\Si_{e\la}\leq \Si_{ed},
\\
u_{\la,\bc}&:=\psi_{w_{0,\la}}\psi_{h_\la} e_{\la,\bc}
\\
&=u_{\la_1,c_1}\otimes\dots\otimes u_{\la_n,c_n}\in
\hat C_{\la_1\de}\otimes \dots\otimes \hat C_{\la_n\de}= \hat C_{\la\de}\subseteq \hat C_{d\de},
\end{align*}
where we have used the identification from Lemma~\ref{L030216}. 

\begin{Example}
If $e=3$, then $J=\{1,2\}$ and the only choice of the words~\eqref{Eli} is 
$\bl^1 = 021\in I^\de$ and $\bl^2 = 012\in I^\de$. In this case, if 
$d=4$, $n=5$, $\la=(3,0,1,0,0)\in \La(5,4)$ and $\bc = (2,1,1,1,2)\in J^5$, then in terms of Khovanov-Lauda diagrams~\cite[\S 2.1]{KL1} we have 
\[
u_{\la,\bc}=
\begin{array}{c}
\begin{braid}
\newcommand*{\xx}{1.2}
 \draw (0*\xx,0)node[below]{$0$}--(0*\xx,5);
 \draw (1*\xx,0)node[below]{$1$}--(3*\xx,5);
 \draw(2*\xx,0)node[below]{$2$}--(6*\xx,5);
 \draw(3*\xx,0)node[below]{$0$}--(1*\xx,5);
 \draw(4*\xx,0)node[below]{$1$}--(4*\xx,5);
 \draw(5*\xx,0)node[below]{$2$}--(7*\xx,5);
 \draw(6*\xx,0)node[below]{$0$}--(3*\xx,2.5)--(2*\xx,5);
 \draw(7*\xx,0)node[below]{$1$}--(5*\xx,5);
 \draw(8*\xx,0)node[below]{$2$}--(8*\xx,5);
 \draw(9*\xx,0)node[below]{$0$}--(9*\xx,9)node[above]{$0$};
 \draw(10*\xx,0)node[below]{$2$}--(10*\xx,9)node[above]{$2$};
 \draw(11*\xx,0)node[below]{$1$}--(11*\xx,9)node[above]{$1$};
 \draw(0*\xx, 5)--(2*\xx,9)node[above]{$0$};
 \draw(1*\xx,5)--(1*\xx,9)node[above]{$0$};
 \draw(2*\xx,5)--(0.3*\xx,7)--(0*\xx,9)node[above]{$0$};
 \draw(3*\xx, 5)--(5*\xx,9)node[above]{$1$};
 \draw(4*\xx,5)--(4*\xx,9)node[above]{$1$};
 \draw(5*\xx,5)--(3.3*\xx,7)--(3*\xx,9)node[above]{$1$};
 \draw(6*\xx, 5)--(8*\xx,9)node[above]{$2$};
 \draw(7*\xx,5)--(7*\xx,9)node[above]{$2$};
 \draw(8*\xx,5)--(6.3*\xx,7)--(6*\xx,9)node[above]{$2$};
\end{braid}
\end{array}
\]
\end{Example}

In view of (\ref{EKLId}) we get:

\begin{Lemma} \label{LStyagIdemp}
We have $u_{\la,\bc}\in\ga^{\la,\bc} \hat C_{d\de}e_{\la,\bc}$. 
\end{Lemma}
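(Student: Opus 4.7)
The goal splits into two assertions: (a) $u_{\la,\bc} e_{\la,\bc} = u_{\la,\bc}$, and (b) $\ga^{\la,\bc} u_{\la,\bc} = u_{\la,\bc}$. Part (a) is immediate from the definition $u_{\la,\bc}=\psi_{w_{0,\la}}\psi_{h_\la} e_{\la,\bc}$ together with $e_{\la,\bc}^2=e_{\la,\bc}$ (the latter being a standard idempotent relation in the KLR algebra, since $e_{\la,\bc}=1_\bj$ for the word $\bj=(\bl^{c_1})^{\la_1}\dots(\bl^{c_n})^{\la_n}$). So the content is entirely in (b).

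For (b), the plan is to first reduce to the special case $n=1$, $\la=(d)$, $\bc=(j)$. Under the identification $\hat C_{\la\de}\cong \hat C_{\la_1\de}\otimes\dots\otimes \hat C_{\la_n\de}$ from Lemma~\ref{L030216}, both $\ga^{\la,\bc}$ and $u_{\la,\bc}$ are tensor products of the corresponding single-color ingredients:
\[
\ga^{\la,\bc}=\ga^{\la_1,c_1}\otimes\dots\otimes\ga^{\la_n,c_n},\qquad u_{\la,\bc}=u_{\la_1,c_1}\otimes\dots\otimes u_{\la_n,c_n},
\]
so it suffices to show $\ga^{d,j} u_{d,j}=u_{d,j}$ for each $(d,j)$.

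For the base case, I would argue as follows. By the definition of the divided power idempotent in~\S\ref{SSDiv},
\[
\ga^{d,j}=1_{\bl^j(d)}=\iota_{d\al_{l_{j,1}},\dots,d\al_{l_{j,e}}}\bigl(1_{l_{j,1}^{(d)}}\otimes\dots\otimes 1_{l_{j,e}^{(d)}}\bigr),
\]
where each $1_{l^{(d)}}=\psi_{w_0}\prod_{s=1}^{d}y_s^{s-1}$ is the standard idempotent in the nilHecke algebra $R_{d\al_l}$. The longest element $w_{0,d}$ of $\Si_{(d^e)}$ factors as $w_{0,d}=(w_0,\dots,w_0)$ in the parabolic $\Si_d\times\dots\times\Si_d$, so $\psi_{w_{0,d}}$ lies in the image of $\iota_{d\al_{l_{j,1}},\dots,d\al_{l_{j,e}}}$ and is obtained by applying $\iota$ to $\psi_{w_0}\otimes\dots\otimes\psi_{w_0}$. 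Applying the key identity~\eqref{EKLId}, namely $1_{l^{(d)}}\psi_{w_0}=\psi_{w_0}$, in each of the $e$ tensor factors yields
\[
\ga^{d,j}\,\psi_{w_{0,d}}=\psi_{w_{0,d}}.
\]
Therefore
\[
\ga^{d,j}\,u_{d,j}=\ga^{d,j}\,\psi_{w_{0,d}}\psi_{h_d}e_{j^d}=\psi_{w_{0,d}}\psi_{h_d}e_{j^d}=u_{d,j},
\]
as required.

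There is really no substantive obstacle here; the lemma is a formal consequence of the nilHecke identity~\eqref{EKLId} together with the factorisation of $\ga^{\la,\bc}$ and $u_{\la,\bc}$ through the parabolic subalgebra $\hat C_{\la\de}$. The only point one has to be careful about is that $\psi_{w_{0,\la}}$ genuinely lies in the parabolic $R_{d\al_{l_{c_1,1}},\dots}$ (not only in $R_{\la\de}$), which is visible from the explicit description of $w_{0,\la}$ as the longest element of $\Si_{(\la_1^e)}\times\dots\times\Si_{(\la_n^e)}$ acting by permutations within blocks of constant residue in the word $\widehat{\bl(\la,\bc)}$.
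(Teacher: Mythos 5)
Your proof is correct and takes essentially the same approach as the paper: the authors dispose of this lemma with the one-line remark ``In view of~\eqref{EKLId} we get:'', and your argument simply spells out what that remark leaves implicit, namely the factorisation of $\ga^{\la,\bc}$ and $\psi_{w_{0,\la}}1_{\widehat{\bl(\la,\bc)}}$ through the parabolic tensor factors and the application of $1_{i^{(m)}}\psi_{w_0}=\psi_{w_0}$ in each nilHecke factor. The added care about $\psi_{w_{0,\la}}$ lying in the smaller parabolic $R_{\la_1\al_{l_{c_1,1}},\dots}$ is harmless (and automatic, since braid relations hold on the nose in nilHecke factors), so there is nothing missing.
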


Recall the integer $a_{\la}$ defined by~\eqref{Eala}.
 The following is easily deduced from the definitions:

\begin{Lemma}\label{LDegU}
The element $u_{\la,\bc}\in \hat C_{d\de}$ is homogeneous of degree $a_{\la}$. 
\end{Lemma}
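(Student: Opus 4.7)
The plan is to reduce to the case $n=1$ and then compute the degree of $u_{d,j}$ directly by tracking the contribution of each crossing in the product $\psi_{w_{0,d}}\psi_{h_d}e_{j^d}$. By Lemma~\ref{L030216}, the algebra embedding $\hat C_{\la_1\de}\otimes\dots\otimes\hat C_{\la_n\de}\hookrightarrow\hat C_{d\de}$ is induced by the homogeneous parabolic embedding $\iota_{\la_1\de,\dots,\la_n\de}$, so it is degree preserving. Since $u_{\la,\bc}=u_{\la_1,c_1}\otimes\dots\otimes u_{\la_n,c_n}$ and $a_\la=\sum_t a_{(\la_t)}$, it suffices to prove $\deg u_{d,j}=-ed(d-1)/2$ for every $d\geq 1$ and $j\in J$.

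Start from $e_{j^d}=1_{(\bl^j)^d}$, which is homogeneous of degree $0$. The general rule from~\eqref{R2PsiE} is that each elementary crossing $\psi_r$ applied above a word $\bi$ contributes $-\mathtt{c}_{i_r,i_{r+1}}$ to the degree, so for any reduced expression of $w\in\Si_m$, the degree of $\psi_w 1_\bi$ equals $-\sum_{(a,b)}\mathtt{c}_{i_a,i_b}$, where the sum is over pairs $a<b$ in $[1,m]$ that are inverted by $w$ (viewed as acting on positions). First I will compute $\deg(\psi_{h_d}e_{j^d})$. Writing positions in $[1,ed]$ as $(t-1)e+q$ with $t\in[1,d]$, $q\in[1,e]$, a short check shows that $h_d$ inverts exactly those pairs with $t<t'$ and $q>q'$. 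Each such inversion involves strands of residues $l_{j,q}$ and $l_{j,q'}$, so
\[
\deg\bigl(\psi_{h_d}e_{j^d}\bigr)=-\binom{d}{2}\sum_{q>q'}\mathtt{c}_{l_{j,q},l_{j,q'}}.
\]
Using that $\mathtt{c}_{i,i'}=(\al_i,\al_{i'})$, the symmetry of $(\cdot,\cdot)$, and the standard fact $(\de,\de)=0$ for the affine Cartan form, I obtain $\sum_{q,q'=1}^{e}\mathtt{c}_{l_{j,q},l_{j,q'}}=(\de,\de)=0$ and $\sum_{q=1}^{e}\mathtt{c}_{l_{j,q},l_{j,q}}=2e$. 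Therefore $\sum_{q>q'}\mathtt{c}_{l_{j,q},l_{j,q'}}=-e$, so $\deg(\psi_{h_d}e_{j^d})=ed(d-1)/2$.

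Next I will compute the additional contribution from $\psi_{w_{0,d}}$. The word produced by $\psi_{h_d}e_{j^d}$ is $\bl^j(d)$ in its expanded form: the residue $l_{j,q}$ occupies positions $(q-1)d+1,\dots,qd$ for each $q\in[1,e]$. Since $w_{0,d}$ is the longest element of $\Si_{(d^e)}$, it acts as the longest element of $\Si_d$ on each block of $d$ equal residues, so every one of its $e\binom{d}{2}$ inversions is between strands of equal residue $i$, contributing $-\mathtt{c}_{i,i}=-2$ each. Hence $\deg(\psi_{w_{0,d}}\psi_{h_d}e_{j^d})-\deg(\psi_{h_d}e_{j^d})=-ed(d-1)$, giving
\[
\deg u_{d,j}=\frac{ed(d-1)}{2}-ed(d-1)=-\frac{ed(d-1)}{2}=a_{(d)},
\]
as required. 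Combined with the reduction in the first paragraph, this yields $\deg u_{\la,\bc}=a_\la$.

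The only genuine friction is bookkeeping: correctly identifying the inversions of $h_d$ (which requires parsing the two-index labelling of $[1,ed]$) and confirming that $\psi_{w_{0,d}}$ is applied to a word whose residues are constant on each block of length $d$, so that the $-2$ contribution per crossing is uniform. Once these are in hand, the computation closes cleanly via $(\de,\de)=0$, which is the conceptual reason a cancellation occurs and leaves only the diagonal self-pairings.
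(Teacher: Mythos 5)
Your computation is correct and supplies exactly the details the paper elides (the paper merely asserts the lemma is "easily deduced from the definitions" without writing out a proof). The reduction to $n=1$ via the tensor-factor identification from Lemma~\ref{L030216} and the additivity of $a_\la$ is right. The inversion bookkeeping is right as well: $h_d$ inverts precisely the pairs with $t<t'$ and $q>q'$, giving $-\binom{d}{2}\sum_{q>q'}\mathtt{c}_{l_{j,q},l_{j,q'}}$; the identity $(\de,\de)=0$ together with $\sum_q \mathtt{c}_{l_{j,q},l_{j,q}}=2e$ forces $\sum_{q>q'}\mathtt{c}_{l_{j,q},l_{j,q'}}=-e$; the word $h_d\cdot(\bl^j)^d$ is constant on blocks of length $d$, so the $e\binom{d}{2}$ inversions of $w_{0,d}$ each contribute $-2$; and the totals combine to $-ed(d-1)/2=a_{(d)}$ as required.
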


\begin{Lemma} \label{LExcellent} 
Let $j\in J$ and $\bi=\bi^{(1)}\dots\bi^{(d)}$ for some $\bi^{(1)},\dots,\bi^{(d)}\in I^{\de}$. If $g\in{}^{(d^{e})}\D^{(e^d)}$ is such that $g\bi=  \bl^j(d)$, then $g=h_d$ and $\bi^{(1)}=\dots=\bi^{(d)}=\bl^j$. 
\end{Lemma}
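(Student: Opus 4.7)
The plan is to exploit the characterization of ${}^{(d^e)}\D^{(e^d)}$ in terms of two monotonicity conditions and combine this with the fact that each $\bi^{(r)} \in I^\de$ is a permutation of $I$ (so all entries within a block of $\bi$ are distinct). Set $p(q,t):=g^{-1}((q-1)d+t)$ for $q\in[1,e]$ and $t\in[1,d]$. Since $g\in{}^{(d^e)}\D^{(e^d)}$, I would use that $g^{-1}$ is increasing on each block of $(d^e)$, giving $p(q,1)<p(q,2)<\dots<p(q,d)$ for every fixed $q$, and that $g$ is increasing on each block of $(e^d)$, giving $g((r-1)e+1)<\dots<g(re)$ for every fixed $r$.

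Next, I would use the hypothesis $g\bi=\bl^j(d)$ in the form $\bi_{p(q,t)}=l_{j,q}$. Since $l_{j,1},\dots,l_{j,e}$ are all distinct (the entries of $\bl^j\in I^\de$ exhaust $I$), each value $l_{j,q}$ occurs at most once in each block $[(r-1)e+1,re]$ of $\bi$. But for fixed $q$ there are exactly $d$ positions $p(q,1),\dots,p(q,d)$ carrying the value $l_{j,q}$, and there are exactly $d$ blocks of $\bi$, so each block contains exactly one such position. Combined with $p(q,1)<\dots<p(q,d)$, this forces $p(q,t)$ to lie in the $t$-th block, i.e.\ $p(q,t)=(t-1)e+s(q,t)$ for some $s(q,t)\in[1,e]$, and for each fixed $t$ the map $q\mapsto s(q,t)$ is a permutation of $[1,e]$.

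The final step is to pin down $s(q,t)$ using the second monotonicity condition. For fixed $t$, the identity $g((t-1)e+s(q,t))=(q-1)d+t$ and monotonicity of $g$ on the $t$-th block of $(e^d)$ force $s(q,t)$ to be strictly increasing in $q$; since $s(\cdot,t)$ is a permutation of $[1,e]$, we get $s(q,t)=q$. Hence $g((t-1)e+q)=(q-1)d+t$, which is precisely the defining formula for $h_d$, so $g=h_d$. Reading off $\bi^{(t)}_q=\bi_{(t-1)e+q}=l_{j,q}$ then gives $\bi^{(t)}=\bl^j$ for all $t$.

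There is no serious obstacle here: the argument is purely combinatorial and the only mild subtlety is keeping the two indexings straight (the top parabolic $(d^e)$ groups positions by the value $q$ they carry in $\bl^j(d)$, while the bottom parabolic $(e^d)$ groups positions by the block index $r$ in $\bi$). Once both minimality conditions are applied, the permutation $g$ and the words $\bi^{(r)}$ are determined uniquely.
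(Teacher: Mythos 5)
Your proof is correct and fills in exactly the detail the paper leaves to the reader: the paper's one-line proof points to the two facts (distinctness of entries in each $\bi^{(r)}\in I^\de$, and the monotonicity characterization of ${}^{(d^e)}\D^{(e^d)}$) that you spell out. Same approach, just written out in full.
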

\begin{proof}
If $\bi\in I^\de$, the letters of $\bi$ are distinct. The result follows from this observation together with the definition of ${}^{(d^{e})}\D^{(e^d)}$. 
\end{proof}

Given $\la\in\La(n,d)$, define the composition
$$
\la^{\{e\}}:=(\la_1^e,\dots,\la_n^e)\in\La(ne,de).
$$
Define the {\em block permutation group} $B_d\cong \Si_d$ as the subgroup of $\Si_{de}$ generated by the involutions $w_1,\dots,w_{d-1}$ defined by (\ref{EBl}).

\begin{Lemma}\label{LStyagBlPerm} 
Let $\bi^{(1)},\dots,\bi^{(d)}\in I^{\de}_\scusp$ and $\bi=\bi^{(1)}\dots\bi^{(d)}$. If $g\in{}^{\la^{\{e\}}}\D^{(e^d)}$ is such that $g\bi=  \bl(\la,\bc)$ for some $\bc\in J^n$, then $g=h_\la b$ for some $b\in B_d$ such that $\ell(g)=\ell(h_\la)+\ell(b)$. 
\end{Lemma}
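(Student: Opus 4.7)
Plan: The strategy is to explicitly construct a block permutation $b\in B_d$ from $g$ and then verify that $gb^{-1}=h_\la$. The intuition is that $b$ reorders the $d$ length-$e$ blocks of $\bi$ so that the blocks sent by $g$ into super-block $r$ of $\bl(\la,\bc)$ occupy the $r$-th super-block in input order, after which $h_\la$ spreads each consecutive run of $\la_r$ identical semicuspidal blocks into the divided-power arrangement $\bl^{c_r}(\la_r)$. To begin, I would make the characterisation of $g\in{}^{\la^{\{e\}}}\D^{(e^d)}$ explicit: $g$ is strictly increasing on every length-$e$ block of $(e^d)$, while $g^{-1}$ is strictly increasing on every sub-block of $\la^{\{e\}}$ (the $\la_r$ positions within super-block $r$ carrying a common letter).

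The principal obstacle is showing that $g$ does not split any $e$-block of $\bi$ across distinct super-blocks of $\bl(\la,\bc)$. Here the semicuspidality hypothesis is essential: by the $d=1$ case of Corollary~\ref{CSepSC}, each $\bi^{(a)}$ lies in $I^{\de,j_a}$ for some $j_a\in J$, so $\bi^{(a)}$ contains each residue of $I$ exactly once, with $\bi^{(a)}_1=0$ and $\bi^{(a)}_e=j_a$. Examining the preimage of the first sub-block of super-block $r$ (which carries letter $l_{c_r,1}=0$), the $g^{-1}$-increasing condition puts those preimages at positions of the form $(a-1)e+1$ for an increasing sequence of block indices; since $0$ occurs in $\bi$ only at block-starts, this identifies a set $A_r\subseteq\{1,\ldots,d\}$ of $\la_r$ block indices. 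I would then compare with the preimages of the later sub-blocks of super-block $r$ and use the increasing-on-$(e^d)$-blocks condition on $g$ to force these preimages to be drawn from the \emph{same} $A_r$; a symmetric argument at the last sub-block of each super-block, together with the uniqueness of letter positions inside each $\bi^{(a)}$, rules out the possibility that $g$ disperses a block across several super-blocks.

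Once block preservation is secured, I would define $b\in B_d$ to be the unique block permutation sending the blocks indexed by $A_r$ (in increasing order) to block positions $\la_1+\cdots+\la_{r-1}+1,\ldots,\la_1+\cdots+\la_r$, for every $r$. A direct check shows that $gb^{-1}$ stabilises each super-block setwise, and its restriction to super-block $r$ lies in ${}^{(\la_r^e)}\D^{(e^{\la_r})}$ (both the increasing-on-$(e^d)$-blocks condition for $g$ and the increasing-on-sub-blocks condition for $g^{-1}$ descend once $b$ has concentrated the blocks of $A_r$). Lemma~\ref{LExcellent} then identifies this restriction with $h_{\la_r}$ and simultaneously forces $\bi^{(a)}=\bl^{c_r}$ for every $a\in A_r$, so $gb^{-1}=h_\la$. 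Finally, the length additivity $\ell(g)=\ell(h_\la)+\ell(b)$ follows from the standard fact that $\ell(yb)=\ell(y)+\ell(b)$ whenever $y$ belongs to a standard parabolic $\Si_{(e\la_1,\ldots,e\la_n)}$ (which contains $h_\la$) and $b$ is the minimum-length representative in its left coset of this parabolic: by construction $b$ orders the blocks within each super-block increasingly, so $b^{-1}$ is increasing on each $(e\la_1,\ldots,e\la_n)$-block, giving the required minimality.
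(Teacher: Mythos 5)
Your proposal takes a genuinely different route from the paper's proof: you construct the block permutation $b$ globally and verify $gb^{-1}=h_\la$ super-block by super-block, whereas the paper argues by induction on $n$, peeling off the last super-block each time. Both routes pass through the same two essential facts you identify (the $0$'s in $\bi$ sit only at block-starts by Corollary~\ref{CSepSC}, and $g,g^{-1}$ are monotone on the blocks of $(e^d)$ and on the sub-blocks of $\la^{\{e\}}$ respectively), and both use Lemma~\ref{LExcellent} in each individual super-block. What the paper's induction buys is that block preservation is automatic: once the block-starts of blocks $a_1,\dots,a_{\la_n}$ are seen to land in the final super-block, the increasing-on-blocks condition forces the \emph{whole} of those blocks to land there, simply because there is nowhere further to the right to go. Your global route does not get this for free at intermediate super-blocks.

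This is the one place where your proposal is vague in a way that matters. You say the increasing-on-$(e^d)$-blocks condition ``forces these preimages to be drawn from the same $A_r$,'' and then appeal to ``a symmetric argument at the last sub-block'' plus ``uniqueness of letter positions.'' The increasing condition alone only shows that if a position of super-block $r$ has preimage in block $a$, then the block-start of $a$ maps to some super-block $r'\le r$, i.e.\ $a\in A_{r'}$ for some $r'\le r$; it does not directly force $r'=r$, and I do not see a clean ``symmetric argument at the last sub-block'' that fixes this (the last letters of the $\bi^{(a)}$ are unconstrained elements of $J$, so they do not match the last sub-block uniformly). What \emph{does} close the gap is a counting argument run left-to-right: blocks in $A_s$ for $s>r$ have all their positions strictly above $e(\la_1+\cdots+\la_r)$ by increasing-on-blocks, so after blocks $A_1,\ldots,A_{r-1}$ have (inductively) been shown to fill super-blocks $1,\ldots,r-1$ exactly, only the $\la_r$ blocks of $A_r$ remain available for the $e\la_r$ positions of super-block $r$, forcing an exact match. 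With that counting/induction-on-$r$ inserted in place of the hand-waving, the rest of your argument (definition of $b$, descent of the double-coset conditions, application of Lemma~\ref{LExcellent} per super-block, and the length additivity $\ell(h_\la b)=\ell(h_\la)+\ell(b)$ from $h_\la\in\Si_{e\la}$ and $b\in{}^{e\la}\D$) is correct. So: right approach, one step stated too loosely; the paper's induction-from-the-right sidesteps exactly that step.
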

\begin{proof}
We apply the induction on $n$, the case $n=1$ being Lemma~\ref{LExcellent}. Let $n>1$. By the inductive hypothesis, we may assume that $\la_n>0$. Note that $ \bl(\la,\bc)= \bl(\la',\bc') \bl^{c_n}(\la_n)$, where $\la'=(\la_1,\dots,\la_{n-1})$ and $\bc'=(c_1,\dots,c_{n-1})$. Let $ \bl^{c_n}=(l_1,\dots,l_e)$ so that $ \bl^{c_n}(\la_n)=(l_1^{\la_n},\dots,l_e^{\la_n})$. We know that $l_1=0$ and $i^{(t)}_1=0$ for $t=1,\dots,d$, see Corollary~\ref{CSepSC}. Note that the positions $(d-\la_n)e+q$ 
for $q=1,\dots,\la_n$ in $ \bl(\la,\bc)$ correspond to the first $\la_n$ positions in $ \bl^{c_n}(\la_n)$, and so they are occupied with $0$s. So there exist $1\leq a_1,\dots,a_{\la_n}\leq d$ such that $g$ sends the first position of the word $\bi^{(a_q)}$  to 
the $q$th position of $ \bl^{c_n}(\la_n)$, 
i.e.~$g((a_q-1)e+1)=(d-\la_n)e+q$ for $q=1,\dots,\la_n$. Since $g\in {}^{\la^{\{e\}}}\D$, we have $a_1<\dots<a_{\la_n}$. Since $g\in \D^{(e^d)}$, it sends the remaining positions in the words $\bi^{(a_1)},\dots,\bi^{(a_{\la_n})}$ to the remaining positions of $ \bl^{c_n}(\la_n)$, i.e.~to the last $\la_n(e-1)$ positions of $ \bl(\la,\bc)$. It follows that $\bi^{(a_1)}=\dots=\bi^{(a_{\la_n})}=\bl^{c_n}$.

Let $b'\in B_d$ be the block permutation which moves the blocks $\bi^{(a_1)},\dots,\bi^{(a_{\la_n})}$ to the end in the same order and preserves the order of the remaining blocks. Let $g'=g(b')^{-1}$. We claim that $\ell(g')=\ell(g)-\ell(b')$. To prove this, it suffices to show that $g(r)>g(s)$ for all $1\leq r<s\leq ed$ such that $b'(r)>b'(s)$, which is clear since for any such $r,s$, the element $r$ is in one of the blocks corresponding to $\bi^{(a_1)},\dots,\bi^{(a_{\la_n})}$, whereas $s$ is not. 

We have $g' \in{}^{\la^{\{e\}}}\D^{(e^d)}$. Indeed, it is obvious that $g' \in\D^{(e^d)}$, and $g'\in {}^{\la^{\{e\}}}\D$ because $g=g'b'\in {}^{\la^{\{e\}}}\D$ and $\ell(g)=\ell(g')+\ell(b')$. 
Moreover, $g'\in\Si_{(d-\la_n)e,\la_ne}$, so the result follows by the inductive assumption. 
\end{proof}

\begin{Lemma} \label{LHLa} 
For any $y\in\Z[y_1,\dots,y_{de}]$ there exists $y'\in \Z[y_1,\dots,y_{de}]$ such that $1_{\bl(\la,\bc)}y\psi_{h_\la}=1_{\bl(\la,\bc)}\psi_{h_\la}y'$.
\end{Lemma}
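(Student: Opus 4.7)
The claim is that $y':=h_\la^{-1}\!\cdot y$ works, where $h_\la$ permutes the variables $y_1,\dots,y_{de}$ via its action on indices. The strategy is to move $y$ past $\psi_{h_\la}$ one transposition at a time using~\eqref{R3YPsi},~\eqref{ypsi},~\eqref{psiy}; the difficulty is the $\delta$-corrections in the latter two relations. The plan is to exhibit a reduced expression of $h_\la$ along which every such correction vanishes.

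\textbf{Reductions.} Since $h_\la=h_{\la_1}\cdots h_{\la_n}$ acts on pairwise disjoint coordinate ranges (the $n$ blocks of $\bl(\la,\bc)$), reduced decompositions of $h_\la$ are concatenations of those of the $h_{\la_r}$, and the problem localises within each block. So I may assume $n=1$, $\la=(d)$, $\bc=(j)$, with end-word $h_d^{-1}\bl^j(d)=(\bl^j)^d$ on the right of $\psi_{h_d}$. By $\Z$-linearity and iterating over monomial factors in $y$, I may further take $y=y_a$ for a single index~$a$.

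\textbf{A good reduced expression.} The main point is to construct $h_d=s_{r_1}\cdots s_{r_N}$ such that, setting $\bi^{(k)}:=s_{r_k}\cdots s_{r_N}(\bl^j)^d$ (so $\bi^{(N+1)}=(\bl^j)^d$ and $\bi^{(1)}=\bl^j(d)$), the entries of $\bi^{(k+1)}$ at positions $r_k$ and $r_k+1$ differ for every $k$. The construction is column-by-column bubble sort: for $q=1,\dots,e$ in order, move all $d$ copies of $l_{j,q}$ in the current word to their target positions $(q-1)d+1,\dots,qd$ via adjacent transpositions. Each such swap moves an $l_{j,q}$ past an $l_{j,q'}$ with $q'\neq q$, and since $\bl^j\in I^{\de,j}$ has pairwise distinct entries, these are genuinely different letters. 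A direct count shows the total number of swaps equals $d(d-1)e(e-1)/4$, which one checks matches the inversion count of $h_d$ in one-line notation, so the expression is indeed reduced. I expect verifying the existence of this ``good'' reduced expression to be the main (but ultimately combinatorial) obstacle.

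\textbf{Pushing through.} Given the good reduced expression, moving $y_a$ past each $\psi_{r_k}$ with $1_{\bi^{(k+1)}}$ on the right uses one of~\eqref{R3YPsi},~\eqref{ypsi},~\eqref{psiy}; because the letters of $\bi^{(k+1)}$ at positions $r_k,r_k+1$ differ, every $\delta$-term vanishes and
\[
y_a\,\psi_{r_k}\,1_{\bi^{(k+1)}}\;=\;\psi_{r_k}\,y_{s_{r_k}(a)}\,1_{\bi^{(k+1)}}.
\]
Iterating along the expression and using $\psi_w1_\bi=1_{w\bi}\psi_w$ to propagate the idempotents between successive $\psi$'s yields $y\,\psi_{h_d}\,1_{(\bl^j)^d}=\psi_{h_d}\,(h_d^{-1}\!\cdot y)\,1_{(\bl^j)^d}$. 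Since $1_{\bl^j(d)}\,y\,\psi_{h_d}=1_{\bl^j(d)}\,y\,\psi_{h_d}\,1_{(\bl^j)^d}$ (all other end-word idempotents annihilate $\psi_{h_d}$ on the right), the lemma follows with $y':=h_\la^{-1}\!\cdot y$.
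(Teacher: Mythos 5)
Your proof is correct and follows essentially the same idea as the paper's one-line argument, namely that the Khovanov–Lauda diagram of $1_{\bl(\la,\bc)}\psi_{h_\la}$ has no crossings between strings of equal label, so dots slide through every crossing without a $\delta$-correction. The only place where you do slightly more work than necessary is the construction of a specific ``good'' reduced expression: since the set of pairs swapped along \emph{any} reduced word of $h_\la$ is precisely the inversion set of $h_\la$, and (as you essentially observe) no pair of equal-label positions is an inversion of $h_\la$, every reduced expression already has the property you need, so the bubble-sort construction and length count can be dispensed with.
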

\begin{proof}
This follows from the observation that the Khovanov--Lauda  diagram \cite[\S2.1]{KL1} of $1_{\bl(\la,\bc)} \psi_{h_{\la}}$ does not have any crossings of two strings with the same label and the relations (\ref{ypsi}), (\ref{psiy}). 
\end{proof}

\begin{Lemma}\label{LCyc}
For any $(\la,\bc)\in \La^{\col}(n,d)$, we have:
\begin{enumerate}
\item[{\rm (i)}] $\ga^{\la,\bc} \hat C_{d\de} 1_{\om\de} = 
u_{\la,\bc} \hat C_{d\de} 1_{\om\de}
=u_{\la,\bc} \ga^{\om} \hat C_{d\de} 1_{\om\de};$
\item[{\rm (ii)}] $\ga^{\la,\bc} \hat C_{\la\de} 1_{\om\de} =u_{\la,\bc} \hat C_{\om\de}$.
\end{enumerate}
\end{Lemma}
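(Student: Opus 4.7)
The plan is as follows. For part~(i), the middle equality $u_{\la,\bc}\hat C_{d\de}1_{\om\de}=u_{\la,\bc}\ga^\om\hat C_{d\de}1_{\om\de}$ is immediate from $u_{\la,\bc}=u_{\la,\bc}e_{\la,\bc}$ (by the definition of $u_{\la,\bc}$) together with $e_{\la,\bc}\ga^\om=e_{\la,\bc}$ (since $e_{\la,\bc}$ is one of the mutually orthogonal idempotents summing to $\ga^\om$), and the inclusion $u_{\la,\bc}\hat C_{d\de}1_{\om\de}\subseteq\ga^{\la,\bc}\hat C_{d\de}1_{\om\de}$ is Lemma~\ref{LStyagIdemp}.

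The substantive content of~(i) is the reverse inclusion, for which I would apply Lemma~\ref{LMackey} to the fine parabolic $R_{\theta_{1,1},\dots,\theta_{n,e}}$, where $\theta_{r,q}:=\la_r\al_{l_{c_r,q}}$ is a scalar multiple of a simple root. Because each $\theta_{r,q}$ is a multiple of a single simple root, the identity of this parabolic coincides with $1_{\widehat{\bl(\la,\bc)}}$, and the corresponding composition of heights is exactly $\la^{\{e\}}$. Each factor $R_{\theta_{r,q}}$ is a nilHecke algebra in which $\ga^{\la,\bc}$ restricts to the standard divided-power idempotent, so Lemma~\ref{LNHecke} applied in each factor gives $\ga^{\la,\bc}R_{\theta_{1,1},\dots,\theta_{n,e}}=\psi_{w_{0,\la}}\Z[y_1,\dots,y_{de}]\cdot 1_{\widehat{\bl(\la,\bc)}}$. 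After descending to $\hat C_{d\de}$, Lemma~\ref{LStyagBlPerm} collapses the surviving double coset representatives to those of the form $h_\la b$ with $b\in B_d$ and $\ell(h_\la b)=\ell(h_\la)+\ell(b)$; the identity $1_{\widehat{\bl(\la,\bc)}}\psi_{h_\la}=\psi_{h_\la}e_{\la,\bc}$ together with Lemma~\ref{LHLa} then commutes the intervening polynomials through $\psi_{h_\la}$, exhibiting every element of $\ga^{\la,\bc}\hat C_{d\de}1_{\om\de}$ as an element of $u_{\la,\bc}\cdot\Z[y_1,\dots,y_{de}]\cdot\psi_b\cdot\hat C_{\om\de}\subseteq u_{\la,\bc}\hat C_{d\de}1_{\om\de}$.

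For part~(ii), the tensor decomposition $\hat C_{\la\de}=\bigotimes_r\hat C_{\la_r\de}$ from Lemma~\ref{L030216}, together with the matching factorizations $\ga^{\la,\bc}=\bigotimes_r\ga^{\la_r,c_r}$, $u_{\la,\bc}=\bigotimes_r u_{\la_r,c_r}$, $\hat C_{\om\de}=\bigotimes_r\hat C_{\om_{\la_r}\de}$, and $1_{\om\de}=\bigotimes_r 1_{\om_{\la_r}\de}$, reduces the statement to the case $n=1$, $\la=(d)$, $\bc=(j)$, namely $\ga^{d,j}\hat C_{d\de}1_{\om\de}=u_{d,j}\hat C_{\om\de}$. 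By part~(i) the left-hand side equals $u_{d,j}\hat C_{d\de}1_{\om\de}$, and the containment $u_{d,j}\hat C_{\om\de}\subseteq u_{d,j}\hat C_{d\de}1_{\om\de}$ is immediate from $\hat C_{\om\de}=\hat C_{\om\de}\cdot 1_{\om\de}$. For the reverse, Lemma~\ref{L030216} provides the free right $\hat C_{\om\de}$-module basis $\{\psi_w:w\in\D^{(e^d)}\}$ of $\hat C_{d\de}1_{\om\de}$, reducing the problem to showing $u_{d,j}\psi_w\in u_{d,j}\hat C_{\om\de}$ for each such $w$. Writing $w=vw'$ with $v\in\Si_{(d^e)}$, $w'\in{}^{(d^e)}\D^{(e^d)}$, and lengths adding, the nilHecke vanishing $\psi_{w_{0,d}}\psi_v=0$ for $v\neq 1$ forces $v=1$; Lemma~\ref{LStyagBlPerm} further restricts $w'$ to be of the form $h_d b$ with $b\in B_d$. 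The direct computation $e_{j^d}\hat r_t=-\psi_{w_t}e_{j^d}-\zeta_j e_{j^d}$ from~\eqref{Erhat} then yields the recurrence $u_{d,j}\psi_{w_t}=-u_{d,j}(\hat r_t+\zeta_j)$, which iterates along a reduced decomposition of $b$.

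The main obstacle will be the final step of~(ii). Although the recurrence $u_{d,j}\psi_{w_t}=-u_{d,j}(\hat r_t+\zeta_j)$ is clean, the element $\hat r_t$ itself does not lie in $\hat C_{\om\de}$, so one needs a careful inductive argument using the KLR braid relations along with the structural description of $\ga^\om\hat C_{d\de}\ga^\om$ from Theorem~\ref{THomZZ} to ensure that the iterated contributions $u_{d,j}\hat r_{t_1}\cdots\hat r_{t_m}$ arising along a reduced decomposition of $b$ collectively land inside $u_{d,j}\hat C_{\om\de}$.
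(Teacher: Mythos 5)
Your treatment of part~(i) matches the paper's proof: both use Lemma~\ref{LMackey} on the fine parabolic $R_{\theta_1,\dots,\theta_{ne}}$ determined by the divided-power word, then Lemma~\ref{LNHecke} in each nilHecke factor, Lemma~\ref{LStyagBlPerm} to collapse the surviving double-coset representatives to the form $h_\la b$, and Lemma~\ref{LHLa} to push the polynomials past $\psi_{h_\la}$. That part is fine.

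For part~(ii) you go off the rails and, to your credit, you say so in your last paragraph — but the ``obstacle'' you identify is self-inflicted. Once you reduce to $n=1$ and carry out the Mackey computation of part~(i) in that case, Lemma~\ref{LExcellent} (not merely Lemma~\ref{LStyagBlPerm}) tells you that the only surviving double-coset representative is $g=h_d$ itself, i.e.~$b=1$. So each summand of $\ga^{d,j}\hat C_{d\de}1_{\om\de}$ is already of the form $u_{d,j}\ga^\om P\hat C_{\om\de}$ with $P=\Z[y_1,\dots,y_{de}]$, and since $P\hat C_{\om\de}=\hat C_{\om\de}$ this gives $\ga^{d,j}\hat C_{d\de}1_{\om\de}\subseteq u_{d,j}\hat C_{\om\de}$ with nothing further to prove. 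In other words, the paper does not handle~(ii) by first proving~(i) and then separately analysing the basis $\{\psi_w\mid w\in\D^{(e^d)}\}$; it simply notes that the same Mackey computation, specialised to $n=1$, already lands in $u_{\la,\bc}\hat C_{\om\de}$ because $\psi_b=1$.

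Your proposed detour via the $\hat C_{\om\de}$-basis of $\hat C_{d\de}1_{\om\de}$, the decomposition $w=vw'$, the nilHecke vanishing, and the recurrence $u_{d,j}\psi_{w_t}=-u_{d,j}(\hat r_t+\zeta_j)$ is both unnecessary and under-justified: you invoke Lemma~\ref{LStyagBlPerm} outside the Mackey framework where its hypothesis (``$g\bi=\bl(\la,\bc)$ for some concatenation $\bi$ of semicuspidal words'') naturally arises, and as you observe, $\hat r_t\notin\hat C_{\om\de}$, so the iteration along a reduced word for $b$ does not obviously stay inside $u_{d,j}\hat C_{\om\de}$. None of this is needed; there simply is no nontrivial $b$ to handle when $n=1$.
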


\begin{proof}
By Lemma~\ref{LStyagIdemp}, 
$u_{\la,\bc} \ga^{\om} \hat C_{d\de} 1_{\om\de}=u_{\la,\bc} \hat C_{d\de} 1_{\om\de}\subseteq 
\ga^{\la,\bc} \hat C_{d\de} 1_{\om\de}$, 
so for (i) it only remains to prove the  inclusion
$\ga^{\la,\bc} \hat C_{d\de} 1_{\om\de}  \subseteq u_{\la,\bc} \ga^{\om} \hat C_{d\de}1_{\om\de}$. Moreover, for (ii) we may assume that $n=1$ and prove only that $\ga^{d,j}\hat C_{d\de}1_{\om\de}\subseteq u_{d,j}\hat C_{\om\de}$. 

The word $  \bl(\la,\bc) \in I^{d\de}$ is the concatenation of $ne$ words of the form $(i^s)\in I^{s\al_i}$ for various $i\in I$ and 
$s\in \Z_{\ge 0}$. 
We denote the corresponding integer multiples  $s\al_i\in Q_+$ of simple roots
 by $\theta_1,\dots,\theta_{ne}$, listed in the order of concatenation, 
i.e.\ $\theta_{e(t-1)+q}=\la_t \al_{l_{c_t, q}}$ for all $t=1,\dots,n$ and $q=1,\dots,e$, cf.~\eqref{Eli}.
By Lemma~\ref{LMackey}, we have  
$$1_{\theta_1,\dots,\theta_{ne}}\hat C_{d\de} 1_{\om\de} = 
\sum_{g\in {}^{\la^{\{e\}}} {\!\D}^{(e^d)} } 
R_{\theta_1,\dots,\theta_{ne}} \psi_g \hat C_{\om \de}$$
as $(R_{\theta_1,\dots,\theta_{ne}}, \hat C_{\om \de})$-bimodules, so
\[
\ga^{\la,\bc} \hat C_{d\de} 1_{\om\de}=
\sum_{g\in {}^{\la^{\{e\}}} {\!\D}^{\vphantom{\la^{e}}(e^d)} } 
\ga^{\la,\bc} R_{\theta_1,\dots,\theta_{ne}} \psi_g \hat C_{\om \de}.
\]
Consider an element $g\in {}^{\la^{\{e\}}} {\!\D}^{(e^d)}$
such that the summand 
$U:=\ga^{\la,\bc} R_{\theta_1,\dots,\theta_{ne}} \psi_g \hat C_{\om \de}$ on the right hand side is non-zero. 
Then there exists $\bi \in I^{d\de}$ such that $g\bi =   \bl(\la,\bc)$ and 
$1_{\bi} 1_{\om\de}\ne 0$ in $\hat C_{\om\de}$, whence $\bi = \bi^{(1)}\ldots \bi^{(d)}$ for some $\bi^{(1)},\dots, \bi^{(d)} \in I^{\de}_{\scusp}$. 
Hence, by Lemma~\ref{LStyagBlPerm},
we have $g=h_{\la} b$ for some $b\in B_d$. Moreover, in the case when $n=1$, needed for part (ii), we have $b=1$ by Lemma~\ref{LExcellent}. We may assume that preferred reduced decompositions for the elements of $\Si_{de}$ are chosen in such a way that $\psi_g = \psi_{h_{\la}} \psi_b$, so 
$U=\ga^{\la,\bc} R_{\theta_1,\dots,\theta_{ne}} \psi_{h_{\la}} \psi_b 
\hat C_{\om \de}$. 

Let $P\subseteq R_{d\de}$ be the subalgebra generated by $y_1,\dots,y_{de}$. Then 
\begin{align*}
U&=\ga^{\la,\bc} R_{\theta_1,\dots,\theta_{ne}} \psi_{h_{\la}} \psi_b 
\hat C_{\om \de}
\\
&=\ga^{\la,\bc} \psi_{w_{0,\la}}P\psi_{h_{\la}} \psi_b 
\hat C_{\om \de}
\\
&=\ga^{\la,\bc} \psi_{w_{0,\la}}\psi_{h_{\la}}P \psi_b 
\hat C_{\om \de}
\\
&=u_{\la,\bc} \ga^\om P \psi_b 
\hat C_{\om \de},
\end{align*}
where we have used Lemma~\ref{LNHecke} for the second  equality, Lemma~\ref{LHLa} for the third equality, the definition of $u_{\la,\bc}$ and Lemma~\ref{LStyagIdemp} for the fourth  equality. 
Part (i) now follows since $
u_{\la,\bc} \ga^\om P \psi_b 
\hat C_{\om \de}\subseteq u_{\la,\bc} \ga^{\om} \hat C_{d\de} 1_{\om\de}$, whereas part (ii) follows since $\psi_b=1$ in the case $n=1$ and $P\hat C_{\om\de}=\hat C_{\om\de}$. 
\end{proof}

Multiplying the equality in Lemma~\ref{LCyc}(i) by $\ga^\om $ on the right, we obtain $\ga^{\la,\bc} \hat C_{d\de} \ga^\om = 
u_{\la,\bc} \ga^{\om} \hat C_{d\de} \ga^\om$. In particular:

\begin{Corollary}\label{CCyc} 
As a right $\ga^\om \hat C_{d\de}\ga^\om$-module, $\ga^{\la,\bc} \hat C_{d\de} \ga^\om$ is generated by  $u_{\la,\bc}$. 
\end{Corollary}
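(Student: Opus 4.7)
The plan is to deduce the corollary as a direct consequence of Lemma~\ref{LCyc}(i), essentially by right-multiplying the equality stated there by $\ga^\om$. No new ideas are required; the only thing to check is that this right-multiplication does not lose any information.

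First I would verify that $u_{\la,\bc}$ really lies in $\ga^{\la,\bc}\hat C_{d\de}\ga^\om$. By Lemma~\ref{LStyagIdemp} we have $u_{\la,\bc}\in \ga^{\la,\bc}\hat C_{d\de}e_{\la,\bc}$, and the idempotent $e_{\la,\bc}$ from~\eqref{EElac} is one of the summands $e_\bj$ with $\bj=(c_1^{\la_1},\ldots,c_n^{\la_n})\in J^d$ comprising $\ga^\om$, cf.~\eqref{EGaOm}. Hence $e_{\la,\bc}\ga^\om=e_{\la,\bc}$, so $u_{\la,\bc}=u_{\la,\bc}\ga^\om\in \ga^{\la,\bc}\hat C_{d\de}\ga^\om$.

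Next I would observe that $1_{\om\de}\ga^\om=\ga^\om$, since each idempotent $e_\bj$ with $\bj\in J^d$ satisfies $1_{\om\de}e_\bj=e_\bj$ by definition of $1_{\om\de}$ as the identity of $\hat C_{\om\de}=\hat C_\de^{\otimes d}$. Consequently, right-multiplying both sides of Lemma~\ref{LCyc}(i) by $\ga^\om$ yields
\[
\ga^{\la,\bc}\hat C_{d\de}\ga^\om
= \ga^{\la,\bc}\hat C_{d\de}1_{\om\de}\ga^\om
= u_{\la,\bc}\ga^\om\hat C_{d\de}1_{\om\de}\ga^\om
= u_{\la,\bc}\ga^\om\hat C_{d\de}\ga^\om.
\]
This is precisely the statement that $\ga^{\la,\bc}\hat C_{d\de}\ga^\om$ is generated by $u_{\la,\bc}$ as a right $\ga^\om\hat C_{d\de}\ga^\om$-module, completing the proof.

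The argument is essentially mechanical, so there is no real obstacle; the content lies entirely in Lemma~\ref{LCyc}(i), whose proof (via the Mackey-type decomposition of Lemma~\ref{LMackey} and the control over double cosets provided by Lemma~\ref{LStyagBlPerm}) has already been carried out.
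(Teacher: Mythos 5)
Your proof is correct and is exactly the paper's argument: the paper derives the corollary by right-multiplying Lemma~\ref{LCyc}(i) by $\ga^\om$, just as you do. The extra checks you include (that $u_{\la,\bc}\ga^\om=u_{\la,\bc}$ via $e_{\la,\bc}\ga^\om=e_{\la,\bc}$, and that $1_{\om\de}\ga^\om=\ga^\om$) are sound and simply make explicit what the paper leaves implicit.
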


Note that by Lemmas~\ref{LPos} and~\ref{LDegU} and Corollary~\ref{CCyc}, we have $\ga^{\la,\bc} \hat C_{d\de} \ga^\om\subseteq \hat C_{d\de}^{\ge a_\la}$.
Recall the left $\hat C_{d\de}$-modules $N_{\bj}$ defined by~\eqref{ENk}.

\begin{Lemma}\label{LDegN}
For any $(\la,\bc)\in \La^{\col} (n,d)$ and $\bj\in J^d$, we have 
$\ga^{\la,\bc} N_{\bj} \subseteq \hat C_{d\de}^{>a_{\la}}$. 
\end{Lemma}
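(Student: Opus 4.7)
The plan is to reduce to Lemma~\ref{LCyc}(i) and then extract positive degree from the factor $\hat C_{\om\de}^{>0}$ via Lemma~\ref{LPos}(ii). First, I would observe that since $e_\bj = e_\bj \cdot 1_{\om\de}$ and $1_{\om\de}$ is the identity of $\hat C_{\om\de}$, the definition of $N_\bj$ gives
\[
\ga^{\la,\bc} N_\bj = \ga^{\la,\bc}\,\hat C_{d\de}\,1_{\om\de}\,\hat C_{\om\de}^{>0}\,e_\bj.
\]
Applying Lemma~\ref{LCyc}(i), which states $\ga^{\la,\bc} \hat C_{d\de} 1_{\om\de} = u_{\la,\bc}\,\ga^{\om} \hat C_{d\de} 1_{\om\de}$, I would rewrite this as
\[
\ga^{\la,\bc} N_\bj = u_{\la,\bc}\,\ga^{\om}\, \hat C_{d\de}\, \hat C_{\om\de}^{>0}\, e_\bj.
\]

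Next, I would analyze the factor after $u_{\la,\bc}$. Since $\ga^{\om} = \ga^{\om} 1_{\om\de}$, it sits inside $1_{\om\de}\hat C_{d\de}$, and $e_\bj \in 1_{\om\de} \hat C_{d\de} 1_{\om\de}$, so
\[
\ga^{\om} \hat C_{d\de} \hat C_{\om\de}^{>0} e_\bj \subseteq \ga^{\om}\,\bigl(1_{\om\de}\, \hat C_{d\de}\, \hat C_{\om\de}^{>0}\bigr).
\]
Invoking Lemma~\ref{LPos}(ii) identifies $1_{\om\de}\hat C_{d\de} \hat C_{\om\de}^{>0}$ with $1_{\om\de}\hat C_{d\de}^{>0} 1_{\om\de}$, which is concentrated in strictly positive degrees.

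Finally, Lemma~\ref{LDegU} tells us that $u_{\la,\bc}$ is homogeneous of degree $a_\la$, so multiplying by an element of strictly positive degree produces an element of degree strictly greater than $a_\la$. Hence
\[
\ga^{\la,\bc} N_\bj \subseteq u_{\la,\bc} \cdot \hat C_{d\de}^{>0} \subseteq \hat C_{d\de}^{>a_\la},
\]
as required. There is no serious obstacle here: the argument is a straightforward bookkeeping of degrees once Lemma~\ref{LCyc}(i) has been used to produce the factor $u_{\la,\bc}$ of controlled degree, with Lemma~\ref{LPos}(ii) providing the crucial degree strict positivity coming from the parabolic augmentation ideal.
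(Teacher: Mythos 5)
Your argument is correct and follows essentially the same strategy as the paper: apply Lemma~\ref{LCyc}(i) to peel off the factor $u_{\la,\bc}$ of degree $a_\la$ (Lemma~\ref{LDegU}), and then show the remaining factor lies in strictly positive degree. The only cosmetic difference is that the paper inserts the idempotent $e_{\la,\bc}$ via Lemma~\ref{LStyagIdemp} and then invokes Lemma~\ref{LPos}(i), whereas you use $\ga^\om = \ga^\om 1_{\om\de}$ together with Lemma~\ref{LPos}(ii); both yield the same conclusion.
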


\begin{proof} Recall that $u_{\la,\bc} =u_{\la,\bc} \ga^\om$. 
We have 
\[
\ga^{\la,\bc} N_{\bj} = 
\ga^{\la,\bc} \hat C_{d\de} (\hat C_{\om\de}^{>0}) e_{\bj}
= u_{\la,\bc} \hat C_{d\de}  (\hat C_{\om\de}^{>0}) e_{\bj}
= u_{\la,\bc} e_{\la,\bc} \hat C_{d\de}  (\hat C_{\om\de}^{>0}) e_{\bj},
 \] 
where the second equality comes from Lemma~\ref{LCyc}(i) since $1_{\om\de}$ is the identity element of $\hat C_{\om\de}$, and the last equality holds by Lemma~\ref{LStyagIdemp}. Now $e_{\la,\bc} \hat C_{d\de} 1_{\om\de}$ is non-negatively graded by Lemma~\ref{LPos}(i), and $\deg(u_{\la})=a_{\la}$ by Lemma~\ref{LDegU}, so the lemma follows. 
\end{proof}

Recalling  (\ref{EMlac}) and the homomorphism $\Theta$ from Theorem~\ref{THomZZ}, 
define the $\hat C_{d\de}$-submodule
\[
 \hat Z_{\la,\bc}:= 
 \{ v\in \hat T_{\la,\bc} \mid v \Theta(g) =
 \eps_{\la,\bc} (g)v \text{ for all } g\in \Si_{\la} \} \subseteq \hat T_{\la,\bc} .
\]
Clearly, $\hat Z_{\la,\bc} \subseteq \Zt_{\la,\bc}$, cf.~\eqref{EZt}. 

\begin{Lemma}\label{LUinZhat}
We have $u_{\la,\bc} \in \hat Z_{\la,\bc}$. 
\end{Lemma}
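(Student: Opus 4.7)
The plan is to apply Lemma~\ref{LZt} to deduce that $u_{\la,\bc}\in \Zt_{\la,\bc}$ and then upgrade the resulting congruence modulo $N_{\la,\bc}$ to an equality by a degree-count argument, using Lemmas~\ref{LDegU} and~\ref{LDegN} together with the gradedness of $\Theta$.

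First I would check that $u_{\la,\bc}$ is eligible. Under the identification of Lemma~\ref{L030216}, we have $u_{\la,\bc}=u_{\la_1,c_1}\otimes\cdots\otimes u_{\la_n,c_n}\in \hat C_{\la_1\de}\otimes\cdots\otimes \hat C_{\la_n\de}=\hat C_{\la\de}$, and applying Lemma~\ref{LStyagIdemp} in each tensor factor gives $u_{\la,\bc}\in \ga^{\la,\bc}\hat C_{\la\de}e_{\la,\bc}$. In particular $u_{\la,\bc}\in \hat C_{d\de}\ga^{\la,\bc}\hat C_{\la\de}e_{\la,\bc}\subseteq \Zt_{\la,\bc}$ by Lemma~\ref{LZt}, so by the very definition of $\Zt_{\la,\bc}$,
\[
u_{\la,\bc}\Theta(g)-\eps_{\la,\bc}(g)\,u_{\la,\bc}\in N_{\la,\bc}\qquad\text{for every }g\in \Si_\la.
\]

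Next, since $\ga^{\la,\bc}$ is an idempotent and $u_{\la,\bc}\in \ga^{\la,\bc}\hat C_{\la\de}e_{\la,\bc}$, we have $u_{\la,\bc}=\ga^{\la,\bc}u_{\la,\bc}$. Multiplying the above relation on the left by $\ga^{\la,\bc}$ therefore places the difference inside $\ga^{\la,\bc}N_{\la,\bc}$. Setting $\bj:=c_1^{\la_1}\cdots c_n^{\la_n}\in J^d$, so that $N_{\la,\bc}=N_{\bj}$, Lemma~\ref{LDegN} yields
\[
u_{\la,\bc}\Theta(g)-\eps_{\la,\bc}(g)\,u_{\la,\bc}\in \hat C_{d\de}^{>a_\la}.
\]

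Finally, by Theorem~\ref{THomZZ}(ii) the map $\Theta$ preserves degrees, and every $g\in \Si_\la$ lies in $W_d^0$, so $\Theta(g)\in \hat C_{d\de}^0$. Combined with Lemma~\ref{LDegU}, this shows that both $u_{\la,\bc}\Theta(g)$ and $\eps_{\la,\bc}(g)\,u_{\la,\bc}$ are homogeneous of degree $a_\la$. Hence their difference is homogeneous of degree $a_\la$, yet simultaneously lies in $\hat C_{d\de}^{>a_\la}$, forcing it to vanish. This gives $u_{\la,\bc}\Theta(g)=\eps_{\la,\bc}(g)\,u_{\la,\bc}$ for every $g\in \Si_\la$, which is exactly the assertion $u_{\la,\bc}\in \hat Z_{\la,\bc}$.

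The conceptual difficulty is that a priori one only has the relation modulo the large submodule $N_{\la,\bc}$; the key observation that unlocks the proof is that $u_{\la,\bc}$ sits precisely in the smallest degree surviving in $\ga^{\la,\bc}\hat T_{\la,\bc}$, so Lemma~\ref{LDegN} promotes the congruence to an equality with no further computation in the KLR relations required.
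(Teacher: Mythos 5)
Your proof is correct and follows essentially the same route as the paper's: deduce $u_{\la,\bc}\in\Zt_{\la,\bc}$ from Lemma~\ref{LZt} via $u_{\la,\bc}\in\ga^{\la,\bc}\hat C_{\la\de}e_{\la,\bc}$, then use the degree count from Lemmas~\ref{LDegU} and~\ref{LDegN} to kill the $N_{\la,\bc}$-error term. The only difference is that you are slightly more explicit about why the difference is homogeneous of degree $a_\la$ (invoking that $\Theta$ is degree-preserving and $\Si_\la\subseteq W_d^0$), a point the paper's proof leaves implicit.
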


\begin{proof}
By Lemma~\ref{LStyagIdemp} and the definition of $u_{\la,\bc}$,  we have $u_{\la,\bc}\in \ga^{\la,\bc} \hat C_{\la\de}e_{\la,\bc}$. Hence, by Lemma~\ref{LZt}, we get $u_{\la,\bc} \in \Zt_{\la,\bc}$. So for any $g\in \Si_{\la}$, we have $u_{\la,\bc}\Theta(g) - \eps_{\la,\bc}(g) u_{\la,\bc}\in \ga^{\la,\bc} N_{\la,\bc}$. By  Lemma~\ref{LDegU}, 
$u_{\la,\bc}\Theta(g) - \eps_{\la,\bc}(g) u_{\la,\bc}$ is homogeneous of degree $a_\la$. But $\ga^{\la,\bc} N_{\la,\bc}^{a_{\la}}=0$ by Lemma~\ref{LDegN}, so $u_{\la,\bc}\Theta(g) -\eps_{\la,\bc}(g)u_{\la,\bc}=0$.
\end{proof}

\begin{Lemma}\label{LLeftMult}
Let $(\la,\bc), (\mu,\bb) \in \La^{\col}(n,d)$. 
If $v\in \ga^{\mu,\bb}  \hat Z_{\la,\bc}$ is a homogeneous element of degree $a_{\mu}$, then $v=x u_{\la,\bc}$ for some $x\in \ga^{\mu,\bb} \hat C_{d\de} \ga^{\la,\bc}$. 
\end{Lemma}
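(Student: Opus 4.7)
The plan is to combine Lemma~\ref{LZt}, which expresses $\Zt_{\la,\bc}\supseteq \hat Z_{\la,\bc}$ modulo $N_{\la,\bc}$, with the key observation that multiplying $u_{\la,\bc}$ on the right by any element of $\hat C_{\om\de}^{>0}$ already lands inside $N_{\la,\bc}$; the degree bound from Lemma~\ref{LDegN} will then force the cleaned-up expression to be of the desired form $x u_{\la,\bc}$.

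Since $v\in \hat Z_{\la,\bc}\subseteq \Zt_{\la,\bc}$, Lemma~\ref{LZt} gives
\[
v\in \ga^{\mu,\bb}\hat C_{d\de}\ga^{\la,\bc}\hat C_{\la\de}e_{\la,\bc}+\ga^{\mu,\bb}N_{\la,\bc}.
\]
Applying Lemma~\ref{LCyc}(ii) on the right by $e_{\la,\bc}$, and using $u_{\la,\bc}=u_{\la,\bc}e_{\la,\bc}$ from Lemma~\ref{LStyagIdemp}, I would rewrite $\ga^{\la,\bc}\hat C_{\la\de}e_{\la,\bc}=u_{\la,\bc}\cdot e_{\la,\bc}\hat C_{\om\de}e_{\la,\bc}$. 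Since all of the spaces involved are graded and $v$ is homogeneous, this yields a decomposition
\[
v=\sum_i a_i u_{\la,\bc}w_i+n
\]
with homogeneous $a_i\in \ga^{\mu,\bb}\hat C_{d\de}\ga^{\la,\bc}$, homogeneous $w_i\in e_{\la,\bc}\hat C_{\om\de}e_{\la,\bc}$, and $n\in \ga^{\mu,\bb}N_{\la,\bc}$.

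The algebra $\hat C_{\om\de}\cong \hat C_\de^{\otimes d}$ is non-negatively graded, since in $R_\de$ any word $\bi\in I^\de$ has pairwise distinct entries, so each KLR crossing $\psi_r 1_\bi$ contributes degree $-\mathtt{c}_{i_r,i_{r+1}}\ge 0$. Hence each $\deg w_i\ge 0$. For indices with $\deg w_i=0$, Lemma~\ref{LParTr} gives $w_i=c_i e_{\la,\bc}$ for some $c_i\in\Z$, contributing $c_i a_i u_{\la,\bc}$ to the sum. For indices with $\deg w_i>0$, we have $u_{\la,\bc}w_i\in u_{\la,\bc}\hat C_{\om\de}^{>0}e_{\la,\bc}\subseteq \hat C_{d\de}\hat C_{\om\de}^{>0}e_{\la,\bc}=N_{\la,\bc}$, so the corresponding summand lies in $\ga^{\mu,\bb}N_{\la,\bc}$.

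Collecting terms, we obtain $v=x u_{\la,\bc}+n'$ with $x:=\sum_{\deg w_i=0}c_ia_i\in \ga^{\mu,\bb}\hat C_{d\de}\ga^{\la,\bc}$ and $n'\in \ga^{\mu,\bb}N_{\la,\bc}$. By Lemma~\ref{LDegN}, $\ga^{\mu,\bb}N_{\la,\bc}$ is concentrated in degrees strictly greater than $a_\mu$, whereas $v$ is of degree $a_\mu$; therefore $n'$ vanishes in degree $a_\mu$, and replacing $x$ by its homogeneous component of degree $a_\mu-a_\la$ yields $v=xu_{\la,\bc}$ as required. I do not expect a serious obstacle here; the only mild subtlety is the homogeneous choice of the decomposition in the second paragraph, which is routine given that all the spaces in question are graded.
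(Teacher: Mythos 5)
Your proof is correct and follows essentially the same route as the paper's, invoking the same key ingredients: Lemma~\ref{LZt}, Lemma~\ref{LCyc}(ii), Lemma~\ref{LParTr}, Lemma~\ref{LDegN}, non-negativity of $\hat C_{\om\de}$, and Lemma~\ref{LStyagIdemp}. The only difference is cosmetic — the paper kills the $\ga^{\mu,\bb}N_{\la,\bc}$-contribution immediately after Lemma~\ref{LZt} using homogeneity and Lemma~\ref{LDegN}, whereas you carry it along and discard it at the end — so the two arguments are interchangeable.
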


\begin{proof}
By Lemma~\ref{LZt}, we have 
$v\in \ga^{\mu,\bb} \hat C_{d\de} \ga^{\la,\bc} \hat C_{\la\de}e_{\la,\bc}+\ga^{\mu,\bb} N_{\la,\bc}$. By Lemma~\ref{LDegN}, $\ga^{\mu,\bb} N_{\la,\bc}^{a_{\mu}}=0$, so 
$v\in \ga^{\mu,\bb} \hat C_{d\de} \ga^{\la,\bc} \hat C_{\la\de}e_{\la,\bc}$. Hence,  by Lemma~\ref{LCyc}(ii), we have 
$v\in  \ga^{\mu,\bb} \hat C_{d\de} u_{\la,\bc} \hat C_{\om\de}e_{\la,\bc}$. We know that $\hat C_{\om\de}$ is non-negatively graded, so we have $v=v_1+v_2$ for some homogeneous elements 
$v_1 \in \ga^{\mu,\bb} \hat C_{d\de} u_{\la,\bc} \hat C_{\om\de}^0 e_{\la,\bc}$
and 
$v_2 \in \ga^{\mu,\bb} \hat C_{d\de} u_{\la,\bc} \hat C_{\om\de}^{>0} e_{\la,\bc}$, with $\deg(v_1)=\deg(v_2)=\deg(v)=a_{\mu}$. 
By definition~\eqref{ENLaC} of $N_{\la,\bc}$, we have  $v_2 \in \ga^{\mu,\bb} N_{\la,\bc}$, whence $v_2=0$ by Lemma~\ref{LDegN}. On the other hand,
by Lemma~\ref{LParTr}, we have $u_{\la,\bc} \hat C_{\om\de}^0 e_{\la,\bc}=\Z u_{\la,\bc}$, so $v=v_1 \in \ga^{\mu,\bb} \hat C_{d\de} u_{\la,\bc}$, and the result follows by Lemma~\ref{LStyagIdemp}. 
\end{proof}

\section{RoCK blocks and generalized Schur algebras}\label{SRoCKSchur}

As in \S\ref{SSRoCK},  we fix $d\in\Z_{>0}$ and a $d$-Rouquier core $\rho$ of residue $\kappa$.

\subsection{Identifying $W_d$ with $\ga^{\om} C_{\rho,d} \ga^{\om}$}\label{SSTruncIso}
Recall from (\ref{ECRhoD}) that we have the natural surjection
$$
\Pi\colon \hat C_{\rho,d}\to \hat C_{d\de}/\ker\Om=C_{\rho,d}.
$$ 
This yields the surjections
\begin{equation}\label{ETruncSurjection}
\Pi_{\om} \colon \ga^\om \hat C_{d\de}\ga^\om \to \ga^\om C_{\rho,d}\ga^\om, \quad \Pi_{\la,\bc} \colon \ga^{\la,\bc} \hat C_{d\de} \ga^\om \to \ga^{\la,\bc} C_{\rho,d} \ga^\om
\end{equation}
for $(\la,\bc)\in \La^\col(n,d)$. 
For any $x\in \hat C_{d\de}$, we often denote by $x$ its image $\Pi(x)$ in $C_{\rho,d}$.

We define the algebra homomorphism
\begin{equation}\label{EXi}
\Xi:=\Pi_\om \circ \Theta \colon W_d \to \ga^\om C_{\rho,d} \ga^\om 
\end{equation}
where 
$\Theta\colon W_d \to \ga^{\om} \hat C_{\rho,d}\ga^{\om}$ is as in  Theorem~\ref{THomZZ}. 
Our aim is to prove that $\Xi$ is an isomorphism by generalizing the arguments
of~\cite[Section 7]{Evseev}, where a similar statement is proved over a field containing an element of quantum characteristic $e$.  
We begin with the case where $d=1$, when $W_d=\Zig$. 

\begin{Lemma}\label{LXi01}
 For $d=1$ and each $a\in \{0,1\}$, the map\, $\Xi$\, restricts to a $\Z$-module isomorphism of graded components $\Zig^a\iso\ga^{\om} C_{\rho,1}^a \ga^{\om}$. 
\end{Lemma}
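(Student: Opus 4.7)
The plan is to factor $\Xi$ as in~\eqref{EXi}, namely $\Xi = \Pi_{\om}\circ \Theta$, and to handle each factor separately. By Theorem~\ref{THomZZ}(ii) specialized to $d=1$ (so $W_1 = \Zig$), the map $\Theta$ already restricts to a $\Z$-module isomorphism $\Zig^a \iso \ga^\om \hat C_{\de}^a \ga^\om$ for every $a\in \{0,1\}$. It therefore suffices to show that for $a\in \{0,1\}$ the surjection $\Pi_\om$ from~\eqref{ETruncSurjection} restricts to a $\Z$-module isomorphism $\ga^\om \hat C_{\de}^a \ga^\om \iso \ga^\om C_{\rho,1}^a \ga^\om$.

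First I would pin down both ranks. On the $\Zig$ side, \eqref{EDimZZ} gives $\DIM \Zig = (e-1)(1+q^2) + 2(e-2)q$, so $\Zig^0$ and $\Zig^1$ are free $\Z$-modules of ranks $e-1$ and $2(e-2)$ respectively; by the previous paragraph the same holds for $\ga^\om \hat C_{\de}^0 \ga^\om$ and $\ga^\om \hat C_{\de}^1 \ga^\om$. On the $C_{\rho,1}$ side, summing the formula of Lemma~\ref{LDimD=1} over all $k,j\in J$ and using that $\ga^\om = \sum_{j\in J} e_j = \sum_{j\in J} 1_{\bl^j}$ by~\eqref{EGaOm}, I obtain
\[
\DIM(\ga^\om C_{\rho,1} \ga^\om)
= \sum_{j,k\in J} \DIM(1_{\bl^k} C_{\rho,1} 1_{\bl^j})
= (e-1)(1+q^2) + 2(e-2)q,
\]
where the neighbor-pair count $2(e-2)$ in degree $1$ is immediate from the type $A_{e-1}$ quiver. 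In particular, $\ga^\om C_{\rho,1}^a \ga^\om$ is free over $\Z$ (as a direct summand of the free $\Z$-module $C_{\rho,1}$, cf.\ Lemma~\ref{LCFree}), of rank $e-1$ for $a=0$ and $2(e-2)$ for $a=1$, matching the ranks found on the $\hat C_\de$ side.

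Finally I would invoke the elementary fact that a surjective homomorphism between two free $\Z$-modules of the same finite rank is an isomorphism: the kernel is finitely generated torsion-free, and by rank-additivity in the split short exact sequence it has rank $0$, hence is zero. Applying this to $\Pi_\om$ in each graded component $a\in\{0,1\}$ produces the desired isomorphism and finishes the proof. No real obstacle is anticipated; the only nontrivial input is the graded-dimension calculation of Lemma~\ref{LDimD=1}, which has already been established.
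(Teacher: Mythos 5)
Your proof is correct and takes essentially the same approach as the paper's: both use Theorem~\ref{THomZZ}(ii) to reduce to a surjectivity statement and then invoke Lemma~\ref{LDimD=1}, \eqref{EDimZZ} and Lemma~\ref{LCFree} to match free $\Z$-module ranks in degrees $0$ and $1$. The only cosmetic difference is that you phrase the comparison via $\Pi_\om$ while the paper argues directly about $\Xi$; the content is identical.
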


\begin{proof}
By Theorem~\ref{THomZZ}(ii), $\Theta$ restricts to an isomorphism $\Zig^a\iso\ga^{\om} \hat C_{\rho,1}^a \ga^{\om}$, whence $\Xi$\, restricts to a surjection $\Zig^a\to\ga^{\om} C_{\rho,1}^a \ga^{\om}$.
 Moreover, by (\ref{EDimZZ}) and Lemmas~\ref{LCFree} and \ref{LDimD=1}, we have that $\Zig^a$ and $\ga^{\om} C_{\rho,1}^a \ga^{\om}$ are free $\Z$-modules of the same rank, which completes the proof. 
\end{proof}

\begin{Lemma}\label{LD=1Deg2}  
Let $d=1$ and $j\in J$. Then $C_{\rho,1}^2e_j=\Z(y_1-y_e)e_j$. 
\end{Lemma}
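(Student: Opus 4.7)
My plan breaks into two parts: identifying the rank-$1$ component $e_j C_{\rho,1}^2 e_j$, and then pinning down $(y_1-y_e)e_j$ as a $\Z$-generator of it.

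First I would verify that $e_j C_{\rho,1}^2 e_j$ is a free $\Z$-module of rank~$1$ and that the other blocks $1_\bi C_{\rho,1}^2 e_j$ do not contribute. Freeness of rank~$1$ is immediate from Lemma~\ref{LDimD=1}, since the $q^2$-coefficient of $\DIM(e_j C_{\rho,1} e_j) = 1 + q^2$ is $1$. For the vanishing of the cross terms, the argument of Lemma~\ref{Ldim1} extends verbatim to arbitrary semicuspidal idempotents: for $\bi \in I^\de_\scusp$,
\[
\DIM(1_\bi C_{\rho,1} e_j) = \sum_{\mu \in \Par_{\rho,1}} \Big(\sum_{\t \in \Std(\mu\sm\rho,\,\bi^{+\ka})} q^{\deg \t}\Big) \Big(\sum_{\t' \in \Std(\mu\sm\rho,\,(\bl^j)^{+\ka})} q^{\deg \t'}\Big).
\]
By Lemma~\ref{LUnion}, each $\mu\sm\rho$ is an $e$-hook, and by Lemma~\ref{LHookTab} only shapes of arm lengths $j-1$ (contributing degree $0$) and $j$ (contributing degree $1$) to the $\t'$-sum are relevant. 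A residue-sequence and degree analysis of $\bi$-standard hook tableaux then identifies $\bi = \bl^j$ as the only word for which the $q^2$-coefficient survives.

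Second, by Theorem~\ref{THomZZ}(i), $\Theta(\zc^{(j)}) = \pm(y_1-y_e)e_j$ in $\hat C_\de$; composing with $\Pi_\om$ deposits $(y_1-y_e)e_j$ in $e_j C_{\rho,1}^2 e_j$. Since $C_{\rho,1}$ is $\Z$-free (Lemma~\ref{LCFree}) and the target is $\Z$-free of rank~$1$, it is enough to show that $(y_1-y_e)e_j$ does not vanish after base change to any field $\k$. I would use Corollary~\ref{CCSymmetric}: $C_{\rho,1,\k}$ is symmetric with form $\tau$ of degree $-2$, so the induced pairing $e_j C_{\rho,1,\k}^2 e_j \times e_j C_{\rho,1,\k}^0 e_j \to \k$ is perfect between two $1$-dimensional spaces. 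Non-vanishing of $\tau((y_1-y_e)e_j)$ can be checked by transporting through $\bar\Omega$ of Corollary~\ref{Ctens} into $R^{\La_0}_{\al,\k}$ and invoking the explicit symmetrizing form of Theorem~\ref{TSVV} together with the degree formula for standard tableaux of hook shapes.

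The main obstacle is the primitivity in the second step; in particular, the reduction modulo every prime $p$ requires a uniform argument, for which the symmetric-algebra pairing seems cleanest. The combinatorial reduction in the first step is routine but requires careful bookkeeping of residues of hook tableaux to cover the cases when $I^{\de,k}$ contains more than one word.
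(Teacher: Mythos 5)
Your high-level strategy shares the paper's key ingredients (Lemma~\ref{LDimD=1} for the rank, Remark~\ref{RCrazy} for the reduction to fields, and Corollary~\ref{CCSymmetric} for the symmetrizing form), but both of your two steps have genuine problems.

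\textbf{Step 1 (cross terms).} The claim that ``$\bi = \bl^j$ is the only word for which the $q^2$-coefficient survives'' is false. For \emph{every} $\bi \in I^{\de,j}$ the $\bi$-standard tableau of hook shape $(j+1,1^{e-j-1})$ has degree~$1$, so the $q^2$-coefficient of $\qdim(1_\bi C_{\rho,1} e_j)$ equals $1$, not $0$. For instance when $e=4$, $j=2$ and $\bl^2 = 0132$, one computes directly from the degree formula that $\qdim(1_{0312}C_{\rho,1}1_{0132})=1+q^2$, so $1_{0312}C_{\rho,1}^2 e_j \ne 0$. (Equivalently: $R^{\La_0}_{\de}1_{\bi}$ and $R^{\La_0}_{\de}e_j$ are isomorphic as graded projectives for all $\bi\in I^{\de,j}$, so $1_\bi R^{\La_0}_\de e_j$ always has the same graded dimension $1+q^2$.) What is actually true, and what the paper's proof and the subsequent application in Corollary~\ref{CY1} use, is the statement for the truncation $e_j C_{\rho,1}^2 e_j$, whose rank~$1$ is immediate from Lemma~\ref{LDimD=1}; the assertion $C_{\rho,1}^2 e_j = e_j C_{\rho,1}^2 e_j$ in the first line of the paper's own proof is not needed and does not hold in general.

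\textbf{Step 2 (non-vanishing over every field).} The proposed use of the degree $(2,0)$ pairing is circular. The perfect pairing $e_j C_{\rho,1,\k}^2 e_j \times e_j C_{\rho,1,\k}^0 e_j \to \k$ tells you that $\tau((y_1-y_e)e_j)\ne 0$ \emph{if and only if} $(y_1-y_e)e_j \ne 0$, which is exactly what you are trying to prove; to break the circle you would need an independent computation of $\tau((y_1-y_e)e_j)$, and neither Theorem~\ref{TSVV} nor anything else in the paper gives a formula for the symmetrizing form that one can evaluate on such an element. The paper avoids this by pairing \emph{degree~$1$ with degree~$1$}: Lemma~\ref{LXi01} gives complete and explicit control of $\ga^\om C_{\rho,1,\k}^1 \ga^\om$ via $\Xi$, so one knows that $a^{k,j}=\pm\Xi(\za^{k,j})\ne 0$ for a neighbor $k$ of $j$; the symmetrizing form of degree $-2$ then produces $x=\Xi(\za^{j,k})$ with $F(xa^{k,j})\neq 0$; and finally $xa^{k,j}=\pm\Xi(\za^{j,k}\za^{k,j})=\pm(y_1-y_e)e_j$ is therefore nonzero. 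That is, the degree-$2$ element is realized as a product of two explicitly controlled degree-$1$ elements rather than being attacked directly. (The paper also needs a separate citation for $e=2$, where $j$ has no neighbor; your plan does not need that case split, but only because it does not close the argument in the first place.)
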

\begin{proof}
By Lemmas~\ref{LCFree} and \ref{LDimD=1}, the $\Z$-module $C_{\rho,1}^2e_j=e_jC_{\rho,1}^2e_j$ is free of rank $1$. It suffices to prove that $y:=(y_1-y_e)e_j\otimes 1$ generates $C_{\rho,1,\k}^2e_j$ over any field $\k$, i.e. that $y\neq 0$ for any field $\k$, cf. Remark~\ref{RCrazy}. This is proved in \cite[Proposition 7.2]{Evseev} for any field $\k$ containing an element of quantum characteristic $e$, in particular for $e=2$. So we may assume that $e>2$. By Corollary~\ref{CCSymmetric}, the algebra $C_{\rho,1,\k}$ has a symmetrizing form $F$ of degree $-2$. Since $e>2$, the element $j$ has a neighbor $k\in J$. In the rest of the proof, we write $x:=x\otimes 1\in C_{\rho,1,\k}$ for $x\in C_{\rho,1}$. Recalling the elements of $\hat C_\de$ introduced in \S\ref{SSExplicit}, note that $a^{k,j}\neq 0$ in $C_{\rho,1,\k}^1$ by Lemma~\ref{LXi01}. So there must exist an element $x\in C_{\rho,1,\k}^1$ such that $F(xa^{k,j})\neq 0$. Using Theorem~\ref{THomZZ} and Lemma~\ref{LXi01} again, we may assume that 
$x=\Xi (\za^{j,k})$, and hence
 $(y_1-y_e)e_j=\pm \Xi(\zc \ze_j)=\pm\Xi(\za^{j,k}\za^{k,j})=\pm a^{j,k}a^{k,j}\neq 0$. 
\end{proof}

Now we return to the case when $d\in \Z_{>0}$ is arbitrary.
By Lemma~\ref{L030216}, we have an embedding
\begin{equation}\label{EIotaChat}
\iota\colon \hat C_{\de} \to \hat C_{d\de},\; 
x\mapsto x\otimes 1_{(d-1)\de} \in \hat C_{\de} \otimes \hat C_{(d-1)\de} = \hat C_{\de,(d-1)\de} \subseteq \hat C_{d\de}. 
\end{equation}
In view of Theorem~\ref{THomZZ}(i), for any $j\in J$, we have
\begin{equation}\label{E120216}
\Theta(\ze_j[1])=\iota(e_j).
\end{equation}

\begin{Corollary}\label{CY1}
The element $y_1 \ga^\om\in C_{\rho,d}$ belongs to the image of $\Xi$.
\end{Corollary}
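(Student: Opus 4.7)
The plan is to reduce to the $d=1$ case, which is already explicit, via the parabolic embedding of the semicuspidal algebra. By Lemma~\ref{LD=1Deg2}, for each $j\in J$ the $\Z$-module $C_{\rho,1}^2 e_j$ is free of rank one with basis $(y_1-y_e)e_j$. Since $y_1 e_j$ is a degree-two element of $C_{\rho,1}e_j$, there exists an integer $\nu_j \in \Z$ with $y_1 e_j = \nu_j(y_1-y_e)e_j$ in $C_{\rho,1}$. Equivalently, $z_j := y_1 e_j - \nu_j(y_1-y_e)e_j$ lies in $\ker\Omega_1 \subseteq \hat C_\de$, where $\Omega_1\colon\hat C_\de\to R^{\La_0}_{\cont(\rho)+\de}$ is the homomorphism~\eqref{EOm} in the $d=1$ case.

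The key technical point I would then establish is that, for any $z\in\ker\Omega_1$ and any $y\in \hat C_{(d-1)\de}$, the product $z\otimes y \in \hat C_{\de}\otimes \hat C_{(d-1)\de} = \hat C_{\omega\de}\subseteq\hat C_{d\de}$ lies in $\ker\Omega_d$. Since $\rot_\kappa$ is defined on idempotents, $y_r$ and $\psi_s$ separately, it commutes with parabolic embeddings; so $\Omega_d(z\otimes y)$ can be rewritten as the image under $\pi_{\alpha_d}$ of
$$\iota_{\cont(\rho)+\de,\,(d-1)\de}\bigl((1_{\cont(\rho)}\otimes\rot_\kappa(z))\otimes\rot_\kappa(y)\bigr).$$
The claim then reduces to the observation that the parabolic embedding $\iota_{\cont(\rho)+\de,\,(d-1)\de}$ sends $I^{\La_0}_{\cont(\rho)+\de}\otimes R_{(d-1)\de}$ into $I^{\La_0}_{\cont(\rho)+d\de}$: indeed, each generator $y_1^{\delta_{i_1,0}}1_\bi$ of $I^{\La_0}_{\cont(\rho)+\de}$ is sent to a sum of elements $y_1^{\delta_{i_1,0}}1_{\bi\bj}$, each of which is a generator of $I^{\La_0}_{\cont(\rho)+d\de}$.

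Applying this with $z=z_j$ and $y=e_J^{\otimes d-1}$, and summing over $j \in J$, together with the decomposition $y_1\ga^\om=\sum_{j\in J} y_1 e_j\otimes e_J^{\otimes d-1}$ which follows from $\ga^\om = e_J^{\otimes d}$ and the fact that $y_1$ affects only the first tensor factor, I would obtain
$$y_1\ga^\om = \sum_{j\in J}\nu_j\bigl((y_1-y_e)e_j\otimes e_J^{\otimes d-1}\bigr) \quad \text{in } C_{\rho,d}.$$
By Theorem~\ref{THomZZ}(i), $\Theta(\zc^{(j)}[1]) = \epsilon_j (y_1-y_e)e_j\otimes e_J^{\otimes d-1}$ for some $\epsilon_j\in\{\pm 1\}$, hence $y_1\ga^\om = \Xi\bigl(\sum_{j\in J}\epsilon_j\nu_j\,\zc^{(j)}[1]\bigr)$ lies in the image of $\Xi$. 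The only delicate piece is verifying the factorization of $\Omega_d$ through the parabolic and tracking the behaviour of the cyclotomic ideals under parabolic embeddings; once this is in hand, the rest of the argument is routine.
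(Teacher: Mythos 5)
Your proof is correct and follows essentially the same route as the paper's: both reduce to the $d=1$ statement of Lemma~\ref{LD=1Deg2}, both pass to general $d$ through the parabolic embedding $\iota$ of \eqref{EIotaChat}, and both rest on the containment $\iota(\ker\Omega_1)\subseteq\ker\Omega_d$. The only difference is in how that containment is obtained. The paper invokes the algebra homomorphism $\zeta_{\cont(\rho)+\de,\,(d-1)\de}$ already constructed in \eqref{EZetaHom} (whose existence encapsulates precisely the fact that the cyclotomic ideal is carried forward by the parabolic embedding) and notes the identity $\zeta\circ\Omega_1=\Omega_d\circ\iota$, from which the kernel containment is immediate. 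You instead unroll this: you reduce $\Omega_d(z\otimes y)$ to a parabolic image using the compatibility of $\rot_\kappa$ with parabolic embeddings and associativity of the $\iota$'s, and then you check directly that each generator $y_1^{\de_{i_1,0}}1_{\bi}$ of $I^{\La_0}_{\cont(\rho)+\de}$ maps to a sum of generators $y_1^{\de_{i_1,0}}1_{\bi\bj}$ of $I^{\La_0}_{\cont(\rho)+d\de}$. That verification is precisely what makes $\zeta_{\th,\eta}$ well defined, so you are re-deriving inline a fact the paper already packaged as \eqref{EZetaHom}; your version is a little longer but not wrong. One small cosmetic point: to conclude you only need that $y_1\ga^\om$ lies in $\sum_j \Z(y_1-y_e)e_j\otimes e_J^{\otimes d-1}+\ker\Omega_d$ and then apply $\Pi$ and Theorem~\ref{THomZZ}(i); you do not actually need to name the integers $\nu_j$ or fix the signs $\epsilon_j$, and the paper's phrasing in \eqref{EY1GAOM} avoids doing so.
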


\begin{proof}
We have the (non-unital) algebra homomorphisms
$\Om_1 \colon \hat C_{\de} \to R^{\La_0}_{\cont(\rho)+\de}$ and 
$\Om_d \colon \hat C_{d\de} \to R^{\La_0}_{\cont(\rho)+d\de}$
defined as in~\eqref{EOm}. 
Recall the algebra homomorphism 
\[
\zeta:=\zeta_{\cont(\rho)+\de,\,(d-1)\de}\colon 
R^{\La_0}_{\cont(\rho)+\de}\to R^{\La_0}_{\cont(\rho)+\de,\,(d-1)\de}
\subseteq  R^{\La_0}_{\cont(\rho)+d\de}
\]
defined by~\eqref{EZetaHom}. It follows easily from the definitions that 
$\zeta\circ \Om_1 = \Om_d\circ \iota\colon \hat C_{\de} \to R^{\La_0}_{\cont(\rho)+d\de}$, whence $\iota(\ker \Om_1) \subseteq \ker \Om_d$.

Let $j\in J$. Identifying $\hat C_{\de}\otimes\hat C_{(d-1)\de}$ with $\hat C_{\de,(d-1)\de}\subseteq \hat C_{d\de}$ as usual, we have in $\hat C_{d\de}$: 
\begin{align*}
y_1 e_{j} \otimes 1_{(d-1)\de} =
\iota (y_1 e_j) &\in \iota( \Z(y_1-y_e)e_j + \ker \Om_1) \\
&= \Z (y_1-y_e)e_j \otimes 1_{(d-1)\de} + \iota(\ker \Om_1) \\
&\subseteq \Z (y_1-y_e) e_j \otimes 1_{(d-1)\de} + \ker\Om_d,
\end{align*}
where we have used Lemma~\ref{LD=1Deg2} for the first inclusion. Multiplying by $\ga^\om$, we get
\begin{align}
y_1\ga^\om=\sum_{j\in J}(y_1 e_{j} \otimes 1_{(d-1)\de} )\ga^\om
&\in \sum_{j\in J}(\Z (y_1-y_e) e_j \otimes 1_{(d-1)\de})\ga^\om + \ker\Om_d \notag
\\
&= \sum_{j\in J} \pm \Theta(\zc[1] \ze_j [1]) + \ker \Om_d,
\label{EY1GAOM}
\end{align}
where the last equality holds by Theorem~\ref{THomZZ}(i) and (\ref{E120216}). Now the lemma follows on applying $\Pi$.
\end{proof}

\begin{Theorem}\label{TXiIso}
The map $\Xi\colon W_d \to \ga^\om C_{\rho,d}\ga^\om$ is an  isomorphism of graded algebras. 
\end{Theorem}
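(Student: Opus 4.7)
The plan is to deduce the theorem by combining surjectivity, which follows from the structural generation result in Theorem~\ref{THomZZ}(iii) together with Corollary~\ref{CY1}, with a dimension count based on Corollary~\ref{CDimOmOm}. The grading preservation will be automatic from the construction of $\Xi$, so only bijectivity of the underlying map needs to be verified.

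First I would prove that $\Xi$ is surjective. By Theorem~\ref{THomZZ}(iii), the $\Z$-algebra $\ga^\om \hat C_{d\de} \ga^\om$ is generated by $\Theta(W_d)$ together with $y_1\ga^\om$. Since $\Pi_\om$ is surjective, the image $\ga^\om C_{\rho,d}\ga^\om$ is generated as a $\Z$-algebra by $\Pi_\om(\Theta(W_d))=\Xi(W_d)$ together with $\Pi_\om(y_1\ga^\om)$. But by Corollary~\ref{CY1}, the element $\Pi_\om(y_1\ga^\om)$ already lies in $\Xi(W_d)$. Since $\Xi(W_d)$ is a $\Z$-subalgebra, we conclude $\Xi(W_d) = \ga^\om C_{\rho,d}\ga^\om$.

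Next I would compare ranks as free $\Z$-modules. The algebra $W_d = \Zig^{\otimes d}\rtimes \Z\Si_d$ is visibly $\Z$-free of rank $d!\,(\dim\Zig)^d = d!(4e-6)^d$ by~\eqref{EDimZZ}. On the other hand, $C_{\rho,d}$ is $\Z$-free of finite rank by Lemma~\ref{LCFree}, so its idempotent truncation $\ga^\om C_{\rho,d}\ga^\om$ is also $\Z$-free of finite rank, and by Corollary~\ref{CDimOmOm} this rank equals $d!(4e-6)^d$.

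Finally, any surjective $\Z$-module homomorphism between free $\Z$-modules of equal finite rank is an isomorphism (the kernel is a torsion-free submodule of rank $0$). Applied to the surjection $\Xi\colon W_d \onto \ga^\om C_{\rho,d}\ga^\om$, this shows $\Xi$ is bijective. Since both $\Theta$ and $\Pi_\om$ preserve the grading, so does $\Xi$, completing the proof. There is no genuine obstacle here once the preparatory Corollaries~\ref{CY1} and~\ref{CDimOmOm} are in hand; the only subtle point is ensuring the degree-$2$ element $y_1\ga^\om$ is captured by $\Xi$, which was the content of Corollary~\ref{CY1} (and ultimately of the degree-$2$ computation in Lemma~\ref{LD=1Deg2}).
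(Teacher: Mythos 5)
Your proposal follows the paper's proof essentially verbatim: surjectivity from Theorem~\ref{THomZZ}(iii) and Corollary~\ref{CY1}, then the equality of $\Z$-ranks from Corollary~\ref{CDimOmOm}. The only difference is that you spell out explicitly the passage through the surjection $\Pi_\om$, which the paper leaves implicit.
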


\begin{proof}
By Theorem~\ref{THomZZ}(iii), the algebra $\ga^\om C_{\rho,d}\ga^\om$ is generated by 
$\Xi(W_d)$ together with the element $y_1 \ga^\om$. 
But $y_1 \ga^\om\in \Xi(W_d)$ by Corollary~\ref{CY1}, so $\Xi$ is surjective. 
By Corollary~\ref{CDimOmOm}, the algebras $W_d$ and 
$\ga^\om C_{\rho,d}\ga^\om$ are $\Z$-free of the same rank, and the result follows. 
\end{proof}

\begin{Lemma}\label{LProjCRhoD}
Let $\k$ be a field with $\charact \k =0$ or $\charact \k>d$.
The left module $C_{\rho,d,\k} \ga^\om$ is a projective generator for the algebra $C_{\rho,d,\k}$. 
\end{Lemma}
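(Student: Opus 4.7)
The plan is to derive this from Lemma~\ref{LProjGen} by using that $C_{\rho,d,\k}$ is a quotient of $\hat C_{d\de,\k}$ (via $\Pi$ tensored with $\k$), so that projectivity and generation transfer through the quotient map.

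First, projectivity is essentially free: the image $\ga^\om$ in $C_{\rho,d,\k}$ remains idempotent (as $\Pi$ is an algebra homomorphism), so $C_{\rho,d,\k}\ga^\om$ is a direct summand of the regular left module $C_{\rho,d,\k}$ and hence projective. Also, $C_{\rho,d,\k}$ is finite-dimensional by Lemma~\ref{LCFree} together with scalar extension, so the usual characterization of a projective generator as a projective module whose direct summands exhaust every simple module applies.

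Next, for the generator property, I would argue as follows. Any simple $C_{\rho,d,\k}$-module $L$ pulls back along the surjection $\Pi\otimes_{\Z}\k\colon \hat C_{d\de,\k}\twoheadrightarrow C_{\rho,d,\k}$ to a simple $\hat C_{d\de,\k}$-module. By Lemma~\ref{LProjGen}, under the hypothesis on $\charact\k$, the module $\hat C_{d\de,\k}\ga^\om$ is a projective generator for $\hat C_{d\de,\k}$, so $\ga^\om L\ne 0$. Since the action of $\ga^\om$ on $L$ is the same whether $L$ is viewed as an $\hat C_{d\de,\k}$- or as a $C_{\rho,d,\k}$-module, we conclude $\ga^\om L\ne 0$ in $C_{\rho,d,\k}$ as well. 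Equivalently, $\Hom_{C_{\rho,d,\k}}(C_{\rho,d,\k}\ga^\om,L)\cong \ga^\om L\ne 0$, so every simple $C_{\rho,d,\k}$-module occurs in the head of $C_{\rho,d,\k}\ga^\om$.

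Combining the two paragraphs gives that $C_{\rho,d,\k}\ga^\om$ is a projective generator for $C_{\rho,d,\k}$. I do not anticipate a genuine obstacle here: the entire argument is a routine transfer along the quotient $\Pi$, with Lemma~\ref{LProjGen} doing all of the real work (the characteristic hypothesis is inherited from there, and the main subtlety is only the minor point that the image of an idempotent is an idempotent so that no new projectivity check is required).
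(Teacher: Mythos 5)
Your proof is correct and matches the paper's argument almost exactly: both reduce to showing $\ga^\om L\ne 0$ for each simple $C_{\rho,d,\k}$-module $L$, pull $L$ back to a simple $\hat C_{d\de,\k}$-module through the quotient map, and then apply Lemma~\ref{LProjGen}. The only difference is that you spell out the (routine) projectivity and finite-dimensionality observations that the paper leaves implicit.
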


\begin{proof}
As $C_{\rho,d,\k}\ga^\om$ is projective, it is enough  to show that 
for every simple $C_{\rho,d,\k}$-module $L$ we have 
$\ga^\om L\cong \Hom_{C_{\rho,d,\k}} (C_{\rho,d,\k}\ga^\om, L) \ne 0$. 
But $L$ may also be viewed as a simple $\hat C_{d\de,\k}$-module via the natural surjection 
$\hat C_{d\de,\k} \twoheadrightarrow C_{\rho,d,\k}$. By Lemma~\ref{LProjGen}, the module $\hat C_{d\de,\k} \ga^\om$ is a projective generator for $\hat C_{d\de,\k}$, whence 
$\ga^\om L \cong \Hom_{\hat C_{d\de,\k}} (\hat C_{d\de,\k}\ga^\om, L) \ne 0$.
\end{proof}

\begin{Corollary}\label{CFaithful}
The $C_{\rho,d}$-module
$C_{\rho,d} \ga^\om$ is faithful. 
\end{Corollary}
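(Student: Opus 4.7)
The plan is to deduce faithfulness over $\Z$ from faithfulness after base change to a field of characteristic zero, using the fact that $C_{\rho,d}$ is a free $\Z$-module (Lemma~\ref{LCFree}). Concretely, I would proceed as follows.

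First, I would observe that the natural map $C_{\rho,d} \to C_{\rho,d,\Q}$, $x \mapsto x\otimes 1$, is injective since $C_{\rho,d}$ is free of finite rank over $\Z$. Moreover, this map sends $\ga^\om$ to $\ga^\om \otimes 1$, which we still denote $\ga^\om$ in $C_{\rho,d,\Q}$. So if $x \in C_{\rho,d}$ annihilates the left module $C_{\rho,d}\ga^\om$, meaning $x \cdot (y\ga^\om) = 0$ for all $y\in C_{\rho,d}$, then a straightforward $\Q$-linearity argument shows that $x\otimes 1 \in C_{\rho,d,\Q}$ annihilates the left module $C_{\rho,d,\Q}\ga^\om$.

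Next, I would invoke Lemma~\ref{LProjCRhoD} with $\k = \Q$ (which has characteristic $0$), to conclude that $C_{\rho,d,\Q}\ga^\om$ is a projective generator for $C_{\rho,d,\Q}$. Since any projective generator over an algebra is a faithful module (its annihilator is a two-sided ideal contained in the annihilator of every simple module, hence zero), it follows that $x\otimes 1 = 0$ in $C_{\rho,d,\Q}$. Combined with the injectivity of $C_{\rho,d} \hookrightarrow C_{\rho,d,\Q}$, this gives $x = 0$, and hence $C_{\rho,d}\ga^\om$ is faithful as a left $C_{\rho,d}$-module.

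There is no genuine obstacle here: the entire argument is a routine reduction to the projective generator statement of Lemma~\ref{LProjCRhoD}, together with the $\Z$-freeness of $C_{\rho,d}$. The only point that requires minor care is verifying that annihilation over $\Z$ passes to annihilation over $\Q$ after base change, but this is immediate from the identification $C_{\rho,d,\Q} = C_{\rho,d}\otimes_\Z \Q$ and the $\Q$-bilinearity of multiplication.
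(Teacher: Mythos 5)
Your proof is correct and follows the same route as the paper: reduce to $\Q$ using the $\Z$-freeness of $C_{\rho,d}$ (Lemma~\ref{LCFree}), then apply Lemma~\ref{LProjCRhoD} to see that $C_{\rho,d,\Q}\ga^\om$ is a projective generator, hence faithful. The only difference is that you spell out the base-change and projective-generator-implies-faithful steps in a bit more detail.
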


\begin{proof}
By Lemma~\ref{LCFree}, the algebra $C_{\rho,d}$ is $\Z$-free, so it is enough to show that the $C_{\rho,d,\Q}$-module 
$C_{\rho,d,\Q} \ga^\om$ is faithful. By Lemma~\ref{LProjCRhoD}, this module is a projective generator for $C_{\rho,d,\Q}$, and the result follows.
\end{proof}

\subsection{Identifying $\ga^{\la,\bc} C_{d\de} \ga^{\om}$ with $M_{\la,\bc}$}

Let $n\in \Z_{>0}$ and $(\la,\bc) \in \La^{\col} (n,d)$. 
By Theorem~\ref{THomZZ}, the right 
$\ga^{\om} \hat C_{d\de} \ga^{\om}$-module
$\ga^{\la,\bc} \hat C_{d\de}\ga^{\om}$ becomes a right $W_d$-module via the map $\Theta$. Moreover, by Theorem~\ref{TXiIso},  the right 
$\ga^{\om} C_{\rho,d} \ga^{\om}$-module
$\ga^{\la,\bc} C_{\rho,d}\ga^{\om}$ becomes a right 
$W_d$-module via the map $\Xi$. In other words:
\begin{align}
  v  z &:= v \Theta(z) \qquad (v\in  
  \ga^{\la,\bc} \hat C_{d\de}\ga^{\om},\ z \in W_d), 
  \label{EVThetaZ}
  \\
  v z &:= v \Xi(z) \qquad (v\in  \ga^{\la,\bc} C_{\rho,d}\ga^{\om},\ z \in W_d).
  \label{EVZ}
\end{align}
It is clear from the definitions that 
$\Pi_{\la,\bc}\colon \ga^{\la,\bc} \hat C_{d\de} \ga^{\om}\to \ga^{\la,\bc} C_{\rho,d} \ga^{\om}$ is a surjective homomorphism of $W_d$-modules. 

Recall the colored permutation $W_d$-module $M_{\la,\bc}$ with generator $m_{\la,\bc}=1_{\la,\bc}\otimes \ze_{\la,\bc}$ defined by~\eqref{EMLaC}
and the
element $u_{\la,\bc} \in \ga^{\la,\bc} C_{d\de} \ga^\om$ introduced in~\S\ref{SSLaOm}. 

\begin{Lemma}\label{LModHatHom}
There is a degree-preserving $W_d$-module homomorphism
\[
\theta_{\la,\bc} \colon M_{\la,\bc} \to q^{-a_\la}\ga^{\la,\bc} \hat C_{d\de} \ga^{\om}, \; m_{\la,\bc} \mapsto u_{\la,\bc}.
\]
\end{Lemma}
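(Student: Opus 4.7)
The plan is to build the desired homomorphism in two steps: first define a right $W_d$-linear map out of the free cyclic submodule $\ze_{\la,\bc}W_d$, then check that it descends to the tensor product $M_{\la,\bc} = \alt_{\la,\bc}\otimes_{W_{\la,\bc}}\ze_{\la,\bc}W_d$. All the necessary ingredients are already in hand.

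First I would observe that, by Lemma~\ref{LStyagIdemp}, $u_{\la,\bc} = u_{\la,\bc}e_{\la,\bc}$, and by~\eqref{EThLaC}, $\Theta(\ze_{\la,\bc}) = e_{\la,\bc}$. Hence $u_{\la,\bc}\Theta(\ze_{\la,\bc}) = u_{\la,\bc}$, which means the prescription $\ze_{\la,\bc}z\mapsto u_{\la,\bc}\Theta(z)$ gives a well-defined right $W_d$-module map $f\colon \ze_{\la,\bc}W_d \to \ga^{\la,\bc}\hat C_{d\de}\ga^{\om}$, where the right $W_d$-action on the target is via $\Theta$ as in~\eqref{EVThetaZ}. (That the image lands in $\ga^{\la,\bc}\hat C_{d\de}\ga^{\om}$ follows from $u_{\la,\bc} \in \ga^{\la,\bc}\hat C_{d\de}e_{\la,\bc}$ and $e_{\la,\bc}\Theta(z) = \Theta(\ze_{\la,\bc}z) \in \ga^\om \hat C_{d\de}\ga^\om$.)

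To descend $f$ to $M_{\la,\bc}$ it suffices to verify, for every $g\in \Si_\la$ and $z\in W_d$, the balancing relation $f((\ze_{\la,\bc}\otimes g)\cdot \ze_{\la,\bc}z) = \eps_{\la,\bc}(g) f(\ze_{\la,\bc}z)$. Since $g\in \Si_\la$ permutes only within blocks of equal color, $g$ commutes with $\ze_{\la,\bc}$ in $W_d$, so $(\ze_{\la,\bc}g)(\ze_{\la,\bc}z) = \ze_{\la,\bc}(g z)$ and therefore
\[
f\bigl((\ze_{\la,\bc}\otimes g)\ze_{\la,\bc}z\bigr) = u_{\la,\bc}\Theta(g)\Theta(z).
\]
By Lemma~\ref{LUinZhat}, $u_{\la,\bc}\in \hat Z_{\la,\bc}$, i.e.\ $u_{\la,\bc}\Theta(g) = \eps_{\la,\bc}(g)u_{\la,\bc}$, and the required identity follows. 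Consequently there is a well-defined right $W_d$-module homomorphism $\theta_{\la,\bc}\colon M_{\la,\bc}\to \ga^{\la,\bc}\hat C_{d\de}\ga^{\om}$ with $m_{\la,\bc}\mapsto u_{\la,\bc}$.

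Finally, by Lemma~\ref{LDegU} we have $\deg u_{\la,\bc}=a_\la$, whereas $m_{\la,\bc}$ sits in degree $0$ by construction; hence, viewing the target as $q^{-a_\la}\ga^{\la,\bc}\hat C_{d\de}\ga^{\om}$, the element $u_{\la,\bc}$ has degree $0$ and $\theta_{\la,\bc}$ is degree-preserving. The only nontrivial input is the containment $u_{\la,\bc}\in \hat Z_{\la,\bc}$, which was established in Lemma~\ref{LUinZhat} and is the main technical point; everything else here is formal.
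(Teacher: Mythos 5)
Your proof is correct and follows essentially the same route as the paper: it relies on the same three ingredients (the identity $u_{\la,\bc}\Theta(\ze_{\la,\bc})=u_{\la,\bc}$ from Lemma~\ref{LStyagIdemp} and~\eqref{EThLaC}, the sign-equivariance $u_{\la,\bc}\in\hat Z_{\la,\bc}$ from Lemma~\ref{LUinZhat}, and the degree computation from Lemma~\ref{LDegU}). The only cosmetic difference is that you verify the balancing relation directly on $\ze_{\la,\bc}W_d$, whereas the paper packages the same check as the construction of a $W_{\la,\bc}$-module map $\alt_{\la,\bc}\to q^{-a_\la}\ga^{\la,\bc}\hat C_{d\de}\ga^\om$ which then induces $\theta_{\la,\bc}$ by Frobenius reciprocity; these are equivalent ways of invoking the universal property of the induced module.
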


\begin{proof}
By~\eqref{EThLaC} and Lemma~\ref{LStyagIdemp}, we have 
$u_{\la,\bc}\Theta(\ze_{\la,\bc}) =u_{\la,\bc}e_{\la,\bc}=u_{\la,\bc}$. By Lemma~\ref{LUinZhat}, 
for any $g\in \Si_{\la}$ we have 
$u_{\la,\bc} \Theta(g) = \eps_{\la,\bc} (g) u_{\la,\bc}$. Using Lemma~\ref{LDegU}, we deduce that there is a degree-preserving $W_{\la,\bc}$-module homomorphism 
$\alt_{\la,\bc} \to q^{-a_\la}\ga^{\la,\bc} \hat C_{d\de}\ga^\om,\ 1_{\la,\bc} \mapsto u_{\la,\bc}$. This map induces a $W_d$-module homomorphism $\theta_{\la,\bc}$ as in the statement of the lemma. 
\end{proof}

From now on, we write $\bar u_{\la,\bc}:=\Pi_{\la,\bc}(u_{\la,\bc})\in C_{\rho,d}$. 

\begin{Theorem}\label{TModIso}
For any $(\la,\bc) \in \La^\col (n,d)$,
there is an   isomorphism of graded $W_d$-modules: 
\[
\eta_{\la,\bc} \colon M_{\la,\bc} \iso q^{-a_\la}\ga^{\la,\bc} C_{\rho,d} \ga^{\om}, \; m_{\la,\bc}  \mapsto \bar u_{\la,\bc}.
\]
\end{Theorem}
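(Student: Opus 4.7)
The plan is to take $\eta_{\la,\bc}$ to be the composition $\Pi_{\la,\bc} \circ \theta_{\la,\bc}$, where $\theta_{\la,\bc}$ is the map from Lemma~\ref{LModHatHom} and $\Pi_{\la,\bc}$ is the surjection from~\eqref{ETruncSurjection}. By Lemma~\ref{LModHatHom}, $\theta_{\la,\bc}$ is a degree-preserving homomorphism of right $W_d$-modules when the target carries the $W_d$-action induced from $\Theta$ via~\eqref{EVThetaZ}. Since $\Xi = \Pi_\om \circ \Theta$, the map $\Pi_{\la,\bc}$ intertwines the $W_d$-actions~\eqref{EVThetaZ} and~\eqref{EVZ}, so $\eta_{\la,\bc}$ is a degree-preserving morphism in the category of right $W_d$-modules, and by construction it sends $m_{\la,\bc}$ to $\bar u_{\la,\bc}$.

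For surjectivity, I would invoke Corollary~\ref{CCyc}, which says that $u_{\la,\bc}$ generates $\ga^{\la,\bc}\hat C_{d\de}\ga^\om$ as a right $\ga^\om \hat C_{d\de}\ga^\om$-module. Applying $\Pi_{\la,\bc}$ and using Theorem~\ref{THomZZ} together with Theorem~\ref{TXiIso}, we deduce that $\bar u_{\la,\bc}$ generates $\ga^{\la,\bc}C_{\rho,d}\ga^\om$ as a right $W_d$-module under the action~\eqref{EVZ}. Hence $\eta_{\la,\bc}$ is surjective.

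For injectivity, I would use a rank count. The module $M_{\la,\bc}$ is $\Z$-free of the rank computed in Lemma~\ref{LBasisMLaC}, and $\ga^{\la,\bc} C_{\rho,d}\ga^\om$ is a $\Z$-module direct summand (via the orthogonal idempotents $\ga^{\la,\bc}$ and $\ga^\om$) of the finitely generated free $\Z$-module $C_{\rho,d}$ of Lemma~\ref{LCFree}, so it too is free of finite rank. Corollary~\ref{CDimEqual} then tells us the two $\Z$-ranks coincide, and a surjection between free $\Z$-modules of the same finite rank is automatically an isomorphism.

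The only step requiring real work has already been done: constructing $\theta_{\la,\bc}$ in Lemma~\ref{LModHatHom} (which in turn rests on Lemma~\ref{LUinZhat} and hence on the rigidity statement Lemma~\ref{LDegN}), and proving the generation statement in Corollary~\ref{CCyc}. Given these, and the dimension equality Corollary~\ref{CDimEqual} derived from the combinatorial Theorem~\ref{TComb}, the proof is essentially a short assembly. The only conceptual point one should be slightly careful about is that $\eta_{\la,\bc}$ really does preserve the $W_d$-module structure through the composition $\Xi = \Pi_\om \circ \Theta$; but this is immediate because $\Pi\colon \hat C_{d\de}\to C_{\rho,d}$ is an algebra homomorphism intertwining left and right multiplications by elements of $\ga^\om \hat C_{d\de}\ga^\om$ with their images in $\ga^\om C_{\rho,d}\ga^\om$.
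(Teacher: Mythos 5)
Your proof is correct and follows exactly the paper's argument: define $\eta_{\la,\bc} = \Pi_{\la,\bc}\circ\theta_{\la,\bc}$, get surjectivity from Corollary~\ref{CCyc} combined with Theorem~\ref{TXiIso}, and conclude via the rank equality of Corollary~\ref{CDimEqual} together with $\Z$-freeness. The only cosmetic difference is that you invoke Theorem~\ref{THomZZ} in the surjectivity step, which is not needed once Theorem~\ref{TXiIso} is in hand.
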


\begin{proof}
Let $\theta_{\la,\bc}$ be as in Lemma~\ref{LModHatHom}. We have a homomorphism of $W_d$-modules 
$$\eta_{\la,\bc}:=\Pi_{\la,\bc}\circ \theta_{\la,\bc}\colon M_{\la,\bc} \to q^{-a_\la}\ga^{\la,\bc} C_{\rho,d} \ga^{\om}, \; m_{\la,\bc}  \mapsto \bar u_{\la,\bc}.$$ 
By Corollary~\ref{CCyc}, the right 
$\ga^\om C_{\rho,d} \ga^\om$-module $q^{-a_\la}\ga^{\la,\bc} C_{\rho,d} \ga^{\om}$ 
is generated by $\bar u_{\la,\bc}$. Using Theorem~\ref{TXiIso}, we conclude that $\eta_{\la,\bc}$ is surjective.
By Corollary~\ref{CDimEqual}, the $\Z$-modules $M_{\la,\bc}$ and 
$q^{-a_\la}\ga^{\la,\bc} C_{\rho,d} \ga^{\om}$ are free of the same (ungraded) rank, and the theorem follows. 
\end{proof}

\subsection{The algebra $E(n,d)$ and the double $D_Q (n,d)$}\label{SSEnd}

Fix $n\in \Z_{>0}$.
Recall the tuple $\bc^0 \in J^{n(e-1)}$, the modules $M^\la=M_{\la,\bc^0}$ and the algebra $S^\Zig(n,d)$ from \S\ref{SSTDSA}.
Let $\la \in \La(n(e-1),d)$. Define the idempotent
\[
 f^\la:= \ga^{\la,\bc^0}.
\]
Recall the integer $$a_\la =-e\sum_{t=1}^{n(e-1)}\la_t(\la_t-1)/2= \deg(u_{\la,\bc^0}),$$ 
see~\eqref{Eala} and Lemma~\ref{LDegU}. 
In the sequel, we abbreviate 
\begin{align*}
u_{\la}&:=u_{\la,\bc^0}, \quad \bar u_\la:=\bar u_{\la,\bc^0},\quad 
e_{\la}:=e_{\la,\bc^0},\quad \ze_{\la}:=\ze_{\la,\bc^0}, \quad \eps_{\la}:=\eps_{\la,\bc^0}, \\
\theta_{\la}&:=
\theta_{\la,\bc^0}\colon M^\la \to q^{-a_\la}f^\la \hat C_{d\de} \ga^\om,\quad
\eta_{\la}
:=
\eta_{\la,\bc^0}\colon M^\la \iso q^{-a_\la}f^\la C_{\rho,d} \ga^\om,
\end{align*}
where $\theta_{\la,\bc^0}$ is the homomorphism of Lemma~\ref{LModHatHom} and $\eta_{\la,\bc^0}$ is the isomorphism of 
Theorem~\ref{TModIso}.

Define the left $C_{\rho,d}$-module
\[
\Ga (n,d):= \bigoplus_{\la\in \La(n(e-1),d)} q^{a_\la} C_{\rho,d} f^\la
\]
and the algebra
\begin{equation}\label{EDefE}
 E(n,d):= \End_{C_{\rho,d}} (\Gamma (n,d))^{\op}.
\end{equation}
Let $\la,\mu\in \La(n(e-1),d)$. We identify the (graded) 
$\Z$-module $q^{a_{\la}-a_{\mu}} f^\mu C_{\rho,d} f^\la$ with the $\Z$-submodule of $E(n,d)$ consisting of the endomorphisms that send the summand $q^{a_\mu}C_{\rho,d} f^\mu$ to $q^{a_\la}C_{\rho,d} f^\la$ and send the other summands to zero. Specifically, an element $x\in q^{a_{\la}-a_{\mu}} f^\mu C_{\rho,d} f^\la$ corresponds to the homomorphism given by the right multiplication:
\[
q^{a_\mu}C_{\rho,d} f^\mu \to q^{a_\la}C_{\rho,d} f^\la,\ v\mapsto vx.
\]
Thus,
\begin{equation}\label{EEDec}
 E(n,d) = \bigoplus_{\la,\mu\in \La(n(e-1))} q^{a_\la-a_\mu} f^\mu C_{\rho,d} f^\la.
\end{equation}

 Let
$x\in q^{a_\la-a_\mu} f^\mu C_{\rho,d} f^\la$. Recalling the right $W_d$-module structure (\ref{EVZ}),  we have a $W_d$-module homomorphism 
\[
q^{-a_\la}f^\la C_{\rho,d} \ga^\om \to q^{-a_\mu}f^\mu C_{\rho,d} \ga^\om,\ 
v \mapsto xv. 
\] 
Identifying $q^{-a_\la}f^\la C_{\rho,d} \ga^\om$ with $M^\la$ and 
$q^{-a_\mu}f^\mu C_{\rho,d} \ga^\om$ with $M^\mu$ via the isomorphisms of Theorem~\ref{TModIso}, we obtain an element $\Phi(x)\in \Hom_{W_d} (M^\la,M^\mu)$. In other words,
\[
\Phi(x)\colon M^\la \to M^\mu, \ v \mapsto 
\eta_{\mu}^{-1} ( x\eta_\la (v)) \qquad (v\in M^\la). 
\]
Recall from \S\ref{SSTDSA} that $\Hom_{W_d} (M^\la,M^\mu)$ is identified with $\xi_\mu S^{\Zig}(n,d)\xi_\la$. 
The assignments $x\mapsto \Phi(x)$ for all $\la,\mu\in \La(n(e-1))$ and all $x\in q^{a_\la-a_\mu} f^\mu C_{\rho,d} f^\la $ extend uniquely to a $\Z$-linear map 
\[
\Phi \colon E(n,d) \to S^{\Zig} (n,d). 
\]

\begin{Lemma}\label{LPhiHom}
The map $\Phi\colon E(n,d) \to S^\Zig (n,d)$ is a homomorphism of graded algebras. 
\end{Lemma}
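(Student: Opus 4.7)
The plan is to verify the three defining properties of an algebra homomorphism---$\Z$-linearity, preservation of the identity and multiplicativity---together with the grading condition, directly from the definition of $\Phi$ via the isomorphisms $\eta_\la\colon M^\la\iso q^{-a_\la} f^\la C_{\rho,d}\ga^\om$ supplied by Theorem~\ref{TModIso}. Linearity is built into the construction. The key structural observation, which makes the rest routine, is that under the identification $x\mapsto \phi_x$ of elements of $f^\mu C_{\rho,d} f^\la$ with right-multiplication endomorphisms, the $\op$-product on $E(n,d)$ corresponds to ordinary multiplication in $C_{\rho,d}$: for $x\in f^\mu C_{\rho,d} f^\la$ and $y\in f^{\la'} C_{\rho,d} f^\beta$, the composition $\phi_y\circ\phi_x$ is non-zero only when $\la=\la'$, in which case it equals $\phi_{xy}$.

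For the grading, I would track degrees through the defining equation $\Phi(x)(v)=\eta_\mu^{-1}(x\eta_\la(v))$. Taking $x$ homogeneous of degree $k$ in $q^{a_\la-a_\mu}f^\mu C_{\rho,d} f^\la$ (i.e.\ of degree $k-a_\la+a_\mu$ in the unshifted space) and $v\in M^\la$ of degree $j$ (so that $\eta_\la(v)$ has degree $j+a_\la$ in $f^\la C_{\rho,d}\ga^\om$), a short tally shows that $x\eta_\la(v)$ has degree $k+j+a_\mu$ in $f^\mu C_{\rho,d}\ga^\om$, hence degree $k+j$ in $q^{-a_\mu} f^\mu C_{\rho,d}\ga^\om$, so $\Phi(x)(v)\in M^\mu$ has degree $k+j$. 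Thus $\Phi(x)$ is homogeneous of degree $k$ as a homomorphism $M^\la\to M^\mu$, matching the degree of $x$ in $\xi_\mu S^\Zig(n,d)\xi_\la$. The shifts $q^{a_\la}$ in the definition of $\Gamma(n,d)$ were chosen precisely to make this cancellation clean.

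For multiplicativity, given $x\in q^{a_\la-a_\mu} f^\mu C_{\rho,d}f^\la$ and $y\in q^{a_\nu-a_{\la'}} f^{\la'} C_{\rho,d} f^\nu$, both the product $x\cdot y$ in $E(n,d)$ and the composition $\Phi(x)\circ\Phi(y)$ in $S^\Zig(n,d)$ vanish unless $\la=\la'$. Assuming $\la=\la'$, I would compute $\Phi(x)\circ\Phi(y)$ at a vector $v\in M^\nu$, where $\eta_\la\circ\eta_\la^{-1}$ cancels and one recovers $\eta_\mu^{-1}(xy\,\eta_\nu(v))=\Phi(xy)(v)$. Combined with the identification of the $\op$-product described above, this yields $\Phi(x\cdot y)=\Phi(xy)=\Phi(x)\,\Phi(y)$. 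Finally, $\Phi(f^\la)(v)=\eta_\la^{-1}(f^\la\eta_\la(v))=v$ for $v\in M^\la$ shows that $\Phi(f^\la)=\xi_\la$, so $\Phi$ sends the identity $\sum_\la f^\la$ of $E(n,d)$ to the identity of $S^\Zig(n,d)$.

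No substantive obstacle arises at this stage: all the representation-theoretic content has been packaged into the existence of the $W_d$-module isomorphisms $\eta_\la$ from Theorem~\ref{TModIso}, and the lemma itself is a purely formal consequence, modulo the degree-shift bookkeeping indicated above.
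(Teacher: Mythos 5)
Your proposal is correct and takes essentially the same route as the paper: the ungraded homomorphism property is treated as an immediate consequence of the definitions, and the only real work is the degree bookkeeping, which you carry out by tracking a general $v\in M^\la$ while the paper tracks the generator $m^\la$ and the element $z\in W_d$ with $\Phi(x)(m^\la)=m^\mu z$ (using $\deg(\bar u_\la)=a_\la$ from Lemma~\ref{LDegU}); both tallies are equivalent and lead to the same cancellation of the shifts $a_\la,a_\mu$.
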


\begin{proof}
That $\Phi$ is a homomorphism of ungraded algebras follows easily from the definitions. Let $x\in q^{a_\la-a_\mu} f^\mu C_{\rho,d} f^\la$ be a homogeneous element for some $\la,\mu\in \La(n(e-1),d)$. 
Then, by definitions, 
$\Phi(x)\colon m^\la \mapsto m^\mu z$ for some homogeneous $z\in W_d$ such that $x \bar u_\la = \bar u_\mu \Xi(z)$ in $q^{-a_\mu}f^\mu C_{\rho,d}\ga^\om$. Hence, $\Phi(x)$ is homogeneous  of degree 
\[
\deg(z) =\deg(\Xi(z))= \deg(x)-(a_\la-a_\mu) + \deg(\bar u_\la) - \deg(\bar u_\mu) = \deg(x),
\]
where the last equality is due to Lemma~\ref{LDegU}. 
\end{proof}

\begin{Corollary}\label{CPhiInj}
The algebra homomorphism $\Phi \colon E(n,d) \to S^\Zig (n,d)$ is injective.
\end{Corollary}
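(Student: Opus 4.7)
The plan is to reduce to injectivity on each block $f^\mu C_{\rho,d} f^\la$ of the decomposition~\eqref{EEDec} and then deploy the faithfulness of $C_{\rho,d}\ga^\om$ (Corollary~\ref{CFaithful}) together with the module isomorphisms $\eta_\la$ (Theorem~\ref{TModIso}).

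First I would observe that $S^\Zig(n,d) = \bigoplus_{\la,\mu} \xi_\mu S^\Zig(n,d) \xi_\la$, and that $\Phi$ respects this decomposition: for $x \in f^\mu C_{\rho,d} f^\la$, by construction $\Phi(x)$ lies in $\Hom_{W_d}(M^\la,M^\mu) = \xi_\mu S^\Zig(n,d)\xi_\la$. Hence if $x = \sum_{\la,\mu} x_{\mu,\la} \in E(n,d)$ with $x_{\mu,\la} \in f^\mu C_{\rho,d} f^\la$ satisfies $\Phi(x)=0$, then $\Phi(x_{\mu,\la})=0$ for every pair $(\la,\mu)$, and it suffices to prove that $\Phi$ is injective on each component $f^\mu C_{\rho,d} f^\la$.

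Next, fix $\la,\mu$ and $x \in f^\mu C_{\rho,d} f^\la$ with $\Phi(x)=0$. Unwinding the definition, this means that $\eta_\mu^{-1}(x\eta_\la(v)) = 0$ for all $v \in M^\la$. Since $\eta_\mu$ is an isomorphism by Theorem~\ref{TModIso}, we get $x \eta_\la(v)=0$ for all $v\in M^\la$. Using the surjectivity of $\eta_\la$ onto $q^{-a_\la}f^\la C_{\rho,d}\ga^\om$, we conclude that $x$ annihilates $f^\la C_{\rho,d}\ga^\om$.

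Finally I would promote this to an annihilation of all of $C_{\rho,d}\ga^\om$: since $x = x f^\la$, for every $z \in C_{\rho,d}\ga^\om$ we have $xz = xf^\la z$ with $f^\la z \in f^\la C_{\rho,d}\ga^\om$, so $xz=0$. Thus $x$ acts as zero on the left $C_{\rho,d}$-module $C_{\rho,d}\ga^\om$. By Corollary~\ref{CFaithful}, this module is faithful, and we conclude $x=0$. No step presents a genuine obstacle; the substantive input (Theorem~\ref{TModIso} and Corollary~\ref{CFaithful}) has already been established, and the remainder is a short formal argument.
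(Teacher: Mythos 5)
Your proof is correct and takes essentially the same route as the paper: reduce to a single component $f^\mu C_{\rho,d} f^\la$, deduce from $\Phi(x)=0$ that $x f^\la C_{\rho,d}\ga^\om = 0$, hence $xC_{\rho,d}\ga^\om=0$ (since $x=xf^\la$), and conclude by Corollary~\ref{CFaithful}. You have merely spelled out the intermediate steps in more detail.
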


\begin{proof}
If not, then there exist $\la,\mu\in\La(n(e-1),d)$ and 
$0\ne x \in f^\mu C_{\rho,d}f^\la$ such that $x f^\la C_{\rho,d} \ga^\om=0$, whence $xC_{\rho,d}\ga^{\om}=0$. But this is impossible by Corollary~\ref{CFaithful}.
\end{proof}

\begin{Lemma}\label{LPhiDeg0}
We have $\Phi(E(n,d)) \supseteq S^\Zig(n,d)^0$.
\end{Lemma}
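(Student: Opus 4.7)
The plan is to reduce to a pointwise statement. Since $\Phi$ is graded by Lemma~\ref{LPhiHom} and both $E(n,d)$ and $S^\Zig(n,d)^0$ decompose compatibly, it suffices to fix $\la,\mu \in \La(n(e-1),d)$ and prove that the induced map $(f^\mu C_{\rho,d} f^\la)^{a_\mu-a_\la} \to \Hom_{W_d}(M^\la,M^\mu)^0$ is surjective.

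Given $\phi\in\Hom_{W_d}(M^\la,M^\mu)^0$, I would translate it to the $C_{\rho,d}$ side. Since $M^\la$ is generated in degree $0$ by $m^\la$, the map $\phi$ is determined by $\phi(m^\la)\in (M^\mu)^0$, which via Theorem~\ref{TModIso} and $\Xi(\ze_\la)=e_\la$ corresponds to an element $v \in (f^\mu C_{\rho,d}\ga^\om)^{a_\mu}$ satisfying $ve_\la=v$ and $v\Xi(g)=\eps_\la(g)v$ for every $g\in\Si_\la$. Because $\eta_\la(m^\la)=\bar u_\la$, producing $x\in(f^\mu C_{\rho,d} f^\la)^{a_\mu-a_\la}$ with $\Phi(x)=\phi$ is equivalent to producing $x$ with $x\bar u_\la=v$.

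To produce $x$, the plan is to port the argument of Lemma~\ref{LLeftMult} to $C_{\rho,d}$. A close reading of that lemma shows that the $\hat Z$-hypothesis is used only to place the input inside $\tilde Z_{\la,\bc}$, and the proof then appeals to the decomposition $\tilde Z_{\la,\bc^0} = \hat C_{d\de} f^\la \hat C_{\la\de} e_\la + N_{\la,\bc^0}$ provided by Lemma~\ref{LZt}. Projecting this decomposition through $\Pi$, and projecting Lemma~\ref{L6.4.1Mult} through the isomorphism~\eqref{EHatMM}, yields the $C_{\rho,d}$-analogue: every $\eps_\la$-antisymmetric element of $f^\mu C_{\rho,d} e_\la$ lies in $f^\mu C_{\rho,d} f^\la \Pi(\hat C_{\la\de}) e_\la + f^\mu \bar N_\la$, where $\bar N_\la := \Pi(N_{\la,\bc^0})$. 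Applied to our $v$, which is exactly $\eps_\la$-antisymmetric, this gives $v \in f^\mu C_{\rho,d} f^\la \Pi(\hat C_{\la\de}) e_\la + f^\mu \bar N_\la$. Projecting Lemma~\ref{LDegN} delivers $(f^\mu \bar N_\la)^{a_\mu}=0$, so degree considerations kill the $\bar N_\la$-term and leave $v \in f^\mu C_{\rho,d} f^\la \Pi(\hat C_{\la\de}) e_\la$. The projected Lemma~\ref{LCyc}(ii) rewrites $f^\la \Pi(\hat C_{\la\de}) e_\la$ as $\bar u_\la \Pi(\hat C_{\om\de}) e_\la$; splitting the $\Pi(\hat C_{\om\de})$-factor into its degree-$0$ and positive-degree pieces, using Lemma~\ref{LParTr} (which collapses the degree-$0$ piece to multiples of $e_\la$) together with the non-negativity of $\hat C_{\om\de}$ (so that the positive-degree piece again feeds into $\bar N_\la$ and vanishes in degree $a_\mu$), the outcome is $v = x\bar u_\la$ for a single element $x \in (f^\mu C_{\rho,d} f^\la)^{a_\mu-a_\la}$, as required.

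The principal obstacle I anticipate is the mismatch between exact $\eps_\la$-antisymmetry in $C_{\rho,d}$ and its would-be lift to $\hat C_{d\de}$: any lift $\tilde v$ of $v$ is antisymmetric only modulo $\ker\Pi$, not modulo $N_{\la,\bc^0}$, so $\tilde v$ need not lie in $\tilde Z_{\la,\bc^0}$ and Lemma~\ref{LLeftMult} cannot be invoked verbatim. The strategy above avoids lifting the invariance altogether by running the whole argument inside $C_{\rho,d}$, and the main technical point to verify is that the key structural identities (the signed-invariant description via~\eqref{EHatMM}, the parabolic factorisation of Lemma~\ref{LCyc}(ii), and the degree vanishing of Lemma~\ref{LDegN}) descend cleanly along the quotient $\Pi\colon \hat C_{d\de} \twoheadrightarrow C_{\rho,d}$.
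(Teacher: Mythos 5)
Your proposal correctly identifies the degree-zero element associated to a morphism in $\Hom_{W_d}(M^\la,M^\mu)^0$ and correctly locates the crux: getting that element into the reach of Lemma~\ref{LLeftMult}. But the obstacle you flag---that an arbitrary lift to $\hat C_{d\de}$ is only antisymmetric modulo $\ker\Pi$---has a much shorter resolution than the descent you propose, and the descent itself contains a gap.

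The paper's route avoids lifting anything from $C_{\rho,d}$ at all: write $h(m^\la)=m^\mu z$ with $z\in W_d^0$ and apply $\theta_\mu$ rather than $\eta_\mu$. Because $\theta_\mu\colon M^\mu\to q^{-a_\mu}f^\mu\hat C_{d\de}\ga^\om$ from Lemma~\ref{LModHatHom} is a $W_d$-module homomorphism whose target is $\hat C_{d\de}$, not $C_{\rho,d}$, the identities $(m^\mu z)\ze_\la=m^\mu z$ and $(m^\mu z)g=\eps_\la(g)(m^\mu z)$ in $M^\mu$ push forward to the \emph{exact} identities $ve_\la=v$ and $v\Theta(g)=\eps_\la(g)v$ in $\hat C_{d\de}$, where $v:=\theta_\mu(m^\mu z)=u_\mu\Theta(z)$. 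Thus $v\in f^\mu\hat Z_{\la,\bc^0}$, Lemma~\ref{LLeftMult} applies directly upstairs to produce $x\in f^\mu\hat C_{d\de}f^\la$ with $v=xu_\la$, and one applies $\Pi$ only at the very end. No descent of Lemmas~\ref{LZt}, \ref{L6.4.1Mult} or \ref{LCyc} is needed. In other words, the canonical lift given by $\theta_\mu$ \emph{is} exactly antisymmetric; your anticipated obstacle simply does not arise for it.

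The gap in your descent: you assert that projecting Lemma~\ref{LZt} gives that every $\eps_\la$-antisymmetric element of $f^\mu C_{\rho,d}e_\la$ lies in $f^\mu C_{\rho,d}f^\la\Pi(\hat C_{\la\de})e_\la+f^\mu\bar N_\la$. Applying $\Pi$ to the equality of Lemma~\ref{LZt} yields only $\Pi(\Zt_{\la,\bc^0})=C_{\rho,d}\ga^{\la,\bc^0}\Pi(\hat C_{\la\de})e_\la+\bar N_\la$; it does not yield that an antisymmetric element of $C_{\rho,d}e_\la$ lies in $\Pi(\Zt_{\la,\bc^0})$. That containment is exactly the assertion that antisymmetric elements of the quotient lift to approximately antisymmetric elements of $\hat C_{d\de}$---the same lifting problem you set out to avoid, now concealed inside the phrase ``descends cleanly.'' Over $\Z$, where the $\Si_\la$-action is not semisimple, such a surjectivity of signed-invariants is not automatic, and neither Lemma~\ref{LZt} nor Lemma~\ref{L6.4.1Mult} delivers it by mere projection. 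Once you notice that $\theta_\mu$ already supplies an $\eps_\la$-antisymmetric lift, the paper's argument closes the very loophole you identified, and the detour through $C_{\rho,d}$ is neither necessary nor, as presented, valid.
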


\begin{proof}
Suppose that $\la,\mu\in \La(n(e-1),d)$ and $h\in \Hom_{W_d} (M^\la,M^\mu)^0$. Then there exists $z\in W_d^0$ such that $h(m^\la) = m^\mu z$. 
Hence, $m^\mu z \ze_{\la} = m^\mu z$ and $m^\mu z g= \eps_{\la} (g) m^\mu z$ for all $g\in \Si_\la$. By Lemma~\ref{LModHatHom}, (\ref{EVThetaZ}) and~\eqref{EThLaC}, 
it follows that the element 
$$v:=\theta_\la(m^\mu z)=u_\mu \Theta(z) \in q^{-a_\mu}f^\mu \hat C_{d\de} \ga^\om$$
has degree zero and satisfies $v= v e_{\la}$ and 
$v \Theta(g) = \eps_\la (g) v$ for all $g\in \Si_\la$. 
By Lemma~\ref{LLeftMult}, there exists $x\in f^\mu \hat C_{d\de}f^\la$ such that $v= x u_\la$. Applying the surjection $\Pi$ to this equality and writing 
$\bar x:=\Pi(x)\in q^{a_\la-a_\mu}f^\mu C_{\rho,d}f^\la$, we have $\bar u_{\mu} \Xi(z) = \bar x \bar u_{\la}$, cf.~\eqref{EXi}.
But $\bar u_\la = \eta_\la(m^\la)$ and $\bar u_\mu \Xi(z) 
= \eta_{\mu} (m^\mu z)$, so 
the map $\Phi(\bar x)$ sends $m^\la$ to $m^\mu z$. Thus, 
$\Phi(\bar x) = h$, so $h\in \Phi(E(n,d))$. The lemma follows. 
\end{proof}

Recall the algebra homomorphisms  
$\itt^\la\colon \Zig \to S^\Zig (n,d)$ from~\eqref{EILa}.

\begin{Lemma}\label{LPhiSpecial}
For any $\la \in\La((n-1)(e-1),d-1)$, we have $\itt^\la (\Zig) \subseteq \Phi(E(n,d))$. 
\end{Lemma}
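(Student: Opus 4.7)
My plan is to verify the inclusion on each element of the basis $B_\Zig$ of $\Zig$ from~\eqref{EBZig}. The case $z = \ze_j$ will be immediate from Lemma~\ref{LPhiDeg0}: unraveling~\eqref{EILa} shows that $\itt^\la(\ze_j)$ coincides with the projection $\xi_{\hat\la^j}\in S^\Zig(n,d)^0$, which lies in $\Phi(E(n,d))$ by that lemma. The substantive work is then to handle $z=\za^{k,j}$ (for neighbors $k,j\in J$) and $z=\zc^{(j)}$, to which I now turn.

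For these cases, write $z\in \ze_k\Zig\ze_j$ (allowing $k=j$). Set $\la':=(\la_1,\dots,\la_{(n-1)(e-1)})$ and $\bc':=(1,2,\dots,e-1)^{n-1}\in J^{(n-1)(e-1)}$, and let $u'_\la:=u_{\la',\bc'}$, $f'_\la:=\ga^{\la',\bc'}\in \hat C_{(d-1)\de}$ denote the analogs of $u_{\la,\bc}$ and $\ga^{\la,\bc}$ for the ``shifted'' rank-$(d-1)$ problem. Let $\Theta_1\colon \Zig\to \hat C_\de$ be the instance of the map from Theorem~\ref{THomZZ}(i) for $d=1$. Using the parabolic identification $\hat C_{\de,(d-1)\de}\cong \hat C_\de\otimes \hat C_{(d-1)\de}$ from Lemma~\ref{L030216}, I will establish the factorizations
\[
f^{\hat\la^i}= e_i\otimes f'_\la,\qquad u_{\hat\la^i}=e_i\otimes u'_\la \qquad (i\in\{j,k\})
\]
by a direct computation from the definitions, noting that zero entries of $\hat\la^i$ contribute trivially to $w_{0,\hat\la^i}$ and $h_{\hat\la^i}$, and that $u_{1,i}=e_i$. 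Since $\Theta_1(z)\in e_k\hat C_\de e_j$, the element $\tilde x:=\Theta_1(z)\otimes f'_\la\in \hat C_{\de,(d-1)\de}\subseteq \hat C_{d\de}$ satisfies $\tilde x=f^{\hat\la^k}\tilde x f^{\hat\la^j}$, so its image $\bar x:=\Pi(\tilde x)$ lies in $f^{\hat\la^k}C_{\rho,d}f^{\hat\la^j}$ and defines an element $x\in E(n,d)$ via the decomposition~\eqref{EEDec}; note that $a_{\hat\la^j}=a_{\hat\la^k}=a_\la$, so no grading shift arises.

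To conclude $\Phi(x)=\itt^\la(z)$, I will rephrase this equality via the isomorphisms $\eta_\mu$ of Theorem~\ref{TModIso} as the identity $\bar x\,\bar u_{\hat\la^j}=\bar u_{\hat\la^k}\,\Xi(z[1])$, together with $\bar x\,\bar u_\mu=0$ for $\mu\ne \hat\la^j$ (which is immediate from idempotent support). I will verify this by lifting to $\hat C_{d\de}$: using $\Theta(z[1])=\Theta_1(z)\otimes e_J^{\otimes(d-1)}$ from Theorem~\ref{THomZZ}(i), together with the absorption relations $u'_\la\cdot e_J^{\otimes(d-1)}=u'_\la$ (since the rightmost idempotent $e_{\la',\bc'}$ of $u'_\la$ is a summand of $e_J^{\otimes(d-1)}$) and $f'_\la\cdot u'_\la=u'_\la$, both sides of the lifted identity collapse to $\Theta_1(z)\otimes u'_\la$ in $\hat C_\de\otimes \hat C_{(d-1)\de}$. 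The main obstacle is precisely this bookkeeping: establishing the factorizations $u_{\hat\la^i}=e_i\otimes u'_\la$ and confirming the absorption of the spurious factor $e_J^{\otimes(d-1)}$ coming from $\Theta(z[1])$. The signs in $\Theta$ (which depend on $k,j$) cancel automatically, since the same sign appears on both sides via $\Theta_1(z)$.
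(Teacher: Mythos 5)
Your proposal is correct and follows essentially the same route as the paper's proof. The paper defines $\bar x := \Pi(\iota(x))f^{\hat\la^k}$ using the embedding $\iota\colon \hat C_\de\to\hat C_{d\de}$ and the identity $\Theta(z[1])=\iota(x)$ (which, as stated via~\eqref{E120216}, is a touch imprecise since $\Theta(z[1])=\Theta_1(z)\otimes e_J^{\otimes(d-1)}$ rather than $\Theta_1(z)\otimes 1_{(d-1)\de}$, though the extra idempotent is absorbed by $u_{\hat\la^k}$ in the end). Your $\bar x=\Pi(\Theta_1(z)\otimes f'_\la)$ is exactly the same element — indeed $\iota(x)f^{\hat\la^k}=\Theta_1(z)\otimes f'_\la$ — and you verify the same key identity $\bar x\,\bar u_{\hat\la^j}=\bar u_{\hat\la^k}\,\Xi(z[1])$ via the same factorizations $u_{\hat\la^i}=e_i\otimes u'_\la$; your treatment of the absorption relations is just more explicit. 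The separate handling of $z=\ze_j$ via Lemma~\ref{LPhiDeg0} is correct but unnecessary, as the main argument covers that case with $\Theta_1(\ze_j)=e_j$.
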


\begin{proof}
Let $z\in \ze_j \Zig \ze_k$ for some $k,j\in J$. Recall the embedding $\iota \colon \hat C_{\de} \to \hat C_{d\de}$ from~\eqref{EIotaChat}. 
It follows from Theorem~\ref{THomZZ}(i) and (\ref{E120216}) that 
there exists $x\in e_j \hat C_{\de}e_k$ such that 
$\Theta(z[1])= \iota(x)$. Note that 
\[
u_{\hat\la^k} = e_k \otimes u_\la\in \hat C_{\de} \otimes \hat C_{(d-1)\de}= \hat C_{\de,(d-1)\de} \subseteq \hat C_{d\de},
\]
and $u_{\hat\la^j}$ is described similarly. Hence, $\iota(x) u_{\hat\la^k} = u_{\hat\la^j} \iota(x)= u_{\hat\la^j}\Theta(z[1])$.
Writing $\bar x:=\Pi(\iota(x)) f^{\hat\la^k} = f^{\hat\la^j} \Pi(\iota(x))\in f^{\hat\la^j} C_{\rho,d}f^{\hat\la^k}$, we have 
\[
\bar x \bar u_{\hat\la^k} = \bar u_{\hat\la^j} \bar x = \bar u_{\hat\la^j} \Xi(z[1]),
\]
whence 
\begin{align*}
\Phi(\bar x)(m^{\hat\la^k})&=\eta^{-1}_{\hat\la^j}(\bar x \eta_{\hat\la^k}(m^{\hat\la^k}))=\eta^{-1}_{\hat\la^j}(\bar x \bar u_{\hat\la^k})
=\eta^{-1}_{\hat\la^j}( \bar u_{\hat\la^j} \Xi(z[1]))=m^{\hat\la^j} z[1]
\\
&=\itt^\la (z)(m^{\hat\la^k}).
\end{align*}
So $\Phi(\bar x) = \itt^\la (z)$, and the lemma follows. 
\end{proof}

\begin{Lemma}\label{LESym}
For every field $\k$, the $\k$-algebra $E(n,d)_{\k}$ is symmetric. 
\end{Lemma}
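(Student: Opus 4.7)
The plan is to realize $E(n,d)_\k$ as the opposite of the endomorphism algebra of a finitely generated projective left $C_{\rho,d,\k}$-module, and to combine this observation with Corollary~\ref{CCSymmetric}, which asserts that $C_{\rho,d,\k}$ is itself symmetric. The key general principle to be invoked is the following: if $A$ is a symmetric $\k$-algebra and $P$ is a finitely generated projective left $A$-module, then $\End_A(P)$ is again a symmetric $\k$-algebra. This is standard: realizing $P$ as a direct summand of a free $A$-module of some rank $N$, one has $\End_A(P)^{\op}\cong fM_N(A)f$ for some idempotent $f\in M_N(A)$, after which the claim follows from two well-known facts, namely that matrix algebras over symmetric algebras are symmetric (with symmetrizing form obtained by composing the original form with the matrix trace) and that idempotent truncations of symmetric algebras are symmetric (\cite[Theorem IV.4.1]{SY}).

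To carry this out in our setting, observe that each summand $q^{a_\la} C_{\rho,d,\k} f^\la$ of $\Gamma(n,d)_\k$ is a direct summand of the regular left module $C_{\rho,d,\k}$, and so $\Gamma(n,d)_\k$ is a finitely generated projective $C_{\rho,d,\k}$-module. The principle above, together with Corollary~\ref{CCSymmetric}, then yields that $\End_{C_{\rho,d,\k}}(\Gamma(n,d)_\k)$ is a symmetric $\k$-algebra. Since the opposite of a symmetric algebra is symmetric (via the same form), we conclude that $E(n,d)_\k=\End_{C_{\rho,d,\k}}(\Gamma(n,d)_\k)^{\op}$ is symmetric. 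The grading shifts $q^{a_\la}$ play no role here since the statement concerns the underlying ungraded algebra, so no further bookkeeping is needed, and there are no substantive obstacles in this argument.
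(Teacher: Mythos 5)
Your proof is correct and takes the same approach as the paper: realize $E(n,d)_\k$ as the opposite endomorphism algebra of the finitely generated projective $C_{\rho,d,\k}$-module $\Gamma(n,d)_\k$ and invoke Corollary~\ref{CCSymmetric}. The paper simply cites \cite[Proposition IV.4.4]{SY} for the general principle that you prove by hand via matrix algebras and idempotent truncations.
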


\begin{proof}
By Corollary~\ref{CCSymmetric}, the algebra $C_{\rho,d,\k}$ is symmetric. 
It follows from~\eqref{EDefE} that 
$E(n,d)_{\k} \cong \End_{C_{\rho,d,\k}} 
(\Gamma(n,d)_{\k} )^{\op}$. Since 
$\Gamma(n,d)_{\k}$ is a projective $C_{\rho,d,\k}$-module, the lemma follows by~\cite[Proposition IV.4.4]{SY}. 
\end{proof}

Recall the subalgebra $T^\Zig (n,d)\subseteq S^\Zig (n,d)$ from 
\S\ref{SSTDSA}.

\begin{Theorem}\label{TMainIso}
Suppose that $n\ge d$. Then
we have an isomorphism of graded algebras $\Phi\colon E(n,d) \iso T^\Zig (n,d)$. 
\end{Theorem}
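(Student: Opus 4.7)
The plan is to combine the injectivity of $\Phi$ (already established) with a two-sided containment of $\Phi(E(n,d))$ and $T^\Zig(n,d)$ inside $S^\Zig(n,d)$, using the symmetricity criterion (Theorem~\ref{TSymmetricity}) to close the gap.

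First I would observe that $\Phi \colon E(n,d) \to S^\Zig(n,d)$ is already known to be an injective homomorphism of graded algebras by Lemma~\ref{LPhiHom} and Corollary~\ref{CPhiInj}. The containment $T^\Zig(n,d) \subseteq \Phi(E(n,d))$ is immediate: by the definition of $T^\Zig(n,d)$ as the subalgebra of $S^\Zig(n,d)$ generated by $S^\Zig(n,d)^0$ together with $\bigcup_\la \itt^\la(\Zig)$, the inclusions
$S^\Zig(n,d)^0 \subseteq \Phi(E(n,d))$ (Lemma~\ref{LPhiDeg0}) and $\itt^\la(\Zig) \subseteq \Phi(E(n,d))$ for all $\la\in\La((n-1)(e-1),d-1)$ (Lemma~\ref{LPhiSpecial}) force $T^\Zig(n,d)\subseteq \Phi(E(n,d))$, since $\Phi(E(n,d))$ is a subalgebra.

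The reverse inclusion $\Phi(E(n,d)) \subseteq T^\Zig(n,d)$ is where Theorem~\ref{TSymmetricity} enters. Set $A:=\Phi(E(n,d))$, a subalgebra of $S^\Zig(n,d)$ containing $T^\Zig(n,d)$. Since $C_{\rho,d}$ is $\Z$-free of finite rank (Lemma~\ref{LCFree}), each summand $f^\mu C_{\rho,d} f^\la$ in the decomposition~\eqref{EEDec} of $E(n,d)$ is a free $\Z$-module, hence so is $E(n,d)$; the injectivity of $\Phi$ then gives $A\otimes_\Z \F_p \cong E(n,d)_{\F_p}$ as $\F_p$-algebras. By Lemma~\ref{LESym} the right hand side is symmetric, so $A_{\F_p}$ is symmetric for every prime $p$. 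The hypothesis $n\ge d$ allows us to invoke Theorem~\ref{TSymmetricity}, which forces $A=T^\Zig(n,d)$.

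Combining this with the injectivity of $\Phi$, the corestriction $\Phi\colon E(n,d) \iso T^\Zig(n,d)$ is a graded algebra isomorphism. The only subtle point is the scalar extension step in the application of Theorem~\ref{TSymmetricity}---one must check that ``$A_{\F_p}$'' (in the sense of Theorem~\ref{TSymmetricity}) really does agree with $E(n,d)\otimes_\Z \F_p$, which reduces to $\Z$-freeness of $E(n,d)$ and injectivity of $\Phi$ and so presents no real obstacle; otherwise the proof is a straightforward assembly of the pieces prepared in \S\ref{SSEnd}.
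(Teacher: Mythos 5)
Your proposal is correct and follows essentially the same route as the paper: injectivity from Lemma~\ref{LPhiHom} and Corollary~\ref{CPhiInj}, the containment $T^\Zig(n,d)\subseteq\Phi(E(n,d))$ from Lemmas~\ref{LPhiDeg0} and~\ref{LPhiSpecial}, and the reverse containment via Lemma~\ref{LESym} and Theorem~\ref{TSymmetricity}. Your explicit remark on $\Z$-freeness of $E(n,d)$ to justify the identification $A_{\F_p}\cong E(n,d)_{\F_p}$ is a sensible small elaboration of a step the paper leaves implicit.
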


\begin{proof}
By Lemma~\ref{LPhiHom} and Corollary~\ref{CPhiInj}, the map $\Phi\colon E(n,d) \to S^\Zig (n,d)$ is an injective homomorphism of graded algebras, so 
$E(n,d) \cong \Phi(E(n,d))$.
By Lemmas~\ref{LPhiDeg0} and~\ref{LPhiSpecial},
we have $T^\Zig(n,d) \subseteq \Phi(E(n,d))$. 
By Lemma~\ref{LESym}, for every prime $p$, the algebra 
$\Phi(E(n,d))\otimes_{\Z} \F_p$ is symmetric. An application of Theorem~\ref{TSymmetricity} completes the proof.
\end{proof}

\begin{Corollary} \label{CKey} 
Let $n\geq d$. Then $E(n,d) \cong D_Q (n,d)$. 
\end{Corollary}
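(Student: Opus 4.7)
The plan is essentially immediate: this corollary is a direct consequence of the two graded algebra isomorphisms already available in the excerpt. Specifically, Theorem~\ref{TMainIso} provides a graded algebra isomorphism
\[
\Phi \colon E(n,d) \iso T^{\Zig}(n,d)
\]
under the hypothesis $n \ge d$, while Theorem~\ref{TGeneration} (imported from~\cite{EK}) provides, under the same hypothesis, a graded algebra isomorphism
\[
D_Q(n,d) \iso T^{\Zig}(n,d).
\]
Composing $\Phi$ with the inverse of the isomorphism of Theorem~\ref{TGeneration} yields the desired isomorphism $E(n,d) \cong D_Q(n,d)$.

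There is no real obstacle here; the substance of the argument is entirely contained in Theorems~\ref{TMainIso} and~\ref{TGeneration}. The only thing worth emphasizing in the proof is that both isomorphisms respect the grading, so that the conclusion is an identification of graded algebras, not merely ungraded ones. Thus the proof is essentially a one-line citation of the two previous theorems.
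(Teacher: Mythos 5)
Your proof is correct and follows exactly the paper's own argument: the paper also derives Corollary~\ref{CKey} immediately by combining Theorem~\ref{TGeneration} with Theorem~\ref{TMainIso}. Nothing further is needed.
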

\begin{proof}
This follows from Theorems~\ref{TGeneration} and \ref{TMainIso}.
\end{proof}

\begin{Example}
Recall the idempotents 
$\xi_\la \in S^\Zig (n,d)$ defined in~\S\ref{SSTDSA} for any $\la\in \La(n(e-1),d)$. It follows from the definitions that for all $\la,\mu\in \La(n(e-1),d)$, the homomorphism $\Phi$ maps the component 
$q^{a_\la-a_\mu} f^\mu C_{\rho,d} f^\la$ of the decomposition~\eqref{EEDec} of $E(n,d)$ into the component 
$\xi_\mu S^\Zig (n(e-1),d) \xi_\la= \Hom_{W_d} (M^\la, M^\mu)$ of $S^\Zig(n,d)$. In this example, 
we consider the case when $e=2$, $d=2$, $n=2$ and $\la=(2,0)$, and 
we identify 
 $\Phi(f^\la C_{\rho,d} f^\la)$ as an explicit subalgebra of 
 $\End_{W_2} (M^\la)$. 

Let $x_1, x_2\in \End_{W_2} (M^\la)$ be the endomorphisms defined by the properties that $x_1 (m^\la) = m^\la (\zc[1]+\zc[2])$ and $x_2 (m^\la) = m^\la \zc[1] \zc[2]$. 
Then $\{ 1:=\xi_\la, x_1,x_2\}$ is a $\Z$-basis of the commutative algebra $\End_{W_2} (M^\la)$, and $x_1^2 = 2 x_2$, $x_1x_2=0$. Moreover, 
it is easy to see as in~\cite[Example 4.27]{EK}
that 
$\xi_\la T^\Zig (n,d) \xi_\la$ is the $\Z$-span of $\{1,x_1,2x_2\}$, so 
$\xi_\la T^\Zig (n,d) \xi_\la$ is isomorphic to the truncated polynomial algebra $\Z[z]/(z^3)$, with $x_1$ corresponding to $z$. Thus, Theorem~\ref{TMainIso} asserts, in particular, that 
$\Phi(f^\la C_{\rho,d} f^\la) = \Z 1 \oplus \Z x_1 \oplus 2\Z x_2$. 
This assertion can also be verified by direct calculations using 
 (1) the defining relations of the affine zigzag algebra $1_{0101} \hat C_{2\de} 1_{0101}$, see~\cite[Definition 3.3]{KM2}; and 
 (2) the fact that $y_1 \ga^\om = y_1 1_{0101} = a (y_1-y_2) 1_{0101}$ in $C_{\rho,2}$ for some $a\in \Z$, see~\eqref{EY1GAOM}.

\end{Example}

\subsection{Morita equivalences}\label{SSMorita}
Let $A$ and $B$ be graded $\Z$-algebras. A {\em graded} functor $\mod{A}\to\mod{B}$ is a functor $\funF$ equipped with an equivalence between $q\circ \funF$ and $\funF\circ q$. A graded functor $\funF$ is a {\em graded equivalence} if it is an equivalence of categories (in the usual sense). The graded algebras $A$ and $B$ are {\em graded Morita equivalent} if there is a graded equivalence between $\mod{A}$ and $\mod{B}$. As noted for example in \cite[\S II.5.3]{Van} the graded analogue of Morita theory holds. In particular, $A$ is graded Morita equivalent to $B$ if and only if there exists a graded projective left $A$-module $P$ which is a projective generator and such that $B\cong \End_A(P)^\op$.  

For a graded algebra $A$, recall the notation $\ell(A)$ from~\S\ref{SSAlg}.

\begin{Lemma}\label{LMoritaField}
Let $A$ be a graded $\Z$-algebra which is finitely generated as a $\Z$-module, and let $\eps\in A$ be a homogeneous idempotent. 
Suppose that for every prime $p$ we have 
$\ell(A_{\bar \F_p})=\ell((\eps\otimes 1) A_{\bar \F_p}(\eps\otimes 1))$. Then the algebras $A$ and $\eps A \eps$ are graded Morita equivalent. 
\end{Lemma}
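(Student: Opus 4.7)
The plan is to show that $A\eps$ is a graded projective generator for $\mod{A}$, from which the graded Morita equivalence between $A$ and $\End_A(A\eps)^{\op} \cong \eps A \eps$ follows by the standard graded Morita theorem. Since $A\eps$ is a direct summand of the free left $A$-module $A$, it is automatically a finitely generated graded projective left $A$-module, so it only remains to check that it is a generator. Equivalently, I need to show that the trace ideal equals all of $A$, i.e.\ $A\eps A = A$.

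The quotient $A/A\eps A$ is a finitely generated $\Z$-module since $A$ is, and by the structure theorem for finitely generated abelian groups, any such module $M$ with $M\otimes_\Z \F_p = 0$ for every prime $p$ must vanish. Using faithful flatness of $\bar\F_p$ over $\F_p$, it suffices to verify that $A_{\bar\F_p}(\eps\otimes 1)A_{\bar\F_p} = A_{\bar\F_p}$ for every prime $p$. Fix $p$ and write $\k := \bar\F_p$ and $\bar\eps := \eps\otimes 1 \in A_\k$. The graded analogue of the usual simple-module correspondence produces a bijection, up to isomorphism and grading shift, between simple graded $\bar\eps A_\k \bar\eps$-modules and those simple graded $A_\k$-modules $L$ with $\bar\eps L \neq 0$; the hypothesis $\ell(A_\k) = \ell(\bar\eps A_\k \bar\eps)$ therefore forces $\bar\eps L \neq 0$ for \emph{every} simple graded $A_\k$-module $L$. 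By the standard progenerator criterion, this implies that $A_\k \bar\eps$ is a graded projective generator of $\mod{A_\k}$, and in particular its trace ideal $A_\k \bar\eps A_\k$ equals $A_\k$, as needed.

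There is no deep obstacle in this argument. What requires the most care is simply stating the graded versions of the simple-module correspondence and the progenerator-via-trace-ideal criterion correctly, and performing the descent from the field-level statements back to the $\Z$-level using the finite generation hypothesis on $A$. Once $A\eps A = A$ is established, the remaining conclusion $\End_A(A\eps)^{\op} \cong \eps A \eps$ is a direct computation and the graded Morita equivalence is formal.
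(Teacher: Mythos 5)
Your proof is correct and takes essentially the same approach as the paper: both reduce to showing $A\eps A=A$, pass to $\bar\F_p$ for each prime $p$, and use the count of simple modules (via the idempotent truncation correspondence) to conclude that $\bar\eps L\neq 0$ for every simple graded $A_{\bar\F_p}$-module $L$. The only difference is expository; you spell out the descent (finite generation over $\Z$ plus faithful flatness of $\bar\F_p/\F_p$) that the paper leaves implicit, while the paper phrases the argument by contradiction and cites Green's book for the simple-module correspondence.
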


\begin{proof}
We write $\eps:=\eps\otimes 1\in A_{ \bar \F_p}$ for each prime $p$.
It suffices to show that the left $A$-module $A\eps$ is a projective generator for $A$ or, equivalently, that $A\eps A=A$. 
Assume that $A\eps A \ne A$. Then there exists a prime $p$ 
such that $A_{\bar \F_p}\eps A_{\bar \F_p} \ne A_{\bar \F_p}$. If $L$ is a composition factor of $A_{\bar \F_p}/A_{\bar \F_p}\eps A_{\bar \F_p}$, then $\eps L=0$, which contradicts the assumption that 
$\ell(A_{\bar \F_p})=\ell(\eps A_{\bar \F_p}\eps)$, for example by \cite[Theorem 6.2(g)]{Green}. 
\end{proof}

Let $\la \in \La(n(e-1),d)$.
It follows from the definitions in~\S\ref{SSGG} that 
$\bl (\la,\bc^0)$ is obtained from $\hat \bl (\la,\bc^0)$ by replacing
each subword of the form $i^m$ that is not preceded by or followed by $i$ with $i^{(m)}$. 
Therefore, for any $\la,\mu \in \La(n(e-1),d)$, we either have $f^\la = f^\mu$ or $f^\la f^\mu = f^\mu f^\la =0$. We have an equivalence relation on $\La(n(e-1),d)$, with $\la$ being equivalent to $\mu$ if and only if $f^\la = f^\mu$. 
Let $\mathcal X\subseteq \La(n(e-1),d)$ be a set of representatives of equivalence classes.
Define 
$
 f:= \sum_{\la\in \mathcal X} f^\la\in C_{\rho,d}.
$
 Then $f^2=f$ is a homogeneous idempotent. 

\begin{Lemma}\label{LProjIdMorita}
The algebra $E(n,d)$ is graded Morita equivalent to $f C_{\rho,d} f$. 
\end{Lemma}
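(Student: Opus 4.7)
Plan:

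The proof reduces to showing that $\Ga(n,d)$ and $C_{\rho,d}f$ generate the same graded additive subcategory of $\mod{C_{\rho,d}}$; the graded Morita equivalence of their opposite endomorphism algebras $E(n,d)$ and $fC_{\rho,d}f$ then follows from the graded version of Morita theory (cf.~\cite[\S II.5.3]{Van}). For each $\la\in\mathcal X$, set $V_\la:=\{\mu\in\La(n(e-1),d):f^\mu=f^\la\}$. Since $\La(n(e-1),d)=\bigsqcup_{\la\in\mathcal X}V_\la$ and $C_{\rho,d}f^\mu=C_{\rho,d}f^\la$ whenever $\mu\in V_\la$, the definition of $\Ga(n,d)$ regroups as
\[
\Ga(n,d)=\bigoplus_{\la\in\mathcal X}\bigoplus_{\mu\in V_\la}q^{a_\mu}C_{\rho,d}f^\la.
\]
Hence every indecomposable summand of $\Ga(n,d)$ is a grading shift of a direct summand of some $C_{\rho,d}f^\la$ with $\la\in\mathcal X$, and conversely $C_{\rho,d}f^\la=q^{-a_\la}(q^{a_\la}C_{\rho,d}f^\la)$ is a grading shift of a direct summand of $\Ga(n,d)$ (corresponding to $\mu=\la\in V_\la$).

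The equivalence may be realised explicitly by letting $e\in E(n,d)$ be the degree-zero idempotent projecting $\Ga(n,d)$ onto $\bigoplus_{\la\in\mathcal X}q^{a_\la}C_{\rho,d}f^\la$. The decomposition above shows that the identity endomorphism of any summand $q^{a_\mu}C_{\rho,d}f^\mu$ of $\Ga(n,d)$ factors through $q^{a_\la}C_{\rho,d}f^\la$ (where $\la\in\mathcal X$ is determined by $f^\mu=f^\la$) via the identity isomorphisms $C_{\rho,d}f^\mu=C_{\rho,d}f^\la$, read as morphisms of degree $\pm(a_\mu-a_\la)$; this gives $E(n,d)eE(n,d)=E(n,d)$ and hence a graded Morita equivalence between $E(n,d)$ and $eE(n,d)e$. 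The subalgebra $eE(n,d)e=\bigoplus_{\la,\mu\in\mathcal X}q^{a_\la-a_\mu}f^\mu C_{\rho,d}f^\la$ coincides with $fC_{\rho,d}f$ as an ungraded algebra, and is graded Morita equivalent to it because both arise as opposite endomorphism algebras of progenerators $\bigoplus_{\la\in\mathcal X}q^{a_\la}C_{\rho,d}f^\la$ and $C_{\rho,d}f$ of the same graded additive subcategory of $\mod{C_{\rho,d}}$, differing only by internal grading shifts on direct summands.

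The main technical point is the careful bookkeeping of the grading shifts $q^{a_\la}$; the essential insight, standard from graded Morita theory, is that the graded Morita equivalence class of $\End_{C_{\rho,d}}(P)^{\op}$ for a graded projective $P$ depends only on the isomorphism classes, up to grading shift, of the indecomposable direct summands of $P$, and not on their multiplicities or individual grading shifts.
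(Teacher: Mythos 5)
Your proposal is correct and rests on the same observation as the paper's proof: that $\Ga(n,d)$ and $C_{\rho,d}f$ have the same indecomposable direct summands up to grading shift, so that their opposite endomorphism algebras over $C_{\rho,d}$ — namely $E(n,d)$ and $fC_{\rho,d}f$ — are graded Morita equivalent. The paper packages this more directly by noting that $f\Ga(n,d)$ is a projective generator for $fC_{\rho,d}f$ with $\End_{fC_{\rho,d}f}(f\Ga(n,d))^{\op}\cong E(n,d)$, whereas you pass through the intermediate truncation $eE(n,d)e$ and then identify it with $fC_{\rho,d}f$ up to grading shift; both routes are valid instances of the same fact about additive hulls of graded projectives.
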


\begin{proof}
Consider the left $f C_{\rho,d}f$-module 
\[
f \Ga(n,d) = 
\bigoplus_{\la\in \La(n(e-1),d)} q^{a_\la} f C_{\rho,d} f^\la.
\]
There is a surjective 
$f C_{\rho,d}f$-module homomorphism $f \Ga(n,d)\to f C_{\rho,d}f$ which is the identity on the summands $f C_{\rho,d} f^\la$ for $\la\in \mathcal X$ and zero on the other summands. 
Hence, 
$f \Ga(n,d)$ is a projective generator for $fC_{\rho,d} f$. 
It is easy to see that 
$E(n,d) \cong \End_{f C_{\rho,d} f} (f \Ga(n,d))^\op$, 
since for all $\la,\mu\in \La(n(e-1),d)$ we have
$$\Hom_{f C_{\rho,d} f}(q^{a_\mu} f C_{\rho,d} f^\mu, q^{a_\la} f C_{\rho,d} f^\la)\cong q^{a_\la-a_\mu}f^\mu f C_{\rho,d} f^\la
=q^{a_\la-a_\mu}f^\mu C_{\rho,d} f^\la.
$$ 
The lemma follows by graded Morita theory. 
\end{proof}

Write $\al= \cont(\rho)+d\de$, so that $R^{\La_0}_{\al}$ is the RoCK block of~\S\ref{SSRoCK}.
For any $m,h\in\Z_{\ge 0}$, 
we denote by $S(m,h)$ the usual Schur algebra over $\Z$, as in~\cite{Green}. 

\begin{Theorem}\label{TMainMorita}
Suppose that $n\ge d$. Then the $\Z$-algebras $R^{\La_0}_{\al}$ and $D_Q (n,d)$ are graded Morita equivalent.
\end{Theorem}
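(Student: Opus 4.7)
The plan is to chain together three identifications already established and then invoke the numerical Morita criterion of Lemma~\ref{LMoritaField}. First, by Corollary~\ref{CKey}, $E(n,d)\cong D_Q(n,d)$ as graded $\Z$-algebras. Second, Lemma~\ref{LProjIdMorita} shows that $E(n,d)$ is graded Morita equivalent to $fC_{\rho,d}f$. Third, Remark~\ref{RCrazy} realizes $C_{\rho,d}$ as an idempotent truncation $\mathbf{e}R^{\La_0}_\al\mathbf{e}$ of the RoCK block; under this isomorphism the homogeneous idempotent $f\in C_{\rho,d}$ corresponds to a homogeneous idempotent $\eps\in R^{\La_0}_\al$ satisfying $\mathbf{e}\eps=\eps\mathbf{e}=\eps$, and hence $fC_{\rho,d}f\cong \eps R^{\La_0}_\al\eps$. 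Composing, $D_Q(n,d)$ is graded Morita equivalent to the truncation $\eps R^{\La_0}_\al\eps$.

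The next step is to upgrade this to a graded Morita equivalence with the whole algebra $R^{\La_0}_\al$ via Lemma~\ref{LMoritaField}. The algebra $R^{\La_0}_\al$ is finitely generated as a $\Z$-module by Theorem~\ref{TBasisCyc}, so Lemma~\ref{LMoritaField} applies to the pair $(R^{\La_0}_\al,\eps)$ provided
\[
\ell(R^{\La_0}_{\al,\bar\F_p})=\ell\bigl(\eps R^{\La_0}_{\al,\bar\F_p}\eps\bigr)
\]
for every prime $p$. By Theorem~\ref{TNSimples}, the left-hand side equals $|\Par^J(d)|$. The three Morita equivalences assembled in the previous paragraph are all implemented by projective generators built from $\Z$-free modules (the isomorphism of Corollary~\ref{CKey}, the projective generator $f\Ga(n,d)$ in Lemma~\ref{LProjIdMorita}, and the $\Z$-algebra isomorphism of Remark~\ref{RCrazy}), so each survives the scalar extension to $\bar\F_p$ and preserves the number of isomorphism classes of simple modules. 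Hence $\ell(\eps R^{\La_0}_{\al,\bar\F_p}\eps)=\ell(D_Q(n,d)_{\bar\F_p})$, and the entire theorem reduces to the identity
\[
\ell(D_Q(n,d)_{\bar\F_p})=|\Par^J(d)|\qquad(p\text{ prime}).
\]

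The hard part will be this last simple-counting identity for the Turner double. The expected approach exploits the decomposition $D^dX=\bigoplus_{f=0}^{d}\Inv^fX\otimes(\Inv^{d-f}X)^*$ from Section~\ref{SZZ} and the embedding $D_Q(n,d)\cong T^\Zig(n,d)\hookrightarrow S^\Zig(n,d)$ of Theorem~\ref{TGeneration}: for $n\ge d$ every `weight' $\la\in\La(n(e-1),d)$ is realized by a nonzero idempotent $\xi_\la$, and a Schur-algebra-style argument (organizing simples of $D_Q(n,d)_{\bar\F_p}$ by their highest $\Inv X$-weights and killing lower weights using the coproduct-style structure of $DX$) should yield the bijection between simples and $\Par^J(d)$ independently of the characteristic of the ground field. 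An alternative route is to combine the symmetry of $E(n,d)_{\bar\F_p}\cong D_Q(n,d)_{\bar\F_p}$ established in Lemma~\ref{LESym} with a Cartan/decomposition-matrix argument whose characteristic-zero base case is furnished by Lemma~\ref{LProjCRhoD}, Theorem~\ref{THomZZ} and the classical count of simple $W_{d,\Q}$-modules via Clifford theory for the wreath product. Either implementation is technical and constitutes the real content of the proof; the reductions in the first two paragraphs above are essentially formal given the machinery already developed.
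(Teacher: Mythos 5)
The reductions in your first two paragraphs are exactly those the paper carries out: you correctly chain Corollary~\ref{CKey}, Lemma~\ref{LProjIdMorita}, and Remark~\ref{RCrazy}, then invoke Lemma~\ref{LMoritaField} and Theorem~\ref{TNSimples} to reduce the theorem to the single identity $\ell(D_Q(n,d)_{\bar\F_p})=|\Par^J(d)|$ for every prime $p$. That part is fine (one small point: you should not silently assume that the idempotent $f\in C_{\rho,d}$ lifts to an idempotent $\eps$ with $\mathbf{e}\eps=\eps\mathbf{e}=\eps$ satisfying $fC_{\rho,d}f\cong\eps R^{\La_0}_\al\eps$; this follows immediately from $C_{\rho,d}\cong\mathbf{e}R^{\La_0}_\al\mathbf{e}$ by taking $\eps$ to be the image of $f$, but it is worth stating).

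The genuine gap is the final paragraph: the identity $\ell(D_Q(n,d)_{\bar\F_p})=|\Par^J(d)|$ is left unproved, replaced by two speculative sketches, and neither sketch is close to a complete argument. Your first sketch (organizing simples of $D_Q(n,d)_{\bar\F_p}$ by highest $\Inv X$-weights and killing lower weights via the coproduct) would require a substantial highest-weight theory for the Turner double that has not been developed in the paper and is not obviously available. Your second sketch (Cartan/decomposition matrices from a characteristic-zero base case) would need the decomposition matrix to be square and unitriangular, which again is not established. The paper's actual route is much shorter and avoids both issues: since $D_Q(n,d)_{\bar\F_p}$ is non-negatively graded, every simple module is concentrated in a single degree and $\ell(D_Q(n,d)_{\bar\F_p})=\ell(D_Q(n,d)^0_{\bar\F_p})$; the degree-zero subalgebra is then identified, via Theorem~\ref{TGeneration} and \cite[(7.2) and Lemma~7.3]{EK}, with $\bigoplus_{(d_1,\dots,d_{e-1})\in\La(e-1,d)}S(n,d_1)\otimes\dots\otimes S(n,d_{e-1})$, a direct sum of tensor products of \emph{classical} Schur algebras, whose simple modules are counted by $|\Par(d_1)|\cdots|\Par(d_{e-1})|$ for $n\ge d$ by \cite[Theorem~3.5(a)]{Green}. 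Summing over $(d_1,\dots,d_{e-1})$ gives $|\Par^J(d)|$. This is the real content of the proof and you have not supplied it.
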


\begin{proof}
By Remark~\ref{RCrazy}, there is a homogeneous idempotent 
$\mathbf{e} \in R^{\La_0}_{\al}$ such that 
$C_{\rho,d} \cong \mathbf{e} R^{\La_0}_{\al} \mathbf{e}$. 
Hence, by Lemma~\ref{LProjIdMorita}, 
there exists a homogeneous idempotent $\eps\in R^{\La_0}_{\al}$ such that $E(n,d)$ is graded Morita equivalent 
to $\eps R^{\La_0}_{\al} \eps$. 
By Corollary~\ref{CKey}, we have 
$E(n,d)\cong D_Q (n,d)$, so $\eps R^{\La_0}_{\al} \eps$ is graded Morita equivalent to $D_Q (n,d)$. So it suffices to show that $\eps R^{\La_0}_{\al} \eps$ is graded Morita equivalent to $R^{\La_0}_{\al}$. 

Let $p$ be a prime, and write $\eps:=\eps\otimes 1 \in R^{\La_0}_{\al, \bar \F_p}$.
By the first paragraph, the algebras $\eps R^{\La_0}_{\al,  \bar \F_p}\eps$ and 
$D_{Q} (n,d)_{\bar \F_p}$ 
are graded Morita equivalent. 
In particular, $\ell (\eps R^{\La_0}_{\al, \bar \F_p} \eps) 
= \ell(D_{Q} (n,d)_{\bar \F_p})$.
By Lemma~\ref{LMoritaField}, it remains to show only that 
$\ell(R^{\La_0}_{\al, \bar\F_p})=\ell(D_{Q} (n,d)_{\bar \F_p})$. 

Since the algebra $D_{Q} (n,d)_{\bar \F_p}$ is non-negatively graded, we have 
$\ell(D_{Q} (n,d)_{\bar \F_p})=\ell(D_{Q}(n,d)_{\bar \F_p}^0)$.
By~\cite[(7.2) and Lemma 7.3]{EK} together with Theorem~\ref{TGeneration}, 
\[
D_Q(n,d)^0 \cong \bigoplus_{(d_1,\dots,d_{e-1})\in \La(e-1,d)} S(n, d_1) \otimes \dots \otimes S(n,d_{e-1}). 
\]
By~\cite[Theorem 3.5(a)]{Green}, for all $h\le n$ we have  $\ell(S(n,h)_{\bar \F_p})= |\Par(h)|$. It follows that 
$\ell(D_{Q} (n,d)_{\bar \F_p}) = |\Par^J (d)|$.
On the other hand, by Theorem~\ref{TNSimples}, we have $\ell(R^{\La_0}_{\al, \bar \F_p})=|\Par^J(d)|$, and 
the proof is complete.
\end{proof}

Thus, we have proved Theorem~\ref{TA}. In conclusion, we consider the case where we work over a field of sufficiently large characteristic, cf.~the discussion in Section~\ref{SIntro}.

\begin{Proposition}\label{PFourAlgebras}
Suppose that $n\ge d$ and $\k$ is a field with $\charact \k =0$ or $\charact \k>d$. Then the RoCK block $R^{\La_0}_{\al,\k}$, the Turner double $D_Q(n,d)_{\k}$ and the wreath products $W_{d,\k}$ and $(R^{\La_0}_{\de,\k})^{\otimes d} \rtimes \k \Si_d$ are all graded Morita equivalent to each other. 
\end{Proposition}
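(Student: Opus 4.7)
The strategy is to combine Theorem~\ref{TMainMorita} with a Clifford-theoretic count of simple modules, and then transfer the resulting equivalences to the wreath-product setting. First, Theorem~\ref{TMainMorita} applied over $\k$ furnishes a graded Morita equivalence between $R^{\La_0}_{\al,\k}$ and $D_Q(n,d)_\k$, handling one of the three non-trivial edges of the desired four-way equivalence.

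Next, I will show that $D_Q(n,d)_\k$ is graded Morita equivalent to $W_{d,\k}$. Recall from Section~\ref{SIntro} that $\xi_\om$ is a homogeneous idempotent of $D_Q(n,d)$ with $\xi_\om D_Q(n,d)_\k \xi_\om \cong W_{d,\k}$; hence it is enough to show that $\xi_\om$ is full, that is, $D_Q(n,d)_\k \xi_\om D_Q(n,d)_\k = D_Q(n,d)_\k$. By the argument of Lemma~\ref{LMoritaField} (adapted to the present setting, with scalar extension to the algebraic closure $\bar\k$ of $\k$), this reduces to the numerical equality $\ell(D_Q(n,d)_{\bar\k}) = \ell(W_{d,\bar\k})$. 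The left-hand side equals $|\Par^J(d)|$ by the final paragraph of the proof of Theorem~\ref{TMainMorita}. For the right-hand side, under our hypothesis on $\charact \k$ the group algebra $\bar\k\Si_d$ is semisimple, and Clifford theory applied to the wreath product $\Zig_{\bar\k}\wr \Si_d$ parametrises its irreducible modules by families $(\mu^{(j)})_{j\in J}$ of partitions with $\sum_j |\mu^{(j)}|=d$; here I use that the simples of $\Zig_{\bar\k}$ are the one-dimensional modules indexed by $J$ (as $\Zig_{\bar\k}/\rad \Zig_{\bar\k} \cong \bar\k^{e-1}$) and that the $\Si_d$-stabiliser of each element of $J^d$ is a Young subgroup. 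This yields $\ell(W_{d,\bar\k}) = |\Par^J(d)|$, as required.

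Specialising the two preceding paragraphs to $d=1$ with $\rho=\varnothing$, which is a $1$-Rouquier core with $\cont(\varnothing)=0$ and hence $\al=\de$, I obtain a graded Morita equivalence $R^{\La_0}_{\de,\k}\sim \Zig_\k$. To propagate this to wreath products, choose a graded $(\Zig_\k, R^{\La_0}_{\de,\k})$-progenerator bimodule $P$ realising the equivalence; then $P^{\otimes d}$ is a graded progenerator over $\Zig_\k^{\otimes d}$ on one side and $(R^{\La_0}_{\de,\k})^{\otimes d}$ on the other, carrying a compatible $\Si_d$-action by permutation of tensor factors. This promotes $P^{\otimes d}$ to a bimodule over the two semidirect products and realises the required graded Morita equivalence between $W_{d,\k}$ and $(R^{\La_0}_{\de,\k})^{\otimes d}\rtimes \k\Si_d$.

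The main technical obstacle is the identity $\ell(W_{d,\bar\k})=|\Par^J(d)|$ in the second paragraph; this is classical in good characteristic and is the unique point where the hypothesis on $\charact \k$ is used in an essential way. The remainder of the argument is either a direct appeal to Theorem~\ref{TMainMorita} or a routine application of Morita theory, so I do not anticipate any genuine difficulty beyond being careful that all the equivalences in sight are actually graded (which is automatic since $\xi_\om$ and the bimodule $P^{\otimes d}$ are all naturally graded).
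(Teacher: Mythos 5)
Your argument is correct, but it takes a genuinely different route from the paper's proof. The paper establishes $D_Q(n,d)_\k\sim W_{d,\k}$ by passing through $C_{\rho,d,\k}$: it applies Lemma~\ref{LProjCRhoD}, a structural result stating that $C_{\rho,d,\k}\ga^\om$ is a projective generator (which is where the characteristic hypothesis enters, via \cite[Lemma 6.22]{KM1}), and then uses Theorem~\ref{TXiIso} to identify $\ga^\om C_{\rho,d,\k}\ga^\om$ with $W_{d,\k}$. You instead work directly inside $D_Q(n,d)_\k$ and argue that the idempotent $\xi_\om$ is full by counting simple modules over $\bar\k$, with the Clifford-theoretic computation $\ell(W_{d,\bar\k})=|\Par^J(d)|$ carrying the characteristic hypothesis. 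Both approaches are legitimate, but note that yours depends on the identification $\xi_\om D_Q(n,d)\xi_\om\cong W_d$, which the paper states only in the introduction (as a consequence of the Schur--Weyl duality from \cite{EK}) and never proves or cites formally; the paper's own proof carefully avoids needing it. For the third equivalence, the paper again quotes the $d=1$ case of Lemma~\ref{LProjCRhoD} to conclude that $e_J^{\otimes d}$ is a full idempotent in $(R^{\La_0}_{\de,\k})^{\otimes d}\rtimes\k\Si_d$, and then applies the explicit isomorphism $e_J R^{\La_0}_{\de,\k}e_J\cong\Zig_\k$ of \cite[Theorem 4.17]{KM2}; you instead re-derive $R^{\La_0}_{\de,\k}\sim\Zig_\k$ by specializing your first two steps to $d=1$, $\rho=\varnothing$ (which is indeed a $1$-Rouquier core), and then invoke the general principle that a Morita equivalence $A\sim B$ lifts to a Morita equivalence of wreath products. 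This lifting is a standard fact (used, e.g., in \cite{CK}), but as stated you are gliding over the technical point that the bimodule you need is not $P^{\otimes d}$ itself but rather its extension $(A\wr\Si_d)\otimes_{A^{\otimes d}}P^{\otimes d}$; the paper's idempotent formulation (with $e=e_J$ and $P=Be$, so that $e^{\otimes d}$ is a $\Si_d$-invariant full idempotent in $B\wr\Si_d$) sidesteps this cleanly. Overall your proof is sound, but it outsources more to general principles and an unproved claim in the introduction, whereas the paper's argument is self-contained within its developed machinery.
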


\begin{proof}
We write $x:=x\otimes 1\in A_{\k}$ for any algebra $A$ and any $x\in A$. 
By (the proof of) Theorem~\ref{TMainMorita}, the algebras 
$R^{\La_0}_{\al,\k}$, $C_{\rho,d,\k}$ and $D_Q(n,d)_{\k}$ are graded Morita equivalent. By Lemma~\ref{LProjCRhoD}, the module $C_{\rho,d,\k} \ga^\om$ is a projective generator for $C_{\rho,d,\k}$, so $C_{\rho,d,\k}$ is graded Morita equivalent to 
\[
\End_{C_{\rho,d,\k}} (C_{\rho,d,\k} \ga^\om)
\cong \ga^\om  C_{\rho,d,\k}\ga^{\om} \cong W_{d,\k}.
\]
where the second isomorphism comes from Theorem~\ref{TXiIso}. 
Recall the idempotent $e_J$ from~\eqref{EEJ}.
By the $d=1$ case of Lemma~\ref{LProjCRhoD}, the module $R^{\La_0}_{\de,\k}e_J$ is a projective generator for $R^{\La_0}_{\de,\k}$. Hence, setting $\xi:=e_J^{\otimes d}$, we have that $((R^{\La_0}_{\de,\k})^{\otimes d} \rtimes \k \Si_d) \xi$ is a projective generator for $(R^{\La_0}_{\de,\k})^{\otimes d} \rtimes \k \Si_d$. So $(R^{\La_0}_{\de,\k})^{\otimes d} \rtimes \k \Si_d$ is graded Morita equivalent to 
\[
\xi ((R^{\La_0}_{\de,\k})^{\otimes d} \rtimes \k \Si_d) \xi \cong 
(e_J R^{\La_0}_{\de,\k} e_J)^{\otimes d} \rtimes \k\Si_d 
\cong (\Zig_{\k})^{\otimes d} \rtimes k\Si_d \cong W_{d,\k},
\]
where for the second isomorphism we use the fact that 
$e_J R^{\La_0}_{\de,\k} e_J \cong \Zig_\k$, see~\cite[Theorem 4.17]{KM2}. 
\end{proof}


\begin{thebibliography}{BKW}

\bibitem[Al]{Alperin} J.L.~Alperin, Weights for finite groups. Proc.~Symp.~Pure Math., 47, Part 1, Amer.~Math.~Soc. (1987), 369--379. 

\bibitem[A${}_1$]{Ariki}
S. Ariki, On the decomposition numbers of the Hecke algebra of $G(m,1,n)$, {\em J.\ Math.\ Kyoto Univ.} {\bf 36} (1996), 789--808. 

\bibitem[A${}_2$]{Abook}
S. Ariki, {\em Representations of Quantum Algebras and Combinatorics of Young 
Tableaux}, University Lecture Series 26, American Mathematical Society, 2002. 

\bibitem[Br]{Broue} M.~Brou\'e, Isom{\'e}tries parfaites, types des blocs,  cat\'egories d\'eriv\'ees. {\em Ast\'erisque}  {\bf 181-182} (1990), 61--92. 

\bibitem[BK${}_1$]{BK} 
J.~Brundan and A.~Kleshchev, 
Blocks of cyclotomic Hecke algebras and Khovanov-Lauda algebras. 
\emph{Invent.~Math.} \textbf{178} (2009), 451--484. 

\bibitem[BK${}_2$]{BKllt}
J.\ Brundan and A.\ Kleshchev,
\newblock Graded decomposition numbers for cyclotomic {H}ecke algebras.
\newblock {\em Adv.\ Math.} \textbf{222} (2009), 1883--1942.

\bibitem[BKW]{BKW} J.\ Brundan, A.\ Kleshchev and W.\ Wang, Graded Specht modules, {\em J.~reine angew.~Math.}, {\bf 655} (2011), 61--87.



\bibitem[CK]{CK}
J.~Chuang and R.~Kessar, 
Symmetric groups, wreath products, Morita equivalences, and Brou{\'e}'s abelian defect group conjecture.
\emph{Bull.~London Math.~Soc.}~\textbf{34} (2002), 174--184.



\bibitem[CR]{CR}
J.\ Chuang and R.\ Rouquier,
Derived equivalences for symmetric groups and $\mathfrak{sl}_2$-categorification, {\em Ann.\ Math.} {\bf 167} (2008), 245--298.




\bibitem[Ev]{Evseev}
A.\ Evseev, Rock blocks, wreath products and KLR algebras, 
{\tt arXiv:1511.08004}.

\bibitem[EK]{EK}
A.\ Evseev and A.\ Kleshchev, Turner doubles and generalized Schur algebras, {\em preprint}, 2016.

\bibitem[G]{Green}
J.A.\ Green, {\em Polynomial representations of $GL_n$}, 2nd edition, Springer-Verlag, Berlin, 2007.

\bibitem[Gro]{Gr}
I.\ Grojnowski, Affine $\mathfrak{sl}_p$ controls the representation theory of the symmetric group and related Hecke algebras, {\tt arXiv:math.RT/9907129}. 


\bibitem[HK]{HK} R.S.\ Huerfano and M.\ Khovanov, A category for the adjoint representation, {\em J.\ Algebra} {\bf 246} (2001), 514--542.

\bibitem[Hu]{Hu}
J.E.\ Humphreys, {\em Reflection Groups and Coxeter Groups}, Cambridge University Press, 1990.

\bibitem[JK]{JK} G.D.\ James and A.\ Kerber, {\em The Representation Theory of the Symmetric Groups}, Addison-Wesley, 1981. 

\bibitem[Ka]{Kac} V.G.\ Kac, {\em Infinite Dimensional Lie Algebras}, Cambridge University Press, 1990. 


\bibitem[KK]{KK}
S.-J.\ Kang and M.\ Kashiwara,
Categorification of highest weight modules via Khovanov-Lauda-Rouquier algebras, {\em Invent.\ Math.} {\bf 190} (2012), 699--742. 

\bibitem[KL]{KL1}
M.\ Khovanov and A.\ Lauda,
A diagrammatic approach to categorification of quantum
groups I, {\em Represent.\ Theory} {\bf 13} (2009), 309--347. 

\bibitem[K${}_1$]{Kbook}
 A.\ Kleshchev, {\em Linear and Projective Representations of Symmetric Groups}, Cambridge University Press, 2005. 

\bibitem[K${}_2$]{Kcusp}
 A.\ Kleshchev, Cuspidal systems for affine Khovanov-Lauda-Rouquier algebras, {\em Math.~Z.} {\bf 276} (2014),  691--726.

\bibitem[KM${}_1$]{KM}
 A.\ Kleshchev and R.\ Muth, Imaginary Schur-Weyl duality, 
 {\em Mem.\ Amer.\ Math.\ Soc.}, to appear; {\tt  arXiv:1312.6104}.
 
 \bibitem[KM${}_2$]{KM1}
 A.~Kleshchev and R.~Muth, Stratifying KLR algebras of affine type, 
 {\tt arXiv:1511.05511}. 
 
\bibitem[KM${}_3$]{KM2}
 A.\ Kleshchev and R.\ Muth, Affine zigzag algebras and imaginary strata for KLR algebras, {\tt  arXiv:1511.05905}.

\bibitem[KR]{KRhomog}
 A.\ Kleshchev and A.\ Ram, Homogeneous representations of Khovanov-Lauda algebras, {\em J.\ Eur.\ Math.\ Soc.} {\bf 12} (2010), 1293--1306.

\bibitem[LLT]{LLT}
A.\ Lascoux, B.\ Leclerc, and J.-Y.\ Thibon, Hecke algebras at roots of unity and crystal bases of quantum affine algebras, {\em Commun.\ Math.\ Phys.} {\bf 181} (1996), 205--263.


\bibitem[LV]{LV}
A.\ Lauda and M.\ Vazirani, Crystals from categorified quantum groups, {\em Adv.\ Math.} {\bf 228} (2011), 803--861. 

\bibitem[LVV]{Van}
F.\ le Bruyn, M.\ Van den Bergh and F.\ Van Oystaeyen, {\em Graded Orders}, Birkh\"auser, 1988. 

\bibitem[McN]{McNAff}
P.\ McNamara, Representations of Khovanov-Lauda-Rouquier algebras III: symmetric affine type, {\tt arXiv:1407.7304v2}. 



\bibitem[R${}_1$]{RoTh}
R. Rouquier, Repres\'entations et cat\'egories d\'eriv\'ees, Rapport  d'habilitation, Universit\'e de Paris VII, 1998. 


\bibitem[R${}_2$]{R}
R. Rouquier, $2$-Kac-Moody algebras, {\tt arXiv:0812.5023}. 

\bibitem[SVV]{SVV}
P.\ Shan, M.\ Varagnolo and E.\ Vasserot, On the center of quiver-Hecke algebras, {\tt arXiv:1411.4392}.

\bibitem[SY]{SY}
A.\ Skowro\'nski and K.\ Yamagata, {\em Frobenius Algebras I: Basic Representation Theory}, European Mathematical Society, Z\"urich, 2011. 




\bibitem[Tu${}_1$]{Turner}
W.\ Turner, Rock blocks, Mem.\ Amer.\ Math.\ Soc.\ {\bf 202} (2009), no. 947. 

\bibitem[Tu${}_2$]{TurnerT}
W.\ Turner, Tilting equivalences: from hereditary algebras to symmetric groups, {\em J.\ Algebra} {\bf 319} (2008), 3975--4007. 

\bibitem[Tu${}_3$]{TurnerCat}
W.\ Turner, Bialgebras and caterpillars, {\em Q.\ J.\ Math.} {\bf 59}  (2008), 379--388.
 

\bibitem[We]{Webster}
B.\ Webster, Knot invariants and higher representation theory, {\tt arXiv:1309.3796}.
 
\end{thebibliography}
\end{document}